\documentclass[a4paper, reqno, 12pt]{amsart}

\usepackage[utf8]{inputenc}
\usepackage{amsthm,amsfonts,amssymb,amsmath,amsxtra,amsrefs}
\usepackage{mathtools}
 \usepackage{bbold}
\usepackage{latexsym}
 \usepackage{stmaryrd}
\usepackage{graphicx}
\usepackage{tikz}
\usepackage{tikz-cd}
\usepackage{todonotes,cancel}
\setcounter{tocdepth}{1}
\usepackage[margin=1.25in]{geometry}
\usepackage{mathrsfs}
\usepackage{bm}


\usepackage[all]{xy}
\SelectTips{cm}{}
\usepackage{xr-hyper}
\usepackage[colorlinks=false,
   citecolor=Black,
   linkcolor=Red,
   urlcolor=Blue]{hyperref}
\usepackage{verbatim}
%
%


\RequirePackage{xspace}
\RequirePackage{etoolbox}
\RequirePackage{varwidth}
\RequirePackage{enumitem}
\RequirePackage{tensor}
\RequirePackage{mathtools}
\RequirePackage{longtable}
\RequirePackage{multirow}

\usepackage{rotating}
\newcommand*{\isoarrow}[1]{\arrow[#1,"\rotatebox{90}{\(\sim\)}"
]}

\tikzset{
    labl/.style={anchor=south, rotate=90, inner sep=.5mm}
}

\newtheorem{thm}{Theorem}
\newtheorem{theorem}[thm]{Theorem}
\newtheorem{thmintro}{Theorem}
\newtheorem{propintro}[thmintro]{Proposition}
\newtheorem{prop}[thm]{Proposition}
\newtheorem{proposition}[thm]{Proposition}
\newtheorem{lem}[thm]{Lemma}
\newtheorem{lemma}[thm]{Lemma}

\newtheorem{cor}[thm]{Corollary}
\newtheorem{corollary}[thm]{Corollary}

\theoremstyle{definition}
\newtheorem{definition}[thm]{Definition}
\newtheorem{defi}[thm]{Definition}
\newtheorem{example}[thm]{Example}

\newtheorem{remark}[thm]{Remark}
\newtheorem{remarks}[thm]{Remarks}

\numberwithin{equation}{section}
\numberwithin{thm}{section}

\newcommand{\qbinom}[2]{\genfrac{[}{]}{0pt}{0}{#1}{#2}}

\newcommand{\I}{I}
\newcommand{\coroot}{\alpha^\vee}
\newcommand{\A}{\mathcal{A}}
\newcommand{\U}{\mathrm{U}}
\newcommand{\fkU}{\mathfrak{U}}
\newcommand{\AU}{{_\mathcal{A}\dot{\U}}}
\newcommand{\Ui}{\U^\imath}
\newcommand{\ev}{\bar{0}}
\newcommand{\odd}{\bar{1}}
\newcommand{\one}{\mathbf{1}}
\newcommand{\fr}{{\textbf{Fr}'}}
\newcommand{\ofr}{\textbf{Fr}}
\newcommand{\pdotU}{{_{\A'}\dot{\U}}}
\newcommand{\cdotU}{{_{\A'}\dot{\fkU}}}
\newcommand{\ifr}{{\textbf{Fr}'_\imath}}
\newcommand{\iofr}{{\ofr_\imath}}
\newcommand{\fpdotU}{{_{\mathbf{F}}\dot{\U}}}

\newcommand{\udoti}{{}_k\dot{\U}^\imath}
\newcommand{\bq}{\mathbf{q}}
\newcommand{\B}{\mathrm{B}}
\newcommand{\LR}[2]{\left\llbracket \begin{matrix} #1\\#2 \end{matrix} \right\rrbracket}
\def\lr#1#2{\ensuremath{\left(\kern-.3em\left(\genfrac{}{}{0pt}{}{#1}{#2}\right)\kern-.3em\right)}}
\newcommand{\baV}{{V_{\BF_p}}}
\newcommand{\mO}{\mathcal{O}}
\newcommand{\dist}{\text{Dist}}
\newcommand{\lie}{\text{Lie}}

\newcommand{\Qq}{\mathbb{Q}(q)}

\newcommand{\WD}{\widehat{\Delta}}

\newcommand{\G}{\mathbf{G}}
\newcommand{\mU}{\mathfrak{U}}

\newcommand{\cAUi}{{_{\mathcal{A}'}\dot{\mathfrak{U}}^\imath}}
\newcommand{\pAUi}{{_{\mathcal{A}'}\dot{\mathrm{U}}^\imath}}
\newcommand{\cAtUi}{{_{\mathcal{A}'_2}\dot{\mathfrak{U}}^\imath}}
\newcommand{\pAtUi}{{_{\mathcal{A}'_2}\dot{\mathrm{U}}^\imath}}

\newcommand{\BC}{\ensuremath{\mathbb {C}}\xspace}

\newcommand{\BF}{\ensuremath{\mathbb {F}}\xspace}
\newcommand{{\BG}}{\ensuremath{\mathbb {G}}\xspace}

\newcommand{{\BK}}{\ensuremath{\mathbb {K}}\xspace}

\newcommand{\BN}{\ensuremath{\mathbb {N}}\xspace}

\newcommand{\BP}{\ensuremath{\mathbb {P}}\xspace}
\newcommand{\BQ}{\ensuremath{\mathbb {Q}}\xspace}

\newcommand{\BZ}{\ensuremath{\mathbb {Z}}\xspace}

\newcommand{\CA}{\ensuremath{\mathcal {A}}\xspace}
\newcommand{\CB}{\ensuremath{\mathcal {B}}\xspace}

\newcommand{\CO}{\ensuremath{\mathcal {O}}\xspace}

\newcommand{\CV}{\ensuremath{\mathcal {V}}\xspace}

\newcommand{\CX}{\ensuremath{\mathcal {X}}\xspace}
\newcommand{\CY}{\ensuremath{\mathcal {Y}}\xspace}
\newcommand{\CZ}{\ensuremath{\mathcal {Z}}\xspace}

\newcommand{\ff}{\ensuremath{\mathfrak {f}}\xspace}
\newcommand{\fe}{\ensuremath{\mathfrak {e}}\xspace}
\newcommand{\fb}{\ensuremath{\mathfrak {b}}\xspace}

\newcommand{\RF}{\ensuremath{\mathrm {F}}\xspace}
\newcommand{\RE}{\ensuremath{\mathrm {E}}\xspace}
\newcommand{\RB}{\ensuremath{\mathrm {B}}\xspace}
\newcommand{\RO}{\ensuremath{\mathrm {O}}\xspace}

\begin{document}

\title[]{Symmetric subgroup schemes, Frobenius splittings, and quantum symmetric pairs}

\author[Huanchen Bao]{Huanchen Bao}
\address{Department of Mathematics, National University of Singapore, Singapore.}
\email{huanchen@nus.edu.sg}

\author[Jinfeng Song]{Jinfeng Song}
\address{Department of Mathematics, National University of Singapore, Singapore.}
\email{j\_song@u.nus.edu}


\begin{abstract}
Let $G_k$ be a connected reductive algebraic group over an algebraically closed field $k$ of characteristic $\neq 2$. Let $K_k \subset G_k$ be a quasi-split symmetric subgroup of $G_k$ with respect to an involution $\theta_k$ of $G_k$. The classification of such involutions is independent of the characteristic of $k$ (provided not $2$).

We first construct a closed subgroup scheme $\G^\imath$ of the Chevalley group scheme $\G$ over $\BZ$. The pair $(\G, \G^\imath)$ parameterizes   symmetric pairs of the given type over any algebraically closed field of characteristic $\neq 2$, that is, the geometric fibre of $\G^\imath$ becomes the reductive group $K_k \subset G_k$ over any algebraically closed field $k$ of characteristic $\neq 2$. 
As a consequence, we show the coordinate ring of the group $K_k$ is spanned by the dual $\imath$canonical basis of the corresponding $\imath$quantum group.

We then construct a quantum Frobenius splitting for the quasi-split $\imath$quantum group at roots of $1$. This generalizes Lusztig's quantum Frobenius splitting for quantum groups at roots of $1$. Over a field of positive characteristic, our quantum Frobenius splitting induces a Frobenius splitting of the algebraic group $K_k$.

Finally, we construct Frobenius splittings of the flag variety $G_k / B_k$ that compatibly split certain $K_k$-orbit closures over positive characteristics.  We deduce cohomological vanishings of line bundles as well as normalities. Results apply to characteristic $0$ as well, thanks to the existence of the scheme $\G^\imath$. Our construction of splittings is based on the quantum Frobenius splitting of the corresponding $\imath$quantum group. %
\end{abstract}

	\maketitle
	
	\tableofcontents

	\section{Introduction}
	The main part of this introduction is divided into three subsections discussing three main topics considered in this paper: the symmetric subgroup $K_k$ of a reductive group $G_k$; the quantum Frobenius splitting of an $\imath$quantum group; the Frobenius splitting of $K_k$-orbit closures on the flag variety $\CB_k$ of $G_k$. 
	 \subsection{The symmetric subgroup $K_k$}
	 
\subsubsection{Quantum groups and coordinate rings}
		Let $G_{\mathbb{C}}$ be a connected reductive algebraic group  over $\BC$. We denote by $\RO_\BC$ the coordinate ring of $G_{\mathbb{C}}$.  In his famous papers  \cites{Ch55, Ch95}, Chevalley constructed an integral form $\RO_\BZ$ of $\RO_\BC$ such that $\RO_\BC = \RO_\BZ \otimes_{\BZ} \BC$.   The integral form defines the {\it Chevalley group scheme}  $\G=\G_\BZ$  over $\BZ$ such that the geometric fiber $G_k\footnote{We identify an algebraic variety over $k$ with its set of $k$-rational points.} = \G_{\BZ} \times_{{Sp\, \BZ}} {Sp\, } k$ is the linear algebraic group associated with the given root datum for any algebraically closed field $k$.

Chevalley's approach depends on the choice of representations of $G_{\mathbb{C}}$. Kostant \cite{Ko66} identified (without proof) $\RO_\BZ$ intrinsically as the restricted dual of the Kostant's $\BZ$-form of the enveloping algebra of the Lie algebra of $G_{\mathbb{C}}$. Lusztig \cite{Lu07} reformulated (and proved) Kostant's construction using his theory of canonical bases on the quantum group $\U$. The coordinate ring $\RO_\BZ$ is identified as a dual subspace of the modified quantum group (at $q=1$) spanned by the dual canonical basis.

	\subsubsection{The symmetric subgroups} Let $G_k$ be a connected reductive algebraic group over an algebraically closed field $k$ of characteristic $\neq 2$. Let $\theta_k$ be an involution of $G_k$.  We denote the fixed point subgroup by $K_k$. It was shown by Steinberg \cite{Ste68} that $K_k$ is reductive. We remark that $K_k$ may not be connected.
	
		It was shown by Springer \cite{Spr87} that involutions of $G_k$ are classified in terms of Satake diagrams (when $G_k$ is simple) independent of the characteristic of $k$, provided $\neq 2$. We reformulate Springer's classification using {\em $\imath$root datum} in \S\ref{sec:class}. 
		
	We assume $K_k$ or $(G_k, \theta_k)$ is of quasi-split type, that is, there exists a Borel subgroup $B_k\subset G_k$ such that $B_k \cap \theta_k(B_k) =T_k$ is a maximal torus of $G_k$. Such a Borel subgroup $B_k$ is called $\theta_k$-anisotropic. In terms of Satake diagrams, this means no black dots in the Satake diagram. We also call the relevant $\imath$root datum quasi-split in this case.

	We only consider quasi-split cases in this paper. We fix a quasi-split $\imath$root datum for the rest of this introduction. Hence, for any algebraically closed field $k$ of characteristic $\neq 2$, we obtain $G_k$, $\theta_k$, $K_k$, $\theta_k$-anisotropic $B_k$. 

	\subsubsection{Quantum symmetric pairs}
		Associated to the $\imath$root datum, we can construct a quantum symmetric pair $(\U,\U^\imath)$\footnote{The superscript $\imath$ stands for invariant or involution. Since $\U^\imath$ is not the fixed point of any natural involution $\theta$ of $\U$ for generic $q$, one can not denote it by $\U^\theta$.}, where $\U^\imath \subset \U$ is a coideal subalgebra of the quantum group associated to the root datum of $G_k$. The quantum symmetric pair $(\U,\U^\imath)$ is a quantization of the pair of enveloping algebras of the symmetric pair $(\mathfrak{g}_k, \mathfrak{g}_k^{\theta_k})$. Here $\mathfrak{g}_k$ denotes the Lie algebra of $G_k$, and $\mathfrak{g}_k^{\theta_k}$ denotes the Lie algebra of $K_k$ (cf. \cite{Bo91}*{\S9.4}).  We often call $\U^\imath$ the {\em $\imath$quantum group}.
		
		Quantum symmetric pairs were originally introduced by Letzter \cite{Le99}, generalized by Kolb \cite{Ko14} to Kac-Moody cases.  The first named author and Wang \cite{BW18} initiated the theory of canonical bases arising from quantum symmetric pairs. We refer to the survey \cite{WangICM} for recent developments in quantum symmetric pairs. 
  
  Let ${}_\A\dot{\U}^\imath$ be the $\A = \BZ[q,q^{-1}]$-form of the modified $\imath$quantum group. This is a free $\A$-subalgebra of the modified $\imath$quantum group $\dot{\U}^\imath$ with basis $\dot{\B}^\imath$. We call $\dot{\B}^\imath$ the $\imath$canonical basis of ${}_\A\dot{\U}^\imath$. 
		
		For any commutative ring $R$ and ring homomorphism $\A \rightarrow R$, we write ${}_R\dot{\U}^\imath=  R \otimes_{\A} {}_\A\dot{\U}^\imath$ if there is no ambiguity. We abuse notations and denote by  $\dot{\B}^\imath$ the basis of ${}_R\dot{\U}^\imath$ after base change.

	\subsubsection{Symmetric subgroup schemes} 
	
	We consider the ring homomorphism $\A \rightarrow \BZ$ mapping $q$ to $1$ and the algebra ${}_\BZ\dot{\U}^\imath=  \BZ \otimes_{\A} {}_\A\dot{\U}^\imath$.  Let $\RO^\imath_{\BZ}$ be the subspace of $\text{Hom}_{ {\BZ}}( {}_\BZ\dot{{\U}}^\imath,  \BZ)$ spanned by the dual basis of $\dot{\B}^\imath$.  We state the first main theorem of this paper. 
	
	\begin{thmintro}[Theorem~\ref{thm:Hopfi} \& Proposition~\ref{prop:GAi} \& \S\ref{sec:pro} (a) \& Theorem~\ref{thm:Oik}]\label{thm:1}  \phantom{x}
	
	\begin{enumerate}
	\item  The subspace $\RO^\imath_{{\BZ}}$ is naturally a commutative and cocommutative Hopf algebra. 
    \item The algebra $\RO^\imath_{{\BZ}}$ defines a closed subgroup scheme of the Chevalley group scheme $\G$, denoted by $\G^\imath \subset \G$.  
    \item  Assume $R$ is an integral domain with characteristic not 2. Then $\RO^\imath_{R} = \RO^\imath_{{\BZ}} \otimes_\BZ R$ is reduced.
    \item  Let $k$ be any algebraically closed field $k$ of characteristic $\neq 2$. We identify $\G^\imath_k$ with its set of $k$-rational points, denoted by $G^\imath_k$. 
	We have $G^\imath_k = K_k \subset G_k$. In particular, $\RO^\imath_{k}$ is the coordinate ring of $K_k$.
	\end{enumerate}
	
	\end{thmintro}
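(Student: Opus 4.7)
The approach I would take mirrors Lusztig's \cite{Lu07} reformulation of Kostant's integral form: use the $\imath$canonical basis $\dot{\B}^\imath$ and duality to transport Hopf-algebraic structure from the $q=1$ specialization of the modified $\imath$quantum group to $\RO^\imath_\BZ$. The crucial observation is that while ${}_\A\dot{\U}^\imath$ is merely a coideal subalgebra of ${}_\A\dot{\U}$, its specialization at $q=1$ ought to be a genuine Hopf subalgebra, paralleling the classical fact that $U(\mathfrak{k})$ is a Hopf subalgebra of $U(\mathfrak{g})$.

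For part~(1), the first task is to show that the coproduct of ${}_\A\dot{\U}$, when restricted to ${}_\A\dot{\U}^\imath$, lands inside ${}_\A\dot{\U}^\imath \otimes_\A {}_\A\dot{\U}^\imath + (q-1)\bigl({}_\A\dot{\U}^\imath \otimes_\A {}_\A\dot{\U}\bigr)$. I would verify this on algebra generators of $\U^\imath$ and then propagate to the basis $\dot{\B}^\imath$ using bar-invariance and the asymptotic structure of $\dot{\B}^\imath$ relative to $\dot{\B}$ coming from \cite{BW18}. Once this is established, specialization at $q=1$ yields a Hopf subalgebra ${}_\BZ\dot{\U}^\imath \subset {}_\BZ\dot{\U}$. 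The $\BZ$-bilinear pairing between ${}_\BZ\dot{\U}^\imath$ and $\RO^\imath_\BZ$ (dual bases) then converts multiplication and coproduct on ${}_\BZ\dot{\U}^\imath$ into coproduct and multiplication on $\RO^\imath_\BZ$; commutativity and cocommutativity of $\RO^\imath_\BZ$ follow by dualizing cocommutativity and commutativity of the classical limit, which at $q=1$ is a quotient of $U(\mathfrak{g})$-type algebra containing $U(\mathfrak{k})$. For part~(2), the inclusion ${}_\BZ\dot{\U}^\imath \hookrightarrow {}_\BZ\dot{\U}$ dualizes to a Hopf algebra surjection $\RO_\BZ \twoheadrightarrow \RO^\imath_\BZ$, presenting $\G^\imath$ as a closed subgroup scheme of $\G$.

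Parts~(3) and~(4) I would treat together. For part~(4), the inclusion $K_k \subseteq G^\imath_k$ reduces to checking that the kernel of $\RO_k \twoheadrightarrow \RO^\imath_k$ vanishes on $K_k$; this can be read off from the fact that $\dot{\B}^\imath$ is characterized by a $\theta_k$-type invariance at $q=1$ and the non-$\imath$ part of $\dot{\B}$ corresponds to functions whose $K_k$-restrictions must vanish. For the reverse inclusion $G^\imath_k \subseteq K_k$, I would compute $\lie(\G^\imath_k)$ from the generators of ${}_\BZ\dot{\U}^\imath$ at $q=1$ and identify it with $\mathfrak{g}_k^{\theta_k} = \text{Lie}(K_k)$, then match the connected components of both subgroups using the $\imath$root datum. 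Part~(3) then follows from $\BZ$-flatness of $\RO^\imath_\BZ$ (immediate from the existence of a $\BZ$-basis) together with the reducedness of the geometric fibers provided by part~(4) and Steinberg's theorem that $K_k$ is reductive; a fiberwise-criterion argument finishes reducedness over the integral domain $R$.

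The main obstacle is part~(1): proving that the coproduct closes up modulo $q-1$ on the integral form. At generic $q$ the coproduct of a generator of $\U^\imath$ involves correction terms lying outside $\U^\imath \otimes \U^\imath$, and one must show these error terms are divisible by $q-1$ in ${}_\A\dot{\U}^\imath \otimes_\A {}_\A\dot{\U}$—this requires delicate knowledge of the $\imath$-divided powers and the integral structure of ${}_\A\dot{\U}^\imath$. A secondary obstacle arises in part~(4) over small positive characteristics: ruling out extra components or nilpotents in $\G^\imath_k$ requires sufficient control over the reduction mod $p$ of $\dot{\B}^\imath$, analogous to Lusztig's analysis of $\dot{\B}$ in the ordinary quantum group case.
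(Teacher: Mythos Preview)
Your approach to part~(1) takes a harder route than the paper and has a real gap. You propose to show that the coproduct of ${}_\A\dot{\U}^\imath$ closes up modulo $(q-1)$, making ${}_\BZ\dot{\U}^\imath$ a genuine Hopf subalgebra, and then dualize. The paper never does this. Instead it uses only the \emph{coideal} property $\widehat{\Delta}({}_A\dot{\U}^\imath)\subseteq{}_A\widehat{\U}^{\imath,1}$, which holds at all $q$ and requires no $(q-1)$-divisibility analysis. The Hopf structure on $\RO^\imath_A$ is obtained as a \emph{quotient} of $\RO_A$: one shows the kernel $I_A$ of the restriction map $r:\RO_A\to\RO^\imath_A$ is a Hopf ideal. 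That $I_A$ is a two-sided ideal follows immediately from the coideal property (if $f$ vanishes on ${}_A\dot{\U}^\imath$ then $(fg)(x)=(f\otimes g)(\widehat\Delta(x))=0$ for $x\in{}_A\dot{\U}^\imath$); compatibility with comultiplication follows from ${}_A\dot{\U}^\imath$ being a subalgebra. Your ``coproduct closes up mod $(q-1)$'' program is not needed and, as you yourself note, would be delicate on the integral form.

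More seriously, you miss the key technical input for part~(2): surjectivity of $r:\RO_\BZ\to\RO^\imath_\BZ$ is not automatic from dualizing an inclusion. The paper needs a \emph{finiteness} statement (for each $b'\in\dot\RB$, only finitely many $b\in\dot\RB^\imath$ have $\iota_{b,b'}\neq0$) even to define $r$, and then uses the splitting $s_\mu^*$ coming from the isomorphism $p_{\imath,\lambda}:{}_\A\dot{\U}^\imath\one_{\bar\lambda}\xrightarrow{\sim}{}_\A\U^-\one_\lambda$ to prove surjectivity. The finiteness rests on Watanabe's stability theorem for $\imath$canonical bases, which is the genuine hard input here and does not appear in your outline.

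For part~(4) you have the two inclusions reversed relative to what works. The inclusion $G^\imath_k\subseteq K_k$ is the \emph{easy} one: it follows by checking that the generators $B_{i,\zeta}$ of ${}_k\dot{\U}^\imath$ are fixed by $\theta_k$, so every group-like element of ${}_k\widehat{\U}^\imath$ lies in $K_k$. The Lie algebra comparison is used for the \emph{other} direction: one shows $\mathfrak{k}\subseteq\mathfrak{g}^\imath$, hence $K^\circ_k\subseteq G^\imath_k$, and then handles the components via $K_k=T_k^{\theta_k}K^\circ_k$ together with a direct check that $T_k^{\theta_k}\subseteq G^\imath_k$. Your plan to deduce $G^\imath_k\subseteq K_k$ from a Lie algebra computation cannot work without already controlling components, and your ``kernel vanishes on $K_k$'' argument for $K_k\subseteq G^\imath_k$ is the wrong translation (the kernel consists of functions vanishing on ${}_k\dot{\U}^\imath$, not on $K_k$).

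Finally, your proposed derivation of part~(3) from part~(4) is circular as stated: equality of $k$-points $G^\imath_k=K_k$ does not by itself give reducedness of the scheme $\G^\imath_k$. (One could rescue this via a smoothness argument comparing tangent spaces, but you do not say so, and you invoke Steinberg's reductivity theorem where smoothness is what is needed.) The paper's direct argument is quite different: it uses the $X_\imath$-grading on $\RO^\imath_A$, the section $s_\mu^*$ embedding each graded piece into the integral domain $\RO_A$, and the fact that $X_\imath$ has no odd torsion to rule out nilpotents in positive characteristic $p\neq2$.
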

  We call $\G^\imath$ the {\em symmetric subgroup scheme} over $\BZ$. 
   The theorem generalizes Lusztig's construction \cite{Lu07} of the Chevalley group scheme $\G$ using (dual) canonical bases arising from quantum groups. While $\G$ parameterizes reductive groups associated to a given root datum, the pair $(\G, \G^\imath)$ parameterizes symmetric pairs associated to a given $\imath$root datum.

\subsubsection{Future works}
We expect to generalize Theorem~\ref{thm:1} to arbitrary (quantum) symmetric pairs in our future works. Our current results rely on the (strong) compatibility of $\imath$canonical bases with respect to the projective system introduced by the first named author and Wang in  \cite{BW18a}*{\S6.2} (see Theorem~\ref{thm:stab}). Such compatibility was conjectured by the first named author and Wang in \cite{BW18a}*{Remark~6.18} and established by Watanabe \cite{Wa21}  for quasi-split cases (as well as all real rank one cases in \cite{Wa22}) using the crystal theory for $\imath$quantum groups. 
While the crystal theory for $\imath$quantum groups is an interesting research direction on its own, only a weaker version is required in our project. 

We expect that $\G^\imath$ is the fixed point subscheme of $\G$ (cf. Fogarty \cite{Fo73}). It would be interesting to see the connection with the recent preprint \cite{ALRR22} by Achar-Louren{\c c}o-Richarz-Riche.

	\subsection{Quantum Frobenius splittings}
 Let $k$ be an algebraically closed field of characteristic $p > 2$.
		\subsubsection{Definitions} 
		The concept of Frobenius splitting was introduced by Mehta-Ramanathan \cite{MR85}, followed by Ramanan-Ramanathan \cite{RR85}, in their study of Schubert varieties.  Mehta-Ramanathan \cite{MR85} and Ramanan-Ramanathan \cite{RR85} used Frobenius splitting method to prove the vanishing of higher cohomologies of ample line bundles on Schubert variety, as well as the normality of Schubert varieties (which implies the Demazure character formula). Such results apply to characteristic $0$ as well by the semi-continuity theorem \cite{Ha77}*{Chapter III, Theorem 12.8}.  
		
		A variety $\mathcal{X}$ over $k$ is called Frobenius split if the natural morphism $\mathcal{O}_\mathcal{X} \rightarrow F_* \mathcal{O}_\mathcal{X}$ admits a splitting. Here $F: \mathcal{X} \rightarrow \mathcal{X}$ denotes the absolute Frobenius morphism, which fixes all the points and raises the functions to their $p$-th powers.
		
		Let $\mathcal{Y} \subset \mathcal{X}$ be a closed subvariety of $\mathcal{X}$. A splitting of $F: \mathcal{X} \rightarrow \mathcal{X}$ is said to compatibly split $\mathcal{Y}$ if it induces a splitting of $F: \mathcal{Y} \rightarrow \mathcal{Y}$. This can be equivalently defined as a splitting preserving the ideal sheaf of $\mathcal{Y}$ as $\mathcal{O}_\mathcal{X}$-modules. 
	
	We refer to the book \cite{BK05} by Kumar-Brion for details on Frobenius splittings. 
		 
	\subsubsection{Quantum Frobenius splittings of $\U$} We assume $p > 2$ (and $p > 3$ if $G_k$ has a $G_2$ component).  Let $\A'$ be the quotient of $\A$ by the $p$-th cyclotomic polynomial, that is, $q$ is a $p$-th primitive root of $1$ in $\A'$.  We are interested in two ring homomorphisms from $\A$ to $\A'$, mapping $q$ to $1$ and $p$-th root of $1$, respectively.   We then obtain the two $\A'$-algebras ${}_{\A'} \dot{\mathfrak{U}}$ (mapping $q$ to $1$) and ${}_{\A'} \dot{\U}$ (mapping $q$ to $p$-th root of $1$) via base changes from the $\A$-form ${}_{\A} \dot{\U}$ of the modified quantum group. 		 


	Lusztig \cite{Lu90} defined a quantum Frobenius morphism $\ofr :{}_{\A'} \dot{\U} \rightarrow  {}_{\A'} \dot{\mathfrak{U}}$ mapping divided powers  $E^{(n)}_i 1_\zeta $  of Chevalley generators to $E^{(n/p)}_i 1_{\zeta/p}$ (and $0$ if not divisible). Lusztig \cite{Lu93} also defined a (more mysterious) splitting $\fr: {}_{\A'} \dot{\mathfrak{U}} \rightarrow {}_{\A'} \dot{\U}$ of the quantum Frobenius morphism, such that $\ofr \circ \fr = {\rm id}$.
	
	\subsubsection{A Frobenius splitting of $G_k$} Note that ${}_{k}\dot{\U} = k \otimes_{\A'}{}_{\A'} \dot{\mathfrak{U}} = k \otimes_{\A'}{}_{\A'}\dot{\U}$, when $k$ is a field of characteristic $p$. 
	
	
	By taking duals, and combining with the Frobenius automorphis of $k$ (see \S\ref{sec:kFr} for more details), both ${}_k\ofr^*$ and ${}_k\fr^{, *}$ become additive endomorphisms of the coordinate ring $\RO_k$ of $G_k$.  The following theorem seems never appear explicitly in literature, which we would like to highlight. 
	\begin{thmintro}[Theorem~\ref{thm:splitO}] \label{introthm:2}
		The induced morphims ${}_k\ofr^{*}: \RO_k \rightarrow \RO_k$ is  the $p$-th power map. The induced map ${}_k\fr^{, *}:\RO_k \rightarrow \RO_k$ gives a Frobenius splitting of the algebraic group $G_k$.
	\end{thmintro}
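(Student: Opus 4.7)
The plan is to identify the base-changed quantum Frobenius ${}_k\ofr$ with the classical Frobenius on the distribution algebra of $G_k$, and then take duals. By Lusztig's identification of the specialization ${}_k\dot{\U}$ at $q = 1$ with the modified Kostant $\BZ$-form of the enveloping algebra, the coordinate ring $\RO_k$ sits inside $\mathrm{Hom}_k({}_k\dot{\U}, k)$ as the restricted dual spanned by the dual canonical basis. Under this identification I would verify that the base change ${}_k\ofr: {}_k\dot{\U} \to {}_k\dot{\U}$, which by Lusztig's formula sends $E_i^{(n)} 1_\zeta$ to $E_i^{(n/p)} 1_{\zeta/p}$ (and to $0$ otherwise), is precisely the map on distributions induced by the absolute Frobenius $F : G_k \to G_k$; this is the standard description of the Frobenius on Kostant's $\BZ$-form in positive characteristic, where divided powers $e_i^{(n)}$ go to $e_i^{(n/p)}$ when $p \mid n$ and vanish otherwise. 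Taking duals then identifies ${}_k\ofr^{*}$ with $F^{*}$ on $\RO_k$, which is the $p$-th power map. The Frobenius automorphism $\sigma$ of $k$ referenced in \S\ref{sec:kFr} enters in order to convert the $\sigma$-linear absolute Frobenius into a $k$-linear endomorphism, matching the $k$-linear base change.

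Once the first statement is in hand, the second follows formally from the identity $\ofr \circ \fr = \mathrm{id}$. Base changing and dualizing, this gives ${}_k\fr^{*} \circ {}_k\ofr^{*} = \mathrm{id}_{\RO_k}$, so that ${}_k\fr^{*}(f^p) = f$ for every $f \in \RO_k$. To upgrade this to a genuine Frobenius splitting of the affine scheme $G_k$, one also needs the $\mathcal{O}_{G_k}$-linearity condition ${}_k\fr^{*}(g^p s) = g \cdot {}_k\fr^{*}(s)$ with respect to the twisted module structure on $F_* \RO_k$; this is automatic as soon as ${}_k\fr^{*}$ is a ring homomorphism, which in turn follows from ${}_k\fr$ being a coalgebra map. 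I would therefore invoke (or verify directly) the coalgebra compatibility of Lusztig's splitting $\fr$.

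The principal obstacle is the first step: matching Lusztig's quantum Frobenius $\ofr$, after specialization through $q \mapsto \zeta_p$ in $\A'$ and then $\zeta_p \mapsto 1$ in $k$, with the classical Frobenius on the distribution algebra. This requires careful bookkeeping of the three module structures (over $\A$, $\A'$, and $k$) as well as of the Frobenius twist of the base field. A smaller secondary point is that the dual ${}_k\ofr^{*}$ a priori only lands in the full linear dual $\mathrm{Hom}_k({}_k\dot{\U}, k)$; that it restricts to $\RO_k$ is automatic once it is identified with the $p$-th power map, since $\RO_k$ is a subalgebra and therefore closed under $p$-th powers.
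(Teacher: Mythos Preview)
Your plan for the first assertion is sound, though it differs from the paper's argument. Rather than identifying ${}_k\ofr$ with the map induced on distributions by the absolute Frobenius, the paper checks directly that $f^{\otimes p}\circ\widehat{\Delta}^p = f\circ\ofr$ on ${}_{\BF_p}\!\dot{\U}$: the iterated coproduct $\widehat{\Delta}^p(x)$ is $S_p$-invariant, non-trivial $S_p$-orbits have size divisible by $p$ and thus vanish under $f^{\otimes p}$, and the $S_p$-fixed summands produce exactly $f\circ\ofr(x)$ after using $z^p=z$ in $\BF_p$. Your route via the classical Frobenius on the distribution algebra would also work, but it imports an external identification that the paper avoids.

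The genuine gap is in the second assertion. You assert that the $\mathcal{O}_{G_k}$-linearity ${}_k\fr^{,*}(g^p s)=g\cdot{}_k\fr^{,*}(s)$ will follow once ${}_k\fr^{,*}$ is a ring homomorphism, and that this in turn follows from $\fr$ being a coalgebra map. But $\fr$ is \emph{not} a coalgebra map: already for a single additive one-parameter subgroup one has $\Delta\big(\fr(e_i\one_\zeta)\big)=\Delta(e_i^{(p)}\one_{p\zeta})=\sum_{a+b=p,\;\mu+\mu'=p\zeta} e_i^{(a)}\one_\mu\otimes e_i^{(b)}\one_{\mu'}$, which contains cross terms such as $e_i^{(1)}\otimes e_i^{(p-1)}$ that are not in the image of $\fr\otimes\fr$. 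Dually, ${}_k\fr^{,*}$ is not multiplicative (e.g.\ on $k[t]$ the standard splitting $t^n\mapsto t^{n/p}$ or $0$ satisfies $\varphi(t^p)=t\neq 0=\varphi(t)\varphi(t^{p-1})$).

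What the paper proves instead is the twisted compatibility
\[
(id\otimes\ofr)\circ\widehat{\Delta}\circ\fr \;=\; (\fr\otimes id)\circ\widehat{\Delta}
\]
on generators (Proposition~5.5); dualizing this yields precisely $\fr^{,*}(f\cdot g^p)=\fr^{,*}(f)\cdot g$, which together with $\ofr\circ\fr=\mathrm{id}$ gives the splitting property. So the missing ingredient in your argument is this commutative diagram, not a nonexistent coalgebra property of $\fr$.
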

	
	Since $G_k$ is a smooth affine variety, it is always Frobenius split.  The quantum Frobenius splitting ${}_k\fr^{, *}$ induces a concrete splitting on $G_k$. By construction, the splitting compatibly splits the chosen Borel subgroup $B_k$ and the maximal torus $T_k \subset B_k$. 
	
	\subsubsection{Quantum Frobenius splittings of $\U^\imath$}We now consider the two $\A'$-algebras ${}_{\A'} \dot{\mathfrak{U}}^\imath$ and ${}_{\A'} \dot{\U}^\imath$ associated to the $\imath$quantum group. They are obtained via base changes from ${}_{\A} \dot{\U}^\imath$ and ring homomorphisms $\A \rightarrow \A'$, where $q$ maps $1$ for ${}_{\A'} \dot{\mathfrak{U}}^\imath$ and $p$-th root of $1$ for ${}_{\A'} \dot{\U}^\imath$. Here we use different notations to distinguish the two $\A'$-algebras when considered together. 
 
 It was shown by the first named author and Sale \cite{BS21} that Lusztig's quantum Frobenius morphism restricts to a quantum Frobenius morphism $\iofr: {}_{\A'} \dot{\U}^\imath \rightarrow {}_{\A'} \dot{\mathfrak{U}}^\imath$. On the other hand, one can not expect Lusztig's quantum Frobenius splitting restricts to a splitting of the $\imath$quantum group. This is more transparent geometrically, that is, the splitting ${}_k\fr^{ ,*}$ of $G_k$ induced by $\fr$ does not compatibly split the symmetric subgroup $K_k$ in general.  
	
	The following theorem generalizes Lusztig's quantum Frobenius splitting.
	
	\begin{thmintro}[Theorem~\ref{thm:iFr}]
 We construct an $\A'$-algebra homomorphism $\ifr : {}_{\A'} \dot{\mathfrak{U}}^\imath \rightarrow {}_{\A'} \dot{\U}^\imath $ that splits the quantum Frobenius morphism $\iofr: {}_{\A'} \dot{\U}^\imath  \rightarrow {}_{\A'} \dot{\mathfrak{U}}^\imath $, that is $\iofr \circ \ifr =  {\rm id}$.
	\end{thmintro}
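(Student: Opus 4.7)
The plan is to imitate Lusztig's construction of $\fr$ via the canonical basis, replacing the canonical basis of $\dot{\U}$ by the $\imath$canonical basis $\dot{\B}^\imath$ of $\dot{\U}^\imath$. First I would analyze how the quantum Frobenius morphism $\iofr$ acts on the $\imath$canonical basis. Since, by Bao--Sale, $\iofr$ is the restriction of Lusztig's $\ofr$, and $\ofr$ sends each element of the canonical basis of ${}_{\A'}\dot{\U}$ either to a canonical basis element of ${}_{\A'}\dot{\mathfrak{U}}$ or to $0$, I expect the same dichotomy to descend to $\dot{\B}^\imath$: there should be a distinguished subset $\dot{\B}^\imath[p] \subset \dot{\B}^\imath \subset {}_{\A'}\dot{\U}^\imath$ of ``small'' basis elements which $\iofr$ puts in bijection with the full $\imath$canonical basis $\dot{\B}^\imath \subset {}_{\A'}\dot{\mathfrak{U}}^\imath$, with every other $b \in \dot{\B}^\imath$ satisfying $\iofr(b)=0$. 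Establishing this bijection is the first technical step; it should follow from the corresponding statement for $\dot{\B}$ together with the strong compatibility of $\imath$canonical bases with canonical bases under the projective system of Bao--Wang (Theorem~\ref{thm:stab}), which controls how $\dot{\B}^\imath$ is extracted from $\dot{\B}$ after applying based $\U$-modules.

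Granted the bijection, I would \emph{define} $\ifr$ as the $\A'$-linear map sending each $b' \in \dot{\B}^\imath \subset {}_{\A'}\dot{\mathfrak{U}}^\imath$ to its unique small preimage in $\dot{\B}^\imath[p] \subset {}_{\A'}\dot{\U}^\imath$. Then $\iofr \circ \ifr = {\rm id}$ by construction, so the splitting property is automatic; the entire content of the theorem is concentrated in the claim that $\ifr$ is an $\A'$-algebra homomorphism. In particular, one must show that the product of two small $\imath$canonical basis elements in ${}_{\A'}\dot{\U}^\imath$ is again an $\A'$-linear combination of small $\imath$canonical basis elements, with structure constants matching those of the product in ${}_{\A'}\dot{\mathfrak{U}}^\imath$; equivalently, the non-small basis vectors (which lie in $\ker\iofr$) must not appear.

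The main obstacle is therefore this multiplicativity, which is the $\imath$analog of the deepest part of Lusztig's original argument for $\fr$. My plan is to reduce it to Lusztig's theorem by testing $\ifr$ on finite-dimensional based $\dot{\U}$-modules $V$: the coideal structure of $\Ui$ makes $V$ automatically a $\Ui$-module whose $\imath$canonical basis is governed by $\dot{\B}^\imath$. Comparing the two operators $\ifr(b'_1)\ifr(b'_2)$ and $\ifr(b'_1 b'_2)$ on $V$ via the compatibility (Theorem~\ref{thm:stab}) between the two projective systems, one should show that both operators coincide with the restriction of $\fr(b_1')\fr(b_2') = \fr(b'_1 b'_2)$ (on the canonical-basis side) to the $\imath$canonical vectors, uniformly as $V$ ranges over sufficiently many based modules; this forces their equality in ${}_{\A'}\dot{\U}^\imath$. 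A final bookkeeping step on the idempotents $1_\zeta$, using the explicit behavior of $\iofr$ in zero weight, confirms that $\ifr$ takes the expected values on the identity elements of ${}_{\A'}\dot{\mathfrak{U}}^\imath$, thereby completing the construction.
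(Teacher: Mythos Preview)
Your proposal has two genuine gaps, and the paper takes a completely different route.

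\textbf{Gap 1: the dichotomy for $\iofr$ on $\dot{\B}^\imath$.} You assert that because $\ofr$ sends canonical basis elements of ${}_{\A'}\dot{\U}$ to canonical basis elements or $0$, the same should hold for $\iofr$ on $\dot{\B}^\imath$. But the embedding ${}_\A\dot{\U}^\imath\hookrightarrow{}_\A\widehat{\U}$ does \emph{not} send $\imath$canonical basis elements to canonical basis elements; each $b\in\dot{\B}^\imath$ becomes an infinite $\A$-linear combination $\sum_{b'}\iota_{b,b'}b'$ (see \S\ref{sec:fin}). So even granting the dichotomy for $\dot{\B}$, nothing forces $\iofr(b)$ to be a single $\imath$canonical basis element or zero. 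The stability theorem (Theorem~\ref{thm:stab}) controls the \emph{module} maps $\pi_\lambda^\imath$, not the interaction of $\dot{\B}^\imath$ with $\ofr$. Indeed, the paper writes $\iofr(b)=\sum_{b'}x^\imath_{b,b'}b'$ with general coefficients (Lemma~\ref{le:fiFr}), and Corollary~\ref{cor:iFr} shows that already $\ifr(\mathfrak{b}_{i,\zeta}^{(n)})$ is a nontrivial sum of $\imath$divided powers when $\tau i=i$ and the parity condition holds, so the map you would obtain is not the paper's $\ifr$.

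\textbf{Gap 2: the multiplicativity reduction to $\fr$.} Your plan to test $\ifr(b'_1)\ifr(b'_2)$ against $\fr(b'_1)\fr(b'_2)$ on based modules presupposes that $\ifr$ agrees with (a restriction of) $\fr$. It does not: the introduction explicitly warns that ``one can not expect Lusztig's quantum Frobenius splitting restricts to a splitting of the $\imath$quantum group'', and geometrically the induced splitting of $G_k$ does not compatibly split $K_k$. So there is no identity of the form $\ifr = \fr|_{\dot{\U}^\imath}$ to invoke, and the reduction collapses.

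\textbf{What the paper does instead.} The construction is by generators and relations, not by canonical bases. Over the localization $\A'_2$ (where the balanced $\imath$divided powers $'B_{i,\zeta}^{(n)}$ are available), one \emph{defines} $\ifr$ on generators by $\mathfrak{b}_{i,\zeta}\mapsto \B_{i,l\zeta}^{(l)}$ for $\tau i\neq i$ and $'\mathfrak{b}_{i,\zeta}\mapsto{'\B_{i,l\zeta}^{(l)}}$ for $\tau i=i$, and then verifies by direct computation that all defining relations \eqref{re:wt}--\eqref{re:iSerre2} of ${}_{\A'_2}\dot{\mathfrak{U}}^\imath$ are preserved (\S\ref{subsec:pf}). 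The hard cases are the Serre-type relations for $\tau i=i$, handled via new formulas for balanced $\imath$divided powers (Lemma~\ref{lem:balanced}), the $q$-double binomials (Lemma~\ref{le:qBinomAtUnity}), a closed form for the higher $\imath$Serre elements $y_{i,j;n,m,e}$ (Proposition~\ref{prop:HigherSerreRe}), and the isomorphism $p_{\imath,\lambda}$ reducing certain checks to $\U^-$. Finally Corollary~\ref{cor:iFr} shows the map lands in ${}_{\A'}\dot{\U}^\imath$, and the splitting identity $\iofr\circ\ifr=\mathrm{id}$ is then immediate from the explicit values on generators.
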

			The theorem is established by direct computation. Our computation uses crucially the explicit formula of $\imath$divided powers by Berman-Wang \cite{BerW18}, as well as higher $\imath$Serre relations by Chen-Lu-Wang \cite{CLW18}.

	\subsubsection{A Frobenius splitting of $K_k$}	
		 Recall ${}_k \dot{\mathfrak{U}}^\imath = {}_k \dot{\U}^\imath$ as $k$-algebras. So ${}_k\iofr^*$ and ${}_k\ifr^{ ,*}$ (again combined with the Frobenius automorphism of $k$) become  additive endomorphisms of the coordinate ring $\RO_k^\imath$ of $K_k$.
	\begin{thmintro}[Theorem~\ref{thm:splitO}]\label{introthm:4}	
		The map $_k\iofr^*: \RO_k^\imath \rightarrow  \RO_k^\imath $ is the $p$-th power map. The map $_k\ifr^{, *}:  \RO_k^\imath \rightarrow  \RO_k^\imath $ gives a Frobenius splitting of the symmetric subgroup $K_k$.
		\end{thmintro}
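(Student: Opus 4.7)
The strategy parallels the proof of Theorem~\ref{introthm:2} for $G_k$, now exploiting the compatibility of the $\imath$-quantum Frobenius morphism with the ambient quantum one. I first base-change along $\A' \to k$, which identifies the two $\A'$-forms ${}_{\A'}\dot{\mathfrak{U}}^\imath$ and ${}_{\A'}\dot{\U}^\imath$ with a single $k$-algebra ${}_k\dot{\U}^\imath$, so that $\iofr$ and $\ifr$ descend to $k$-linear endomorphisms ${}_k\iofr$ and ${}_k\ifr$ of ${}_k\dot{\U}^\imath$. By Theorem~\ref{thm:1}, the coordinate ring $\RO_k^\imath$ of $K_k$ is in natural Hopf duality with ${}_k\dot{\U}^\imath$, which is thereby identified with the hyperalgebra $\dist(K_k)$. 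Taking duals and combining with the Frobenius automorphism of $k$ as in \S\ref{sec:kFr} gives the two additive endomorphisms ${}_k\iofr^*$ and ${}_k\ifr^{,*}$ of $\RO_k^\imath$.

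To prove that ${}_k\iofr^*$ coincides with the $p$-th power map on $\RO_k^\imath$, I would reduce to the corresponding assertion for $G_k$ in Theorem~\ref{introthm:2}. Since Bao-Sale \cite{BS21} construct $\iofr$ as the restriction of Lusztig's $\ofr$ along ${}_{\A'}\dot{\U}^\imath \hookrightarrow {}_{\A'}\dot{\U}$, and since the closed embedding $\G^\imath \hookrightarrow \G$ of Theorem~\ref{thm:1} dualizes to the surjection $\RO_k \twoheadrightarrow \RO_k^\imath$, the endomorphism ${}_k\iofr^*$ is obtained by restricting ${}_k\ofr^*$ along this surjection. Since ${}_k\ofr^*$ is the $p$-th power on $\RO_k$ by Theorem~\ref{introthm:2}, and $p$-th powers are compatible with restriction to the closed subscheme $K_k \subset G_k$, the map ${}_k\iofr^*$ equals the $p$-th power on $\RO_k^\imath$.

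For the Frobenius splitting property, I would dualize the identity $\iofr \circ \ifr = \mathrm{id}$ of Theorem~\ref{thm:iFr}: after base change and the appropriate Frobenius twists, this becomes ${}_k\ifr^{,*} \circ F = \mathrm{id}$ on $\RO_k^\imath$, which is the splitting property $\phi \circ F = \mathrm{id}$. The module-theoretic compatibility $\phi(f^p g) = f\, \phi(g)$ follows from $\ifr$ being an $\A'$-algebra homomorphism; under the Hopf duality, this algebra-map property translates (together with the Frobenius twist of $k$) into the required Frobenius-linear compatibility, exactly as in the $G_k$-case. I expect the main obstacle to be the careful bookkeeping in this last translation: one must verify that the algebra structure on ${}_k\dot{\U}^\imath$ dualizes to the (co)algebra structure on $\RO_k^\imath$ in a way consistent with the geometric Hopf algebra structure on the coordinate ring of $K_k$, and that the comma-star convention of \S\ref{sec:kFr} matches the Frobenius twist on both sides. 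Granted this dictionary, which is essentially the content of Theorem~\ref{thm:1} together with the compatibility of $\iofr$ with $\ofr$, the statement of Theorem~\ref{introthm:4} becomes a formal consequence of Theorems~\ref{thm:iFr} and \ref{introthm:2}.
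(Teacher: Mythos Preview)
Your argument for the first claim---that ${}_k\iofr^*$ is the $p$-th power map---is correct and is exactly the paper's approach: Proposition~\ref{prop:pthFr}(2) is deduced from Proposition~\ref{prop:pthFr}(1) via the identity $r\circ\ofr^*=\iofr^*\circ r$ and surjectivity of $r:\RO_{\BF_p}\to\RO_{\BF_p}^\imath$.

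There is a genuine gap in your argument for the splitting. You claim that the compatibility $\phi(f^pg)=f\,\phi(g)$ ``follows from $\ifr$ being an $\A'$-algebra homomorphism; under the Hopf duality, this algebra-map property translates \ldots\ into the required Frobenius-linear compatibility.'' This is not right. The multiplication on $\RO^\imath$ is dual to the \emph{comultiplication}-type map $\widehat{\Delta}:\dot{\U}^\imath\to\widehat{\U}^{\imath,1}$ (the coideal structure), whereas the algebra-homomorphism property of $\ifr$ is dual to the \emph{coalgebra} structure on $\RO^\imath$. So knowing that $\ifr$ is an algebra map tells you nothing, by duality, about how $\ifr^{,*}$ interacts with products in $\RO^\imath$. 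The identity $\iofr\circ\ifr=\mathrm{id}$ does give $\ifr^{,*}(g^p)=g$, but an additive map satisfying this alone need not satisfy $\phi(f^pg)=f\,\phi(g)$; one can perturb the standard splitting of $\BF_p[x,y]$ on a single non-$p$-th-power monomial to see this. The same is true in the $G_k$-case you invoke: the paper does not deduce Proposition~\ref{prop:splp}(1) from $\fr$ being an algebra map either.

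What the paper actually proves is a twisted coproduct compatibility, Proposition~\ref{prop:cosp}: the diagram
\[
(id\otimes\ofr)\circ\widehat{\Delta}\circ\ifr=(\ifr\otimes id)\circ\widehat{\Delta}
\]
on ${}_{\A'}\dot{\mU}^\imath$. This is checked on generators and requires the explicit coproduct formula for the balanced $\imath$divided powers $B_i^{(n)}$ when $\tau i=i$ (Lemma~\ref{le:conew}), together with a direct computation when $\tau i\neq i$. Dualizing this diagram over $\BF_p$ (using Proposition~\ref{prop:pthFr}) yields exactly $\ifr^{,*}(f\cdot g^p)=\ifr^{,*}(f)\cdot g$; see Proposition~\ref{prop:splp}(2). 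This step is not bookkeeping---it is an honest computation that does not follow from the ingredients you list.
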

	
	    \subsubsection{Future works} 	
	    It is shown by Benito-Muller-Rajchgot-Smith \cite{BMRS15} that any upper cluster algebra admits a canonical Frobenius splitting. We believe the splittings constructed here are compatible with the cluster structure on the reductive groups.

      We expect the splitting of $G_k$ in Theorem~\ref{introthm:2} compatibly splits Bruhat cells. It is also interesting to determine the compatibly split subvarieties of $K_k$ induced by the splitting in Theorem~\ref{introthm:4}.

	    Recall that the splitting in Theorem~\ref{introthm:2} does not compatibly split  $K_k$ in general. Since $K_k \subset G_k$ is a smooth subvariety of $G_k$, we know abstractly there exists a Frobenius splitting of $G_k$ that compatibly splits $K_k$ \cite{BK05}*{Proposition~1.1.6}.  It remains an open question to construct concretely an algebraic Frobenius splitting of $G_k$ that compatibly splits $K_k$.

	    We would like to extend our results to general symmetric pairs (including Kac-Moody types). The main difficulty is that the $\imath$divided powers beyond quasi-split types are much more involved and not well-understood. We hope results here can motivate further research in this direction as well.

	\subsection{$K_k$-orbits on flag varieties}Let $k$ be an algebraically closed field of characteristic $\neq  2$.
 
	\subsubsection{Algebraic Frobenius splittings} 		
		Mehta-Ramanathan's splitting on the flag variety $\CB_k$ has been translated by Kummar-Littelmann \cite{KL2} to the algebraic setting using Lusztig's quantum Frobenius splittings. 
		
		The algebraic splitting method has also been applied to the setting of cotangent bundles of flag varieties by Hague \cite{Ha13}. This simplifies the (geometric) Frobenius splitting on  cotangent bundles of flag varieties by Kumar-Lauritzen-Thomsen \cite{KLT99}. 
		
	
	\subsubsection{$K_k$-orbits on flag varieties} Let $\CB_k = G_k/B_k$ be the flag variety of $G_k$. It follows by Springer \cite{Spr85} that $K_k$ has only finitely many orbits on $\CB_k$. 
 Since the Borel subgroup $B_k$ is $\theta_k$-anisotropic, the $K_k$-orbit of $B_k/B_k$ is the unique open dense orbit. The geometry of $K_k$-orbits is a classical subject of study, with intrinsic connections with real groups, number theory, etc.

    We show the poset of $K_k$-orbits on $\CB_k$ can be parameterized independently of the characteristic of $k$ (provided not $2$) in Proposition~\ref{prop:Korbits}. 
     Using the symmetric subgroup scheme $\G^\imath_{\BZ[2^{-1}]}$ over ${\BZ[2^{-1}]}$, we further show the closure of orbits can be defined over a open subset of $ \BZ[2^{-1}]$.
 
	\begin{propintro}[Proposition~\ref{prop:KorbitsZ}]\label{prop:5}Let $\CO_k$ be a $K_k$-orbit on $\CB_k$. 
 There exists a closed subscheme $\CZ$ of the flag scheme $\CB_{\BZ[2^{-1}]}$, such that

	(1) $\CZ\rightarrow Sp\,\BZ[2^{-1}]$ is flat;
	
	(2) there is a nonempty open subset $U$ of $Sp\,\BZ[2^{-1}]$ such that for any algebraically closed field $k'$ and a morphism $Sp\,k'\rightarrow U$, the base change $\CZ_{k'} = \CZ \times_{Sp\, \BZ[2^{-1}]} Sp\, k'$ gives the closure $\overline{\mathcal{O}_{k'}}$ of the $K_{k'}$-orbit on $\mathcal{B}_{k'}$ of the same type. In particular, $\CZ_{k'}$ is reduced.
	\end{propintro}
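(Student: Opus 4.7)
The plan is to construct $\CZ$ as the scheme-theoretic closure in $\CB_{\BZ[2^{-1}]}$ of an orbit closure on the generic fiber $\CB_\BQ$, and then to use the symmetric subgroup scheme $\G^\imath$ from Theorem~\ref{thm:1} together with the characteristic-independent parameterization in Proposition~\ref{prop:Korbits} to identify its specializations over a nonempty open subscheme.

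By Proposition~\ref{prop:Korbits} the orbit $\CO_k$ corresponds to a type $\xi$ independent of the characteristic of $k$. Let $\CZ_\BQ := \overline{\CO_\BQ} \subset \CB_\BQ$ be the closure of the $K_\BQ$-orbit of type $\xi$, and let $\CZ \subset \CB_{\BZ[2^{-1}]}$ be its scheme-theoretic closure. For flatness, observe that the defining ideal $\CI$ of $\CZ$ in $\mO_{\CB_{\BZ[2^{-1}]}}$ consists of local sections whose restriction to $\CB_\BQ$ vanishes on $\CZ_\BQ$; if $\lambda \in \BZ[2^{-1}] \setminus \{0\}$ and $\lambda f \in \CI$, then because $\mO_{\CZ_\BQ}$ is a $\BQ$-algebra in which $\lambda$ is invertible, one gets $f \in \CI$. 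Thus $\mO_\CZ$ is $\BZ[2^{-1}]$-torsion-free, and $\BZ[2^{-1}]$ being a Dedekind domain, $\CZ \to Sp\,\BZ[2^{-1}]$ is flat, establishing (1).

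For (2), I would first check that $\CZ$ is stable under the action of $\G^\imath_{\BZ[2^{-1}]}$: the action map $\G^\imath_{\BZ[2^{-1}]} \times \CZ \to \CB_{\BZ[2^{-1}]}$ factors through $\CZ$ on the generic fiber because $\CZ_\BQ$ is $K_\BQ$-invariant, and then everywhere because $\G^\imath \times \CZ$ is flat over $\BZ[2^{-1}]$ and so coincides with the scheme-theoretic closure of its generic fiber. Using Proposition~\ref{prop:Korbits}, I would choose a section $x \in \CB_R(R)$ with $R = \BZ[2^{-1}, N^{-1}]$ for some integer $N$, such that $x$ lies in $\CZ_\BQ$ over the generic point and $x(\bar s)$ lies in the $K_{\bar s}$-orbit of type $\xi$ for every geometric point $\bar s \to Sp\,R$. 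By generic flatness together with standard spreading-out results on preservation of geometric reducedness, irreducibility of components, and dimension of fibers, I would shrink to a nonempty open $U \subset Sp\,R$ over which each fiber $\CZ_{\bar s}$ is geometrically reduced of constant dimension $d = \dim \CO_\BQ$. Being a $K_{\bar s}$-stable closed subvariety of $\CB_{\bar s}$ of the correct dimension and containing $x(\bar s) \in \CO_{\bar s}$, the fiber $\CZ_{\bar s}$ must coincide with $\overline{\CO_{\bar s}}$, yielding (2).

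The main obstacle is the production of the section $x$: exhibiting an integral family of points whose specializations land in the orbit of the prescribed type $\xi$ at every geometric point of an open of $Sp\,R$. This is precisely what the characteristic-independent parameterization of Proposition~\ref{prop:Korbits} should supply, via a common set of combinatorial data indexing the orbits in every characteristic $\neq 2$. Once $x$ is in hand, the remaining inputs---flatness of the scheme-theoretic closure over a Dedekind base, equivariance of scheme-theoretic closure under a flat group scheme action, and generic constancy of geometric invariants of fibers---are standard.
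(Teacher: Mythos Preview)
Your approach is correct and essentially the same as the paper's, just organized slightly differently. The paper proceeds more directly: it observes from Proposition~\ref{prop:Korbits}(1) that the explicit orbit representatives (products of $n_i^{-1}$, $y_i(\pm 1)x_i(\mp 1/2)$) already lie in $\CB_{\BZ[2^{-1}]}(\BZ[2^{-1}])$, defines $\CZ$ as the scheme-theoretic image of the orbit map $\G^\imath_{\BZ[2^{-1}]}\to\CB_{\BZ[2^{-1}]}$ through that representative, and then defers the spreading-out to \cite{MR85}*{Lemma~3}. Your scheme-theoretic closure of the generic fiber yields the same $\CZ$ (both are flat over the Dedekind base with the same generic fiber), and your sketch of the spreading-out is what that citation unpacks.

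The one point worth noting is that what you flag as the ``main obstacle''---producing the section $x$---is in fact no obstacle at all: Proposition~\ref{prop:Korbits}(1) hands you the section over $R=\BZ[2^{-1}]$ itself, with no need to invert further primes, and Proposition~\ref{prop:Korbits}(3) guarantees that its specialization lands in the orbit of the correct type at every geometric point. Using this section from the outset (as the paper does) to define $\CZ$ via the orbit map streamlines the argument: equivariance is then automatic, and the inclusion $\overline{\CO_{\bar s}}\subseteq\CZ_{\bar s}$ is immediate on every fiber.
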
 
        The existence of such closed sub-schemes of $\CB_{\BZ[2^{-1}]}$ enables us to deduce characteristic $0$ results from the 	characteristic $p$ results.

	  \subsubsection{Codimensional one orbits}   The involution $\theta_k$ of $G_k$ induces an involution $\tau$ on the Dynkin diagram of $G_k$. Let $\I$ be the set of simple roots of $G_k$. The codimensional one $K_k$-orbits on $\CB_k$ can be described as $\{\mathcal{O}_i, i \in \I \vert \tau(i) \neq i\} \cup \{\mathcal{O}^{\pm}_i, i \in \I \vert \tau(i) = i\}$. We might have $\mathcal{O}^{+}_i = \mathcal{O}^{-}_i$, e.g., when $G_k = PGL_{2,k}$.
	    
	     The closures of the orbits $\{\mathcal{O}_i, i \in \I \vert \tau(i) \neq i\}$ are multiplicity-free divisors on $\CB_k$ in the sense of Brion \cites{Br01a, Br01b}, while the closures of the orbits $\{\mathcal{O}^{\pm}_i, i \in \I \vert \tau(i) = i\}$ are often of multiplicity-two.
	     Various nice geometric properties (e.g. normality) of multiplicity-free subvarieties have been established by Brion \cite{Br01a}. 
	    
	\subsubsection{Frobenius splitting of $K_k$-orbits} Assume the characteristic of $k$ is positive now. It is desirable to find a Frobenius splitting of the flag variety $\CB_k$ that compatibly splits $K_k$-orbit closures.  
	However, one can not expect splitting of all $K_k$-orbit closures, cf. \cite{Br01b}*{Introduction}. So the splitting of $K_k$-orbit closures is much more complicated than the case of Schubert varieties. 
	
	
	\begin{thmintro}[Theorem~\ref{thm:spl1} \& Theorem~\ref{thm:spl2}]\label{thmintro:split}
	We parameterize the codimensional one $K_k$-orbits on $\CB_k$ as $\{\mathcal{O}_i, i \in \I \vert \tau(i) \neq i\} \cup \{\mathcal{O}^{\pm}_i, i \in \I \vert \tau(i) = i\}$.
	
	    \begin{enumerate}
	        \item The quantum Frobenius splitting $\ifr$ induces a Frobenius splitting of $\CB_k$ that compatibly splits all the closures of  $\{\mathcal{O}_i, i \in \I \vert \tau(i) \neq i\} $.
	        \item Let $J \subset \I$ satisfying conditions in \ref{sec:propJ}. The quantum Frobenius splitting $\ifr$ induces a Frobenius splitting of $\CB_k$ that compatibly splits all the closures of  $\{\mathcal{O}^{\pm}_j \vert j \in J\} $. 
	       
        \item In particular, when $G_k$ is simply-laced, there exists a Frobenius splitting of $\CB_k$ that compatibly splits any given   codimensional one $K_k$-orbit.
	    \end{enumerate}
	\end{thmintro}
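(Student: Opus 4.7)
The approach is to translate the algebraic $\imath$-quantum Frobenius splitting $\ifr$ into a geometric Frobenius splitting of $\CB_k$, following the framework of Kumar--Littelmann \cite{KL2} in which Lusztig's splitting $\fr$ is used to recover the Mehta--Ramanathan splitting of the flag variety. By Proposition~\ref{prop:5}, all codimension-one $K_k$-orbit closures spread out over an open subset of $\mathrm{Spec}\,\BZ[2^{-1}]$, so it suffices to work over an algebraically closed field $k$ of characteristic $p > 2$ and then invoke semi-continuity to deduce the characteristic-zero statements. The first step is to realize $H^0(\CB_k,\CL_\lambda)$ as a weight subspace inside the restricted dual of ${}_k\dot{\U}$ (equivalently, inside $\RO_k$); then, after specialization at a primitive $p$-th root of $1$ and twist by the Frobenius of $k$, the splitting $\ifr$ yields a distinguished section of the anti-canonical bundle on $\CB_k$, and pairing against this section produces a Frobenius splitting $\Phi$ of $\CB_k$ whose restriction to $\RO_k^\imath\subset \RO_k$ recovers the splitting of $K_k$ from Theorem~\ref{introthm:4}.

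The second step is to identify each codimension-one orbit closure as a zero locus. By Brion \cite{Br01a,Br01b}, for $\tau(i)\neq i$ the closure $\overline{\CO_i}$ is a multiplicity-free prime divisor cut out by a unique (up to scalar) $K_k$-semi-invariant section $s_i$ of a line bundle pulled back from the partial flag variety associated with the simple root $i$. For $\tau(j)= j$, the divisor $\overline{\CO_j^+} + \overline{\CO_j^-}$ has multiplicity two; separating its two irreducible components requires a refined factorization of a quadratic $K_k$-semi-invariant section. In each case, compatibility of $\Phi$ with the given orbit closure reduces to the dual of $\ifr$ preserving the associated $K_k$-equivariant ideal in the appropriate subspace of $\RO_k$.

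The third step is to verify these ideal-preservation statements by explicit computation, using the formulas for $\imath$-divided powers due to Berman--Wang \cite{BerW18} and the higher $\imath$-Serre relations of Chen--Lu--Wang \cite{CLW18}, exactly the tools used in the construction of $\ifr$. For (1), one checks that $\ifr$ maps the $\imath$canonical basis elements dual to generators of the ideal of $\overline{\CO_i}$ into the subalgebra of ${}_{\A'}\dot{\U}^\imath$ generated by $p$-th divided powers; dually, this gives the required inclusion of ideals. For (2), the conditions on $J$ from \S\ref{sec:propJ} will be tailored so that the quadratic section cutting out $\overline{\CO_j^+}\cup\overline{\CO_j^-}$ admits a factorization that lifts through $\ifr$, furnishing the refinement needed to split the two components individually. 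Part (3) then follows because in the simply-laced case every singleton $\{j\}$ with $\tau(j)=j$ satisfies the conditions of \S\ref{sec:propJ}, so combined with (1) every codimension-one orbit is handled.

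The principal obstacle lies in this last step for part (2): the splitting $\Phi$ does not in general distinguish $\overline{\CO_j^+}$ from $\overline{\CO_j^-}$, since their difference as divisors is invisible at first order in the naive dual of $\ifr$. The conditions in \S\ref{sec:propJ} should encode precisely which subsets $J$ admit a compatible factorization of the relevant quadratic section, and checking this factorization against the higher $\imath$-Serre relations—in particular tracking how $\ifr$ interacts with the $\imath$-divided powers at the self-dual nodes—will form the technical heart of the argument.
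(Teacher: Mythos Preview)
Your proposal has the right overall shape for part (1) but contains a genuine gap for part (2), and the mechanism you sketch there is not the one that works.

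The paper does not use a single splitting $\Phi$ for both parts. It builds the splitting of $\CB_k$ from $\U^\imath$-module maps on Weyl modules rather than from sections of $\RO_k$: for each $\mu\in X^+$ one constructs a ${}_{\BF_p}\dot{\U}^\imath$-equivariant map $\psi_\mu\colon V_{\BF_p}(\mu)\to V_{\BF_p}(p\mu)^{\ifr}$, and the dual maps assemble into an algebraic splitting of the graded ring $R_k=\bigoplus_\lambda V_k(\lambda)^*$. For part (1) one takes $\psi_\mu(v_\mu^+)=v_{p\mu}^+$; the ideal of $\overline{\CO_i}$ in $V_k(\mu)^*$ is identified (via Proposition~\ref{prop:KvsUi}) with the annihilator of the ${}_k\dot{\U}^\imath$-submodule generated by the extremal vector of weight $s_i\mu$, and a one-line computation $\psi_\mu(f_i^{(\mu_i)}v_\mu^+)=f_i^{(p\mu_i)}v_{p\mu}^+$ plus $\U^\imath$-equivariance finishes the argument. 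Your Step 2/3 for (1) is broadly compatible with this, though the paper never invokes Brion's divisor theory or anti-canonical sections.

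For part (2), however, your proposal misidentifies the key idea. The paper constructs a \emph{different} family of module maps $\psi_\mu^J$, defined by sending $v_\mu^+$ not to $v_{p\mu}^+$ but to the twisted vector $b_{J,\overline{p\mu}}^{(p-1)}\cdot v_{p\mu}^+$, where $b_{J,\overline{p\mu}}^{(p-1)}=\prod_{j\in J}b_j^{(p-1)}\one_{\overline{p\mu}}$. The conditions on $J$ in \S\ref{sec:propJ} are exactly what is needed for this map to be well-defined (Proposition~\ref{prop:faJ}) and for the resulting $\Psi_k^J$ to still be a Frobenius splitting of $R_k$. Compatibility with $\overline{\CO_j^\pm}$ then reduces to showing that $\psi_\mu^J$ sends the vector $v_{\mu,j,\epsilon}=\sum_{a\ge0}\epsilon^{\langle\coroot_j,\mu\rangle+a}f_j^{(a)}v_\mu^+$ (which is $y_j(\epsilon)v_\mu^+$) into the $\U^\imath$-submodule it generates in $V_{\BF_p}(p\mu)$; this is a computation with $\imath$-divided powers (Lemma~\ref{lem:pj}), not a factorization of any quadratic section. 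Your ``factorization of a quadratic $K_k$-semi-invariant'' picture does not appear, and it is unclear how it would distinguish $\CO_j^+$ from $\CO_j^-$ without the twist by $b_J^{(p-1)}$---indeed, the untwisted splitting $\Psi_k$ need not split these closures.
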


	The splittings constructed in (1) and (2) of the theorem are different. The splitting of multiplicity-free divisors is known by Knutson \cite{Kn09}. See also the work by He-Thomsen \cite{HT12} for some splittings relevant to symmetric subgroups. Our algebraic construction using $\imath$quantum groups is new. The splitting of non-multiplicity-free divisors is new.

	\subsubsection{Geometric consequences}
 As standard consequences of the splittings in Theorem~\ref{thmintro:split}, we obtain cohomological vanishing of ample line bundles, reducedness of scheme-theoretical intersections, etc. 
 The splittings apply to partial flag varieties as well as some orbits of smaller dimensions. We further show vanishing of higher cohomology of certain semi-ample line bundles in \S\ref{sec:geo}. These results hold even for  characteristic $0$ by semicontinuity, thanks to Proposition~\ref{prop:5}.  
 
 Normality of orbit closures can sometime be deduced from Frobenius splittings. However, this would require detailed information on the Bruhat order of $K_k$-orbits on $\CB_k$. We demonstrate for the symmetric pair $(GL_{n,k}, GL_{\lceil n/2\rceil,k}\times GL_{\lfloor n/2 \rfloor,k} )$ in \S\ref{sec:AIII}. Here the Bruhat order has been obtained by Wyser \cite{Wy16}. 
	
\subsubsection{Future works} Frobenius splittings of the flag varieties can be classified by elements in the tensor product of Steinberg modules \cite{BK05}*{\S2.3}. We expect that our splittings correspond to (dual) $\imath$canonical basis elements in such a tensor product (with respect to various choice of parameters). 

We expect $K_k$-orbit closures can be defined over $\BZ[2^{-1}]$, that is, $U = Sp\, \BZ[2^{-1}]$ in Proposition~\ref{prop:5}. Similar results for Schubert varieties were first established by Seshadri \cite{Se83}.

	\subsection{Organization of this paper}
	Let us discuss the organization of this paper in more details.
	
	We give a rather comprehensive review on the preliminaries in \S\ref{sec:pre}. Results are essentially known (except Lemma~\ref{le:nop}). Various formulations are new, including Proposition~\ref{prop:cft} and Proposition~\ref{prop:Korbits}.

	We construct the symmetric subgroup scheme $\G^\imath$ in \S\ref{sec:SymGpSch}. We show $\G^\imath$ is a closed subgroup scheme of the Chevalley group scheme $\G$. We show it is reduced over any integral domain with characteristic not 2. As consequences, we construct the coordinate ring of $K_k$ using dual $\imath$canonical basis in Theorem~\ref{thm:Oik} and relate $K_k$-modules with ${}_k\dot{\U}^\imath$-modules in Proposition~\ref{prop:KvsUi}.

	We construct the quantum Frobenius splitting of the $\imath$quantum group in \S\ref{sec:qFri}. The first part of this section is devoted to study the $\imath$quantum group over a localization of the ring $\CA= \BZ[q,q^{-1}]$ (essentially inverting the quantum $2$). The second part of this section is devoted to the proof of Theorem~\ref{thm:iFr}.  We consider the induced splittings on $\U^\imath$-modules in \S\ref{sec:splitmod}.
	
	We apply our quantum Frobenius splitting to construct a Frobenius splitting of the coordinate ring of $G^\imath_k= K_k$ in \S\ref{sec:iFrO}. We construct a Frobenius splitting of the coordinate ring of $G_k$ using Lusztig's Frobenius splitting in Theorem~\ref{thm:splitO}. 
	
	We apply our quantum Frobenius splitting to study the geometry of $K_k$-orbits on the flag variety in \S\ref{sec:Frflag}. We construct two families of splittings $\Psi_k$ and $\Psi^J_k$ of $\CB_k$. The splitting $\Psi_k$ (\S\ref{sec:Phi}) compatibly splits multiplicity-free codimensional one $K$-orbit closures simultaneously. For certain subset $J \subset \I$, we construct the splitting $\Psi^J_k$ (\S\ref{sec:PhiJ}) compatibly splits multiplicity-two codimensional one $K_k$-orbit closures relevant to $J$. We obtain geometric consequences of our splittings in \S\ref{sec:geo}. Finally, we study in details in \S\ref{sec:AIII} the symmetric pair $( GL_{n,k}, GL_{\lceil n/2\rceil,k}\times GL_{\lfloor n/2 \rfloor,k})$ to indicate how to deduce normality from our results. 
	
\vspace{.2cm}
\noindent {\bf Acknowledgement: } HB is supported by MOE grants A-0004586-00-00 and A-0004586-01-00.


\section{Preliminaries} \label{sec:pre}
\subsection{Quantum groups}
	\subsubsection{Root data}
	
	A \emph{root datum} $(\I,Y,X,A,(\alpha_i)_{i\in\I},(\alpha_i^\vee)_{i\in\I})$ consists of 
	\begin{itemize}
	    \item a finite set $\I$;
	    \item two finitely generated free abelian groups $X$, $Y$ and a perfect pairing $\langle\,,\,\rangle:Y\times X\rightarrow \mathbb{Z}$;
	    \item a (symmetrizable) generalized Cartan matrix $A=(a_{ij})_{i,j\in\I}$;
	    \item an embedding $\I\subset X$ ($i\mapsto \alpha_i$) and an embedding $\I\subset Y$ ($i\mapsto \coroot_i$) such that $\langle \coroot_i,\alpha_j\rangle=a_{ij}$.
	\end{itemize}
	
	Let $D=diag(\epsilon_i)_{i\in\I}$ be the (unique) diagonal matrix such that $DA$ is a symmetric matrix, with $\epsilon_i\in\mathbb{\BZ}_{>0}$   and $\epsilon_i=1$ for some $i\in\I$. The number $\epsilon_i$ is called the \emph{root length} of $i$. A root datum is called of \emph{finite type} if the matrix $A$ is a Cartan matrix, namely, $DA$ is positive definite.
	
	In this paper, we will assume that the root datum defined above is \emph{X-regular} and \emph{Y-regular}, namely, elements in $\{\alpha_i\mid i\in\I\}$ are linearly independent in $X$, and elements in $\{\coroot_i\mid i\in\I\}$ are linearly independent in $Y$. The isomorphism between two root data is defined in an evident manner.
	
		Let $X^+=\{\mu\in X\mid\langle\alpha_i^\vee,\mu\rangle\geqslant 0,\forall i\in\I\}$ be the set of \emph{dominant weights}, and let $X^{++}=\{\mu\in X\mid\langle\alpha_i^\vee,\mu\rangle>0,\forall i\in\I\}$ be the set of \emph{regular dominant weights}.
	
	\begin{example}
	
	Let $k$ be an algebraically closed field. For any triple $(G,T,B)$, where $G$ is a connected reductive group, $T$ is a maximal torus of $G$, and $B$ is a Borel subgroup of $G$ containing $T$, one can associate a root datum (of finite type) in the following way.
	
	 Let $X=\text{Hom}(T,k^\times)$, and $Y=\text{Hom}(k^\times,T)$ be the character lattice and cocharacter lattice. Then there is a natural pairing between $Y$ and $X$. Let $(\alpha_i)_{i\in\I}\subset X$ (resp. $(\coroot_i)_{i\in\I}\subset Y$) be the simple roots (resp. simple coroots) associated to the Borel subgroup $B$, where $\I$ is a finite index set. Write $a_{ij}=\langle \alpha_i^\vee,\alpha_j\rangle$, for any $i,j$ in $\I$. Then $(\I,Y,X,A,(\alpha_i)_{i\in\I},(\coroot_i)_{i\in\I}))$ is a root datum of finite type. 
	 
	 For fixed $G$, by choosing different pair $(T,B)$, one will get isomorphic root data. We call any root datum in this isomorphism class a \emph{root data associated to $G$}. There is a bijection between isomorphism classes of connected reductive groups over algebraically closed field, and isomorphism classes of root data of finite type.
	\end{example}
	
	\subsubsection{Definitions} 
	Fix a root datum $(\I,Y,X,A,(\alpha_i)_{i\in\I},(\coroot_i)_{i\in\I})$. Let $q$ be an indeterminate. We write $\A=\mathbb{Z}[q,q^{-1}]$ be the subring of $\mathbb{Q}(q)$. Write $q_i=q^{\epsilon_i}$ for all $i$ in $\I$. For $m,n,d\in \mathbb{Z}$ with $m\geqslant 0$, define 
	\begin{equation*}
	[n]=\frac{q^n-q^{-n}}{q-q^{-1}} \quad \text{ and } \quad [m]!=[1][2]\cdots[m].
	\end{equation*}
	These are called \emph{q-integers} and \emph{q-factorials}. Also define \emph{q-binomial coefficients}:
	\begin{equation*}
	\qbinom{n}{d}=\left\{\begin{array}{cc}
	\frac{[n][n-1]\cdots[n-d+1]}{[d]!}, & \text{ if }d\geqslant 0 \\
	0, & \text{ if }d<0
	\end{array}
	\right.
	\end{equation*}
	Similarly define $[n]_i$, $[m]_i!$ and $\qbinom{n}{d}_i$ with $q$ replaced by $q_i$. Note that these are all elements in $\A$.
	
	The quantum group $\U$ associated to the given root datum is the associated $\mathbb{Q}(q)$-algebra (with 1) with generators 
	\begin{equation*}
	E_i \quad (i\in\I),\qquad F_i \quad (i\in\I), \qquad K_\mu \quad ( \mu\in Y),
	\end{equation*}
	subject to the relations
	\begin{align*}
	&K_0=1,\quad K_\mu K_{\mu'}=K_{\mu+\mu'}, \text{ for all }\mu,\,u'\in Y;\\
	&K_\mu E_i=q^{\langle \mu,\alpha_i\rangle}E_i K_\mu, \text{ for all }i\in\I,\, \mu\in Y;\\
	&K_\mu F_i=q^{-\langle \mu,\alpha_i\rangle}F_i K_\mu, \text{ for all }i\in\I,\, \mu\in Y;\\
	&E_iF_j-F_jE_i=\delta_{ij}\frac{\Tilde{K}_i-\Tilde{K}_{i}^{-1}}{q_i-q_i^{-1}}, \text{ for all }i,j\in\I, \text{ where }\Tilde{K}_i=K_{\coroot_i}^{\epsilon_i} ;\\
	&\sum_{n=0}^{1-a_{ij}}(-1)^nE_i^{(n)}E_jE_i^{(1-a_{ij}-n)}=0,\qquad
	\sum_{n=0}^{1-a_{ij}}(-1)^nF_i^{(n)}F_jF_i^{(1-a_{ij}-n)}=0.
	\end{align*}
Here $E_i^{(n)}=E_i^n/[n]_i!$ and $F_i^{(n)}= F_i^n/[n]_i!$ are the divided powers.
	
	 Let $\dot{\U}$ be the modified quantum group \cite{Lu93}*{23.1} and $\dot{\RB}$ be its canonical basis (\cite{Lu93}*{25.2.1}). Let $\AU$ be the $\A$-form of $\dot{\U}$, which is an $\A$-subalgebra of  $\dot{\U}$ generated by $E_i^{(n)} 1_\lambda$ and $F_i^{(n)} 1_\lambda$ for various $i \in I$, $n \ge 0$ and $\lambda \in X$. Then $\AU$ is a free $\A$-module with basis $\dot{\RB}$. We denote by $\AU^{>0}$ (resp. $\AU^{<0}$) the $\A$-subalgebra generated by $E_i^{(n)} 1_\lambda$ (resp. $F_i^{(n)} 1_\lambda$) for various $i \in I$, $n \ge 0$ and $\lambda \in X$.
	
\subsubsection{Modules} \label{sec:qanniR}

For any $\lambda\in X^+$, let $L(\lambda)$ be the highest weight $\U$-module with highest weight $\lambda$, and $^\omega L(\lambda)$ be the $\U$-module where the action is twisted by the involution $\omega$ (\cite{Lu93}*{3.1.3}). We denote by $\RB(\lambda)$ and ${}^\omega\RB(\lambda)$ the canonical bases of $L(\lambda)$ and $^\omega L(\lambda)$, respectively.  Write $v_\lambda^+$ and $v_{-\lambda}^-$ to denote the highest weight vector and lowest weight vector in $\RB(\lambda)$ and ${}^\omega\RB(\lambda)$, respectively. $L(\lambda)$ also admits a natural $\dot{\U}$-action. We write $_\A L(\lambda)={_\A\dot{\U}\cdot v_\lambda^+}$ to be the $\A$-form of $L(\lambda)$.

For any $\A$-algebra $R$ and $\lambda \in X^+$, we write $_R\dot{\U}=R\otimes_{\A} {_\A\dot{\U}}$ and $_R L(\lambda)=R\otimes_{\A} {_\A L(\lambda)}$.

For any $\lambda_1, \lambda_2$ in $X^+$, and any $\A$-algebra $R$, the action map
$
{}_R\dot{\U}\rightarrow {_R^\omega L(\lambda_1)}\otimes{_R L(\lambda_2)}$, $ x\mapsto x\cdot (v_{\lambda_1}^-\otimes v_{\lambda_2}^+)$ has kernel 
\[
_R P(\lambda_1,\lambda_2) = \!\!\!\sum_{\lambda\neq\lambda_2-\lambda_1} \!\!\! {_R\dot{\U}\one_\lambda}
+
\!\!\!\sum_{i\in\I,\,a_i>\langle \alpha_i^\vee,\lambda_1\rangle}\!\!\! {_R\dot{\U}E_i^{(a_i)}}\one_{\lambda_2-\lambda_1}
+
\!\!\!\sum_{i\in\I,\, b_i>\langle \coroot_i,\lambda_2\rangle}\!\!\! {_R\dot{\U}}F_i^{(b_i)}\one_{\lambda_2-\lambda_1}.
\]

The claim follows from \cite{Lu93}*{Proposition 23.3.6} for the case $R=\A$. The general case follows from the fact that $_\A P(\lambda_1,\lambda_2)$ is a free $\A-$submodule of $_\A\dot{\U}$ spanned by canonical basis elements \cite{Lu93}*{Theorem 25.2.1}. In particular, $_R P(\lambda_1,\lambda_2)$ is a free $R$-module.

	\subsubsection{Quantum Frobenius homomorphism}\label{sec:qFr}
	
	Fix an odd number $l>1$. We require that $l$ is relatively prime to all the root length $\epsilon_i$. Let $f_l\in \mathcal{A}$ be the $l$-th cyclotomic polynomial. Set $\mathcal{A}'=\mathcal{A}/(f_l)$, and $\mathbf{F}$ be the quotient ring of $\mathcal{A}'$. Let $\phi:\mathcal{A}\rightarrow\mathcal{A}'$ be the natural quotient map. Then $\phi$ extends to a ring homomorphism from the local ring $\mathcal{A}_{(f_l)}$ to $\mathbf{F}$.
	
	Let $c:\mathcal{A}\rightarrow\mathcal{A'}$ be the ring homomorphism sending $q$ to 1. We will write $\A'_\phi$ and $\A'_c$ to distinguish two different $\A$-algebra structures. Define
	\begin{align*}  
	_{\mathcal{A}'}\dot{\mathfrak{U}}={_{\A'_c}\dot{\U}}={\mathcal{A}'_c}\otimes_\A ({_{\mathcal{A}}\dot{\mathrm{U}}}),\qquad 
	_{\mathcal{A}'}\dot{\mathrm{U}}={_{\A'_\phi}\dot{\U}}=\mathcal{A}'_\phi\otimes_\A (_\mathcal{A}\dot{\mathrm{U}}).
	\end{align*}
	
	For any $i\in\I$, $\zeta\in X$, we use $\mathfrak{e}_i^{(n)}\one_\zeta$ (resp. $\mathrm{E}^{(n)}_i\one_\zeta$) to denote the image of $E_i^{(n)}\one_\zeta$ in $\cdotU$ (resp. $\pdotU$). Similar notations are used for $\mathfrak{f}_i^{(n)}\one_\zeta$ and $\mathrm{F}_i^{(n)}\one_\zeta$.
	
	\begin{theorem}(\cite{Lu93}*{35.1.9},\cite{Mc07}*{Proposition~3.4})\label{thm:ClaFr}

		(a) There is a unique $\A'$-algebra homomorphism \ofr: $\pdotU \rightarrow\cdotU$, such that for all $i\in\I$, $n\in\mathbb{N}$ and $\zeta\in X$, we have:
		
		$\ofr(\mathrm{E}_i^{(n)}\one_\zeta)$ equals $\mathfrak{e}_i^{(n/l)}\one_{\zeta/l}$, if $l$ divides $n$ and $\zeta\in lX$, and equals 0, if otherwise;
		
		$\ofr(\mathrm{F}_i^{(n)}\one_\zeta)$ equals $\mathfrak{f}_i^{(n/l)}\one_{\zeta/l}$, if $l$ divides $n$ and $\zeta\in lX$, and equals 0, if otherwise.
		
		(b) There is a unique $\A'$-algebra homomorphism $\fr$: $\cdotU\rightarrow\pdotU$, such that for all $i\in\I$, $n\in\mathbb{N}$ and $\zeta\in X$, we have $\fr(\mathfrak{e}_i^{(n)}\one_\zeta)=\mathrm{E}_i^{(nl)}\one_{l\zeta}$, and $\fr(\mathfrak{f}_i^{(n)}\one_\zeta)=\mathrm{F}_i^{(nl)}\one_{l\zeta}$. In particular, we have $\ofr\circ\fr=$id.
	\end{theorem}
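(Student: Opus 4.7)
The plan is to produce, for each of (a) and (b), an $\A'$-algebra homomorphism defined on the divided-power generators of the source modified quantum group, and then to verify that the defining relations of the source are preserved. Uniqueness in both statements is automatic, since $\pdotU$ is generated as an $\A'$-algebra by $\{\mathrm{E}_i^{(n)}\one_\zeta, \mathrm{F}_i^{(n)}\one_\zeta\}$ and similarly $\cdotU$ is generated by $\{\mathfrak{e}_i^{(n)}\one_\zeta, \mathfrak{f}_i^{(n)}\one_\zeta\}$. The final assertion $\ofr\circ\fr=\mathrm{id}$ is then an immediate check on generators: $\ofr(\mathrm{E}_i^{(nl)}\one_{l\zeta}) = \mathfrak{e}_i^{(n)}\one_{\zeta}$ because $l\mid nl$ and $l\zeta\in lX$, and similarly for the $\mathrm{F}$-side.

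For part (a), declare $\ofr$ by the specified formula on the generators and check the relations of the modified quantum group in turn. The idempotent relations $\one_\zeta\one_\mu=\delta_{\zeta\mu}\one_\zeta$ and the weight-shifting relations $\mathrm{E}_i^{(n)}\one_\zeta=\one_{\zeta+n\alpha_i}\mathrm{E}_i^{(n)}\one_\zeta$ are obvious from the formula. The nontrivial ingredients are the divided-power products $E_i^{(a)}E_i^{(b)}=\qbinom{a+b}{a}_i E_i^{(a+b)}$, the $E$-$F$ commutation relation written out in terms of the idempotents $\one_\zeta$, and the quantum Serre relations. Each of these translates, under the map, into an identity in $\cdotU$ whose validity reduces to the $q$-binomial identity $\qbinom{al}{bl}_{i}\equiv \binom{a}{b}\pmod{f_l}$ (recalling that $\gcd(l,\epsilon_i)=1$) together with the vanishing $\qbinom{m}{n}_i\equiv 0\pmod{f_l}$ whenever $l\nmid m$ but $l\mid n$, or similar. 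These divisibility facts exactly match the case split in the definition of $\ofr$ (zero unless $l\mid n$ and $\zeta\in lX$). The $E$-$F$ relation is checked by expanding $\tilde K_i^{\pm 1}\one_\zeta$ as a scalar times $\one_\zeta$ and observing both sides vanish under $\ofr$ unless $\zeta\in lX$.

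For part (b), define $\fr$ on generators by the displayed formula. The divided-power product relations transfer easily using the identity $\mathfrak{e}_i^{(a)}\mathfrak{e}_i^{(b)}=\binom{a+b}{a}\mathfrak{e}_i^{(a+b)}$, matched with $\mathrm{E}_i^{(al)}\mathrm{E}_i^{(bl)}=\qbinom{al+bl}{al}_i\mathrm{E}_i^{((a+b)l)}$ and the identity $\qbinom{al+bl}{al}_i\equiv\binom{a+b}{a}\pmod{f_l}$. The $E$-$F$ commutator is handled by expanding $\tilde K_i^{\pm 1}\one_{l\zeta}$ in $\pdotU$, which gives $q_i^{\pm l\langle\coroot_i,\zeta\rangle}\one_{l\zeta}=\one_{l\zeta}$ in $\A'$, matching the integer formula. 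The main obstacle is the quantum Serre relation: one must show
\[
\sum_{n=0}^{1-a_{ij}}(-1)^n\mathrm{E}_i^{(nl)}\mathrm{E}_j^{(l)}\mathrm{E}_i^{((1-a_{ij}-n)l)}\one_{l\zeta} = 0
\]
in $\pdotU$ for every $\zeta\in X$, and the analogous statement for $\mathrm{F}$. This is not the classical quantum Serre relation for the single divided powers $E_i$, $E_j$; instead it is a higher Serre identity. I would derive it from Lusztig's higher quantum Serre relations \cite{Lu93}*{Ch.~7} applied at level $l$, specialising $q$ to $\zeta_l$ and using again the $q$-binomial congruences $\qbinom{sl}{tl}_i\equiv\binom{s}{t}\pmod{f_l}$ to collapse all coefficients to the integer values that would appear in the Serre relation for $\mathfrak{e}_j^{(1)}$ against $\mathfrak{e}_i^{(n)}$ in $\cdotU$, where the relation is the classical one in the enveloping algebra at $q=1$.

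The main obstacle, as just indicated, is the Serre verification in (b): one has to identify the correct higher quantum Serre identity and establish that its reduction modulo $f_l$ produces precisely the relation needed. Once this combinatorial step is done, the remaining verifications (weight shifts, divided-power products, and the $E$-$F$ commutator) are routine and the uniqueness/splitting statements follow directly from the definitions on generators.
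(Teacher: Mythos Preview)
The paper does not give its own proof of this theorem; it is stated as a known result with citations to \cite{Lu93}*{35.1.9} and \cite{Mc07}*{Proposition~3.4}, and the argument is not reproduced. Your sketch is essentially the standard one from those references: verify the defining relations of the modified form on divided-power generators, using the congruence $\qbinom{al}{bl}_i \equiv \binom{a}{b} \pmod{f_l}$ (Lusztig's Lemma~34.1.2) and, for part~(b), Lusztig's higher Serre relations from \cite{Lu93}*{Ch.~7} specialized at an $l$-th root of unity to obtain the needed identity $\sum_{r+s=1-a_{ij}}(-1)^r \mathrm{E}_i^{(rl)}\mathrm{E}_j^{(l)}\mathrm{E}_i^{(sl)}=0$. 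Indeed, the paper itself later invokes exactly this mechanism (see \S\ref{subsec:pf4}(a), which points to \cite{Lu93}*{\S35.2.3}) when establishing the analogous Serre-type identities for the $\imath$quantum Frobenius splitting. So your plan is correct and aligned with the cited sources; there is nothing further to compare against in the present paper.
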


For any $\lambda\in X^+$, set ${_{\A'}L}(\lambda)=\A'_{\phi} \otimes_\A ({_\A L(\lambda)})$, and ${_{\A'}\mathfrak{L}}(\lambda)=\A'_c \otimes_\A ({_\A L(\lambda)})$. We will abuse the notations, and still use $v_\lambda^+$ to denote the highest weight vector in these modules.
We also have the following proposition by Theorem \ref{thm:ClaFr}{(a)} and \S\ref{sec:qanniR}.

\begin{proposition}\label{prop:ga}
    For any $\mu\in X^+$, there exists a unique $_{\A'}\dot{\U}$-module homomorphism
    \[
    \gamma_\mu: {_{\A'}L}(l\mu)\longrightarrow \big({_{\A'}\mathfrak{L}}(\mu)\big)^\ofr, \quad v_{l\mu}^+ \mapsto v_\mu^+.
    \]
  Here $\big({_{\A'}\mathfrak{L}}(\mu)\big)^\ofr$ stands for the $_{\A'}\dot{\U}$-module which is isomorphic to ${_{\A'}\mathfrak{L}}(\mu)$ as $\A'$-modules, and the action is twisted by $\ofr$.
\end{proposition}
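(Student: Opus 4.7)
\emph{Proof Plan.} The plan is to define $\gamma_\mu$ by $\gamma_\mu(x\cdot v_{l\mu}^+) = \ofr(x)\cdot v_\mu^+$ for $x \in {_{\A'}\dot{\U}}$, and then verify that this is well-defined. Uniqueness is immediate: since $v_{l\mu}^+$ generates ${_{\A'}L(l\mu)}$ as a $_{\A'}\dot{\U}$-module, any module homomorphism is determined by its value on this vector. The $_{\A'}\dot{\U}$-linearity of $\gamma_\mu$ will then follow from the fact that $\ofr$ is an algebra homomorphism combined with the definition of the twisted action on ${_{\A'}\mathfrak{L}(\mu)^\ofr}$: for $y\in{_{\A'}\dot{\U}}$, one has $y\cdot\gamma_\mu(x\cdot v_{l\mu}^+) = \ofr(y)\ofr(x)\cdot v_\mu^+ = \ofr(yx)\cdot v_\mu^+ = \gamma_\mu(yx\cdot v_{l\mu}^+)$.

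The heart of the argument is well-definedness. First I would apply the description in \S\ref{sec:qanniR} with $\lambda_1 = 0$ and $\lambda_2 = l\mu$ (noting $^\omega L(0)\otimes L(l\mu)\cong L(l\mu)$) to write the kernel ${_{\A'}P(0,l\mu)}$ of the action map $_{\A'}\dot{\U}\to {_{\A'}L(l\mu)}$, $x\mapsto x\cdot v_{l\mu}^+$, as the sum
\[
\sum_{\lambda\neq l\mu} {_{\A'}\dot{\U}}\one_\lambda
+ \sum_{i\in\I,\,a_i\geq 1} {_{\A'}\dot{\U}}E_i^{(a_i)}\one_{l\mu}
+ \sum_{i\in\I,\,b_i>\langle\coroot_i,l\mu\rangle} {_{\A'}\dot{\U}}F_i^{(b_i)}\one_{l\mu}.
\]
It then suffices, since $\ofr$ is an algebra map, to check that each of these three families of generators is sent by $\ofr$ into the annihilator of $v_\mu^+$ in ${_{\A'}\mathfrak{L}(\mu)}$.

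This verification is entirely mechanical using Theorem~\ref{thm:ClaFr}(a). For the first family, $\ofr(\one_\lambda)$ vanishes unless $\lambda\in lX$; if $\lambda = l\mu'$ with $\mu'\neq\mu$, then $\ofr(\one_{l\mu'}) = \one_{\mu'}$, which kills $v_\mu^+$ since $v_\mu^+$ has weight $\mu$. For the second family, $\ofr(E_i^{(a_i)}\one_{l\mu}) = 0$ when $l\nmid a_i$, and equals $\mathfrak{e}_i^{(a_i/l)}\one_\mu$ with $a_i/l\geq 1$ when $l\mid a_i$; in either case the result annihilates the highest weight vector $v_\mu^+$ of ${_{\A'}\mathfrak{L}(\mu)}$. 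For the third family with $b_i > l\langle\coroot_i,\mu\rangle$, the image $\ofr(F_i^{(b_i)}\one_{l\mu})$ is zero if $l\nmid b_i$, and otherwise equals $\mathfrak{f}_i^{(b_i/l)}\one_\mu$ with $b_i/l > \langle\coroot_i,\mu\rangle$, which lies in the annihilator of $v_\mu^+$ in ${_{\A'}\mathfrak{L}(\mu)}$ by the classical description of the kernel applied over $\A'_c$.

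I do not anticipate a serious obstacle: the main points to be careful about are the bookkeeping between the two $\A$-algebra structures $\A'_\phi$ and $\A'_c$ on $\A'$, and the convention that a $_{\A'}\dot{\U}$-module structure on ${_{\A'}\mathfrak{L}(\mu)^\ofr}$ is obtained by precomposing with $\ofr$, so that the computation of $\gamma_\mu$-linearity above really makes sense. Once these are set up correctly, the proof reduces to the three checks above.
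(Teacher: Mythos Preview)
Your proposal is correct and is precisely the argument the paper has in mind: the paper states the proposition as a direct consequence of Theorem~\ref{thm:ClaFr}(a) and \S\ref{sec:qanniR}, and you have accurately spelled out the details of that deduction. There is nothing to add.
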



\subsection{Chevalley group schemes}
In this subsection, we shall assume that our root datum $(\I,Y,X,A,(\alpha_i)_{i\in\I},(\alpha_i^\vee)_{i\in\I})$ is of finite type. We recall Lusztig's construction of the group scheme $\G$ in \cite{Lu07}.
\subsubsection{Lusztig's construction}\label{sec:nocp}

Let $A$ be a commutative ring with 1, which is viewed as an $\A$-algebra, where $q$ acts by 1. For any element $a\in\A$, we will abuse the notation and still write $a$ to denote its image in $A$.

Let ${}_A\dot{\U} = A \otimes_{\A} {}{_\A}\dot{\U}$ and ${}_A\dot{\RB}=\{1\otimes b\mid b\in\dot{\RB}\}$. The set ${}_A\dot{\RB}$ is called the canonical basis of $_A\dot{\U}$. If there is no confusion, we will write ${}_A\dot{\RB}=\dot{\RB}$, and use $b$ to denote the image $1\otimes b$.

Let $_A\widehat{\mathrm{U}}$ be the $A$-module consisting of all the formal linear combinations
\[
    \sum_{a\in\dot{\RB}} n_a a, \quad n_a\in A.
\]
By \cite{Lu07}*{1.11}, $_A\widehat{\U}$ has a structure of $A$-algebra compatible with the embedding $_A\dot{\U} \subset {}_A\widehat{\U}$.  
    
 Let $_A\widehat{\mathrm{U}}^{(2)}$ be the $A$-module consisting of all the formal linear combinations
\[
    \sum_{(a,a')\in \dot{\RB}\times \dot{\RB}} n_{a,a'}a\otimes a', \quad n_{a,a'}\in A.
\]
Then ${}_A\widehat{\U}^{(2)}$ also has an $A$-algebra structure by \cite{Lu07}*{1.11}. We have an $A$-algebra homomorphism $\WD:{_A\widehat{\mathrm{U}}}\longrightarrow {_A\widehat{\mathrm{U}}}^{(2)}$, compatible with the coproduct on $_A\dot{\U} $ by \cite{Lu07}*{1.17}.
Let $S:{_A\dot{\U}}\rightarrow {_A\dot{\U}}$ be the antipode map. It extends to $\widehat{S}:{_A\widehat{\U}}\rightarrow {_A\widehat{\U}}$.


Let $\RO_A$ be the $A$-submodule of $_A\dot{\U}^*=\text{Hom}_A({_A\dot{\U}},A)$, spanned by $\{b^*\mid b\in \dot{\RB}\}$, where $b^* \in {}_A\dot{\U}^*$ is the \emph{dual canonical basis} element which sends $b$ to 1, and sends other canonical basis element to 0. Then for any element $f$ in $\RO_A$, it extends to an $A$-linear form on the completion $_A\widehat{\U}$. We write $\hat{f}$ to denote this extension.


Lusztig defined an $A$-Hopf algebra structure on $\RO_A$, using structure constants for the algebra $_A\dot{\U}$ (\cite{Lu07}*{3.1}). Let $\delta:\RO_A\rightarrow\RO_A\otimes\RO_A$ to be the coproduct, $\sigma:\RO_A\rightarrow\RO_A$ to be the antipode, and $\epsilon:\RO_A\rightarrow A$ to be the counit.

For any $f$, $g \in \RO_A$, it follows from \cite{Lu07}*{3.1} that the following diagrams commute
\begin{equation*}
    \begin{tikzcd}
    & _A\widehat{\U} \arrow[r,"\WD"] \arrow[rd,"\widehat{fg}"'] & {_A\widehat{\U}}^{(2)} \arrow[d,"f\otimes g"] \\ &  & A
    \end{tikzcd}
    \begin{tikzcd}
    &  {_A\widehat{\U}}\otimes{_A\widehat{\U}} \arrow[r,"m"] \arrow[rd,"\delta(f)"'] & {_A\widehat{\U}} \arrow[d,"\hat{f}"] \\ & & A
    \end{tikzcd}
  \quad
    \begin{tikzcd}
    _A\widehat{\U} \arrow[r,"\widehat{S}"] \arrow[rd,"\widehat{\sigma(f)}"'] & {_A\widehat{\U}} \arrow[d,"\hat{f}"] \\ & A
    \end{tikzcd}.
\end{equation*}
Here $f\otimes g: {_A\widehat{\U}^{(2)}}\rightarrow A$ sends formal linear combination $\sum_{(a,a')\in \dot{\RB}\times\dot{\RB}}n_{a,a'}a\otimes a'$ to $\sum_{(a,a')\in \dot{\RB}\times\dot{\RB}}n_{a,a'}f(a) g(a')$, where the second sum is finite, thanks to the definition of $\RO_A$. Similarly $\delta(f)\in \RO_A\otimes\RO_A$ induces a well-defined $A$-linear form on $_A\widehat{\U}\otimes{_A\widehat{\U}}$. We still use $\delta(f)$ to denote this map.

Since $\RO_A$ is a commutative Hopf algebra over $A$, the set $\text{Hom}_{A-\text{alg}}(\RO_A, A)$, consisting of $A$-algebra homomorphisms $\RO_A\rightarrow A$ preserving 1, has a group structure.

By definition, $\text{Hom}_{A-\text{alg}}(\RO_A,A)$ is an $A$-submodule of $\RO_A^{*}$, the $A$-linear duals of $\RO_A$. There is an $A$-linear bijective map  from $\RO_A^{*}$ to $_A\widehat{\U}$, given by $\phi\mapsto \sum_{b\in\dot{\RB}}\phi(b^*)b$. Then under this bijection, the subset $\text{Hom}_{A-\text{alg}}(\RO_A,A)$ is sent to
\[
G_A=\{\xi=\sum_{b\in\dot{\RB}}n_b b\in {_A\widehat{\mathrm{U}}}\mid \WD(\xi)=\xi\otimes\xi,\,n_{1_0}=1\}.
\]

Here for any $\xi=\sum_{b\in \dot{\RB}}n_bb$ and $\xi'=\sum_{b'\in\dot{\RB}}n'_{b'}b'$ in $_A\widehat{\U}$, the element $\xi\otimes\xi'$ in $_A\widehat{\U}^{(2)}$ is defined to be the formal linear combination
\[
\xi\otimes\xi'=\sum_{(b,b')\in \dot{\RB}\times \dot{\RB}} n_{b}n'_{b'}b\otimes b'.
\]

Then the subset $G_A$ is closed taking products. It moreover admits a group structure, where the unit is $\sum_{\lambda\in X}\one_\lambda$, and the inverse is given by the restriction of the antipode $\widehat{S}:{_A\widehat{\U}}\rightarrow{_A\widehat{\U}}$. The bijection $\text{Hom}_{A-\text{alg}}(\RO_A,A)\xrightarrow{\sim}G_A$ is a group isomorphism.

Define the $\BZ$-group scheme $\G$ by setting $\G(A)=G_A$ \footnote{Here we identify a $\BZ$-group scheme with the associated $\BZ$-group functor following \cite{Jan03}} for any (unital) commutative ring $A$. Then $\G\cong Sp\,\RO_\BZ$ is the Chevalley group scheme associated to the given root datum.

\subsubsection{The reductive group $G_k$}\label{sec:regpq}
Let $k$ be an algebraically closed field. Set $G_k$ be the subset of $_k\widehat{\U}$ defined as above by letting $A=k$.

Then $G_k$ is a connected reductive group over $k$ with the coordinate ring $\RO_k$ by \cite{Lu07}*{Theorem~4.11}. 
Let $T_k$ be the subset of $G_k$ consisting of elements of the form $\sum_{\lambda\in X}n_\lambda\one_\lambda$, such that $n_\lambda n_{\lambda'}=n_{\lambda+\lambda'}$. Let $_k\widehat{\U}^{>0}$ be the $k$-subspace of $_k\widehat{\U}$ consisting of all elements of the form $  \sum_{b\in \dot{\RB} \cap {}_k\dot{\U}^{>0},\lambda\in X}n_b(b\one_\lambda)$ with $n_b\in k$. Set $G_k^{>0}=G_k\cap {_k\widehat{\U}^{>0}}$ and $B_k=G_k^{>0}T_k$. It follows from \cite{Lu07}*{4.11} that $T_k$ is a maximal torus of $G_k$ and $B_k$  is a Borel subgroup of $G_k$.

We have the following isomorphisms of free abelian groups
\begin{align*}
X&\xrightarrow{\sim} \text{Hom}(T_k,k^\times),\qquad &Y\xrightarrow{\sim} &\text{Hom}(k^\times,T_k)\\
\lambda & \longmapsto \big(\sum_{\lambda\in X}n_\lambda\one_\lambda\mapsto n_\lambda\big),\qquad &\gamma\longmapsto &\big(a\mapsto \sum_{\lambda\in X}a^{\langle \gamma,\lambda\rangle}\one_\lambda\big).
\end{align*}
These two maps give an isomorphism between the root data defining the quantum group with the root data associated to $(G_k,T_k,B_k)$.

For any $i\in\I$ and $\xi\in k$, define 
\[
x_i(\xi)=\sum_{c\in\BN,\;\lambda\in X}\xi^cE_i^{(c)}\one_\lambda,\qquad y_i(\xi)=\sum_{c\in\BN,\;\lambda\in X}\xi^cF_i^{(c)}\one_\lambda.
\]
Then $\{x_i,y_i\mid i\in\I\}$ is a {\em pinning} for the triple $(G_k,T_k,B_k)$, that is, the morphism 
\[
\begin{pmatrix}
1 & a \\ & 1
\end{pmatrix}\mapsto x_i(a)\quad 
\begin{pmatrix}
1 &  \\ b &1
\end{pmatrix}\mapsto y_i(b)\quad
\begin{pmatrix}
t & \\ & t^{-1}
\end{pmatrix}\mapsto \coroot_i(t)
\]
for any $a, b$ in $k$, and $t$ in $k^*$, give a group morphism from $\mathrm{SL}_{2,k}$ to $G$.

\subsubsection{Lie algebras and  algebras of distributions}\label{sec:liealg} 

 For any linear algebraic group $H_k$ over $k$, we use the notation $\lie(H_k)$ and $\dist(H_k)$ to denote the Lie algebra and the  algebra of distributions  on $H_k$. They are subalgebras of $k[H_k]^*$, the linear dual of the coordinate ring of $H_k$. 
 


Recall $\RO_k$ is the coordinated ring the connected reductive group $G_k$. We identify $\RO_k^*$ with $_k\widehat{\U}$, via $\mu\mapsto\sum_{b\in\dot{\RB}}\mu(b^*)b$. Then its Lie algebra and the distribution algebra are identified with subalgebras of $_k\widehat{\U}$.

For any $i\in\I$, $n\in\BN$, and $\mu\in Y$, define the following elements in $_k\widehat{\U}$:
\[
e_i^{(n)}=\sum_{\lambda\in X}E_i^{(n)}\one_\lambda,\qquad f_i^{(n)}=\sum_{\lambda\in X}F_i^{(n)}\one_\lambda,\qquad \binom{h_\mu}{n}=\sum_{\lambda\in X}\binom{\langle \mu,\lambda\rangle}{n}\one_\lambda.
\]
Set $e_i=e_i^{(1)}$, $f_i=f_i^{(1)}$, and $h_\mu=\binom{h_\mu}{1}$. Here we use the same notations to denote the elements in the algebra after base change.
 
The Lie algebra $\lie(G_k)$ is identified as the Lie subalgebra of $_k\widehat{\U}$ generated by $e_i$ ($i\in\I$), $f_i$ ($i\in\I$), and $h_\mu$ ($\mu\in Y$). The distribution algebra of $G_k$ is identified as the subalgebra of $_k\widehat{\U}$ generated by $e_i^{(n)}$ ($i\in\I$, $n\in\BN$), $f_i^{(n)}$ ($i\in\I$, $n\in\BN$), and $\binom{h_\mu}{n}$ ($\mu\in Y$, $n\in \BN$).

\subsection{Quantum symmetric pairs} 
\subsubsection{$\imath$root datum}
\begin{definition}
A quasi-split \emph{$\imath$root datum} consists of the following 
\begin{itemize}
    \item a root datum $(\I,Y,X,A,(\alpha_i)_{i\in\I},(\coroot_i)_{i\in\I}))$;
    \item an involution $\tau$ of the index set $\I$, such that $a_{ij}=a_{\tau i,\tau j}$, for all $i, j\in\I$;
    \item an involution $\theta_X$ on $X$, and an involution $\theta_Y$ on $Y$, such that $\langle \theta_Y(\lambda),\theta_X(\mu)\rangle=\langle \lambda,\mu\rangle$, for $\lambda\in Y$, $\mu\in X$, and  $\theta_X(\alpha_i)=-\alpha_{\tau i}$, $\theta_Y(\coroot_i)=-\coroot_{\tau i}$, for any $i\in\I$.
\end{itemize}
\end{definition}

If there is no ambiguity, we will use $\theta$ to denote both the involution on $X$ and on $Y$. We write $(\I,Y,X,A,(\alpha_i)_{i\in\I},(\coroot_i)_{i\in\I}),\tau,\theta)$ to denote an $\imath$root datum. We call an $\imath$root datum \emph{of finite type} if the associated Cartan datum is of finite type. Morphisms and isomorphisms between $\imath$root data are defined in an evident manner.

For an $\imath$root datum $(\I,Y,X,A,(\alpha_i)_{i\in\I},(\coroot_i)_{i\in\I}),\tau,\theta)$, set 
	\begin{equation*}
	X_\imath=X/\langle \lambda-\theta\lambda\mid\lambda\in X\rangle\qquad \text{and}\qquad Y^\imath=\{\mu\in Y\mid \theta\mu=\mu\}.
	\end{equation*}
	We call $X_\imath$ the \emph{$\imath$weight lattice}, and call $Y^\imath$ the \emph{$\imath$coweight lattice}. There is a natural pairing $Y^\imath\times X_\imath\rightarrow\mathbb{Z}$ inherited from the pairing between $Y$ and $X$. Note that this pairing is not necessarily perfect. For any $\lambda\in X$, we write $\overline{\lambda}$ its image in $X_\imath$.

\begin{remark}
One can define $\imath$root data of more general (not necessarily quasi-split) type. In this paper we shall only consider quasi-split $\imath$root data, and we will just call them $\imath$root datum for brevity.
\end{remark}

\begin{lemma}\label{le:nop}
The abelian group $X_\imath$ has no odd torsion. Namely, for any $\overline{\lambda} \in X_\imath$ and any odd $n\in\BN$ with $n\overline{\lambda}=0$, we have  $\overline{\lambda}=0$.
\end{lemma}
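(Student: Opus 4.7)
The plan is to translate the condition $n\overline{\lambda} = 0$ in $X_\imath$ into an equation in $X$ and then exhibit an explicit preimage of $\lambda$ under $1-\theta$, using the oddness of $n$ to keep coefficients integral.

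First I would lift $\overline{\lambda}$ to some $\lambda \in X$; the assumption $n\overline{\lambda}=0$ means exactly that $n\lambda \in (1-\theta)X$, so we may write
\[
n\lambda = \mu - \theta\mu
\]
for some $\mu \in X$. The goal becomes showing that $\lambda$ itself lies in $(1-\theta)X$.

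Next I would extract information about $\lambda$ by applying the operator $1+\theta$ to both sides. The right-hand side becomes $(1+\theta)(\mu-\theta\mu) = \mu - \theta\mu + \theta\mu - \mu = 0$, so $n(\lambda + \theta\lambda)=0$. Since $X$ is a free (hence torsion-free) abelian group and $n \neq 0$, this forces $\theta\lambda = -\lambda$; in particular $(1-\theta)\lambda = 2\lambda$.

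Finally, because $n$ is odd, the integer $\tfrac{n-1}{2}$ is a well-defined element of $\BZ$, so I may set
\[
\nu := \mu - \tfrac{n-1}{2}\,\lambda \;\in\; X.
\]
A direct computation, using $n\lambda = \mu - \theta\mu$ and $(1-\theta)\lambda = 2\lambda$, gives
\[
(1-\theta)\nu \;=\; (\mu - \theta\mu) - \tfrac{n-1}{2}(\lambda - \theta\lambda) \;=\; n\lambda - (n-1)\lambda \;=\; \lambda.
\]
Thus $\lambda \in (1-\theta)X$, so $\overline{\lambda} = 0$ in $X_\imath$, as desired.

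There is no serious obstacle here: the argument is purely formal once one notices that the key use of oddness is exactly to make the correction term $\tfrac{n-1}{2}\lambda$ integral. The only point worth emphasizing is that the preliminary step $\theta\lambda = -\lambda$ is essential, because it is what converts the fractional-looking corrector into something with the right action under $1-\theta$; had $X$ contained odd torsion this first step could fail, but $X$ is free by the definition of a root datum.
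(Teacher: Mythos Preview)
Your argument is correct. The paper takes a different, more module-theoretic route: it tensors the short exact sequence $0\to \breve{X}\to X\to X_\imath\to 0$ with $\BZ[2^{-1}]$ and observes that $\lambda\otimes 1\mapsto (\lambda-\theta\lambda)\otimes\tfrac12$ splits it, so $X_\imath\otimes\BZ[2^{-1}]$ embeds in the torsion-free module $X\otimes\BZ[2^{-1}]$. Your proof is more elementary and constructive: you unwind the same idea by hand, first forcing $\theta\lambda=-\lambda$ and then writing down an explicit integral $\nu$ with $(1-\theta)\nu=\lambda$. The paper's formulation has the advantage of yielding a reusable statement (the sequence splits over $\BZ[2^{-1}]$), which it in fact invokes again later; your version avoids localization entirely and makes the role of oddness completely transparent.
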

	
\begin{proof}
Let  $\BZ[2^{-1}] \subset \BQ$ be the localization of $\BZ$ inverting $2$. Let $\Breve{X} = \langle \lambda-\theta\lambda\mid\lambda\in X\rangle$. By definition, we have the short exact sequence
\[
0\rightarrow \Breve{X}\rightarrow X\rightarrow X_\imath\rightarrow 0.
\]
Tensoring with the flat $\BZ$-module $\BZ[2^{-1}]$, we get
\[
0\rightarrow \Breve{X}\otimes \BZ[2^{-1}]\rightarrow X\otimes \BZ[2^{-1}]\rightarrow X_\imath\otimes \BZ[2^{-1}]\rightarrow 0.
\]

Note that the $\BZ[2^{-1}]$-module homomorphism 
\[
X\otimes\BZ[2^{-1}]\longrightarrow \Breve{X}\otimes \BZ[2^{-1}]
\]
sending $\lambda\otimes 1$ to $(\lambda-\theta\lambda)\otimes 1/2$ gives a splitting for the above short exact sequence. Hence $X_\imath\otimes \BZ[2^{-1}]$ can be viewed as a submodule of $X\otimes \BZ[2^{-1}]$, which is torsion-free. We deduce that $X_\imath\otimes \BZ[2^{-1}]$ is torsion-free. Then by the structure theory of finitely generated abelian groups, we see that $X_\imath$ has no odd torsion. 
\end{proof}
\subsubsection{$\imath$quantum groups}\label{sec:iqp}
	For each $i\in\I$, we fix a parameter $\varsigma_i\in q^{\mathbb{Z}}$, such that 
	\begin{equation*}
	\varsigma_{\tau i}=q_i^{-a_{i,\tau i}}{\varsigma}_i^{-1},\quad
	\varsigma_{\tau i}=\varsigma_i\text{ if }a_{i,\tau i}=0,\quad
\varsigma_i=q_i^{-1} \text{ if }\tau i=i.
	\end{equation*}
	Then the associated {\em $\imath$quantum group} $\U^\imath=\U^\imath_{\mathbf{\varsigma}}$ is defined to be the $\mathbb{Q}(q)$-subalgebra of $\U$ generated by elements
	\begin{equation*}
	B_i=F_i+\varsigma_iE_{\tau i}\Tilde{K}_i^{-1}\quad (i\in\I),\qquad K_\mu\quad (\mu\in Y^\imath).
	\end{equation*}
	
	The pair $(\U, \U^\imath)$ is called a {\em quantum symmetric pair}. We refer to \cite{BW21}*{\S3} for a more general definition for $\imath$quantum groups. 
	
	
	Let $\dot{\U}^\imath$ be the modified algebra of $\U^\imath$ and let $\dot{\RB}^\imath$ be the canonical basis of $\dot{\U}^\imath$ defined in \cite{BW18a}*{\S3.7, \S6.4}. The algebra $\dot{\U}$ is naturally a $(\dot{\U}^\imath, \dot{\U}^\imath)$-bimodule by \cite{BW18a}*{\S3.7}.
Let $_\A\dot{\U}^\imath$ be the $\A$-form of $\dot{\U}^\imath$ \cite{BW18a}*{Definition~3.19}. For any $\A$-algebra $A$, set ${}_A\dot{\U}^\imath=A\otimes_{\A} {_\A\dot{\U}^\imath}$.

(a) {\it For any $\A$-algebra $A$, we have an $A$-algebra embedding ${}_A\dot{\U}^\imath \rightarrow  {_A\widehat{\mathrm{U}}}$, $x \mapsto \sum_{\lambda \in X} x \one_\lambda$. Here $x \one_\lambda \in \dot{\U}$ is via the ${}_A\dot{\U}^\imath$-action on ${}_A\dot{\U}$.}

	Since  ${}_\A\dot{\U}^\imath$ is a free $\A$-module by  \cite{BW18a}*{Theorem~6.17}, it suffices to prove the statement for $A = \A$. The claim for $A = \A$ follows from \cite{BW18a}*{\S3.7} and \cite{Lu07}*{\S1.11}.

We denote by $\iota_\lambda: {_\A\dot{\U}^\imath\one_{\overline{\lambda}}} \rightarrow {_\A\dot{\U}\one_\lambda}$ the map $ x \mapsto x \one_\lambda$. We define the composition
\begin{equation}\label{eq:pim}
p_{\imath,\lambda}=\pi_\lambda\circ\iota_\lambda:{_\A\dot{\U}^\imath\one_{\overline{\lambda}}}\xrightarrow{\iota_\lambda}{_\A\dot{\U}\one_\lambda}\xrightarrow{\pi_\lambda}{_\A\U^-\one_\lambda}, 
\end{equation}
where $\pi_\lambda:{_\A\dot{\U}\one_\lambda}\rightarrow {_\A\dot{\U}\one_\lambda}/{_\A\dot{\U}\U^+\one_\lambda}\xrightarrow{\sim} {_A\U^-}\one_\lambda$.
It follows from \cite[Corollary 6.20]{BW18a} that $p_{\imath,\lambda}$ is an isomorphism of $\A$-modules.

Let $A$ be any $\A$-algebra. After tensoring with $A$, we deduce that the composition
\begin{equation}\label{eq:pimA}
p_{\imath,\lambda}=\pi_\lambda\circ\iota_\lambda:{_A\dot{\U}^\imath\one_{\overline{\lambda}}}\xrightarrow{\iota_\lambda}{_A\dot{\U}\one_\lambda}\xrightarrow{\pi_\lambda} {_A\U^-\one_\lambda}
\end{equation}
is an isomorphism between $A$-modules. 

We recall the construction for \emph{$\imath$divided powers}.
	
	For $i\in\I$ with $\tau i\neq i$, and $m\in\mathbb{N}$, set
	\begin{equation*}
	B_i^{(m)}=\frac{B_i^m}{[m]_i!}.
	\end{equation*}
	For $i\in \I$ with $\tau i=i$, and $m\in\mathbb{N}$, set
	\begin{eqnarray*}
	&&B_{i,\odd}^{(m)} =\frac{1}{[m]_i^!}\left\{ \begin{array}{ccccc} B_i\prod_{j=1}^k (B_i^2-[2j-1]_i^2 ), & \text{if }m=2k+1;\\
	\prod_{j=1}^k (B_i^2-[2j-1]_i^2), &\text{if }m=2k; \end{array}\right.
	\\
	&&B_{i,\ev}^{(m)} = \frac{1}{[m]_i^!}\left\{ \begin{array}{ccccc} B_i\prod_{j=1}^k (B_i^2-[2j]_i^2 ), & \text{if }m=2k+1;\\
	\prod_{j=1}^{k} (B_i^2-[2j-2]_i^2), &\text{if }m=2k. \end{array}\right.
	\end{eqnarray*}
 
	We also recall the {\em modified $\imath$divided powers}.
	For $\tau i\neq i$, $\zeta\in X_\imath$, and $m\in\mathbb{N}$, we define
	$B_{i,\zeta}^{(m)}=B_i^{(m)}\one_\zeta$.
	For $\tau i=i$, $\zeta\in X_\imath$, and $m\in\mathbb{N}$, we define $B_{i,\zeta}^{(m)}=B_{i,\ev}^{(m)}\one_\zeta$ if $\langle \alpha^\vee_i,\zeta\rangle$ is even;
	$B_{i,\zeta}^{(m)}=B_{i,\odd}^{(m)}\one_\zeta$  if $\langle \alpha^\vee_i,\zeta\rangle$ is odd.
	The algebra ${_\A}\dot{\U}^\imath$ is generated by $B_{i,\zeta}^{(n)}$, for various $i\in\I$, $\zeta\in X_\imath$, and $n\in\mathbb{N}$.

\subsubsection{Stability}

For the remaining part of this section, we assume root datum is of finite type. We also fix parameters as following: $\varsigma_i=q^{-1}_i$, if $\tau i=i$; $\varsigma_i=1$, if $a_{i,\tau i}=0$; and $\{\varsigma_i,\varsigma_{\tau i}\}=\{1,q_i^{-1}\}$, if $a_{i,\tau i}=-1$. 

For any $\mu\in X^+$, let $\RB[\lambda]^\imath$ be the $\imath$canonical basis for the irreducible module $L(\lambda)$ (\cite{BW18a}*{Theorem 5.7}). 
Let $\pi_\lambda^\imath:\dot{\U}^\imath\one_{\overline{\lambda}}\rightarrow L(\lambda)$ be the $\U^\imath$-module homomorphism sending $u$ to $u\cdot v_\lambda^+$. Note that $\dot{\U}^\imath\one_{\overline{\lambda}}$ is spanned by the $\imath$canonical basis. 

\begin{theorem}\label{thm:stab}(\cite{Wa21}*{Theorem 6.2.8})
For any $\lambda\in X^+$, the map $\pi_\lambda^\imath$ sends $\imath$canonical basis element to an $\imath$canonical basis element or zero. And the kernel of $\pi_\lambda^\imath$ is a subspace of $\dot{\U}^\imath\one_{\overline{\lambda}}$ spanned by $\imath$canonical basis elements. In particular, there are only finitely many $\imath$canonical basis element $b$ in $\dot{\RB}^\imath$, such that $b\cdot v_\lambda^+\neq 0$.
\end{theorem}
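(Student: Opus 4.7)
The plan is to follow the crystal-theoretic strategy of Watanabe, which adapts Kashiwara's theory to the $\imath$setting. First I would equip $L(\lambda)$ with an $\imath$canonical basis $\dot{\RB}(\lambda)^\imath$, inherited from the $\U$-canonical basis $\RB(\lambda)$ by an asymptotic purification against the $\imath$bar involution built from the quasi-$K$-matrix, following the $\imath$based module formalism of \cite{BW18a}. Since $\pi_\lambda^\imath$ is $\dot{\U}^\imath$-equivariant and intertwines the $\imath$bar involutions on both sides, for any $b \in \dot{\RB}^\imath$ with $b\one_{\overline{\lambda}} \ne 0$ the image $b \cdot v_\lambda^+$ is an $\imath$bar-invariant vector in $L(\lambda)$ whose leading term, under the triangular-decomposition isomorphism $p_{\imath,\lambda}$ of \eqref{eq:pimA}, is a canonical basis element of $\U^- v_\lambda^+$ or is zero. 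By the standard uniqueness of $\imath$bar-invariant vectors with prescribed leading term, $b \cdot v_\lambda^+$ must then lie in $\dot{\RB}(\lambda)^\imath \cup \{0\}$.

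To show that $\ker \pi_\lambda^\imath$ is spanned by $\imath$canonical basis elements, I would introduce $\imath$Kashiwara operators on $\dot{\U}^\imath \one_{\overline{\lambda}}$ via a polar decomposition of $B_i$: for $\tau i \ne i$ the operator $B_i$ behaves like $F_i$ modulo higher-order terms at $q = \infty$, while for $\tau i = i$ the $\imath$divided powers $B_{i,\ev}^{(m)}$ and $B_{i,\odd}^{(m)}$ must be split into ``raising'' and ``lowering'' components on the parity eigenspaces of $B_i$. One then constructs an $\imath$crystal lattice whose image at $q = \infty$ is $\dot{\RB}^\imath\one_{\overline{\lambda}}$. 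Applying the analogous construction to $L(\lambda)$ yields an $\imath$crystal lattice realized by $\dot{\RB}(\lambda)^\imath$, and matching the two crystal structures under $\pi_\lambda^\imath$ identifies $\ker \pi_\lambda^\imath$ with the span of those $\imath$canonical basis elements sent to zero. The finiteness assertion follows immediately since $L(\lambda)$ is finite-dimensional and $\dot{\RB}(\lambda)^\imath$ is a basis for it.

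The main obstacle is the construction and combinatorial control of these $\imath$Kashiwara operators. Unlike $F_i$ in the ordinary quantum group, $B_i$ is not locally nilpotent and its spectral behaviour on weight spaces is not governed by $\mathfrak{sl}_2$ in any simple way; one must exploit the higher $\imath$Serre relations of \cite{CLW18} and the explicit $\imath$divided-power formulas of \cite{BerW18} to produce a workable polar decomposition at $q = \infty$. Verifying that this decomposition is stable under the action of $\dot{\U}^\imath$ on its canonical basis, and that it transports correctly through $\pi_\lambda^\imath$, is the technical heart of the argument and the step where the bulk of the work is concentrated.
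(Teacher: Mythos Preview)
The paper does not prove this theorem at all: it is quoted verbatim as \cite{Wa21}*{Theorem~6.2.8} and used as a black box throughout \S\ref{sec:fin} and beyond. So there is no ``paper's own proof'' to compare against; your proposal is effectively a sketch of Watanabe's argument rather than of anything appearing in this paper.

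As a sketch of Watanabe's strategy your outline is broadly accurate in spirit: the proof does proceed via an $\imath$crystal theory, constructing modified $\imath$Kashiwara operators and an $\imath$crystal lattice on $\dot{\U}^\imath\one_{\overline{\lambda}}$ and on $L(\lambda)$, and then reading off the stability statement from compatibility of these structures with $\pi_\lambda^\imath$. You have also correctly identified the genuine difficulty, namely that $B_i$ is not locally nilpotent when $\tau i = i$ and requires a bespoke polar decomposition governed by the $\imath$divided-power formulas. That said, your proposal remains a high-level plan rather than a proof: the steps you flag as ``the technical heart'' (well-definedness of the $\imath$Kashiwara operators, stability of the $\imath$crystal lattice under $\dot{\U}^\imath$, and transport along $\pi_\lambda^\imath$) occupy the bulk of \cite{Wa21}, and nothing you have written substitutes for that work. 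In the context of the present paper, the correct move is simply to cite the result, which is exactly what the authors do.
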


\subsection{Symmetric subgroups}

Let $k$ be an algebraically closed field with characteristic not 2. 

\subsubsection{Definitions} Let $G_k$ be a connected reductive algebraic group defined over $k$. A \emph{symmetric pair} $(G_k,\theta_k)$ consists of a connected reductive group $G_k$, and an involution $\theta_k$ of $G_k$. Two symmetric pairs $(G_k,\theta_k)$ and $(G'_k,\theta_k')$ are called isomorphic if there exists an isomorphism between algebraic groups $f:G_k\rightarrow G'_k$, such that $f\circ\theta_k=\theta_k'\circ f$.

\begin{definition}\cite{Spr87}*{3.4}
An involution $\theta_k$ of $G_k$ as algebraic groups is called \emph{quasi-split} if there exists some Borel subgroup $B_k$, such that $\theta_k(B_k)\cap B_k$ is a maximal torus of $G_k$. Such a Borel subgroup $B_k$ is called \emph{$\theta_k$-anisotropic}.

 We call a symmetric pair $(G_k,\theta_k)$ \emph{quasi-split} if the involution $\theta_k$ is quasi-split.
\end{definition}

In this paper, we shall only consider quasi-split symmetric pairs. In the sequel, symmetric pairs are always assumed to be quasi-split.

For any {\em anisotropic triple} $(G_k,\theta_k,B_k)$, where $(G_k,\theta_k)$ is a symmetric pair  and $B_k$ is a $\theta_k$-anisotropic Borel subgroup of $G_k$, one can associate an $\imath$root datum in the following way.

Let $T_k=B_k\cap\theta_k(B_k)$ be the maximal torus. Let $(\I,Y,X,(\alpha_i)_{i\in\I},(\coroot_i)_{i\in\I}))$ be the root datum associated to $(G_k,T_k,B_k)$. It is easy to see that $\theta_k$ induces involutions on $X$ and $Y$, which respects the pairing, and $\theta_k$ restricts to an involution on the set of simple roots, which induces a graph involution $\tau$ on the Cartan datum $(\I,\cdot)$. Therefore, $(\I,Y,X,(\alpha_i)_{i\in\I},(\coroot_i)_{i\in\I},\tau,\theta_k)$ forms a quasi-split $\imath$root datum of finite type. 

For a fixed symmetric pair $(G_k,\theta_k)$, it is direct to see that by choosing different $\theta_k$-anisotropic Borel subgroups $B_k$, the triple $(G_k,\theta_k,B_k)$ gives isomorphic $\imath$root data. We call any $\imath$root datum in this isomorphism class the \emph{$\imath$root datum associated to $(G_k,\theta_k)$.}

\begin{lemma}\label{lem:pinning}
 Let $(G_k,\theta_k,B_k)$ be a triple as above, and write $T_k=B_k\cap\theta_k(B_k)$. Then there is a pinning $\{x_i,y_i;i\in\I\}$ of $(G_k,T_k,B_k)$, such that $\theta_k(x_i(\xi))=y_{\tau i}(\xi)$, for any $i\in\I$ and $\xi\in k$.
\end{lemma}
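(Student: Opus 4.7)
The plan is to begin with any pinning $\{x_i^0,y_i^0\}_{i\in\I}$ of $(G_k,T_k,B_k)$ (which exists by \S\ref{sec:regpq}) and then rescale the generators to produce the $\theta_k$-equivariance required by the lemma. First I would record that $\theta_k(T_k)=\theta_k(B_k)\cap B_k=T_k$ and that $\theta_k$ acts on $X$ by $\theta_X$ with $\theta_X(\alpha_i)=-\alpha_{\tau i}$, so $\theta_k$ carries each root subgroup $U_{\alpha_i}$ isomorphically onto $U_{-\alpha_{\tau i}}$. Hence there exist unique scalars $c_i,d_i\in k^\times$ with
\[
\theta_k(x_i^0(\xi))=y_{\tau i}^0(c_i\xi), \qquad \theta_k(y_i^0(\xi))=x_{\tau i}^0(d_i\xi).
\]
Applying $\theta_k$ a second time to $x_i^0(\xi)$ and using $\theta_k^2=\mathrm{id}$ yields $d_{\tau i}c_i=1$; likewise applying $\theta_k^2$ to $y_i^0(\xi)$ gives $c_{\tau i}d_i=1$.

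The crucial intermediate step will be to prove the symmetry $c_i=c_{\tau i}$. For this I would invoke the rigidity of $\mathrm{SL}_{2,k}$: let $\phi_i^0:\mathrm{SL}_{2,k}\to G_k$ denote the homomorphism coming from the pinning. Then $\theta_k\circ\phi_i^0$ and $\phi_{\tau i}^0$ are two $\mathrm{SL}_{2,k}$-homomorphisms with image in the rank-one subgroup attached to $\tau i$, so they differ by conjugation by some $g_i\in\mathrm{PGL}_{2,k}$. Because $\theta_k$ swaps $U_{\alpha_i}$ with $U_{-\alpha_{\tau i}}$ and inverts the one-parameter torus $\alpha_{\tau i}^\vee$, the element $g_i$ is forced to be anti-diagonal; normalising its upper-right entry to $1$ and its lower-left entry to $s_i\in k^\times$, a direct matrix calculation produces $c_i=s_i$ and $d_i=s_i^{-1}$. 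Combined with $d_i=c_{\tau i}^{-1}$ from the previous paragraph, this gives $c_i=c_{\tau i}$.

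It then remains to rescale the pinning. For $t_i\in k^\times$ I set $x_i(\xi)=x_i^0(t_i\xi)$ and $y_i(\xi)=y_i^0(t_i^{-1}\xi)$; this is still a pinning, as the associated $\mathrm{SL}_{2,k}$-morphism is simply $\phi_i^0$ conjugated by the diagonal element with square $t_i$ (such a square root exists since $k$ is algebraically closed). The target condition $\theta_k(x_i(\xi))=y_{\tau i}(\xi)$ translates into the system $t_it_{\tau i}c_i=1$ for all $i\in\I$. If $\tau i=i$, one solves $t_i^2=c_i^{-1}$ in $k$; if $\tau i\neq i$, one picks $t_i\in k^\times$ freely and sets $t_{\tau i}=(c_it_i)^{-1}$, and the identity $c_i=c_{\tau i}$ guarantees that the companion equation $t_{\tau i}t_ic_{\tau i}=1$ is automatically satisfied. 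The main obstacle is really the $\mathrm{SL}_{2,k}$-rigidity step producing $c_i=c_{\tau i}$; everything else is routine scaling bookkeeping.
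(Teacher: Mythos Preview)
Your argument is correct, but it takes a different and more self-contained route than the paper. The paper's proof is a two-liner: it invokes \cite{Spr87}*{\S1.5} (using that in the quasi-split case the element $n$ there represents the longest Weyl group element) to obtain directly a pinning with $\theta_k(x_i(\xi))=y_{\tau i}(-\xi)$, and then rescales once to absorb the global sign. In effect, the paper offloads the entire ``$c_i=c_{\tau i}$'' issue to Springer's structural result, where it appears as the uniform constant $c_i=-1$.

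Your approach instead recovers $c_i=c_{\tau i}$ from first principles via the $\mathrm{SL}_{2,k}$-rigidity step, and this is the genuine content of your proof. It is valid: the two homomorphisms $\theta_k\circ\phi_i^0$ and $\phi_{\tau i}^0$ land in the same rank-one subgroup, their difference is an automorphism of $\mathrm{SL}_{2,k}$ (or $\mathrm{PGL}_{2,k}$), hence conjugation by some $g_i\in\mathrm{PGL}_{2,k}$; since $\theta_k$ inverts $\alpha_{\tau i}^\vee$, this $g_i$ normalizes the diagonal torus and acts nontrivially on it, forcing it to be anti-diagonal, and your matrix computation then gives $c_i d_i=1$. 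Combined with $c_{\tau i}d_i=1$ from $\theta_k^2=\mathrm{id}$, you get $c_i=c_{\tau i}$. One could shorten this step slightly by noting that $c_i d_i=1$ already follows from applying $\theta_k$ to the $\mathrm{SL}_2$-relation encoded in the pinning (e.g.\ the relation between $x_i^0, y_i^0$ and $\alpha_i^\vee$), but your version is perfectly fine and has the advantage of being transparently group-theoretic in all characteristics. The trade-off is length for independence from Springer's classification.
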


\begin{proof}
By \cite{Spr87}*{\S1.5}, there exists a pinning $\{x_i,y_i;i\in\I\}$ such that $\theta_k(x_i(\xi))=y_{\tau i}(-\xi)$, for any $i\in\I$ and $\xi\in k$. This is because $n$ in \cite{Spr87}*{\S1.5} is a representative of the longest element of $W$ for quasi-split cases. Now the lemma is immediate by rescaling.
\end{proof}


We will call such pinning an \emph{anisotropic pinning} of $(G_k,\theta_k,B_k)$. It is easy to see that anisotropic pinning is not unique.
 
\subsubsection{Classifications}\label{sec:class}

Take an $\imath$root datum $(\I,Y,X,(\alpha_i)_{i\in\I},(\coroot_i)_{i\in\I},\tau,\theta)$ of finite type. We follow the same notations as before. There is a well-defined $\A$-algebra automorphism ${_\A\dot{\U}}\rightarrow{_\A\dot{\U}}$, sending $E_i^{(n)}\one_\lambda$ to $F_{\tau i}\one_{\theta\lambda}$, and $F_i^{(n)}\one_\lambda$ to $E_{\tau i}\one_{\theta\lambda}$, for any $i\in\I$, $n\in\BN$, and $\lambda\in Y$. Let $A$ be any commutative ring with unit. It is direct to see that after base change, we have the induced $A$-algebra involution 
\begin{equation}\label{eq:thetaA}
\theta_A:{_A\widehat{\U}}\rightarrow{_A\widehat{\U}}, \text{ which restricts to a group homomorphism } \theta_A:G_A\rightarrow G_A.
\end{equation}

In particular, $\theta_k:G_k\rightarrow G_k$ defines a quasi-split involution for the reductive group $G_k$, with $B_k$ an anisotropic Borel subgroup (\S \ref{sec:regpq}). Then the $\imath$root datum associated to the pair $(G_k,\theta_k)$ is isomorphic to  
the one we start with.

By Lemma~\ref{lem:pinning} and \cite{Spr87}*{\S1.6}, we have the following proposition. 
\begin{prop}\label{prop:cft}
We have a canonical bijection:
\[
\{\text{iso. classes of symmetric pairs $(G_k,\theta_k)$}\}\leftrightarrow\{\text{iso. classes of finite type $\imath$root data}\}.
\]
\end{prop}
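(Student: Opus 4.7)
The plan is to exhibit explicit maps in both directions and verify they are mutually inverse on isomorphism classes, using the construction just before the proposition together with Lemma~\ref{lem:pinning} and Springer's classification \cite{Spr87}*{\S1.6}.

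First, I would observe that one direction is already set up in the excerpt: to a symmetric pair $(G_k,\theta_k)$, after choosing a $\theta_k$-anisotropic Borel $B_k$ and setting $T_k=B_k\cap\theta_k(B_k)$, we assigned an $\imath$root datum $(\I,Y,X,A,(\alpha_i),(\coroot_i),\tau,\theta)$. Independence from the choice of $B_k$ (up to isomorphism of $\imath$root data) follows because any two $\theta_k$-anisotropic Borel pairs are $K_k$-conjugate; I would spell this out by transporting via $\mathrm{Ad}(g)$ with $g\in G_k$ to get compatible isomorphisms of $(X,Y)$, of $\tau$, and of the involutions. This yields a well-defined map $\Phi$ on isomorphism classes.

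For the reverse map $\Psi$, given an $\imath$root datum I take the Chevalley group $G_k$ from \S\ref{sec:regpq} together with the involution $\theta_k\colon G_k\to G_k$ constructed in \eqref{eq:thetaA}. I would check that $\theta_k$ is quasi-split with $B_k$ as in \S\ref{sec:regpq} being $\theta_k$-anisotropic: by construction $\theta_k$ sends $E_i^{(n)}\one_\lambda$ to $F_{\tau i}^{(n)}\one_{\theta\lambda}$, so $\theta_k(x_i(\xi))=y_{\tau i}(\xi)$ and $\theta_k$ swaps the positive and negative unipotent subgroups (after permuting by $\tau$), whence $B_k\cap\theta_k(B_k)=T_k$. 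Running the construction of $\Phi$ on $(G_k,\theta_k)$ with this Borel then recovers the original $\imath$root datum on the nose, since the identifications $X\simeq\mathrm{Hom}(T_k,k^\times)$ and $Y\simeq\mathrm{Hom}(k^\times,T_k)$ from \S\ref{sec:regpq} intertwine $\theta$ with the involution induced by $\theta_k$, and the graph involution $\tau$ is read off from $\theta_k(x_i)=y_{\tau i}$. Hence $\Phi\circ\Psi=\mathrm{id}$.

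The main obstacle is the other composition $\Psi\circ\Phi=\mathrm{id}$: given an abstract symmetric pair $(G_k,\theta_k)$, one must produce an isomorphism of symmetric pairs between $(G_k,\theta_k)$ and the pair $\Psi(\Phi(G_k,\theta_k))$ built from its $\imath$root datum. Here I would invoke Lemma~\ref{lem:pinning} to choose an anisotropic pinning $\{x_i,y_i\}$ of $(G_k,\theta_k,B_k)$. The classical isomorphism theorem for split reductive groups (Chevalley) applied to the root datum produces a unique isomorphism $f\colon G_k\to \Psi(\Phi(G_k,\theta_k))$ of reductive groups carrying $(T_k,B_k,\{x_i,y_i\})$ to the corresponding pinned triple on the Chevalley side. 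The key point is then that the involutions agree under $f$: both $\theta_k$ and the transported involution act on the pinning by $x_i(\xi)\mapsto y_{\tau i}(\xi)$ (the former by the anisotropic pinning condition, the latter by construction of \eqref{eq:thetaA}), and an involution of a pinned reductive group is determined by its action on the pinning. This gives $f\circ\theta_k=\theta_k'\circ f$, completing the inverse identity.

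Finally, I would verify that $\Psi$ descends to isomorphism classes: an isomorphism of $\imath$root data induces an isomorphism of the underlying root data, hence by Chevalley an isomorphism of Chevalley groups carrying pinning to pinning, and one checks on the generators $x_i,y_i$ that this isomorphism intertwines the two $\theta_k$ involutions since both are defined by the same formula on the pinning and the same involution on $X, Y$. Combined with the two identities above, $\Phi$ and $\Psi$ are mutually inverse bijections on isomorphism classes.
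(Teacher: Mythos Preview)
Your proposal is correct and follows essentially the same route as the paper, which simply cites Lemma~\ref{lem:pinning} together with \cite{Spr87}*{\S1.6}; you have unpacked that citation into an explicit construction of the inverse map $\Psi$ via \eqref{eq:thetaA} and a direct verification that the two involutions agree on the generating set $T_k,\{x_i(\xi)\},\{y_i(\xi)\}$. The only point worth tightening is the phrase ``an involution of a pinned reductive group is determined by its action on the pinning'': what you actually use (and prove) is that an automorphism is determined by its restriction to $T_k$ together with all simple root subgroups, which is precisely the content Springer supplies.
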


In particular, the isomorphic classes of symmetric pairs $(G_k,\theta_k)$ is independent of the field $k$ (provided char $k$ $\neq 2$). 

\subsubsection{$K_k$-orbits on the flag variety}\label{sec:Korbits}

We fix an anisotropic triple $(G_k,\theta_k,B_k)$ and set $K_k=G_k^{\theta_k}$ be the closed subgroup of $G_k$ consisting of $\theta_k$-fixed elements. It is called the \emph{symmetric subgroup} of $G_k$ associated to $\theta_k$. Let $\{x_i,y_i;i\in\I\}$ be an anisotropic pinning of $(G_k,\theta_k,B_k)$. Let $T_k=B_k\cap \theta_k(B_k)$ be the maximal torus, and $W=N(T_k)/T_k$. For any $i\in\I$, we set $n_i=x_i(1)y_i(-1)x_i(1)$.

Let $\CB_k=G_k/B_k$ be the flag variety of $G_k$. Then $K_k$ acts on $\CB_k$ with finitely many orbits by \cite{Spr85}*{Corollary~4.3}. The $K_k$-orbits on $\CB_k$ can be parameterized equivalently using $B_k$-orbits on $G_k/K_k$. The results are essentially in \cite{Spr85}. Since our choice of $(\theta_k, B_k)$ is different than Springer, who considers $B_k$ to be $\theta_k$-stable,  we reformulate the results here.    

Let $\phi: G_k/K_k \rightarrow G_k$, $gK_k \mapsto g \theta(g)^{-1}$ be the $G_k$-equivariant embedding \cite{Spr85}*{Proposition~2.2}, where the $G_k$-action on $G_k$ is given by $h \ast g = hg\theta(h)^{-1}$. Let $S_k = \phi (G_k/K_k)$. Since $\Big(B_k \times \theta_k(B_k)\Big)\text{-double cosets of $G_k$}$ are parameterized by the Weyl group $W$, we can define the composition 
\[
    \Pi: \{B_k\text{-orbits on }  G_k/K_k\} \rightarrow \{\Big(B_k \times \theta_k(B_k)\Big)\text{-orbits on }  G_k\} \rightarrow W.
\]

One sees that $K_kB_k/B_k$ is the unique open $K_k$-orbit on $\CB_k$ by \cite{Spr85} or \cite{Spr87}*{\S1.3}. 

We next construct $K_k$-orbits inductive from the open orbit following  \cite{Spr85}*{Theorem~6.5}. We consider $B_k$-orbits on $S_k \cong G_k/K_k$.  Let $\Omega \subset S_k$ be a $B_k$-orbit via the $\ast$-action. We denote by $P_i$ the standard parabolic subgroup of $G_k$ associated to $i \in I$. We describe $P_i \ast \Omega$ following \cite{Spr85}*{\S6.7}. Let $P_i \ast \Omega = \Omega \cup \Omega'$, where 
\[
     \Omega' = \{u n_i x \theta(un_i)^{-1} \vert x \in \Omega, u \in U_i\}
\]

Let $w=\Pi(\Omega)\in W$. We divided into several cases.
    
    \begin{enumerate}
    \item[(a)$'$]  If $\ell(s_i w \theta(s_i)) = \ell(w) -2$, then $\Omega'$ is the unique open dense $B_k$-orbit in $\overline{P_i \ast \Omega}$. We have $\Omega' =B_kn_i \ast \Omega$. We have  $\dim \Omega' = \dim \Omega +1$.
    
    \item[(a)$''$]  If $\ell(s_i w \theta(s_i)) = \ell(w) + 2$, then $\Omega'$ is a single  $B_k$-orbit in $P_i \ast \Omega$. We have $\Omega' =B_kn_i \ast \Omega$. We have  $\dim \Omega' = \dim \Omega - 1$.
    \end{enumerate}
 
    Now we consider the cases where $s_i w \theta(s_i) = w$. We have an involution $\psi$ on the subgroup $G_i$ generated by $ x_i(k), y_i(k) $ defined by $\psi(g) = n \theta(g) n^{-1}$, where $n$ is any representative of $w$ in $\Omega$. We are reduced to rank one computations.

   \begin{enumerate}
   \item[(b)] If $\psi =\rm{id}$, then $\Omega' = \Omega$.
    
    \item[ (c)] We assume $\psi(x_i(a)) = x_i(-a)$, $\psi(y_i(a)) = y_i(-a)$, $\psi(n_i) = n_i^{-1}$. Note that $ n_i = y_i(-1) x_i(1/2) \psi( y_i(-1) x_i(1/2))^{-1}$ in this case. 
    
    We write  $\Omega_1 = B_k \ast n_i \Omega= B_ky_i(-1) x_i(1/2) \ast  \Omega$ and $\Omega_2 = B_k \ast n_i \Omega \theta(n_i)^{-1}= B_k n_i \ast  \Omega$. We then have $\Omega' = \Omega_1  \cup \Omega_2 $. Note that $\Omega = \Omega_2$ if  $G_i \cong PGL_{2,k}$.
    
     \item[(d)] We assume $\psi(x_i(a)) = y_i(a)$, $\psi(y_i(a)) = x_i(a)$, $\psi(n_i) = n_i^{-1}$. Note that $x_i(1/2) y_i(-1) \psi(x_i(1/2) y_i(-1))^{-1} = n_i$.
    
    Then we write $\Omega_1 = B_k \ast n_i \Omega= B_k x_i(1/2) y_i(-1) \ast  \Omega$ and $\Omega_2= B_k \ast \alpha^\vee_i(-1) n_i  \Omega_1= B_k x_i(-1/2) y_i(1) * \Omega $. Hence we have 
    $\Omega' = \Omega_1 \cup \Omega_2$.  Note that $\Omega_1 = \Omega_2$ if $G_i \cong PGL_{2,k}$.
    
     \end{enumerate}
	 
	\begin{prop}\label{prop:Korbits}
	 (1)  For any $K_k$-orbit on $\CB_k$, one can take a representative $vB_k$, which can be written as products of $n_i^{- 1}$, $  y_{i}(1)x_i(-1/2)$ and $ y_i(-1) x_{i}(1/2) $, for various $i \in \I$, and $v^{-1}\theta_k(v)\in N(T_k)$. 
	 
	 (2) (for quasi-split types) Codimension one $K_k$-orbits are of the form: 
	    \begin{itemize} 
	    \item $\mO_i=K_k n_iB_k/B_k$, for $i\in \I$, with $\tau i\neq i$; (Note that these orbits may not be distinct.)
	    \item $\mO_i^+=K_ky_i(1)B_k/B_k$ and $\mO_i^-=K_ky_i(-1)B_k/B_k$,  for $i\in \I$, with $\tau i=i$. (Note that $\mO_i^+$ and $\mO_i^-$ may not be distinct.) 
	    \end{itemize}
	  
	 (3) The poset of $K_k$-orbits on $\mathcal{B}_k$ only depends on the $\imath$root datum associated to $(G_k,\theta_k)$. (Hence it is independent of the underlying field $k$.) We write $\CO_k = \CO$ whenever it is necessary to emphasize the field.
	\end{prop}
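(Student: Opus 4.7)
The plan is to combine Springer's inductive description of $B_k$-orbits on $S_k \cong G_k/K_k$ (cases (a)$'$, (a)$''$, (b), (c), (d) recalled above) with the $K_k$-equivariant bijection
\[
K_k \backslash G_k/B_k \longleftrightarrow B_k \backslash G_k/K_k, \qquad K_k v B_k \longleftrightarrow B_k v^{-1} K_k.
\]
Under the closed embedding $\phi: G_k/K_k \hookrightarrow S_k$, $gK_k \mapsto g\theta_k(g)^{-1}$, the left $B_k$-action transports to the $\ast$-action, so that $K_k v B_k/B_k$ corresponds to $B_k \ast (v^{-1}\theta_k(v)) \subset S_k$; the condition $v^{-1}\theta_k(v) \in N(T_k)$ says precisely that $v^{-1}\theta_k(v)$ is a standard representative of the Weyl element $\Pi(\Omega) \in W$.

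For part (1), I induct on the codimension of the $K_k$-orbit. The base case is the open orbit $K_k B_k/B_k$, represented by $v = 1$, satisfying $1^{-1}\theta_k(1) = 1 \in N(T_k)$. For the inductive step, an orbit $\CO_k$ of positive codimension arises from some larger orbit $\CO_k'$ via some $P_i$ with $i\in\I$; on the $S_k$ side this places us in case (a)$''$, (c), or (d), where the new orbit is $B_k \xi_i \ast \Omega'$ for a specific rank-one element $\xi_i$ (one of $n_i$, $y_i(-1)x_i(1/2)$, $x_i(\pm 1/2)y_i(\mp 1)$). Transferring through the bijection yields a new $K_k$-orbit representative $u = v' \xi_i^{-1}$ with $v'$ representing $\CO_k'$. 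A rank-one computation in $G_i$—possibly modifying $v'$ within its $K_k$-orbit and absorbing $x_i(\pm 1/2) \in B_k$ on the right—rewrites $v' \xi_i^{-1}$ in the form $v'' \cdot h_i$, where $h_i$ is one of $n_i^{-1}$, $y_i(1)x_i(-1/2)$, $y_i(-1)x_i(1/2)$ and $v''$ is itself a product of such elements by the inductive hypothesis. The property $u^{-1}\theta_k(u) \in N(T_k)$ is preserved at each step because $\xi_i \theta_k(\xi_i)^{-1} \in N(T_k)$, which is a rank-one identity using the anisotropic pinning $\theta_k(x_i(\xi)) = y_{\tau i}(\xi)$ from Lemma~\ref{lem:pinning}.

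For part (2), I apply the inductive step exactly once from the open orbit ($v' = 1$, $w' = 1$). For $\tau i \neq i$, $\ell(s_i s_{\tau i}) = 2 = \ell(w') + 2$ places us in case (a)$''$; the new orbit is $\CO_i = K_k n_i^{-1} B_k/B_k = K_k n_i B_k/B_k$, using $n_i^{-1} \in \alpha_i^{\vee}(-1)\, n_i T_k \subset n_i B_k$. For $\tau i = i$, $s_i w' \theta_k(s_i) = w'$, and the pinning forces $\theta_k|_{G_i}(x_i(\xi)) = y_i(\xi)$, matching case (d) exactly (cases (b) and (c) require $\theta_k|_{G_i}$ to fix each root subgroup, which fails). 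The two new orbits are $\CO_i^\pm = K_k y_i(\pm 1) x_i(\mp 1/2) B_k/B_k = K_k y_i(\pm 1) B_k/B_k$, coinciding iff $G_i \cong PGL_{2,k}$, exactly as recorded in case (d).

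For part (3), the inductive production of orbits depends only on combinatorial inputs recorded by the $\imath$root datum: the Weyl group $W$, the induced involution on $W$, and the isogeny type of each rank-one subgroup $G_i$ (whether $\cong SL_{2,k}$ or $PGL_{2,k}$), all determined by $(Y, X, \tau, \theta)$. Hence the orbits and the covering relations generating the closure order depend only on the $\imath$root datum, and the poset is independent of $k$. The main obstacle lies in part (1): correctly distinguishing cases (c) and (d) at each step, handling the branch of case (c) whose raw $\xi_i^{-1} = x_i(-1/2)y_i(1)$ is not literally in the listed set (requiring a rank-one rewriting via modifying $v'$ within its $K_k$-orbit), and verifying that $v^{-1}\theta_k(v) \in N(T_k)$ is preserved—all of which reduce to explicit calculations in $G_i$ using the anisotropic pinning.
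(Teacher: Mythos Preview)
Your treatment of parts (1) and (2) matches the paper's (which simply says these are ``immediate from the above discussion'') and fills in reasonable detail; the rank-one rewritings you flag are indeed routine.

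The gap is in part (3). You assert that the inductive production of orbits depends only on combinatorial inputs---$W$, the involution on $W$, and the isogeny type of each $G_i$---and conclude that the set of orbits is field-independent. But the inductive process assigns many representatives to the same orbit (different sequences of $P_i$-moves can land on the same orbit), and you do not show that this equivalence relation on representatives is itself independent of $k$. The isogeny-type remarks in cases (c) and (d) record only the \emph{local} coincidences occurring at a single step; they do not control \emph{global} coincidences arising from distinct inductive paths.

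The paper handles exactly this point. Given representatives $v,v'$ from part (1), set $n=v^{-1}\theta_k(v)$ and $n'=v'^{-1}\theta_k(v')$ in $N(T_k)$. By Bruhat uniqueness, $K_kvB_k=K_kv'B_k$ iff $n,n'$ represent the same $w\in W$ and there exists $t\in T_k$ with $w^{-1}(t)\theta_k(t)=n^{-1}n'$. Because the construction in (1) produces $n^{-1}n'=1\otimes t_0'$ for a fixed $t_0'\in\BZ[2^{-1}]\otimes Y$ independent of $k$, solvability of this torus equation reduces to whether $t_0'$ lies in the image of $\mathrm{id}-\theta w$ on $\BZ[2^{-1}]\otimes Y$. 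That the cokernel of $\mathrm{id}-\theta w:Y\to Y$ has no odd torsion (by the same argument as Lemma~\ref{le:nop}) makes this condition independent of the field. Your argument needs this torus computation, or an equivalent device (e.g.\ a direct appeal to a field-independent combinatorial parametrization of $K_k\backslash\CB_k$, which you do not invoke), to close the gap.
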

	
	\begin{proof}
	Part (1) and part (2) is immediate from the above discussion. 
	
	We prove part (3). The claim on partial orders follows by \cite{RS90}*{Theorem~4.6 or Theorem~7.11}, since the case analysis above for $P_i \ast \Omega$ is independent of the characteristic of $k$. It suffices to show that the set of $K_k$-orbits on $\mathcal{B}_k$ is independent of the characteristic of $k$.
 
 The description for the representatives in (1) only depends on the Weyl group $W$ and the involution on $W$ induced by $\theta_k$, which can be read from the $\imath$root datum. Two different representatives in (1) may represents the same orbit. It will suffice to show this phenomenon can be verified independent of $k$. 
	
	Let $v$, $v'$ be two different representatives in (1). Write $n=v^{-1}\theta_k(v)$, and $n'=v'^{-1}\theta_k(v')$. Then $n$ and $n'$ belong to $N(T_k)$. Then by the above discussion and (uniqueness part of) Bruhat's lemma, we have: $K_kvB_k=K_kv'B_k$ $\Leftrightarrow$ $n'\in B_k\ast n$ $\Leftrightarrow$ $n'\in T_k \ast n$ $\Leftrightarrow$ $n$ and $n'$ represent the same element in $W$, say $w$, and 
	\begin{equation}\label{eq:tsol}
	   \text{$\exists\; t\in T_k$, such that }w^{-1}(t)\theta_k(t)=n^{-1}n' = t_0\in T_k.
	\end{equation} 
	Recall $Y=\text{Hom}(k^\times, T_k)$, and there is a canonical isomorphism $T_k\cong k\otimes_\BZ Y\cong k\otimes_{\BZ[2^{-1}]}Y_2$, where $Y_2=\BZ[2^{-1}]\otimes_\BZ Y$. Under our construction, $t_0=1\otimes t_0'$, for some $t_0'\in Y_2$, independent of $k$. And $\theta_k$ is obtained from $\theta:Y\rightarrow Y$. Since $\theta_k(n)=n^{-1}$, we have $(\theta w)^2=id:Y\rightarrow Y$. By the same proof of Lemma \ref{le:nop}, the cokernel of the map $id-\theta w:Y\rightarrow Y$ has no odd torsion. Hence condition \eqref{eq:tsol} is equivalent to $\theta(t_0')\in \text{im}(id-\theta w)\subseteq Y_2$. Here we extend $\theta$ and $w$ to $Y_2$. This is clearly independent of the field $k$. We complete the proof.
	\end{proof}

\subsection{Frobenius splittings} \label{sec:algFr}
	Let $k$ be an algebraically closed field of positive characteristic $p$ (possibly $2$ in this section). 
	
	\subsubsection{Definitions}
	By \emph{schemes}, we mean separated schemes of finite type over $k$. 
	\begin{defi}
	Let $ \CX$ be a scheme; then the \emph{absolute Frobenius morphism}
	\[
	F:\CX\longrightarrow \CX
	\]
	is the identity on the underlying space of $\CX$, and the $p$-th power map on the structure sheaf $\mathcal{O}_\CX$.
	\end{defi}
	
	Following \cite[\S1.1.3]{BK05}, a scheme $\CX$ is \emph{Frobenius split} (or simply \emph{split}) if the $\mathcal{O}_\CX$-linear map $F^\#:\mathcal{O}_\CX\longrightarrow F_*\mathcal{O}_\CX$ splits. Namely, there exists a $\mathcal{O}_\CX$-linear map $\varphi\in\text{Hom }(F_*\mathcal{O}_\CX,\mathcal{O}_\CX)$ such that $\varphi\circ F^\#={\rm id}$. Such a map $\varphi$ is called a \emph{splitting} of $X$. Then clearly any $\mathcal{O}_\CX$-linear map $\varphi:F_*\mathcal{O}_\CX\longrightarrow \mathcal{O}_\CX$ is a splitting if and only if $\varphi(1)=1$. 
	
	Let $\CY\subset \CX$ be a closed subscheme of $\CX$, then we say $\CX$ is \emph{split compatibly with $\CY$} (or \emph{$\CY$ is compatibly split}) if there is a splitting $\varphi$ of $\CX$
	such that $\varphi(F_*\mathcal{I}_\CY)\subset \mathcal{I}_\CY$, where $\mathcal{I}_\CY$ is the ideal sheaf of $\CY$. 

We collect the following consequences on the Frobenius splitting schemes by \cite{BK05}*{\S1.2}.

(a) {\em Frobenius split schemes are reduced, and weakly normal.}

(b) {\em Let $\CX$ be a scheme, $\phi$ be a Frobenius splitting of $X$. Then the set of closed subschemes of $\CX$ which are compatibly split under $\phi$ is closed under taking irreducible components, and taking scheme-theoretic intersection. In particular, the scheme-theoretic intersection of two compatibly split subschemes is reduced.}

(c) {\em Let $\CX$ be a proper scheme and $\CY$ be a closed subscheme of $\CX$. Suppose $\CX$ is Frobenius split compatibly with $\CY$. Let $\mathcal{L}$ be an ample line bundle on $\CX$. Then $H^i(\CX,\mathcal{L})=0$, for all $i>0$, the restriction map $H^0(\CX,\mathcal{L})\rightarrow H^0(\CY,\mathcal{L})$ is surjective, and $H^i(\CY,\mathcal{L})=0$, for all $i>0$.}

One can often apply the positive characteristic techniques of Frobenius splitting to certain schemes in characteristic zero as well, e.g., Schubert varieties in characteristic zero. We refer to \cite{BK05}*{\S1.6} for details.

\subsubsection{Flag varieties}

	Let $\CB_k=G_k/B_k$ be the flag variety of a connected reductive group $G_k$. For any $\lambda\in X$, let $k_\lambda$ be the one-dimensional representation of the group $B_k=U_kT_k$, where $U_k$ is the unipotent radical of $B_k$ and $T_k$ is the maximal torus, on which $T_k$ acts via the isomorphism $X\xrightarrow{\sim} \text{Hom}(T_k,k^\times)$ and $U_k$ acts trivially. Let
	\[
	\mathcal{L}_\lambda=G_k\times^{B_k} k_{\lambda}\longrightarrow \CB_k
	\]
	be the associated line bundle over the flag variety $\CB_k$. By definition, the total space $G_k\times ^{B_k} k_{\lambda}$ is the quotient space $(G_k\times k)/B_k$, where $B_k$ acts by $(g,t)\cdot b=(gb^{-1},\lambda(b)t)$, for $g\in G_k$, $b\in B_k$, and $t\in k$. The line  bundle $\mathcal{L}_\lambda$ is (very) ample if and only if $\lambda \in X^{++}$.
	
	The space of global sections $H^0(\lambda)=H^0(\CB_k,\mathcal{L}_\lambda)$ admits a natural (rational) $G_k$-action. It is well-known that $H^0(\lambda)$ is nonzero if and only if $\lambda\in X^+$, in which case  $H^0(\lambda) \cong V_k(\lambda)^*$. Here $V_k(\lambda)$ denotes the Weyl module of $G_k$ of highest weight $\lambda$.
	
Let $\mathcal{L}_\lambda$ be an ample line bundle on $\CB_k$. Define a graded $k$-algebra
	\[
	R_{\mathcal{L}_\lambda}=\bigoplus_{n\geqslant 0} H^0(\mathcal{B}_k,\mathcal{L}_\lambda^n) =\bigoplus_{n\geqslant 0} H^0(\mathcal{B}_k,\mathcal{L}_{n\lambda}).
	\]
	Let $\mathcal{Y}\subset \CB_k$ be a closed subvariety, and $I_{\mathcal{Y},{\mathcal{L}_\lambda^n}}$ be the kernel of the restriction map $H^0(\CB_k,{\mathcal{L}_\lambda^n})\rightarrow H^0(\mathcal{Y},{\mathcal{L}_\lambda^n})$, for any $n\geqslant 0$. Set
	\[
	I_{\mathcal{Y},{\mathcal{L}_\lambda}}=\bigoplus_{n\geqslant 0}I_{\mathcal{Y},{\mathcal{L}_\lambda^n}}.
	\]
	It is an ideal of $R_{\mathcal{L}_\lambda}$. The following lemma follows from \cite{KL2}*{Theorem~6.4 \& 6.7}.
	
	\begin{lem}\label{le:algFrfl}
	Let $\phi : R_{\mathcal{L}_\lambda} \rightarrow R_{\mathcal{L}_\lambda}$ be a graded additive endomorphism such that (a) $\phi(f^pg)=f\phi(g)$, for any $f,g$ in $R_{\mathcal{L}_\lambda}$;  (b) $\phi(1)=1$; (c) $\phi ( I_{\mathcal{Y},{\mathcal{L}_\lambda}}) \subset I_{\mathcal{Y},{\mathcal{L}_\lambda}}$. Then  
	the $\mathcal{B}_k$ is Frobenius split compatibly with $\mathcal{Y}$.
	\end{lem}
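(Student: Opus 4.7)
The plan is to realize the graded algebraic endomorphism $\phi$ as a sheaf-theoretic Frobenius splitting of $\CB_k$ via the standard dictionary between graded modules over the section ring of an ample line bundle and quasi-coherent sheaves on $\mathrm{Proj}$, in the spirit of Mehta--Ramanathan and its reformulation by Kumar--Littelmann.

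First, I would cover $\CB_k$ by the principal affine opens $D(s) := \{ s \neq 0 \}$ as $s$ ranges over sections $s \in H^0(\CB_k, \mathcal{L}_\lambda^n)$ for various $n \geqslant 1$; ampleness of $\mathcal{L}_\lambda$ ensures these form a basis of the Zariski topology of $\CB_k$, and $\mathcal{O}_{\CB_k}(D(s)) = (R_{\mathcal{L}_\lambda})_{(s)}$, the degree-zero part of the localization at powers of $s$. Observe from (a) specialized to $g = 1$ that $\phi(f^p) = f$, so $\phi$ divides the grading by $p$ and vanishes in degrees not divisible by $p$. On each $D(s)$, I would define
\[
    \phi_s \colon \mathcal{O}_{\CB_k}(D(s)) \longrightarrow \mathcal{O}_{\CB_k}(D(s)),
    \qquad \phi_s\!\left(g/s^m\right) := \phi(s^{m(p-1)} g)\,/\,s^m,
\]
for $g \in H^0(\CB_k, \mathcal{L}_\lambda^{nm})$. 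A degree count shows the right-hand side lies in degree zero. Using (a), one verifies independence of the chosen representative together with the $F_*$-linearity identity $\phi_s(h^p x) = h\, \phi_s(x)$. Condition (b) gives $\phi_s(1) = \phi(s^{np})/s^n = s^n/s^n = 1$ (applying (a) once more with $f = s^n$, $g = 1$). Thus $\phi_s$ is a Frobenius splitting of the affine open $D(s)$.

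Next, I would check that these local splittings glue. On an overlap $D(s) \cap D(t) = D(st)$ the two formulas defining $\phi_s$ and $\phi_t$ agree on $\mathcal{O}_{\CB_k}(D(st))$, again by direct computation using (a). The collection $\{\phi_s\}$ therefore assembles to a single $\mathcal{O}_{\CB_k}$-linear map $\widetilde{\phi}: F_* \mathcal{O}_{\CB_k} \to \mathcal{O}_{\CB_k}$ with $\widetilde{\phi}(1) = 1$, i.e.\ a splitting of $F^{\#}$. For compatibility with $\mathcal{Y}$, note that the restriction of the ideal sheaf $\mathcal{I}_\mathcal{Y}$ to $D(s)$ is generated by quotients $h/s^m$ with $h \in I_{\mathcal{Y}, \mathcal{L}_\lambda^{nm}}$; condition (c) guarantees $\phi(s^{m(p-1)} h) \in I_{\mathcal{Y}, \mathcal{L}_\lambda^{nm}}$, so $\phi_s$ preserves the ideal locally, and hence $\widetilde{\phi}(F_* \mathcal{I}_\mathcal{Y}) \subset \mathcal{I}_\mathcal{Y}$.

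The principal obstacle I anticipate is the bookkeeping needed to justify well-definedness of $\phi_s$ and its compatibility on overlaps. Both reduce to the key identity $s^a \phi(x) = \phi(s^{pa} x)$, a direct consequence of (a); however, an equality $g/s^m = g'/s^{m'}$ in the localization only corresponds to an equality $s^L(s^{m'}g - s^m g') = 0$ in the graded ring for some large $L$, so one must apply $\phi$ to such cleared equations and then descend back to the localization. Once this formal passage is handled with care, the remaining verifications—gluing, the splitting property, and preservation of $\mathcal{I}_\mathcal{Y}$—are routine.
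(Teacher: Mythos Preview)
Your argument is correct and coincides with the content of the reference the paper cites: the paper itself gives no proof of this lemma but simply refers to \cite{KL2}*{Theorem~6.4 \& 6.7}, where the passage from a graded splitting of $R_{\mathcal{L}_\lambda}$ to a sheaf-level Frobenius splitting of $\mathcal{B}_k=\mathrm{Proj}(R_{\mathcal{L}_\lambda})$ is carried out by exactly the localization-and-gluing procedure you describe.
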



	
\section{Symmetric subgroup schemes}\label{sec:SymGpSch}


Let $(\I,Y,X,A,(\alpha_i)_{i\in\I},(\coroot_i)_{i\in\I},\tau,\theta)$ be a quasi-split $\imath$root datum of finite type. Let $(\U, \U^\imath)$ be the quantum symmetric pair associated to this datum. We also fix parameters of $\U^\imath$ as follows: $\varsigma_i=q^{-1}_i$, if $\tau i=i$; $\varsigma_i=1$, if $a_{i,\tau i}=0$; and $\{\varsigma_i,\varsigma_{\tau i}\}=\{1,q_i^{-1}\}$, if $a_{i,\tau i}=-1$.

\subsection{The finiteness conditions}\label{sec:fin}

Recall the isomorphism of $\A$-modules from \eqref{eq:pim}: 
\[
p_{\imath,\lambda}=\pi_\lambda\circ\iota_\lambda:{_\A\dot{\U}^\imath\one_{\overline{\lambda}}}\xrightarrow{\iota_\lambda}{_\A\dot{\U}\one_\lambda}\xrightarrow{\pi_\lambda}{_\A\dot{\U}^-\one_\lambda}.
\]
 Set $s_\lambda=p_{\imath,\lambda}^{-1}\circ\pi_\lambda:{_\A\dot{\U}\one_\lambda}\longrightarrow{_\A\dot{\U}^\imath}\one_{\overline{\lambda}}.$ Then $s_\lambda\circ \iota_\lambda=id$. 

Recall  the canonical basis $\dot{\RB}$ of $\dot{\U}$  and the $\imath$canonical basis $\dot{\RB}^\imath$  of $\dot{\U}^\imath$. 
For any $\lambda\in X^+$, let $\dot{\RB}^\imath_{\overline{\lambda}} =\dot{\RB}^\imath\cap{\dot{\U}^\imath\one_{\overline{\lambda}}}$
and $\dot{\RB}_\lambda=\dot{\RB}\cap{\dot{\U}\one_\lambda}$. 
Then ${\dot{\U}^\imath\one_{\overline{\lambda}}}$ has basis $\dot{\RB}^\imath_{\overline{\lambda}}$ and ${\dot{\U}\one_\lambda}$ has basis $\dot{\RB}_\lambda$. 

We write  $$\iota_\lambda(b)=\sum_{b'\in\dot{\RB}_\lambda}\iota_{\lambda,b,b'}b', \quad \text{for } b\in\dot{\RB}^\imath_{\overline{\lambda}},  \iota_{\lambda,b,b'} \in \A; $$
$$s_\lambda(b)=\sum_{b'\in\dot{\RB}_{\overline{\lambda}}}s_{\lambda,b,b'}b',\quad \text{for }  b\in\dot{\RB}_\lambda,  s_{\lambda,b,b'} \in \A.$$
We view $_\A\dot{\U}^\imath$ as an $\A$-subalgebra of $_\A\widehat{\U}$ via the embedding  $x \mapsto \sum_{\lambda \in X}\iota_\lambda(x) $ as in \S \ref{sec:iqp}. We write 
\[
    b=\sum_{b'\in\dot{\RB}}\iota_{b,b'}b' \in {}_\A\widehat{\U}, \quad \text{for } b\in\dot{\RB}^\imath,  \iota_{b,b'} \in \A.
\]

\begin{lemma}\label{le:finit}
Let $\mu\in X$. Then for any $b'\in \dot{\RB}_\mu$, there are only finitely many $b \in \dot{\RB}^\imath_{\overline{\mu}}$, such that $\iota_{\mu,b,b'}\neq 0$. For any $b'\in\dot{\RB}^\imath_{\overline{\mu}}$, there are only finitely many $b\in\dot{\RB}_\mu$, such that $s_{\mu,b,b'}\neq 0$.

In particular,  for any $b'\in \dot{\RB}$, there are only finitely many $b\in\dot{\RB}^\imath$, such that $\iota_{b,b'}\neq 0$.
\end{lemma}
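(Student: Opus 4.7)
My plan is to prove all three claims using Theorem~\ref{thm:stab} together with Lusztig's canonical basis compatibility with tensor product modules. The key is that in finite type, every tensor product ${}^\omega L(\lambda_1)\otimes L(\lambda_2)$ with $\lambda_1,\lambda_2\in X^+$ is finite-dimensional, which will let me convert asymptotic finiteness of the action into the asserted column-finiteness of the matrices $(\iota_{\mu,b,b'})$ and $(s_{\mu,b,b'})$.

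For claim (1), I fix $b'\in\dot{\RB}_\mu$ and choose $\lambda_1,\lambda_2\in X^+$ with $\lambda_2-\lambda_1=\mu$ sufficiently dominant so that the action map
\[
\phi=\phi_{\lambda_1,\lambda_2}\colon {}_\A\dot{\U}\one_\mu\longrightarrow{}_\A{}^\omega L(\lambda_1)\otimes {}_\A L(\lambda_2),\qquad x\one_\mu\mapsto x\cdot(v^-_{-\lambda_1}\otimes v^+_{\lambda_2}),
\]
satisfies $\phi(b')\neq0$. This is possible because the kernels $P(\lambda_1,\lambda_2)$ from \S\ref{sec:qanniR} shrink with $\lambda_1,\lambda_2$, and their intersection over all such pairs is zero by \cite{Lu93}*{25.2.1}. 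By the same result the kernel is spanned by canonical basis elements, so $\phi$ sends each element of $\dot{\RB}_\mu$ either to zero or to a \emph{distinct} canonical basis element of the tensor product, and no cancellation can occur in the expansion $\phi(\iota_\mu(b))=\sum_{b''}\iota_{\mu,b,b''}\phi(b'')$. Hence $\iota_{\mu,b,b'}\neq 0$ forces $\phi(\iota_\mu(b))\neq0$. Since $\dot{\U}^\imath\subset\U$ and $v^-_{-\lambda_1}\otimes v^+_{\lambda_2}$ has $\imath$weight $\overline{\mu}$, we have $\phi(\iota_\mu(b))=b\cdot(v^-_{-\lambda_1}\otimes v^+_{\lambda_2})$, which is the $\dot{\U}^\imath$-action.

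The key finiteness step is that only finitely many $b\in\dot{\RB}^\imath_{\overline{\mu}}$ act nontrivially on $v^-_{-\lambda_1}\otimes v^+_{\lambda_2}$. This follows from the $\imath$canonical basis theory for tensor product modules developed in \cite{BW18a}*{\S5--6} (the ``tensor-product'' version of Theorem~\ref{thm:stab}): the action sends $\dot{\RB}^\imath_{\overline{\mu}}$ either to zero or to an $\imath$canonical basis element of the tensor product. As ${}^\omega L(\lambda_1)\otimes L(\lambda_2)$ is finite-dimensional in finite type, its $\imath$canonical basis is a finite set, yielding the finiteness. This completes (1), and claim (3) is an immediate consequence since $\iota_{b,b'}$ is determined by $\iota_{\mu,b,b'}$ with $\mu$ the weight of $b'$.

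For claim (2), I would use the factorization $s_\mu=p_{\imath,\mu}^{-1}\circ\pi_\mu$. By the standard compatibility of the canonical basis with the triangular decomposition, $\pi_\mu$ sends each $b\in\dot{\RB}_\mu$ either to zero or to a specific canonical basis element $\tilde b\one_\mu$ of ${}_\A\U^-\one_\mu$, and this assignment is injective on the non-kernel part of $\dot{\RB}_\mu$. Thus for fixed $b'$ the condition $s_{\mu,b,b'}\neq 0$ requires $\pi_\mu(b)=\tilde b\one_\mu$ with $b'$ appearing in $p_{\imath,\mu}^{-1}(\tilde b\one_\mu)$. By the triangularity of $p_{\imath,\mu}$ between $\imath$CB and CB established in \cite{BW18a}, together with the fact that weight spaces of $\U^-$ are finite-dimensional in finite type, only finitely many $\tilde b\in\RB^-$ contribute, hence only finitely many $b\in\dot{\RB}_\mu$. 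The main obstacle is invoking the $\imath$canonical basis compatibility with tensor product modules in (1) beyond what Theorem~\ref{thm:stab} directly gives for single $L(\lambda)$, and making the triangularity of $p_{\imath,\mu}$ in (2) precise; both ingredients are present in the Bao-Wang/Watanabe framework for quasi-split types.
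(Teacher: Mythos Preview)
Your setup for claim (1) is sound, and the no-cancellation step (kernel of $\phi$ spanned by canonical basis, non-kernel elements mapping to distinct basis elements of the tensor product) is correct; the paper proceeds the same way at that stage. The genuine gap is the finiteness step. You invoke a ``tensor-product version of Theorem~\ref{thm:stab}'' citing \cite{BW18a}*{\S5--6}, but those sections only \emph{construct} $\imath$canonical bases on based modules; they do not prove strong stability. Watanabe's result (which is Theorem~\ref{thm:stab} as quoted here) is stated and proved for a single $L(\lambda)$ only, not for ${}^\omega L(\lambda_1)\otimes L(\lambda_2)$. The paper supplies exactly the missing reduction: the $\U^\imath$-module isomorphism $\mathcal{T}:L(\tau\mu_1)\xrightarrow{\sim}{}^\omega L(\mu_1)$ from \cite{BW18a}*{Theorem~4.18} (sending $v^+_{\tau\mu_1}\mapsto v^-_{-\mu_1}$) identifies the $\U^\imath$-action on $v^-_{-\mu_1}\otimes v^+_{\mu_2}$ with that on $v^+_{\tau\mu_1}\otimes v^+_{\mu_2}$, and then the $\U$-embedding $L(\tau\mu_1+\mu_2)\hookrightarrow L(\tau\mu_1)\otimes L(\mu_2)$ reduces the question to the action on $v^+_{\tau\mu_1+\mu_2}$, where Theorem~\ref{thm:stab} applies directly. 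This use of $\mathcal{T}$ is the key idea you are missing.

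Your approach to claim (2) is different from the paper's and the triangularity step does not close. Even granting that $\pi_\mu$ is compatible with canonical bases, the triangularity of $p_{\imath,\mu}$ (coming from $B_i=F_i+\varsigma_iE_{\tau i}\Tilde{K}_i^{-1}$) goes the wrong way: $p_{\imath,\mu}^{-1}(\tilde b\one_\mu)$ is a combination of elements of degree \emph{at most} that of $\tilde b$, so a fixed $b'$ only bounds the degree of $\tilde b$ from \emph{below}, leaving infinitely many candidates. Finite-dimensionality of individual weight spaces of $\U^-$ does not help. The paper's argument is short and again rests on Theorem~\ref{thm:stab}: choose a suitable dominant weight with $b'\cdot v^+_\mu\neq 0$; since $E_iv^+_\mu=0$ one has $b\cdot v^+_\mu=\pi_\mu(b)\cdot v^+_\mu=s_\mu(b)\cdot v^+_\mu$, and Theorem~\ref{thm:stab} prevents cancellation in $\sum_{b''}s_{\mu,b,b''}(b''\cdot v^+_\mu)$, so $s_{\mu,b,b'}\neq 0$ forces $b\cdot v^+_\mu\neq 0$, which holds for only finitely many $b\in\dot{\RB}_\mu$ by Lusztig's classical result.
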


\begin{proof}

Suppose there exists some $b'\in\dot{\RB}_\mu$, such that there are infinitely many $b\in\dot{\RB}^\imath_{\overline{\mu}}$, such that $\iota_{\mu,b,b'}\neq 0$. 

Take $\mu_1, \mu_2\in X^+$, such that $b'\cdot v_{-\mu_1}^-\otimes v_{\mu_2}^+\neq 0$ in $^\omega L(\mu_1)\otimes L(\mu_2)$. Then by the assumption, we get infinitely many $b\in\dot{\RB}^\imath_{\overline{\mu}}$, such that $b\cdot v_{-\mu_1}\otimes v_{\mu_2}\neq 0$.

Recall from \cite{BW18a}*{Theorem 4.18} that we have $\U^\imath$-module isomorphism $$\mathcal{T}:{ L(\tau\mu_1)}\rightarrow {^\omega L(\mu_1)}$$ sending $v_{\tau\mu_1}^+$ to $v_{-\mu_1}^-$. Then we have $\U^\imath$-module isomorphism $$\mathcal{T}^{-1}\otimes id:{^\omega L(\mu_1)}\otimes L(\mu_2)\rightarrow L(\tau\mu_1)\otimes L(\mu_2)$$ sending $v_{-\mu_1}^-\otimes v_{\mu_2}^+$ to $v_{\tau \mu_1}^+\otimes v_{\mu_2}^+$. Hence via this isomorphism, we get infinitely many $\imath$canonical basis elements $b$, with $b\cdot v_{\tau \mu_1}^+\otimes v_{\mu_2}^+\neq 0$. Via the $\U$-module embedding $L(\tau \mu_1+\mu_2)\rightarrow L(\tau \mu_1)\otimes L(\mu_2)$, sending $v_{\tau\mu_1+\mu_2}^+$ to $v_{\tau \mu_1}^+\otimes v_{\mu_2}^+$, we get infinitely many $\imath$canonical basis elements $b$, such that $b\cdot v_{\tau\mu_1+\mu_2}^+\neq 0$. This contradicts Theorem \ref{thm:stab}. We proved the first claim.

For the second claim, suppose there exists some $b'\in \dot{\RB}^\imath_{\overline{\mu}}$, such that there are infinitely many $b\in\dot{\RB}_\mu$, with $s_{\mu,b,b'}\neq 0$. We take $\mu\in X^+$, such that $b'\cdot v_{\mu}^+\neq 0$ in $L(\mu)$. Then for any $b\in \dot{\RB}_\mu$, since $b\cdot v_{\mu}^+=s_\mu(b)\cdot v_{\mu}^+$, it follows that $b\cdot v_{\mu}^+\neq 0$, whenever $s_{\mu,b,b'}\neq0$. Therefore we get infinitely many canonical basis element $b$, with $b\cdot v_{\mu}^+\neq 0$, which is a contradiction. This proves the second claim.
\end{proof}

\begin{corollary}\label{cor:fimu}
For any $b',b''\in\dot{\RB}^\imath$, write $b'\cdot b''=\sum_{b\in\dot{\RB}^\imath}m_{b',b''}^b b$. Then for any $b\in\dot{\RB}^\imath$, there are only finitely many pair $(b',b'')$ in $\dot{\RB}^\imath$, such that $m_{b',b''}^b\neq 0$.
\end{corollary}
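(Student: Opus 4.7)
The plan is to express $m^b_{b',b''}$ as a finite double sum whose index set depends only on $b$ and a choice of weight $\lambda$, and then invoke both clauses of Lemma~\ref{le:finit}. Fix $b\in\dot{\RB}^\imath$, let $\bar\lambda\in X_\imath$ be its right idempotent, and choose a lift $\lambda\in X$. Since $m^b_{b',b''}\neq 0$ forces $b''\in\dot{\RB}^\imath_{\bar\lambda}$ (by the $X_\imath$-grading of $\dot{\U}^\imath$), multiplying the algebra identity $\iota(b'b'')=\iota(b')\iota(b'')$ in $\widehat{\U}$ by $\one_\lambda$ on the right yields $\iota_\lambda(b'b'')=\iota(b')\iota_\lambda(b'')$ as an element of $\dot{\U}\one_\lambda$. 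Expanding in the canonical basis $\dot{\RB}_\lambda$ using the structure constants $m^{b_3}_{b_1,b_2}$ of $\dot{\U}$, then applying $s_\lambda$ (which satisfies $s_\lambda\circ\iota_\lambda=\mathrm{id}$) and reading off the $b$-component of $b'b''\in\dot{\U}^\imath\one_{\bar\lambda}$, I would obtain
\[
m^b_{b',b''}=\sum_{b_3\in\dot{\RB}_\lambda}s_{\lambda,b_3,b}\sum_{(b_1,b_2):\,m^{b_3}_{b_1,b_2}\neq 0}\iota_{b',b_1}\,\iota_{\lambda,b'',b_2}\,m^{b_3}_{b_1,b_2}.
\]

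The second clause of Lemma~\ref{le:finit} makes $M_b:=\{b_3\in\dot{\RB}_\lambda:s_{\lambda,b_3,b}\neq 0\}$ finite, and for each such $b_3$ the canonical basis of $\dot{\U}$ admits only finitely many contributing $(b_1,b_2)$; hence the formula is a finite sum indexed by a finite set $J_b$ of triples depending only on $b$ and $\lambda$. Nonvanishing of $m^b_{b',b''}$ then requires some $(b_1,b_2)$ (in the finite projection of $J_b$) to satisfy $\iota_{b',b_1}\neq 0$ and $\iota_{\lambda,b'',b_2}\neq 0$, and the first clause of Lemma~\ref{le:finit} together with its ``in particular'' version bound, for each such fixed $b_1$ (resp.\ $b_2$), the number of $b'\in\dot{\RB}^\imath$ (resp.\ $b''\in\dot{\RB}^\imath$) by a finite quantity. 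Thus $\{(b',b''):m^b_{b',b''}\neq 0\}$ is contained in a finite union of finite Cartesian products, hence finite. The whole argument is a clean combination of both parts of Lemma~\ref{le:finit} via the splitting $s_\lambda$, and I do not foresee a substantive obstacle.
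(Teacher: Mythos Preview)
Your argument is correct and takes a genuinely different route from the paper. The paper proceeds representation-theoretically: it fixes $\mu\in X^+$ with $b\cdot v_\mu^+\neq 0$ in $L(\mu)$, then uses Theorem~\ref{thm:stab} to see that $m^b_{b',b''}\neq 0$ forces $(b'b'')\cdot v_\mu^+\neq 0$, whence $b''\cdot v_\mu^+\neq 0$ (finitely many such $b''$ directly by Theorem~\ref{thm:stab}) and $b'\vert_{L(\mu)}\not\equiv 0$ (finitely many such $b'$ by combining the finiteness of canonical basis elements acting nontrivially on the finite-dimensional $L(\mu)$ with Lemma~\ref{le:finit}). Your approach is instead purely algebraic: you transport the product into $\dot{\U}\one_\lambda$ via $\iota_\lambda$, invoke Lusztig's finiteness \cite{Lu07}*{Lemma~1.8} for the structure constants of $\dot{\U}$, pull back through $s_\lambda$, and finish with both clauses of Lemma~\ref{le:finit}. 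Your route yields an explicit expression for $m^b_{b',b''}$ in terms of $\dot{\U}$ data and makes transparent that the corollary is a formal consequence of Lemma~\ref{le:finit} together with the analogous classical finiteness for $\dot{\U}$; the paper's route is somewhat shorter and keeps the stability theorem front and center as the driving input.
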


\begin{proof}
For any $b\in\dot{\RB}^\imath$, we choose $\mu\in X^+$, such that $b\cdot v_\mu^+\neq 0$ in $L(\mu)$. Suppose $m_{b',b''}^b\neq 0$, for some $b',b''$ in $\dot{\RB}^\imath$. Then thanks to Theorem \ref{thm:stab}, we have $(b'\cdot b'')\cdot v_\mu^+\neq 0$. In particular, $b'\mid_{L(\mu)}\not\equiv 0$, and $b''\mid_{L(\mu)}\not\equiv 0$. Finally, note that for all but finitely many canonical basis element $c$, we have $c\mid_{L(\mu)}\equiv 0$. Then thanks to Lemma \ref{le:finit}, we only have finitely many such pair $(b',b'')$.
\end{proof}

\begin{remark}
Theorem~\ref{thm:stab} has been established by Watanabe \cite{Wa22} for all real rank one cases as well. Hence Lemma~\ref{le:finit} and Corollary~\ref{cor:fimu} hold for real rank one cases by similar proofs. One can also show all results in \S\ref{sec:SymGpSch} remain valid for real rank one cases. 

All results in \S\ref{sec:SymGpSch} hold if the strong compatibility conjecture in \cite{BW18a}*{Remark~6.18} is true.
\end{remark}

\subsection{The group scheme $\G^\imath$}
Let $A$ be a commutative ring (with unit) viewed as an $\A$-algebra via $q\mapsto 1$. We follow the same notations and conventions as in \S \ref{sec:nocp}. We write $_A\dot{\U}^\imath=A\otimes_\A\big({_\A\dot{\U}^\imath}\big)$. We will abuse the notations, and use $\dot{\RB}^\imath$ to denote the basis of $_A\dot{\U}^\imath$, consisting of the image of the $\imath$canonical basis elements after base change. This should not cause any confusion.

\subsubsection{}\label{sec:Ui1}

Let $_A\widehat{\mathrm{U}}^\imath$ be the $A$-module consisting of formal linear combinations
\[ \sum_{b\in\dot{\RB}^\imath} n_b b, \quad n_b\in A.
\]
Then thanks to Lemma \ref{le:finit}, $_A\widehat{\mathrm{U}}^\imath$ can be naturally embedded into $_A\widehat{\mathrm{U}}$. By the Corollary \ref{cor:fimu}, the product structure on $_A\widehat{\U}^\imath$ is well defined, and the embedding $_A\widehat{\U}^\imath\hookrightarrow{_A\widehat{\U}}$ is compatible with products.

Define $_A\widehat{\U}^{\imath,1}$ be the $A$-module consisting of all the formal linear combinations
\[
\sum_{(a,a')\in \dot{\RB}^\imath\times \dot{\RB}} n_{a,a'}a\otimes a', \quad n_{a,a'}\in A
\]
 Again, by Lemma \ref{le:finit}, we have natural embedding $_A\widehat{\U}^{\imath,1}\hookrightarrow{_A\widehat{\U}^{(2)}}$. Thanks to the Corollary \ref{cor:fimu} and \cite{Lu07}*{Lemma 1.8}, one can define a product structure on $_A\widehat{\U}^{\imath,1}$ in an evident way. Then the embedding is moreover compatible with products.

Since $\U^\imath$ is a right coideal subalgebra of $\U$, it follows that $\WD:{_A\widehat{\U}}\rightarrow{_A\widehat{\U}}$ will restrict to an $A$-algebra homomorphism
\begin{equation*} 
\WD: {_A\dot{\U}^\imath}\longrightarrow {_A\widehat{\U}^{\imath,1}}.
\end{equation*}


\subsubsection{}\label{sec:pro}

Define $\RO_A^\imath$ be the $A$-submodule of $(_A\dot{\U}^\imath)^*=\text{Hom}_A({_A\dot{\U}^\imath},A)$, spanned by the \emph{dual $\imath$canonical basis} $\{b^*\mid b\in \dot{\RB}^\imath\}$, where $b^*$ stands for the $A$-linear maps sending $b'$ to $\delta_{b', b}$, for any $b'\in\dot{\RB}^\imath$.


Recall \S\ref{sec:nocp} that $\RO_A$ is an $A$-submodule of $_A\dot{\U}^*$, spanned by the dual canonical basis. Define the $A$-linear map 
\[
r:\RO_A\rightarrow\RO_A^\imath, \quad b^* \mapsto \sum_{b_1\in\dot{\RB}^\imath}\iota_{b_1,b}b_1^*, \quad \text{for }b\in\dot{\RB}.
\]
 The summation is finite thanks to Lemma \ref{le:finit}. Then it is clear that for any $f\in \RO_A$, we have $r(f)=\hat{f}\mid_{_A\dot{\U}^\imath}$. Here $\hat{f}$ is the extension of $f$ to $_A\widehat{\U}$.


For any $\mu\in X$, let $\RO_{A,\mu}$ be the $A$-submodule of $\RO_A$, spanned by all $b^*$, such that $b\in \dot{\RB}\cap {_A\dot{\U}\one_\mu}$. Then $\RO_A=\bigoplus_{\mu\in X}\RO_{A,\mu}$. Moreover, we have $\RO_{A,\mu}\cdot\RO_{A,\mu'}\subseteq \RO_{A,\mu+\mu'}$. Via the restriction, we have a natural map $\RO_A\rightarrow(_A\dot{\U}\one_\mu)^*$. Under this restriction, the submodule $\RO_{A,\mu}$ is sent injectively to an $A$-submodule of $(_A\dot{\U}\one_\mu)^*$. We identify $\RO_{A,\mu}$ with its image in $(_A\dot{\U}\one_\mu)^*$.

Similarly, for any $\zeta\in X_\imath$, define $\RO_{A,\zeta}^\imath$ be the $A$-submodule of $\RO_A^\imath$, spanned by all the elements $b^*$, such that $b\in \dot{\RB}^\imath\cap {_A\dot{\U}^\imath\one_\zeta}$. Then $\RO_A^\imath=\bigoplus_{\zeta\in X_\imath}\RO_{A,\zeta}^\imath$. Similarly, the submodule $\RO_{A,\zeta}^\imath$ can be identified with an $A$-submodule of $(_A\dot{\U}^\imath\one_\zeta)^*$ via restriction.

By \S\ref{sec:fin} we have $A$-linear maps
\begin{equation*}
\begin{tikzcd}
{_A\dot{\U}\one_\mu}\arrow[r,bend right,"s_\mu"']& {_A\dot{\U}^\imath\one_{\overline{\mu}}} \arrow[l,"\iota_\mu"'], 
\end{tikzcd}, \quad \text{ such that }s_\mu\circ\iota_\mu=id, \quad \text{for any } \mu\in X.
\end{equation*}

Taking linear dual of these maps, and using Lemma \ref{le:finit}, we have
\begin{equation*}
\begin{tikzcd}
\RO_{A,\mu}\arrow[r,"\iota_\mu^*"] & \RO_{A,\overline{\mu}}^\imath \arrow[l,bend left,"s_\mu^*"]
\end{tikzcd}, \quad \text{such that }\iota_\mu^*\circ s_\mu^*=id.
\end{equation*}
In particular, we deduce that $\iota_\mu^*$ is surjective, and $s_\mu^*$ is injective.

It follows from the definition that $\iota_\mu^*(f)=r(f)$, for any $f\in\RO_{A,\mu}$. Hence we deduce 

(a) {\em the map $r:\RO_A\rightarrow\RO_A^\imath$ is surjective.}

\subsubsection{}\label{sec:HoOi}

Recall from \S \ref{sec:nocp} that $(\RO_A,\delta, \sigma,\epsilon)$ is a commutative $A$-Hopf algebra.



\begin{theorem}\label{thm:Hopfi}
The $A$-module $\RO_A^\imath$ has a structure of a commutative $A$-Hopf algebra, such that the surjection $r:\RO_A\rightarrow\RO_A^\imath$ is a Hopf algebra homomorphism. 
\end{theorem}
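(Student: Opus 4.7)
My plan is to realise $\RO_A^\imath$ as the quotient Hopf algebra $\RO_A/\ker r$. Since $r:\RO_A\to\RO_A^\imath$ is already surjective by \S\ref{sec:pro}(a), the theorem reduces to showing that $\ker r$ is a Hopf ideal of $\RO_A$: a two-sided ideal under the product, a coideal under $\delta$, stable under the antipode $\sigma$, and annihilated by the counit $\epsilon$. Commutativity of $\RO_A^\imath$ is then automatic from that of $\RO_A$, and $r$ is a Hopf algebra homomorphism by construction. Throughout I shall use the reformulation that $f\in\ker r$ iff the extension $\hat f\in({}_A\widehat{\U})^*$ vanishes on every element of $\dot{\RB}^\imath$, equivalently on the whole subalgebra ${}_A\widehat{\U}^\imath$ of \S\ref{sec:Ui1}.

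For the ideal property I would read off $\widehat{fg}(x)=(f\otimes g)(\WD(x))$ from the first diagram in \S\ref{sec:nocp}; the restriction $\WD:{}_A\dot{\U}^\imath\to{}_A\widehat{\U}^{\imath,1}$ of \S\ref{sec:Ui1} places the first tensor factor of $\WD(x)$ in the $A$-span of $\dot{\RB}^\imath$, on which $\hat f$ vanishes, so $fg\in\ker r$ whenever $f\in\ker r$. The coideal and counit conditions then exploit that ${}_A\dot{\U}^\imath$ is an algebra (Corollary~\ref{cor:fimu}) containing every idempotent $\one_\zeta$: expanding $\delta(f)$ in the basis $\{b^*\otimes c^*\}_{b,c\in\dot{\RB}^\imath}$ of $\RO_A^\imath\otimes\RO_A^\imath$, the coefficient at $b\otimes c$ equals $\hat f(bc)$, which vanishes because $bc\in{}_A\dot{\U}^\imath$; and the equality $1=\sum_{\lambda\in X}\one_\lambda=\sum_{\zeta\in X_\imath}\one_\zeta$ inside ${}_A\widehat{\U}^\imath$ forces $\epsilon(f)=\hat f(1)=0$.

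The main obstacle will be the antipode condition $\sigma(\ker r)\subset\ker r$. By the third commutative square in \S\ref{sec:nocp} this is equivalent to $\widehat{S}({}_A\widehat{\U}^\imath)\subset{}_A\widehat{\U}^\imath$, and since $\widehat S$ is an anti-algebra map it is enough to verify the inclusion on the modified $\imath$divided power generators $B_{i,\zeta}^{(n)}$ of \S\ref{sec:iqp}. This is a genuinely $q=1$ phenomenon, since at generic $q$ the antipode does not preserve $\U^\imath$. Under the base change sending $q\mapsto 1$ each $\tilde K_i$ acts as the identity on every weight space (so embeds in ${}_A\widehat{\U}$ as $1$) and the specialised parameters satisfy $\varsigma_i=1$; the standard formulas $S(E_i)=-E_i\tilde K_i^{-1}$ and $S(F_i)=-F_i\tilde K_i$ then collapse to $\widehat S(B_i)=-F_i-E_{\tau i}=-B_i$. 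Since $\widehat S$ is an anti-homomorphism this propagates to $\widehat S(B_i^{(n)})=(-1)^nB_i^{(n)}$; the even and odd variants $B_{i,\ev}^{(m)}$ and $B_{i,\odd}^{(m)}$ are polynomials in $B_i^{\,2}$ with $[n]_i$-coefficients, so they are fixed up to sign by $\widehat S$ and remain inside ${}_A\dot{\U}^\imath$; and the factor $\one_\zeta$ in $B_{i,\zeta}^{(n)}$ is sent to $\one_{-\zeta}\in{}_A\dot{\U}^\imath$. Combining these computations places every $\widehat S(B_{i,\zeta}^{(n)})$ inside ${}_A\dot{\U}^\imath\subset{}_A\widehat{\U}^\imath$, and the theorem follows.
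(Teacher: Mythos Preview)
Your proposal is correct and follows the same route as the paper: both reduce the theorem to showing that $\ker r$ is a Hopf ideal, and the ideal and coideal arguments are identical in spirit (using $\WD({}_A\dot{\U}^\imath)\subset{}_A\widehat{\U}^{\imath,1}$ and closure of ${}_A\dot{\U}^\imath$ under products, respectively). Where the paper simply asserts that $\sigma(I_A)\subset I_A$ and $\epsilon(I_A)=0$ are ``direct to check'', you actually carry out the check: the counit via $1=\sum_{\zeta\in X_\imath}\one_\zeta\in{}_A\widehat{\U}^\imath$, and the antipode by the $q=1$ computation $\widehat S(b_i)=-b_i$ and its propagation to the modified $\imath$divided powers. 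Two small points worth tightening: first, ``$\sigma(\ker r)\subset\ker r$ is \emph{equivalent} to $\widehat S({}_A\widehat{\U}^\imath)\subset{}_A\widehat{\U}^\imath$'' is really only the sufficient direction (which is all you need); second, your phrasing ``expanding $\delta(f)$ in the basis $\{b^*\otimes c^*\}_{b,c\in\dot{\RB}^\imath}$'' should read $(r\otimes r)\delta(f)$, since $\delta(f)$ itself lives in $\RO_A\otimes\RO_A$. Neither affects the validity of the argument. One nice by-product of your explicit antipode computation is the concrete formula $\widehat S(B_{i,\zeta}^{(n)})=(-1)^n B_{i,-\zeta+n\overline{\alpha_i}}^{(n)}$, confirming that $\widehat S$ already preserves ${}_A\dot{\U}^\imath$ (not merely its completion), which the paper does not record.
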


\begin{proof}
Note that the kernel $I_A$ of $r$ consists of linear forms $f$, such that $\hat{f}\mid_{{{}_A\dot{\U}^\imath}} = 0$. We firstly show that $I_A$ is an ideal. Take any $f\in I_A$, and $g\in \RO_A$. For any $x\in{_A\dot{\U}^\imath}$, we have $\widehat{fg}(x)=f\otimes g\circ(\widehat{\Delta}(x))=0$. The last equality follows from the fact $\widehat{\Delta}({_A\dot{\U}^\imath})\subseteq {_A\widehat{\U}^{\imath,1}}$. Hence $fg$ belongs to $I_A$. Since $\RO_A$ is commutative, $I_A$ is a two-sided ideal.

Since the embedding $_A\dot{\U}^\imath\hookrightarrow{_A\widehat{\U}}$ is compatible with products, it follows that the comultiplication $\delta:\RO_A\rightarrow\RO_A\otimes\RO_A$ induces $\RO_A/I_A\rightarrow \RO_A/I_A\otimes\RO_A/I_A$. Finally, it is direct to check that $\sigma(I_A)\subseteq I_A$, and $\epsilon(I_A)=0$. Hence $\RO_A/I_A$ admits a commutative $A$-Hopf algebra structure. We then endow $\RO_A^\imath$ with the commutative $A$-Hopf algebra structure via the isomorphism $\RO_A^\imath\cong \RO_A/I_A$ and the theorem is proved.
\end{proof}

Since $\RO_A^\imath$ is a commutative Hopf algebra over $A$, the set $\text{Hom}_{A-\text{alg}}(\RO_A^\imath, A)$ of $A$-algebra homomorphisms $\RO_A^\imath\rightarrow A$ preserving 1, has a group structure.

By definition, $\text{Hom}_{A-\text{alg}}(\RO_A^\imath,A)$ is an $A$-submodule of $\RO_A^{\imath,*}$, the $A$-linear duals of $\RO_A^\imath$. There is an ($A$-linear) bijective map  from $\RO_A^{\imath,*}$ to $_A\widehat{\U}^\imath$, given by $\phi\mapsto \sum_{b\in\dot{\RB}^\imath}\phi(b^*)b$. We write $G_A^\imath$ to be the image of $\text{Hom}_{A-\text{alg}}(\RO_A^\imath,A)$ under this bijection. Via the embedding $_A\widehat{\U}^\imath\hookrightarrow{_A\widehat{\U}}$, we view $G_A^\imath$ as a subset of ${_A\widehat{\U}}$. 

Recall the construction for the group $G_A$ from \S \ref{sec:regpq}. The following claim is immediate. 

(a) {\em As subsets of $_A\widehat{\U}$, we have $G_A^\imath={_A\widehat{\U}^\imath}\cap G_A$, which is a subgroup of $G_A$. And the bijection $\text{Hom}_{A-\text{alg}}(\RO_A^\imath,A)\xrightarrow{\sim} G_A^\imath$ is moreover a group isomorphism. }

\begin{definition}
We define $\G^\imath$ as the $\BZ$-group scheme, by setting $\G^\imath(A)=G_A^\imath$, for any $\BZ$-algebra A. We have $\G^\imath \cong Sp\, \RO_{\BZ}^\imath  $ as affine group schemes.
\end{definition}

By \S\ref{sec:pro} (a), $\G^\imath$ is a closed subsgroup scheme of $\G$, where $\G$ denotes the Chevalley group scheme over $\BZ$ associated to (the root data) of $G$.


\subsubsection{}
We next show that $\G^\imath_{\BZ[2^{-1}]}\rightarrow Sp\,\BZ[2^{-1}]$ has reduced geometric fibres.
\begin{proposition}\label{prop:GAi}
Suppose $A$ is an integral domain with characteristic not 2. Then $\RO_A^\imath$ is a reduced $A$-algebra.
\end{proposition}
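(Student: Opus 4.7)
Plan:
The approach is to reduce to the case of an algebraically closed field of characteristic $\neq 2$, and then to identify the closed subscheme $\G^\imath_k \subseteq \G_k$ with the reductive (hence smooth and reduced) symmetric subgroup $K_k$.

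For the reduction, note that $\RO^\imath_A = \RO^\imath_{\BZ} \otimes_\BZ A$ is a free $A$-module with basis the dual $\imath$canonical basis, hence $A$-torsion-free. Setting $K = \mathrm{Frac}(A)$ and $k = \overline{K}$ (which still has characteristic $\neq 2$), the maps $\RO^\imath_A \hookrightarrow \RO^\imath_K \hookrightarrow \RO^\imath_k$ are injective, so any nilpotent in $\RO^\imath_A$ would remain a nonzero nilpotent in $\RO^\imath_k$. It therefore suffices to prove the algebraically closed case.

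For $A = k$ algebraically closed of characteristic $\neq 2$, I would argue $\G^\imath_k = K_k$ as closed subschemes of $\G_k$. On $k$-points, the embedding $G^\imath_k \hookrightarrow G_k$ should factor through $K_k$: elements of $G^\imath_k = G_k \cap {}_k\widehat{\U}^\imath$ are $\theta_k$-fixed, because the involution $\theta_A$ from \eqref{eq:thetaA} acts trivially on ${}_k\widehat{\U}^\imath$ (reflecting that $\U^\imath$ specializes at $q=1$ to the enveloping algebra of the $\theta$-fixed Lie subalgebra); conversely, a $\theta_k$-fixed element of $G_k \subseteq {}_k\widehat{\U}$ expands along basis vectors that survive the averaging over $\theta$, placing it in ${}_k\widehat{\U}^\imath$. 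This gives $G^\imath_k = K_k$ set-theoretically and hence $\sqrt{\ker r} = I(K_k)$ by the Nullstellensatz. The reducedness then reduces to showing that the surjection $\RO^\imath_k \twoheadrightarrow \mathcal{O}(K_k)$ is injective, i.e., that $\ker r$ is already radical. I would verify this by matching weight-space dimensions on both sides: on the left, Theorem~\ref{thm:stab} permits enumeration of dual $\imath$canonical basis elements of fixed $X_\imath$-weight via their action on sufficiently large highest-weight modules; on the right, the Peter--Weyl decomposition of $\mathcal{O}(K_k)$ (valid since $K_k$ is reductive by Steinberg) gives the corresponding multiplicities in terms of $K_k$-representations.

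The main obstacle is precisely this last step: a set-theoretic identification of $k$-points does not in general preclude nilpotent structure on $\G^\imath_k$, so one must separately rule out extra infinitesimal data. The essential tool is a separation argument showing that no dual $\imath$canonical basis element vanishes identically on $K_k$, which in turn rests on the strong compatibility (Theorem~\ref{thm:stab}) of $\imath$canonical bases with the projective system of highest-weight modules, applied together with the classical identification of the quantum symmetric pair at $q=1$ with the enveloping-algebra symmetric pair $(U(\mathfrak{g}), U(\mathfrak{g}^\theta))$.
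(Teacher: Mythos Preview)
Your reduction to an algebraically closed field $k$ of characteristic $\neq 2$ is correct and matches the paper. After that, however, your strategy diverges from the paper's and contains a genuine gap.

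The paper does \emph{not} establish reducedness by identifying $\G^\imath_k$ with $K_k$. In fact the logical flow is the reverse: Proposition~\ref{prop:GAi} is proved first, by a direct algebraic argument, and only afterwards is Theorem~\ref{thm:Oik} (the identification $G^\imath_k = K_k$) deduced, using reducedness to pass from equality on $k$-points to equality of coordinate rings. The paper's argument for reducedness exploits the $X_\imath$-grading $\RO^\imath_k = \bigoplus_{\zeta} \RO^\imath_{k,\zeta}$ together with the injective sections $s_\mu^*: \RO^\imath_{k,\overline{\mu}} \hookrightarrow \RO_{k,\mu}$ from \S\ref{sec:pro}. These sections are compatible with multiplication (diagram~\eqref{dia:OmA}), so a product of two nonzero homogeneous elements is nonzero because $\RO_k$ is a domain. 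For a general nilpotent $f = \sum_\zeta f_\zeta$ one takes $f^{p^r} = \sum_\zeta f_\zeta^{p^r}$; since $X_\imath$ has no odd torsion (Lemma~\ref{le:nop}) and $p$ is odd, the summands lie in distinct graded pieces, forcing each $f_\zeta^{p^r}=0$ and hence $f_\zeta=0$. The characteristic-zero case is handled by Cartier's theorem.

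Your proposed route has two soft spots. First, the reverse inclusion $K_k \subseteq G^\imath_k$ on $k$-points is not as simple as ``$\theta$-fixed elements expand along basis vectors that survive averaging'': the canonical basis of ${}_k\widehat{\U}$ is not $\theta$-stable, and the $\theta$-fixed subalgebra of ${}_k\widehat{\U}$ is not a priori ${}_k\widehat{\U}^\imath$. The paper handles this (in Theorem~\ref{thm:Oik}) via a Lie-algebra comparison together with the structural fact $K_k = T_k^{\theta_k} K_k^\circ$. Second, and more seriously, your plan to rule out nilpotents by ``matching weight-space dimensions'' or a ``separation argument'' is not a proof: the graded pieces $\RO^\imath_{k,\zeta}$ are infinite-dimensional, the Peter--Weyl decomposition of $\mathcal{O}(K_k)$ is indexed by $K_k$-irreducibles rather than by $X_\imath$, and you have given no mechanism linking the two. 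Showing that no nonzero element of $\RO^\imath_k$ vanishes on $K_k$ is precisely the content of the proposition, so invoking it as the ``essential tool'' is circular unless you supply an independent argument. The paper's grading-plus-section argument provides exactly such an independent route and is considerably shorter.
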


\begin{proof}
For any $\mu,\mu'\in X$, we have the following commutative diagram by definitions:
\begin{equation*}
    \begin{tikzcd}
    {_A\dot{\U}^\imath}\one_{\overline{\mu+\mu'}} \arrow[r,"\iota_{\mu+\mu'}"] \arrow[d,"\Delta_{\overline{\mu},\overline{\mu'}}"'] & {_A\dot{\U}\one_{\mu+\mu'}} \arrow[r,"\pi_{\mu+\mu'}"] \arrow[d,"\Delta_{\mu,\mu'}"'] & {_A\U^-\one_{\mu+\mu'}} \arrow[d,"\Delta_{\mu,\mu'}"] \\ {_A\dot{\U}^\imath\one_{\overline{\mu}}}\otimes{}{_A\dot{\U}^\imath\one_{\overline{\mu'}}} \arrow[r,"\iota_\mu\otimes\iota_{\mu'}"] & {_A\dot{\U}\one_\mu}\otimes{} {_A\dot{\U}\one_{\mu'}} \arrow[r,"\pi_\mu\otimes\pi_{\mu'}"] & {_A\U^-\one_\mu}\otimes{} {_A\U^-\one_{\mu'}}. 
    \end{tikzcd}
\end{equation*}

Here $\Delta_{\mu,\mu'}$ (resp. $\Delta_{\overline{\mu},\overline{\mu'}}$) stands for the comultiplication restricting to the corresponding weight spaces (resp. $\imath$weight spaces). Then we have the commutative diagram:
\begin{equation}\label{dia:As}
    \begin{tikzcd}
    {_A\dot{\U}\one_{\mu+\mu'}} \arrow[r,"s_{\mu+\mu'}"] \arrow[d,"\Delta_{\mu,\mu'}"'] & {_A\dot{\U}^\imath\one_{\overline{\mu+\mu'}}} \arrow[d,"\Delta_{\overline{\mu},\overline{\mu'}}"]\\ {_A\dot{\U}\one_\mu}\otimes{}{_A\dot{\U}\one_{\mu'}} \arrow[r,"s_\mu\otimes s_{\mu'}"] & {_A\dot{\U}^\imath\one_{\overline{\mu}}}\otimes {_A\dot{\U}^\imath\one_{\overline{\mu'}}}.
    \end{tikzcd}
\end{equation}

By taking the linear dual of the diagram \eqref{dia:As} and restricting to the proper subspaces, we get the commutative diagram:
\begin{equation}\label{dia:OmA}
    \begin{tikzcd}
 \RO_{A,\mu+\mu'} & \RO_{A,\overline{\mu+\mu'}}^\imath \arrow[l,"s_{\mu+\mu'}^*"'] \\ \RO_{A,\mu}\otimes \RO_{A,\mu'} \arrow[u] & \RO_{A,\overline{\mu}^\imath}\otimes\RO_{A,\overline{\mu'}}^\imath \arrow[u] \arrow[l,"s_\mu^*\otimes s_{\mu'}^*"'].
 \end{tikzcd}
\end{equation}

Since $\RO_A^\imath$ is a flat $A$-module, it can be viewed as a subring of $\RO_K^\imath$, where $K$ is the fractional field of $A$. Hence we may assume that $A$ is a field. Set $p=\text{char }A$. Then by our assumption, $p\neq 2$.

Firstly assume $p>0$. Note that for any $\zeta,\zeta'\in X_\imath$, we have $\RO_{A,\zeta}^\imath\cdot\RO_{A,\zeta'}^\imath\subset\RO_{A,\zeta+\zeta'}^\imath$. So $\RO_A^\imath=\bigoplus_{\zeta\in X_\imath}\RO_{A,\zeta}^\imath$ is a $X_\imath$-graded algebra. We firstly prove that a homogeneous element cannot be nilpotent.

Take $f\in \RO_{A,\zeta}^\imath$, $g\in \RO_{A,\zeta'}^\imath$, such that $f\cdot g=0$. Choose $\mu,\mu'\in X$, such that $\overline{\mu}=\zeta$, $\overline{\mu'}=\zeta'$. By the commuting diagram \eqref{dia:OmA}, we deduce that $s_\mu^*(f)\cdot s_{\mu'}^*(g)=0$. Since $\RO_A$ is a integral domain, and $s_\mu^*$, $s_{\mu'}^*$ are injective, we deduce that either $f$ or $g$ is 0. Now suppose $f^n=0$, for some homogeneous element $f$, using the argument inductively, we deduce that $f=0$.

Suppose $f=\sum_{\zeta\in X_\imath} f_\zeta\in \RO_A^\imath$ be a nilpotent element, where $f_\zeta\in\RO_{A,\zeta}^\imath$. Then there exists $r\in \BN$, such that $f^{p^r}=0$. Note that $f^{p^r}=\sum_{\zeta\in X_\imath} f_\zeta^{p^r}$. Element $f_\zeta^{p^r}$ has degree $p^r\zeta$. Since $X_\imath$ has no $p$-torsion by Lemma \ref{le:nop}, we deduce that $f_\zeta^{p^r}$ and $f_{\zeta'}^{p^r}$ have different degrees whenever $\zeta\neq \zeta'$. Hence $f_\zeta^{p^r}=0$, for any $\zeta\in X_\imath$, which implies $f_\zeta=0$, thanks to the above argument. Therefore $f=0$.

The case when $p=0$ either follows from \cite{BK05}*{1.6.5}, or follows from a well-known result by Cartier (which asserts that any finitely generated Hopf algebra over a field with characteristic 0 is reduced).
\end{proof}

\begin{remarks}
The assumption that $\textrm{char}(A)\neq 2$ can not be dropped in general. In the case of of rank 1, one can show that $\RO_A^\imath\cong A[u,v]/(u^2-v^2-1)$, which is non-reduced if $A$ has characteristic 2.
\end{remarks}

\subsection{$G_A^\imath$ as a symmetric subgroup}
Let $A$ be an integral domain with characteristic not 2. Let $k$ be an algebraic closure of the quotient field of $A$. Then characteristic of $k$ in not 2. 
\subsubsection{}

 Recall \eqref{eq:thetaA} the group involution $\theta_A:G_A\rightarrow G_A$. Set $K_A=G_A^{\theta_A}$ be the subgroup of $G_A$ consisting of $\theta_A$ fixed points. It is called the \emph{symmetric subgroup} of $G_A$ corresponding to the involution $\theta_A$.  


Since $B_k$ is a $\theta_k$-anisotropic Borel subgroup of $G_k$, the maximal torus $T_k = B_k \cap \theta_k(B_k)$ contains a maximal $\theta_k$-anisotropic torus. We denote by $K_k^\circ$ the identity component of $K_k$. Then by \cite{Vu74}*{Proposition~7} (cf. \cite{Ri82}*{\S2.9}), we have \begin{equation}\label{eq:TK}
 K_k = T_k^{\theta_k} K^\circ_k.
\end{equation}

\begin{theorem}\label{thm:Oik}
As closed subgroups of $G_k$, we have $G_k^\imath=K_k$. In particular, the coordinate ring of the symmetric subgroup $K_k$ is isomorphic to $\RO_k^\imath$.
\end{theorem}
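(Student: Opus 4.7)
The plan is to prove both inclusions $G_k^\imath \subseteq K_k$ and $K_k \subseteq G_k^\imath$ separately, working throughout with the identification $G_k^\imath = G_k \cap {}_k\widehat{\U}^\imath$ inside ${}_k\widehat{\U}$ from the subsection defining $G_A^\imath$. The ``In particular'' coordinate-ring statement then falls out for free, since $G_k^\imath = \mathrm{Sp}\,\RO_k^\imath$ by construction.

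For the forward inclusion $G_k^\imath \subseteq K_k$, I would show that $\theta_k$ acts as the identity on the image of ${}_k\widehat{\U}^\imath$ inside ${}_k\widehat{\U}$, so that any $g \in G_k^\imath$ is automatically $\theta_k$-fixed. Because $\theta_k$ is an algebra automorphism of ${}_k\widehat{\U}$ that acts linearly on formal sums over the canonical basis, it is enough to check $\theta_k(\iota(b)) = \iota(b)$ on the subalgebra generators $B_i$ ($i \in \I$) and $K_\mu$ ($\mu \in Y^\imath$) of ${}_k\dot{\U}^\imath$. At $q=1$ one has $\varsigma_i = 1$ and $\tilde K_i = 1$, so the embedded generators simplify to $\iota(B_i) = \sum_\lambda (F_i + E_{\tau i})\one_\lambda$ and $\iota(K_\mu) = \sum_\lambda \one_\lambda$, and the identities $\theta_k(E_j\one_\lambda) = F_{\tau j}\one_{\theta\lambda}$ and $\theta_k(\one_\lambda) = \one_{\theta\lambda}$ combined with the fact that $\theta$ permutes $X$ give the claim immediately. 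Intersecting ${}_k\widehat{\U}^\imath \subseteq ({}_k\widehat{\U})^{\theta_k}$ with $G_k$ yields $G_k^\imath \subseteq G_k^{\theta_k} = K_k$.

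For the reverse inclusion $K_k \subseteq G_k^\imath$, I would use the decomposition $K_k = T_k^{\theta_k} K_k^\circ$ from \eqref{eq:TK}. Any torus element $t = \sum_\lambda n_\lambda \one_\lambda \in T_k^{\theta_k}$ has $n_\lambda$ depending only on $\bar\lambda \in X_\imath$, and can therefore be rewritten as $t = \sum_\zeta n_\zeta \iota(\one_\zeta) \in \iota({}_k\widehat{\U}^\imath)$ using that each $\one_\zeta$ lies in $\dot{\RB}^\imath$; hence $T_k^{\theta_k} \subseteq G_k^\imath$. For the identity component I compare Lie algebras. Since the surjection $r : \RO_k \to \RO_k^\imath$ of Theorem~\ref{thm:Hopfi} is a Hopf algebra map, primitive elements identify $\lie(G_k^\imath)$ with $\lie(G_k) \cap {}_k\widehat{\U}^\imath$ inside ${}_k\widehat{\U}$, and since $\mathrm{char}\,k \neq 2$ one has $\lie(K_k) = \lie(G_k)^{\theta_k}$. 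The forward inclusion gives $\lie(G_k^\imath) \subseteq \lie(K_k)$; for the reverse I exhibit the standard spanning set of $\lie(G_k)^{\theta_k}$ inside ${}_k\widehat{\U}^\imath$, namely $e_i + f_{\tau i} = \iota(B_{\tau i})|_{q=1}$ and, for $\mu \in Y^\imath$, the element $h_\mu = \sum_\zeta c_\zeta\,\iota(\one_\zeta)$ with $c_\zeta := \langle \mu,\lambda\rangle$ well-defined on $X_\imath$ because $\theta\mu = \mu$. Consequently $\lie(G_k^\imath) = \lie(K_k)$. Proposition~\ref{prop:GAi} forces $G_k^\imath$ to be reduced and therefore smooth over the algebraically closed field $k$, so $\dim G_k^\imath = \dim \lie(G_k^\imath) = \dim K_k^\circ$ and $(G_k^\imath)^\circ = K_k^\circ$. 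Combining with the torus case, $K_k = T_k^{\theta_k} K_k^\circ \subseteq G_k^\imath$.

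The main obstacle will be the Lie algebra comparison in the second step: although the elements $e_i + f_{\tau i}$ and $h_\mu$ ($\mu \in Y^\imath$) are the obvious candidates, one must (i)~correctly compute the $q = 1$ specializations of the embedded generators $B_i$ and formally sum $\iota(\one_\zeta)$ over $\bar\lambda = \zeta$ to recover the full $\theta$-fixed torus, and (ii)~invoke the reducedness result of Proposition~\ref{prop:GAi} precisely to upgrade the infinitesimal equality $\lie(G_k^\imath) = \lie(K_k)$ into the group-theoretic equality $(G_k^\imath)^\circ = K_k^\circ$. The forward inclusion and torus case, by contrast, are purely formal consequences of the definitions.
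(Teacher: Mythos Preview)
Your proposal is correct and follows essentially the same route as the paper: forward inclusion by checking $\theta_k$ fixes the embedded generators of ${}_k\dot{\U}^\imath$, reverse inclusion via the decomposition $K_k = T_k^{\theta_k}K_k^\circ$ from \eqref{eq:TK}, handling the torus part by rewriting $\sum_\lambda n_\lambda\one_\lambda$ as $\sum_\zeta n_\zeta\one_\zeta$, and handling the identity component by a Lie algebra comparison. One small imprecision: the elements $f_i + e_{\tau i}$ and $h_\mu$ ($\mu\in Y^\imath$) are Lie-algebra \emph{generators} of $\mathfrak{k}=\mathfrak{g}^{\theta_k}$ (the paper cites \cite{Ko14}*{Lemma~2.7}), not a linear spanning set; since $\mathfrak{g}\cap{}_k\widehat{\U}^\imath$ is a Lie subalgebra this still suffices. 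You are in fact slightly more explicit than the paper in invoking Proposition~\ref{prop:GAi} to pass from $\lie(G_k^\imath)=\lie(K_k)$ to $(G_k^\imath)^\circ = K_k^\circ$ via smoothness---the paper's sentence ``Combined with $G_k^\imath\subseteq K_k$, it follows that $K_k^\circ$ is contained in $G_k^\imath$'' uses this implicitly.
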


\begin{proof}
We first show $G_k^\imath\subseteq K_k$. It suffices to show the subalgebra $_k\dot{\U}^\imath \subset {}_k\widehat{\U}$ is fixed by $\theta_k$ pointwise. Since $_k\dot{\U}^\imath\cong k\otimes_\BZ {}_\BZ\dot{\U}^\imath$, and $_\BZ\dot{\U}^\imath$ is a subalgebra of $_\BC\dot{\U}^\imath$, we may assume $k=\BC$. Since $_\BC\dot{\U}^\imath$ is generated by $B_{i,\zeta}$ ($i\in\I$, $\zeta\in X_\imath$), we only need to check $\theta_\BC$ fixes these generators. This is direct, and we leave it for readers.

Next we compare the Lie algebras of $G_k^\imath$ and $K_k$. Let $\mathfrak{g}$ be the Lie algebra of $G$. Recall from \S \ref{sec:liealg} that we have a canonical embedding $\mathfrak{g}\hookrightarrow{}_k\widehat{\U}$. Also recall that $\mathfrak{g}$ has generators (as a Lie algebra)
\[
e_i=\sum_{\lambda\in X}E_i\one_\lambda,\qquad f_i=\sum_{\lambda\in X}F_i\one_\lambda,\qquad h_\mu=\sum_{\lambda\in X}\langle \mu,\lambda\rangle\one_\lambda,
\]
for any $i\in\I$, and $\mu\in Y$. Here we are abusing notations by using the same notations to denote the elements after base change.

The involution $\theta_k$ on $_k\widehat{\U}$ also restricts a (Lie algebra) involution on $\mathfrak{g}$, which is the same as the differentiation of the involution on the group $G_k$. We still write $\theta_k$ to denote the involution on $\mathfrak{g}$.


Let $\mathfrak{k}$ be the Lie algebra of $K_k$. By \cite{Bo91}*{\S9.4}, we have $\mathfrak{k}=\mathfrak{g}^{\theta_k}$. By the triangular decomposition of $\mathfrak{g}$ (cf. \cite{Ko14}*{Lemma~2.7}), it is easy to see that $\mathfrak{k}=\mathfrak{g}^{\theta_k}$ is the subalgebra of $\mathfrak{g}$ generated by $f_i+e_{\tau i}$ ($i\in\I$) and $h_\mu$ ($\mu\in Y^\imath$).

We write $\mathfrak{g}^\imath\subseteq \mathfrak{g}$ to be the Lie algebra of $G^\imath_k$. Then it follows from the definition that $\mathfrak{g}^\imath=\mathfrak{g}\cap{_k\widehat{\U}^\imath}$, as Lie subalgebras of $_k\widehat{\U}$. For any $i\in\I$, $\mu\in Y^\imath$, we have
\[
f_i+e_{\tau i}=\sum_{\zeta\in X_\imath}B_{i,\zeta},\qquad h_\mu=\sum_{\zeta\in X_\imath}\langle \mu,\zeta\rangle \one_\zeta.
\]
Again, we abuse notations here. Then we deduce that $\mathfrak{k}$ is contained in $\mathfrak{g}^\imath$. Combined with $G_k^\imath\subseteq K_k$, it follows that $K_k^\circ$ is contained in $G_k^\imath$. 

Thanks to \eqref{eq:TK}, it remains to show that $T_k^{\theta_k}$ is contained in $G_k^\imath$.
Take any $\xi=\sum_{\lambda\in X}n_\lambda\one_\lambda$ in $T_k^{\theta_k}$. Then $n_\lambda\in k^\times$  and $n_{\lambda'}n_{\lambda''}=n_{\lambda'+\lambda''}$, for any $\lambda',\lambda''\in X$. Moreover we also have $n_\mu=n_{\theta\mu}$, for any $\mu\in X$. Therefore $n_{\mu-\theta\mu}=1$. Hence $n_{\lambda'}=n_{\lambda''}$, whenever $\overline{\lambda'}=\overline{\lambda''}$ in $X_\imath$. For any $\zeta\in X_\imath$, we define $n_\zeta=n_\lambda$, for any $\lambda\in X$ with $\overline{\lambda}=\zeta$. Then we can write $\xi=\sum_{\zeta\in X_\imath}n_\zeta\one_\zeta \in {}_k\widehat{\U}^\imath$. 

We finish the proof now.
\end{proof}

\begin{corollary}
As subgroups of $G_A$, we have $G_A^\imath=K_A$.
\end{corollary}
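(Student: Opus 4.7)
The plan is to prove the two inclusions $G_A^\imath \subseteq K_A$ and $K_A \subseteq G_A^\imath$ separately, both by reduction to ingredients already in hand.

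For $G_A^\imath \subseteq K_A$, I would invoke verbatim the first paragraph of the proof of Theorem~\ref{thm:Oik}: the involution $\theta_A : {_A\widehat{\U}} \to {_A\widehat{\U}}$ of \eqref{eq:thetaA} fixes the subalgebra ${_A\dot{\U}^\imath}$ pointwise, since this is a statement about the generators $B_{i,\zeta}$ that is independent of the base ring $A$ (the computation over $\BC$ carried out in the theorem transfers to arbitrary $A$ via base change from ${_\BZ\dot{\U}^\imath}$). Hence $\theta_A$ also fixes every element of the completion ${_A\widehat{\U}^\imath}$, so every $g \in G_A^\imath = G_A \cap {_A\widehat{\U}^\imath}$ satisfies $\theta_A(g) = g$, i.e.\ $g \in K_A$.

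For $K_A \subseteq G_A^\imath$, I would fix an embedding $A \hookrightarrow k$ into an algebraic closure $k$ of the fraction field of $A$. This induces compatible embeddings ${_A\widehat{\U}} \hookrightarrow {_k\widehat{\U}}$ and $G_A \hookrightarrow G_k$, under which $\theta_A = \theta_k|_{G_A}$, so $K_A = G_A \cap K_k$. By Theorem~\ref{thm:Oik}, $K_k = G_k^\imath$, so it suffices to show $G_A \cap G_k^\imath \subseteq G_A^\imath$. I interpret $g \in G_A$ as an $A$-algebra homomorphism $\phi_g : \RO_A \to A$; then $g \in G_A^\imath$ if and only if $\phi_g$ vanishes on the ideal $I_A = \ker(r : \RO_A \twoheadrightarrow \RO_A^\imath)$. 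Because $\RO_\BZ^\imath$ is $\BZ$-free on the dual $\imath$canonical basis and $r_\BZ$ is surjective (\S\ref{sec:pro}(a)), the short exact sequence $0 \to I_\BZ \to \RO_\BZ \to \RO_\BZ^\imath \to 0$ splits over $\BZ$, so it remains exact after tensoring with either $A$ or $k$, and the natural map $I_A \hookrightarrow I_k$ is injective. Given $g \in G_A \cap G_k^\imath$, the base-changed homomorphism $\phi_g \otimes_A k : \RO_k \to k$ vanishes on $I_k$, and in particular on $I_A \subseteq I_k$; combined with $\phi_g(I_A) \subseteq A$ and the injectivity of $A \hookrightarrow k$, this forces $\phi_g(I_A) = 0$, so $g \in G_A^\imath$.

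The argument is largely formal, and there is no serious obstacle: the only non-trivial input is Theorem~\ref{thm:Oik}, which handles the algebraically closed case, and the key observation is that the entire Hopf-theoretic setup --- the quotient $\RO_\BZ \twoheadrightarrow \RO_\BZ^\imath$ together with its $\BZ$-module splitting --- is defined over $\BZ$ because the coefficients $\iota_{b,b'}|_{q=1}$ are integers and the dual $\imath$canonical basis of $\RO_\BZ^\imath$ is $\BZ$-free. The mild technical point to double-check is simply the compatibility $I_A = A \otimes_\BZ I_\BZ$ (and similarly over $k$), which is immediate from this splitting.
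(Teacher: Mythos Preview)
Your proposal is correct and follows essentially the same strategy as the paper: the inclusion $G_A^\imath\subseteq K_A$ is handled identically, and the reverse inclusion is obtained by embedding into the algebraically closed field $k$, invoking Theorem~\ref{thm:Oik}, and then checking compatibility with base change. The only cosmetic difference is that the paper phrases the base-change step on the completion side as the single chain
\[
K_A\subseteq {_A\widehat{\U}}\cap K_k={_A\widehat{\U}}\cap G_k^\imath={_A\widehat{\U}}\cap{_k\widehat{\U}^\imath}\cap G_k={_A\widehat{\U}^\imath}\cap G_A=G_A^\imath,
\]
whereas you unpack the dual statement on the coordinate-ring side via the split exact sequence $0\to I_\BZ\to\RO_\BZ\to\RO_\BZ^\imath\to 0$.
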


\begin{proof}
The proof for $G_A^\imath\subseteq K_A$ is the same as the first part of the proof of Theorem \ref{thm:Oik}. For the other side inclusion, we embed all the objects into the ambient space $_k\widehat{\U}$. It follows from definitions that 
\[
K_A\subseteq {_A\widehat{\U}}\cap K_k={_A\widehat{\U}}\cap G_k^\imath={_A\widehat{\U}}\cap{_k\widehat{\U}^\imath}\cap G_k={_A\widehat{\U}^\imath}\cap G_A=G_A^\imath.
\]
We completed the proof.
\end{proof}
 
\subsubsection{}
 For any $\mu\in X^+$, write $V_k(\mu)=k\otimes_\A\big({_\A L(\mu)}\big)$ as before. Then $V_k(\mu)$ admits a $_k\widehat{\U}$-action, and hence a (rational) $G_k$-action by restriction. Via the embedding $_k\dot{\U}^\imath\hookrightarrow{_k\widehat{\U}}$ the space $V_k(\mu)$ also admits a $_k\dot{\U}^\imath$-action.  Since there are only finitely many $\imath$canonical basis elements acting non-trivially on $V_k(\mu)$, $V_k(\mu)$ admits a natural $_k\widehat{\U}^\imath$-action.

\begin{prop}\label{prop:KvsUi}
Let $M$ be a $k$-subspace of $V_k(\mu)$. Then $M$ is stable under the $K_k$-action if and only if it is stable under the $_k\dot{\U}^\imath$-action. 
\end{prop}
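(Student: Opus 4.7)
The plan is to reduce the equivalence to the statement that the images of $K_k$ and of ${}_k\dot{\U}^\imath$ in $\mathrm{End}_k(V_k(\mu))$ generate the same unital $k$-subalgebra. This uses the elementary observation that stability of a subspace of a finite-dimensional vector space under a set of linear operators depends only on the subalgebra they generate. By the standard equivalence for rational modules of connected linear algebraic groups (\cite{Jan03}*{I.7.11}), stability under $K_k^\circ$ coincides with stability under $\dist(K_k^\circ) = \dist(K_k)$; combined with the decomposition $K_k = T_k^{\theta_k} K_k^\circ$ from \eqref{eq:TK}, stability under $K_k$ is equivalent to joint stability under $\dist(K_k)$ and $T_k^{\theta_k}$.

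For the inclusion $\mathrm{alg}({}_k\dot{\U}^\imath) \subseteq \mathrm{alg}(K_k)$, the plan is to verify that each $\imath$divided power generator $B_{i,\zeta}^{(n)}$ of ${}_k\dot{\U}^\imath$, at $q = 1$, factors as an element of $\dist(K_k)$ composed with the weight projection $\one_\zeta$. For $\tau i \neq i$, $B_i^{(n)}\one_\zeta$ specializes to $(f_i + e_{\tau i})^{(n)}\one_\zeta = \sum_{a+b=n} f_i^{(a)} e_{\tau i}^{(b)} \one_\zeta$, the $n$-th divided power of the nilpotent element $f_i + e_{\tau i} \in \mathfrak{k}$, times the projection. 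For $\tau i = i$, the Berman--Wang formulas for $B_{i,\ev}^{(m)}$ and $B_{i,\odd}^{(m)}$ specialize at $q = 1$ to polynomials in $f_i + e_i \in \mathfrak{k}$ realizing divided powers in $\dist(K_k)$. The projections $\one_\zeta$, when restricted to the finite-dimensional $V_k(\mu)$, lie in the algebra generated by $T_k^{\theta_k}$: the characters of $T_k^{\theta_k}$ separate the distinct $X_\imath$-weight classes appearing in $V_k(\mu)$, thanks to Lemma~\ref{le:nop} and the assumption $\mathrm{char}\, k \neq 2$.

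For the reverse inclusion $\mathrm{alg}(K_k) \subseteq \mathrm{alg}({}_k\dot{\U}^\imath)$, the Lie algebra $\mathfrak{k}$ is generated, as shown in the proof of Theorem~\ref{thm:Oik}, by $f_i + e_{\tau i}$ ($i \in \I$) and $h_\mu$ ($\mu \in Y^\imath$). Each such element acts on $V_k(\mu)$ as the finite-support sum $\sum_\zeta B_i \one_\zeta$ or $\sum_\zeta \langle \mu, \zeta\rangle \one_\zeta$, an element of ${}_k\dot{\U}^\imath$; divided powers come from $\imath$divided powers. Hence $\dist(K_k) \subset \mathrm{alg}({}_k\dot{\U}^\imath)$. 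For $T_k^{\theta_k}$: any $t = \sum_\lambda n_\lambda \one_\lambda \in T_k^{\theta_k}$ satisfies $n_\lambda = n_{\theta \lambda}$, so $n_\lambda$ depends only on $\overline{\lambda} \in X_\imath$, and $t = \sum_\zeta n_\zeta \one_\zeta$ restricts to $V_k(\mu)$ as a finite $k$-linear combination of the idempotents $\one_\zeta \in {}_k\dot{\U}^\imath$.

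The main obstacle is the identification at $q = 1$ of the $\imath$divided powers with genuine divided-power elements of $\dist(K_k)$, particularly for $\tau i = i$ where $B_{i,\ev}^{(m)}$ and $B_{i,\odd}^{(m)}$ involve products such as $\prod(B_i^2 - [2j-1]_i^2)$. One must carefully unwind the Berman--Wang formulas and verify that after specialization the resulting polynomial expressions in $f_i + e_i$ indeed act as elements of $\dist(K_k)$ on $V_k(\mu)$; this is cleanest to check directly on each finite-dimensional representation of the relevant $\mathrm{SL}_{2,k}$-block.
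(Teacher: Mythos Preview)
Your route through $\dist(K_k)$ and $T_k^{\theta_k}$ is a plausible strategy, and your observation about $T_k^{\theta_k}$ separating $X_\imath$-weight classes is correct (the appeal to Lemma~\ref{le:nop} is apt: it ensures the diagonalizable group scheme $T_k^{\theta_k}$ is smooth over $k$ when $\mathrm{char}\,k\neq 2$). But the detour through divided powers creates exactly the obstacle you flag and do not resolve. In positive characteristic, showing that the specializations of $B_{i,\zeta}^{(n)}$ lie in $\dist(K_k)$ is genuinely non-trivial: for $\tau i=i$ the element $f_i+e_i\in\mathfrak{k}$ is \emph{semisimple}, not nilpotent, so its ``divided powers'' are not root-vector divided powers but binomial-type elements, and matching the Berman--Wang polynomials against generators of the hyperalgebra of $K_k$ is a computation you leave open. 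In the reverse direction, checking that the Lie-algebra generators $f_i+e_{\tau i}$ and $h_\mu$ lie in the image of ${}_k\dot\U^\imath$ does not give $\dist(K_k)\subset\mathrm{alg}({}_k\dot\U^\imath)$ in positive characteristic, since $\dist(K_k)$ is strictly larger than the subalgebra generated by $\mathfrak{k}$; your phrase ``divided powers come from $\imath$divided powers'' hides precisely the same unresolved identification.

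The paper's proof sidesteps all of this with a two-line Hopf-algebraic argument, available once Theorem~\ref{thm:Oik} identifies $\RO^\imath_k$ as the coordinate ring of $K_k=G_k^\imath$. If $M$ is ${}_k\dot\U^\imath$-stable then it is ${}_k\widehat\U^\imath$-stable (only finitely many $\imath$canonical basis elements act on $V_k(\mu)$), hence $K_k$-stable since $K_k\subset{}_k\widehat\U^\imath$. Conversely, if $M$ is $K_k$-stable it is an $\RO^\imath_k$-subcomodule, and any $f\in{}_k\dot\U^\imath$ acts on $m\in M$ as $(\mathrm{id}\otimes\tilde f)\circ\Delta_M(m)\in M$, where $\tilde f\in(\RO^\imath_k)^*$ is the linear form induced by $f$. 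No explicit matching of divided powers is needed.
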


\begin{proof}
Suppose $M$ is $_k\dot{\U}^\imath$-stable. Since we  $K_k = G^\imath_k=G_k\cap{_k\widehat{\U}^\imath}$, it is clear $M$ is $K_k$-stable. 


Next suppose $M$ is $G_k^\imath$-stable. Take any $f$ in $_k\dot{\U}^\imath$  and $m$ in $M$. Then $f$ naturally induces a linear form on $\RO_k^\imath$, which we denote by $\tilde{f}$. It is direct to check that $f\cdot m=(id\otimes\tilde{f})\circ\Delta_M(m)$, where $\Delta_M:M\rightarrow M\otimes \RO_k^\imath$ is the comodule morphism corresponding to the $G_k^\imath$ action on $M$ (cf. \cite{Jan03}*{\S2.8}). It follows that $f\cdot m$ belongs to $M$. 
\end{proof}


	\section{Quantum Frobenius splittings}	\label{sec:qFri}
	
In this section, we assume given an $\imath$root datum $(\I,Y,X,A,(\alpha_i)_{i\in\I},(\coroot_i)_{i\in\I},\tau,\theta)$ of arbitrary (quasi-split) type.
	
	\subsection{$\imath$Quantum groups over $\mathcal{A}_2$}
 
	Set $\mathcal{A}_2=S^{-1}\mathcal{A}$, where $S\subseteq\mathcal{A}$ is the multiplicative system  generated by $q^a+q^{-a}$, for all $a\in\mathbb{Z}$. Then $\mathcal{A}_2 \subset \mathbb{Q}(q)$ is a subring containing $\mathcal{A}$. 

        This subsection is devoted to study the $\imath$quantum group over the ring $\A_2$.
     
	\subsubsection{} We define the following  \emph{q-double binomial coefficients} in $\Qq$:
	\begin{align*}
	\LR{m}{2k} &=\frac{[m][m-2]\cdots[m-2k+2]}{[2][4]\cdots[2k]} =\frac{\prod_{s=0}^{k-1} (q^{m-2s} - q^{-m+2s})}{\prod_{s=1}^{k}(q^{2s} - q^{-2s}) }, \text{for } m\in\mathbb{Z}, k\in\mathbb{Z}_{>0};\\
	\LR{m}{0}&=1, \text{for } m\in\mathbb{Z}; \qquad \LR{m}{2k}=0, \text{for }k<0.
	\end{align*}	
We denote by  $\LR{m}{2k}_{q^a}$ if $q$ is replaced by $q^a$ for some $a \in \BZ_{>0}$.  
	
	We collect some basic properties the $q$-double binomial coefficients from the definition ($m,k\in\mathbb{Z}$). 
		\begin{equation}\label{eq:mk1}
		\LR{m+2}{2k}=q^{-2k}\LR{m}{2k}+q^{m-2k+2}\LR{m}{2k-2},
		\end{equation}
			\begin{equation}\label{eq:mk2}
		\LR{m}{2k}=(-1)^k\LR{-m+2k-2}{2k},
		\end{equation}
		\begin{equation}\label{eq:mk3}
		\LR{2m}{2k}=\qbinom{m}{k}_{q^2}.
		\end{equation}

	\begin{lem}\label{lem:DoubleInA2}	
		(a)  Let  $m,k\in\mathbb{Z}$. We have  $\LR{m}{2k}\in\mathcal{A}_2$.
		
		(b)  Let  $m', m'',k\in\mathbb{Z}$. We have 
		\[
		\LR{m'+m''}{2k}=\sum_{k'+k''=k} q^{e(m'k''-m''k')}\LR{m'}{2k'}\LR{m''}{2k''}, \quad \text{for } e = \pm 1.
		\]
	\end{lem}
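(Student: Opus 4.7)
For part (a), I would analyze $\LR{m}{2k}\in\mathbb{Q}(q)$ via a cyclotomic factorization. Using $[2s]=[s](q^s+q^{-s})$, one rewrites
\[
\LR{m}{2k}=\frac{\prod_{s=0}^{k-1}[m-2s]}{[k]!\cdot\prod_{s=1}^k(q^s+q^{-s})},
\]
and the second factor of the denominator is already a unit in $\mathcal{A}_2$ by definition of $S$. It therefore suffices to show $\prod_{s=0}^{k-1}[m-2s]/[k]!\in\mathcal{A}_2$. Each $[n]$ factors, up to a power of $q$, as $\prod_{d\mid 2n,\,d\geqslant 3}\Phi_d(q)$, so the problem is reduced to matching cyclotomic multiplicities. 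The preliminary observation is that $\Phi_d$ is already a unit in $\mathcal{A}_2$ whenever $4\mid d$, because then $\Phi_d$ divides $q^{d/2}+1=q^{d/4}(q^{d/4}+q^{-d/4})$ and $q^{d/4}+q^{-d/4}\in S$. The remaining ``dangerous'' factors are $\Phi_d$ with $d$ odd and $d\geqslant 3$, or $d=2e$ with $e$ odd and $e\geqslant 3$; setting $e:=d$ in the former case and $e:=d/2$ in the latter, the multiplicity of $\Phi_d$ in $[k]!$ equals $\lfloor k/e\rfloor$, while its multiplicity in $\prod_{s=0}^{k-1}[m-2s]$ is the number of $s\in\{0,\dots,k-1\}$ with $e\mid m-2s$. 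Since $e$ is odd, $2$ is invertible mod $e$, so $2s\equiv m\pmod e$ has a unique residue class, yielding at least $\lfloor k/e\rfloor$ solutions. Every dangerous factor cancels, proving (a).

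For part (b), I would argue by induction on $m''$, using \eqref{eq:mk1} (for $e=+1$) or its dual $\LR{m+2}{2k}=q^{2k}\LR{m}{2k}+q^{-m+2k-2}\LR{m}{2k-2}$ (for $e=-1$, obtained from the bar-involution $q\mapsto q^{-1}$, under which each $\LR{m}{2k}$ is invariant). The base case $m''=0$ is immediate because $\LR{0}{2k''}=\delta_{k'',0}$ collapses the right-hand side to $\LR{m'}{2k}$. For the inductive step from $m''$ to $m''+2$, I would apply the appropriate recursion simultaneously to $\LR{m'+m''+2}{2k}$ on the left-hand side and to each $\LR{m''+2}{2k''}$ appearing on the right-hand side, then invoke the inductive hypothesis on the two resulting instances $\LR{m'+m''}{2k}$ and $\LR{m'+m''}{2(k-1)}$. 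Both sides become $\mathcal{A}_2$-linear combinations of products $\LR{m'}{2k'}\LR{m''}{2k''}$ indexed by $k'+k''\in\{k,k-1\}$, and matching coefficients reduces to elementary $q$-exponent identities; for the $k'+k''=k$ terms with $e=+1$, the relevant check is $m'k''-m''k'-2k=m'k''-(m''+2)k'-2k''$, which is immediate, and a parallel identity handles the $k'+k''=k-1$ terms. Negative $m''$ and the odd-$m''$ base case are dealt with symmetrically, using \eqref{eq:mk2} to relate $\LR{m}{2k}$ with $\LR{-m+2k-2}{2k}$.

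The main technical obstacle, I expect, is the cyclotomic bookkeeping in part (a): one must carefully verify that $\Phi_d$ is a unit in $\mathcal{A}_2$ for $4\mid d$ and handle the two types of ``dangerous'' factors (odd $d$ and $d\equiv 2\pmod 4$) uniformly by passing to the odd integer $e$. Once this groundwork is laid, the Vandermonde-type identity (b) is essentially a tedious but routine book-keeping exercise built on the $q$-Pascal rule \eqref{eq:mk1}.
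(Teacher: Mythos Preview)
Your argument for (a) is correct and gives a genuinely different proof from the paper's. The paper instead reduces via \eqref{eq:mk1} and \eqref{eq:mk2} to the single case $\LR{-1}{2k}$, which it then computes explicitly as $(-1)^k\qbinom{2k}{k}\prod_{i=1}^k(q^i+q^{-i})^{-2}\in\mathcal{A}_2$. Your cyclotomic bookkeeping is more systematic and would adapt readily to other localizations, while the paper's route is shorter and yields a closed formula for the base case.

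For (b), your inductive scheme has a gap in the odd-$m''$ case. The recursion \eqref{eq:mk1} moves in steps of $2$, so from $m''=0$ you reach only even $m''$; to complete the induction you need a separate base case such as $m''=1$. You propose to obtain this via \eqref{eq:mk2}, but that identity sends $m\mapsto -m+2k-2$, which \emph{preserves} the parity of $m$, so it cannot produce an odd base case from the even ones. The gap is not fatal---one can verify $m''=1$ directly by a further induction on $k$ or on $m'$---but your outline does not do this. By contrast, the paper's proof of (b) avoids induction entirely: it observes that both sides are Laurent polynomials in the formal variables $q^{m'}$ and $q^{m''}$ with coefficients in $\mathbb{Q}(q)$, invokes the standard $q$-Vandermonde identity \cite{Lu93}*{\S1.3.1(e)} together with \eqref{eq:mk3} to establish the identity whenever $m',m''$ are both even, and then concludes by polynomial interpolation that it holds for all integers. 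This sidesteps the parity issue in one line.
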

	\begin{proof}
		We show (a). It suffices to consider the case when $k>0$ and $m$ is odd. Thanks to \eqref{eq:mk1} and \eqref{eq:mk2}, it suffices to show $\LR{-1}{2k} \in \CA_2$. Note that $[2n] = [n] (q^n + q^{-n})$ for $n \in \BZ_{>0}$. We have 
			\begin{align*}
		\LR{-1}{2k}&= (-1)^k \LR{2k-1}{2k} = (-1)^k\frac{[2k-1][2k-3]\cdots[1]}{[2k][2k-2]\cdots [2]}\\&=(-1)^k\frac{[2k]!}{[2]^2[4]^2\cdots[2k]^2}=(-1)^k\qbinom{2k}{k}\prod_{i=1}^k\frac{1}{(q^i+q^{-i})^2} \in  \CA_2.
		\end{align*}		 
%
%

We prove (b). When both $m'$, $m''$ are even, the claim is clear  by \eqref{eq:mk3} and \cite[\S1.3.1 (e)]{Lu93}. 
	For fixed $k$, we may regard (b) as an identity of rational functions in three variables: $q, q^{m'}, q^{m''}$. Since the identity holds for all even $m' $, $m'' $, it must hold as a formal identity in the three variables. This proves the proposition. 
	\end{proof}

	\subsubsection{} \label{sec:qsetup}
	We recall the setup in \S\ref{sec:qFr}.
	Fix an odd number $l>1$. We require that $l$ is relatively prime to all the root length $\epsilon_i$. Let $f_l\in \mathcal{A}$ be the $l$-th cyclotomic polynomial. Set $\mathcal{A}'=\mathcal{A}/(f_l)$, and let $\mathbf{F}$ be the quotient ring of $\mathcal{A}'$. Let $\phi:\mathcal{A}\rightarrow\mathcal{A}'$ be the natural quotient map. Then $\phi$ extends to a ring homomorphism from the local ring $\mathcal{A}_{(f_l)}$ to $\mathbf{F}$. Let $c:\mathcal{A}\rightarrow\mathcal{A'}$ be the ring homomorphism sending $q$ to 1. Then $c$ also extends to $\A_{(f_l)}$.
	
	  Write $\bq_i = \phi (q_i)\in\A '$, for any $i\in\I$. For $m,k\in\mathbb{Z}$, set
	\begin{equation*}
	\lr{m}{2k} =c(\LR{m}{2k})=\frac{m\cdot (m-2)\cdots (m-2k+2)}{2\cdot 4\cdots (2k)}\in \mathbb{Z}[2^{-1}].
	\end{equation*}
	
	The following lemma is an analogue of \cite[Lemma 34.1.2 (c)]{Lu93}.
	
	\begin{lemma}\label{le:qBinomAtUnity}
		For $n,k\in\mathbb{Z}$, write $n=n_0+n_1l$, with $n_0,n_1\in\mathbb{Z}$ such that $n_0\in\{0,2,\cdots,2l-2\}$, and $k=k_0+k_1l$, with $k_0,k_1\in\mathbb{Z}$ such that $k_0\in\{0,1,\cdots, l-1\}$. We have
		\begin{equation*}
		\phi\left(\LR{n}{2k}\right)=\lr{n_1}{2k_1}\phi\left(\LR{n_0}{2k_0}\right).
		\end{equation*}
		\begin{proof}
			Let $n$ be even. Then by Lemma \ref{lem:DoubleInA2}, and \cite[Lemma 34.1.2 (c)]{Lu93}, we have
			\begin{align*}
			    \phi\left(\LR{n}{2k}\right)=\phi\left(\qbinom{n/2}{k}_{q^2}\right)=\phi\left(\qbinom{n_0/2}{k_0}_{q^2}\right)\binom{n_1/2}{k_1}=\phi\left(\LR{n_0}{2k_0}\right)\lr{n_1}{2k_1}.
			\end{align*}
			
			Now suppose $n$ is odd. The lemma holds when $n=l$, since
		\[
		\phi\left(\LR{l}{2k}\right) =
		\begin{cases}
		0, & \text{if } l \nmid k;\\
		\lr{1}{2k/l}, & \text{otherwise}.
		\end{cases}
		\]
		
		Note that $n-l$ is even. By the Lemma \ref{lem:DoubleInA2}, we have
			\begin{align*}
			    \phi\left(\LR{n}{2k}\right)&=\sum_{k'+k''=k}\bq^{nk''}\phi\left(\LR{n-l}{2k'}\LR{l}{2k''}\right)\\&=\sum_{k'+lk''=k}\phi\left(\LR{n-l}{2k'}\right)\lr{1}{2k''}\\&=\phi\left(\LR{n_0}{2k_0}\right)\sum_{0\leqslant k''\leqslant k_1}\lr{n_1-1}{2k_1-2k''}\lr{1}{2k''}\\&=\phi\left(\LR{n_0}{2k_0}\right)\lr{n_1}{2k_1}.
			\end{align*}
		Hence the lemma is proved. 
		\end{proof}
	\end{lemma}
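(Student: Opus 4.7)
The plan is to split the argument on the parity of $n$, paralleling Lusztig's proof of \cite{Lu93}*{Lemma 34.1.2(c)}. The even-$n$ case will reduce directly to Lusztig's classical statement for ordinary $q$-binomial coefficients via \eqref{eq:mk3}, while the odd-$n$ case will be handled by stripping off one copy of $l$ (leaving an even top), reducing to the even case through the addition formula in Lemma~\ref{lem:DoubleInA2}(b).

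\textbf{Even case.} Suppose $n$ is even. Since $n_0 \in \{0,2,\ldots,2l-2\}$ is even and $l$ is odd, the relation $n_1 l = n - n_0$ forces $n_1$ to be even as well. Hence $n/2 = n_0/2 + (n_1/2)\,l$ with $n_0/2 \in \{0,1,\ldots,l-1\}$. Using \eqref{eq:mk3} we rewrite both sides as ordinary $q^2$-binomials, then apply \cite{Lu93}*{Lemma 34.1.2(c)} with the variable $q$ replaced by $q^2$, which yields $\phi(\qbinom{n/2}{k}_{q^2}) = \binom{n_1/2}{k_1}\,\phi(\qbinom{n_0/2}{k_0}_{q^2})$. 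Since $n_1$ is even, $\binom{n_1/2}{k_1} = \lr{n_1}{2k_1}$, and another application of \eqref{eq:mk3} delivers the claim.

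\textbf{Odd case, auxiliary step $n=l$.} I would first verify that $\phi(\LR{l}{2k}) = \lr{1}{2k/l}$ if $l \mid k$ and $0$ otherwise. Writing out numerator and denominator in $[\,\cdot\,]$-notation, the factors $[m]$ vanishing at $q=\bq$ are exactly those with $l \mid m$. A direct count shows the numerator $[l][l-2]\cdots[l-2k+2]$ contains $\lceil k/l \rceil$ such factors while the denominator $[2][4]\cdots[2k]$ contains only $\lfloor k/l \rfloor$; this forces $\phi(\LR{l}{2k}) = 0$ unless $l \mid k$. For $k = m l$, the surviving factors cancel one-for-one and a residue/L'Hôpital computation at $q=\bq$ identifies the limit as $\lr{1}{2m}$; alternatively, one may iterate \eqref{eq:mk1} to produce a clean closed form. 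Since in the notation of the lemma $n=l$ gives $n_0=0, n_1=1$ and $\LR{0}{2k_0}=\delta_{k_0,0}$, this matches the asserted formula.

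\textbf{Odd case, general $n$.} As $l$ is odd, $n-l$ is even, with decomposition $n-l = n_0 + (n_1-1)l$ (note $n_1$ is odd, so $n_1-1$ is even). By Lemma~\ref{lem:DoubleInA2}(b) applied with $m'=n-l$, $m''=l$,
\[
\LR{n}{2k} = \sum_{k'+k''=k} q^{(n-l)k'' - l k'}\, \LR{n-l}{2k'}\,\LR{l}{2k''}.
\]
Applying $\phi$, only terms with $l \mid k''$ survive by the auxiliary step, and for those terms $k'' = l k''_1$ both exponents $(n-l)\,l\, k''_1$ and $-l k'$ are multiples of $l$, so $\bq^{l(\cdots)} = 1$ erases all the $q$-powers. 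Using the even case for $\phi(\LR{n-l}{2k'})$ and the auxiliary step for $\phi(\LR{l}{2k''})$, this collapses to
\[
\phi\!\left(\LR{n}{2k}\right) = \phi\!\left(\LR{n_0}{2k_0}\right) \sum_{k''_1 = 0}^{k_1} \lr{n_1 - 1}{2(k_1 - k''_1)} \lr{1}{2k''_1}.
\]
The inner sum equals $\lr{n_1}{2k_1}$ by Lemma~\ref{lem:DoubleInA2}(b) specialised at $q=1$ with $m'=n_1-1$, $m''=1$, completing the proof.

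\textbf{Main obstacle.} The delicate step is the auxiliary computation at $n=l$: tracking which factors in the numerator and denominator of $\LR{l}{2k}$ acquire zeros at the primitive root $\bq$, and extracting the correct residual value $\lr{1}{2k/l}$ after cancellation. Once this is in hand, the addition formula of Lemma~\ref{lem:DoubleInA2}(b) and the parity argument do the rest in a fairly mechanical fashion.
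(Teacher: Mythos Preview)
Your proof is correct and follows essentially the same route as the paper's: the even case via \eqref{eq:mk3} and \cite{Lu93}*{Lemma 34.1.2(c)}, then the odd case by first handling $n=l$ and subsequently peeling off $l$ via Lemma~\ref{lem:DoubleInA2}(b) to reduce to the even case, with the final sum collapsed by the $q=1$ specialization of the same addition formula. Your write-up is in fact slightly more explicit than the paper's in two places --- the parity observation that $n_1$ is even in the even case (so that $\binom{n_1/2}{k_1}=\lr{n_1}{2k_1}$), and the zero-counting sketch for the auxiliary step $n=l$ --- but these are elaborations of the same argument rather than a different approach.
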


	
	
\subsubsection{}\label{sec:baid}

Recall the construction for \emph{$\imath$divided powers} in \S\ref{sec:iqp}.
%
%
Let $i \in \I$ with $\tau i=i$. We define the  \emph{balanced $\imath$divided powers} for $n \ge 0$:
\[
B_i^{(n)}  = B_{i,\overline{n+1}}^{(n)} = \frac{(B_i- [-n+1]_i )(B_i- [-n+3]_i)  \cdots (B_i- [n-3]_i)(B_i- [n-1]_i)}{[n]_i^!}.
\]
Here $B_i^{(0)} = 1$ by definition.


	\begin{lem} \label{lem:ff}
		For $i\in\I$ with $\tau i=i$, and any $n\in \BZ_{\ge 0}$, we have
		\begin{equation}\label{eq:ff}
		 B_i^{(n)}=\sum_{t\geqslant 0}\LR{-1}{2t}_i B_{i,\overline{n}}^{(n-2t)}, \qquad
		 B_{i,\overline{n}}^{(n)} =\sum_{t\geqslant 0}\LR{1}{2t}_i B_i^{(n-2t)}.
		\end{equation}
	\end{lem}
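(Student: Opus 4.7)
The plan is to prove the two identities in \eqref{eq:ff} by first showing they are mutually inverse change-of-basis relations, so that it suffices to prove just one of them.

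For the mutual inversion, I apply Lemma \ref{lem:DoubleInA2}(b) with $m' = -1$ and $m'' = 1$: since $\LR{0}{2t}_i = \delta_{t,0}$, the identity
\[
\delta_{t,0} = \LR{0}{2t}_i = q_i^{-et}\sum_{t_1+t_2 = t}\LR{-1}{2t_1}_i\LR{1}{2t_2}_i
\]
forces $\sum_{t_1+t_2 = t}\LR{-1}{2t_1}_i\LR{1}{2t_2}_i = \delta_{t,0}$. Hence the upper-triangular matrices with entries $(\LR{-1}{2t}_i)_{t \ge 0}$ and $(\LR{1}{2t}_i)_{t \ge 0}$ are mutual inverses, and the two formulas in \eqref{eq:ff} are equivalent.

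To prove the first identity, I view both sides as polynomial expressions in $B_i$ attached to an appropriate $\imath$weight idempotent. From the explicit formulas in \S\ref{sec:iqp} and \S\ref{sec:baid}, one verifies the two-step recursions
\[
B_i^{(n)} = \frac{B_i^2 - [n-1]_i^2}{[n-1]_i[n]_i}B_i^{(n-2)}, \qquad B_{i,\overline{n}}^{(m)} = \frac{B_i^2 - [m-2]_i^2}{[m-1]_i[m]_i}B_{i,\overline{n}}^{(m-2)},
\]
uniformly in the parity of $n$, the only constraint being that $m$ has the same parity as $n$. Arguing by induction on $n$, I substitute the inductive hypothesis for $B_i^{(n-2)}$ into the left-hand side and expand the target right-hand side $\sum_t \LR{-1}{2t}_i B_{i,\overline{n}}^{(n-2t)}$ via the second recursion. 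Comparing coefficients of each $B_{i,\overline{n}}^{(n-2t-2)}$ reduces the inductive step to the $q$-integer identity
\[
[n-1]_i[n]_i - [n-2t-1]_i[n-2t]_i = [2n-2t-1]_i[2t]_i, \qquad 1 \le 2t \le n-2,
\]
together with boundary-term checks (the top-degree coefficient is $1$ on both sides by definition, and the constant term follows from the ratio $\LR{-1}{2t}_i/\LR{-1}{2t-2}_i = -[2t-1]_i/[2t]_i$).

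The $q$-integer identity above is immediate from the product expansion $[a]_i[a+1]_i = \sum_{j=1}^a [2j]_i$, under which both sides expand to $\sum_{j=n-2t}^{n-1}[2j]_i$. The main obstacle is largely notational: one must track the two parity cases for $n$ and verify the uniformity of the recursion for $B_{i,\overline{n}}^{(\cdot)}$ across them (the distinction between $B_{i,\ev}$ and $B_{i,\odd}$ conveniently becomes invisible once $m\equiv n\pmod 2$, since both satisfy the same two-step recursion with factor $B_i^2 - [m-2]_i^2$). Once this bookkeeping is aligned, the induction reduces to the single $q$-identity displayed above.
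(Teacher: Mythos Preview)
Your proof is correct and follows essentially the same inductive strategy as the paper: both arguments use the two-step recursions obtained by multiplying by $B_i^2$ (your displayed recursions are just the paper's identities $B_i^2 B_i^{(n)}=[n+1]_i[n+2]_iB_i^{(n+2)}+[n+1]_i^2B_i^{(n)}$ and $B_i^2 B_{i,\overline{n}}^{(m)}=[m]_i^2B_{i,\overline{n}}^{(m)}+[m+1]_i[m+2]_iB_{i,\overline{n}}^{(m+2)}$ solved for the top term), and both reduce the inductive step to the same $q$-integer identity up to an index shift $n\mapsto n-2$ and the factorization $[n-1]_i^2-[n-2t]_i^2=[2t-1]_i[2n-2t-1]_i$.

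There is one genuine addition in your argument: you first observe that the two formulas in \eqref{eq:ff} are mutually inverse change-of-basis relations, using Lemma~\ref{lem:DoubleInA2}(b) with $m'=-1$, $m''=1$ to get $\sum_{t_1+t_2=t}\LR{-1}{2t_1}_i\LR{1}{2t_2}_i=\delta_{t,0}$. The paper instead simply asserts that the second identity ``can be proved similarly.'' Your reduction is cleaner and avoids a parallel induction.

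One small point to tighten: the claim that the identity $[n-1]_i[n]_i-[n-2t-1]_i[n-2t]_i=[2n-2t-1]_i[2t]_i$ is ``immediate from $[a]_i[a+1]_i=\sum_{j=1}^a[2j]_i$'' handles the left-hand side as $\sum_{j=n-2t}^{n-1}[2j]_i$, but the right-hand side $[2n-2t-1]_i[2t]_i$ is not a product of consecutive $q$-integers and requires the more general expansion $[a]_i[b]_i=\sum_{j=0}^{\min(a,b)-1}[a+b-1-2j]_i$. This is equally standard, but you should state the general form rather than only the consecutive case.
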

	
	\begin{proof}
		We prove the first equation by induction on $n$. The second equation can be proved similarly.  The claim is trivial for $n = 0, 1$.  Note that 
		\begin{align*}
		B_i^2B_i^{(n)} &=[n+1]_i[n+2]_iB_i^{(n+2)}+[n+1]_i^2B_i^{(n)},\\
		B_i^2B_{i,\overline{n}}^{(n-2t)} &=[n-2t]_i^2B_{i,\overline{n}}^{(n-2t)} +[n-2t+1]_i[n-2t+2]_iB_{i,\overline{n}}^{(n-2t+2)} .
		\end{align*}
		Suppose the equation holds for $n$. Multiplying $B_i^2$ on both sides of the first equation in \eqref{eq:ff}, we obtain  
		\begin{align*}
		&[n+1]_i[n+2]_iB_i^{(n+2)}\\
		= & -[n+1]_i^2 \sum_{t\geqslant 0} \LR{-1}{2t}_i B^{(n-2t)}_{i,\overline{n}}+ \sum_{t\geqslant 0} \LR{-1}{2t}_i [n-2t]_i^2 B^{(n-2t)}_{i,\overline{n}} \\
		& + \sum_{t\geqslant 0} \LR{-1}{2t}_i [n-2t+1]_i[n-2t+2]_i B^{(n-2t+2)}_{i,\overline{n}} \\
		= & [n+1]_i[n+2]_iB^{(n+2)}_{i,\overline{n}} \\
		+ & \sum_{t\geqslant 0} \LR{-1}{2t+2}_i \Big( ([n-2t]_i^2-[n+1]_i^2)\frac{[2t+2]_i}{[-1-2t]_i}+[n-2t-1]_i[n-2t]_i \Big) B^{(n-2t)}_{i,\overline{n}}\\
		= & [n+1]_i[n+2]_iB^{(n+2)}_{i,\overline{n}} + \sum_{t\geqslant 0} \LR{-1}{2t+2}_i \Big([n+1]_i[n+2]_i \Big) B^{(n-2t)}_{i,\overline{n}}\\
		= &  [n+1]_i[n+2]_i\sum_{t\geqslant 0} \LR{-1}{2t}_i B^{(n+2-2t)}_{i,\overline{n}}.
		\end{align*}
		This finishes the proof.	
		\end{proof}
%
%

\begin{lem}\label{lem:balanced}
		For  $i\in\I$ with $\tau i=i$, and $a,k\in \mathbb{Z}_{\ge 0}$, we have
		\begin{align*} 
		B_i^{(a)}B_i^{(k)}&=\sum_{t\geqslant 0}\qbinom{a+k}{a}_i\prod_{m=1}^t\frac{[a-2m+2]_i[k-2m+2]_i}{[a+k-2m+1]_i[2m]_i}B_i^{(a+k-2t)}\\
		& =\sum_{t\geqslant 0}\qbinom{a+k}{a}_i\frac{\LR{a}{2t}_i \LR{k}{2t}_i}{\LR{a+k-1}{2t}_i}B_i^{(a+k-2t)}
		\end{align*}
		 In particular, we have $B_iB_i^{(n)}=\sum_{t\geqslant 0}[n+1]_i\LR{1}{2t}_iB_i^{(n+1-2t)}$.
	\end{lem}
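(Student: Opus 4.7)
The equality of the two expressions on the right-hand side is a direct manipulation: unpacking $\LR{m}{2t}_i = \prod_{s=0}^{t-1}[m-2s]_i/\prod_{s=1}^{t}[2s]_i$, the denominator factors from $\LR{a}{2t}_i\LR{k}{2t}_i$ and $\LR{a+k-1}{2t}_i$ recombine to leave exactly one factor of $[2m]_i$ for each $m\in\{1,\ldots,t\}$, giving $\LR{a}{2t}_i\LR{k}{2t}_i/\LR{a+k-1}{2t}_i=\prod_{m=1}^{t}[a-2m+2]_i[k-2m+2]_i/([a+k-2m+1]_i[2m]_i)$. So it suffices to establish either of the two equivalent forms.

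For the main product formula, my plan is to induct on $a$. The case $a=0$ is trivial since $\LR{0}{2t}_i=0$ for $t\geqslant 1$. The base case $a=1$ is precisely the ``In particular'' statement $B_iB_i^{(n)}=\sum_{t\geqslant 0}[n+1]_i\LR{1}{2t}_iB_i^{(n+1-2t)}$, which should be verified first. Because each $B_i^{(m)}$ is a polynomial in $B_i$ of degree $m$ with leading coefficient $1/[m]_i!$, the family $\{B_i^{(m)}\}_{m\geqslant 0}$ is a $\BQ(q)$-basis for $\BQ(q)[B_i]$, so both sides admit a unique expansion in this basis. The coefficients can be computed directly from the defining product $[n]_i!\,B_i^{(n)}=\prod_{j=0}^{n-1}(B_i-[2j-n+1]_i)$; alternatively, one can convert $B_i^{(m)}$ to modified $\imath$divided powers via Lemma~\ref{lem:ff}, apply the known product rules for $B_{i,\zeta}^{(m)}$ from Berman-Wang \cite{BerW18}, and convert back. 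The resulting scalar identities simplify via the telescoping $[n+1]_i^2-[m]_i^2=[n+1-m]_i[n+1+m]_i$ and the Pascal-type recursion \eqref{eq:mk1}; I verified this by hand for small $n$ (e.g.\ $B_iB_i^{(2)}=[3]_iB_i^{(3)}+([3]_i/[2]_i)B_i^{(1)}$).

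The inductive step from $a$ to $a+1$ uses the $a=1$ formula solved for the top divided power, namely $B_i^{(a+1)}=[a+1]_i^{-1}\bigl(B_iB_i^{(a)}-\sum_{s\geqslant 1}[a+1]_i\LR{1}{2s}_iB_i^{(a+1-2s)}\bigr)$. Multiplying on the right by $B_i^{(k)}$, applying the inductive hypothesis to each product $B_i^{(a-\star)}B_i^{(k)}$, and then re-applying the $a=1$ formula to each $B_i\cdot B_i^{(a+k-2u)}$ that arises yields a sum over $m$ of expressions $c_{m,t}B_i^{(a+k+1-2t)}$. The main obstacle, and the heart of the proof, is the combinatorial identity that the coefficient of $B_i^{(a+k+1-2t)}$ so obtained equals $\qbinom{a+k+1}{a+1}_i\LR{a+1}{2t}_i\LR{k}{2t}_i/\LR{a+k}{2t}_i$. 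I expect this to follow by collecting the doubly-indexed sum by the value of $t$ and applying the addition law Lemma~\ref{lem:DoubleInA2}(b), the recursion \eqref{eq:mk1}, and a $q$-Vandermonde rearrangement; the cancellation between the two groups of terms (the $B_iB_i^{(a)}B_i^{(k)}$ contribution and the correction sum $\sum_{s\geqslant 1}\LR{1}{2s}_iB_i^{(a+1-2s)}B_i^{(k)}$) is what ultimately produces the single compact expression on the right.
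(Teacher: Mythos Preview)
Your outline is in the right spirit but leaves the two hardest steps unproved: the base case $a=1$ (the ``In particular'' formula) is only asserted with ``should be verified'' and ``I verified for small $n$'', and the coefficient identity in the inductive step is only asserted with ``I expect this to follow''. Neither is routine. Because $B_iB_i^{(n)}$ already expands as an unbounded sum in the $B_i^{(m)}$-basis, your step from $a$ to $a+1$ forces you to collapse a doubly-indexed sum of $q$-double binomials, and you have not indicated which identity does this.

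The paper sidesteps both difficulties by multiplying with $B_i^2$ instead of $B_i$ and inducting on $a+k$ (effectively stepping $a\mapsto a+2$). The point is that $B_i^2$ acts by a clean two-term recursion,
\[
B_i^2B_i^{(n)} = [n+1]_i[n+2]_iB_i^{(n+2)} + [n+1]_i^2B_i^{(n)},
\]
which follows immediately from the factored definition of $B_i^{(n)}$ since $[n+2]_i!\,B_i^{(n+2)}=(B_i^2-[n+1]_i^2)\,[n]_i!\,B_i^{(n)}$. Applying this to both sides of the formula for $B_i^{(a)}B_i^{(k)}$ reduces the inductive step to a single explicit coefficient check
\[
[a+1]_i[a+2]_i\,c_{a+2,k;t} = [a+k-2t+1]_i[a+k-2t+2]_i\,c_{a,k;t} + [2a+k-2t+4]_i[k-2t+2]_i\,c_{a,k;t-1},
\]
which is verified directly. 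The ``In particular'' statement then falls out by setting $a=1$, rather than serving as an input. If you want to salvage your approach, the cleanest fix is to replace your $a\to a+1$ step by this $a\to a+2$ step; then the base cases $a+k\leqslant 2$ are trivial and no separate proof of the $a=1$ formula is needed.
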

	
	\begin{proof}
	We prove by induction on $a + k$. The base cases are $a+k \le 2$, which can be checked directly. We write $c_{a,k;t} = \qbinom{a+k}{a}_i\displaystyle \prod_{m=1}^t\frac{[a-2m+2]_i[k-2m+2]_i}{[a+k-2m+1]_i[2m]_i}$.
	 
%
		
		Multiplying $B_i^2$ on both sides of the equation, we obtain, by the induction hypothesis,  that 
		\begin{align*}
		    &([a+1]_i[a+2]_iB_i^{(a+2)}+[a+1]_i^2B_i^{(a)})B_i^{(k)}\\&=\sum_{t\geqslant 0}c_{a,k;t}\Big([a+k+1-2t]_i [a+k+2-2t]_i B_i^{(a+k+2-2t)}+[a+k+1-2t]_i ^2 B_i^{(a+k-2t)}\Big).
		\end{align*}
		
		Therefore we have $B_i^{(a+2)} B_i^{(k)} = \sum_{t\geqslant 0}c'_{a+2,k;t}B_i^{(a+2+k - 2t)} $ such that 
		\[
		 [a+1]_i[a+2]_i c'_{a+2,k;t} = [a+k-2t+1]_i[a+k-2t+2]_ic_{a,k;t}+[2a+k-2t+4]_i[k-2t+2]_ic_{a,k;t-1}
		\]
		
		The right hand side is 
		\begin{align*}
		    &[a+k-2t+1]_i[a+k-2t+2]_i\qbinom{a+k}{a}_i \prod_{m=1}^t\frac{[a-2m+2]_i[k-2m+2]_i}{[a+k-2m+1]_i[2m]_i}\\&+[2a+k-2t+4]_i[k-2t+2]_i\qbinom{a+k}{a}_i \prod_{m=1}^{t-1}\frac{[a-2m+2]_i[k-2m+2]_i}{[a+k-2m+1]_i[2m]_i}\\
		    &=[a+1]_i[a+2]_ic_{a+2,k;t} \Big( \frac{ [a+k-2t+2]_i  [a-2t+2]_i}{ [a+k+2]_i[a+2]_i } + \frac{[2a+k-2t+4]_i  [2t]_i}{[a+k+2]_i  [a+2]_i }\Big) \\
		    &=[a+1]_i[a+2]_ic_{a+2,k;t}.
		\end{align*}
  
	Therefore $c_{a+2,k;t} = c'_{a+2,k;t}$. This completes the proof.
	\end{proof}
	\begin{remark}
	This formula is motivated by the formulas for the structure constants for $\imath$divided powers in \cite{CW22}. 
	\end{remark}
	

 

 \subsubsection{} 
 Define $_{\A_2}\dot{\U}^\imath$ be the $\A_2$-span of the free $\A$-submodule of $_\A\dot{\U}^\imath$ inside $\dot{\U}^\imath$. It is then a free $\A_2$-submodule, as well as an $\A_2$-subalgebra.
For $i\in\I$ with $\tau i=i$, and $\zeta\in X_\imath$, define $'B_{i,\zeta}^{(n)}=B_{i}^{(n)}\one_\zeta\in\dot{\U}^\imath$.  By Lemma~\ref{lem:ff}, we have the following claim.

(a) {\em 
the algebra $_{\A_2}\dot{\U}^\imath$ is generated by $'B_{i,\zeta}^{(n)}$ ($\tau i=i$, $\zeta \in X_\imath$, $n\in\BN$), and $B_{i,\zeta}^{(n)}$ ($\tau i\neq i$, $\zeta \in X_\imath$, $n\in\BN$) as an $\A_2$-algebra. }
 	

		\subsubsection{}
		
	Fix $i\neq j\in\I$ with $\tau i=i$ in this subsection. For any $m,n\in\mathbb{Z}$, with $n>0$ and $e=\pm 1$, we define elements $y_{n,m,e}=y_{i,j;n,m,e}$ in $\Ui$ inductively as follows: for $m<0$, set $y_{n,m,e}=0$, and set $y_{n,0,e}=B_j^{(n)}$; for $m\geqslant0$, $y_{n,m,e}$ are determined by the following formula:
		\begin{align}\label{fo:recursion}
		q_i^{-e(2m+na_{ij})}&B_iy_{n,m,e}-y_{n,m,e}B_i\\&=-[m+1]_iy_{n,m+1,e}+[m+na_{ij}-1]_iq_i^{-e(2m+na_{ij}-1)}y_{n,m-1,e}\notag.
		\end{align}

	 The elements are analogues of Lusztig's higher relations for $\imath$quantum groups. They were first studied in \cite{CLW21}, where explicit formulas were obtained in \cite[\S6.1]{CLW21}. We give a simpler expression in the $\CA_2$-form.
%
%
%
	
	\begin{prop}\label{prop:HigherSerreRe}
		For $m,n\in\mathbb{N}$, with $n>0$, and $e=\pm 1$, we have
		\begin{equation}\label{eq:HigherSerreRe}
		 y_{i,j;n,m,e}=\sum_{\substack{r+s+2t=m\\t\geqslant 0}}(-1)^rq_i^{-e(m+na_{ij}-1)(r+t)}\LR{m+na_{ij}}{2t}_iB_i^{(r)}B_j^{(n)}B_i^{(s)}.
		\end{equation}
	\end{prop}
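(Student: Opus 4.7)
The plan is to proceed by induction on $m$ and verify that the right-hand side of \eqref{eq:HigherSerreRe}, which I will denote by $X_{n,m,e}$, satisfies the defining recursion \eqref{fo:recursion} together with the initial condition $X_{n,0,e}=B_j^{(n)}$. Since \eqref{fo:recursion} (with the convention $y_{n,-1,e}=0$) determines the sequence $\{y_{n,m,e}\}_{m\geq 0}$ uniquely, this identifies $X_{n,m,e}$ with $y_{n,m,e}$. The base case $m=0$ is immediate: only $(r,s,t)=(0,0,0)$ contributes, giving $\LR{na_{ij}}{0}_i B_j^{(n)}=B_j^{(n)}$. The base case $m=1$ also deserves a direct check, as it pins down the normalization of the $q$-powers.

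The main step is to expand both $B_i X_{n,m,e}$ and $X_{n,m,e}B_i$ in the same ``monomial basis'' of elements $B_i^{(r')}B_j^{(n)}B_i^{(s')}$. The key observation is that $B_i=B_i^{(1)}$ (since $\tau i=i$) and that each balanced $\imath$divided power is a polynomial in $B_i$, hence commutes with $B_i$. So Lemma~\ref{lem:balanced} applies:
\[
B_i\cdot B_i^{(r)}=\sum_{t'\geq 0}[r+1]_i\LR{1}{2t'}_i B_i^{(r+1-2t')},\qquad B_i^{(s)}\cdot B_i=\sum_{t'\geq 0}[s+1]_i\LR{1}{2t'}_i B_i^{(s+1-2t')}.
\]
Substituting and re-indexing by $r'=r+1-2t'$ (respectively $s'=s+1-2t'$) rewrites the two sides as sums over triples $(r',s',t'')$ with $r'+s'+2t''=m+1$. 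After factoring out $q_i^{-e(2m+na_{ij})}$ appropriately, the coefficient of each monomial $B_i^{(r')}B_j^{(n)}B_i^{(s')}$ in $q_i^{-e(2m+na_{ij})}B_iX_{n,m,e}-X_{n,m,e}B_i$ becomes an explicit finite sum over $t'\in\{0,\dots,t''\}$ involving $(-1)^{r'}$, powers of $q_i$, the factor $[r'+2t']_i$ or $[s'+2t']_i$, and the product $\LR{m+na_{ij}}{2(t''-t')}_i\LR{1}{2t'}_i$.

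On the other side of the recursion, the contribution of $-[m+1]_iX_{n,m+1,e}$ to the coefficient of $B_i^{(r')}B_j^{(n)}B_i^{(s')}$ (with $r'+s'+2t''=m+1$) is $-[m+1]_i(-1)^{r'}q_i^{-e(m+na_{ij})(r'+t'')}\LR{m+1+na_{ij}}{2t''}_i$, and the $[m+na_{ij}-1]_iq_i^{-e(2m+na_{ij}-1)}X_{n,m-1,e}$ term contributes (only when $t''\geq 1$) a similar expression with $\LR{m-1+na_{ij}}{2(t''-1)}_i$. The proposition therefore reduces, for each fixed $(r',s',t'')$, to a combinatorial identity among the $q$-double binomial coefficients $\LR{\cdot}{2\cdot}_i$. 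The plan is to verify it by combining consecutive terms in the $t'$-sum via the shift relation \eqref{eq:mk1} (applied to $\LR{m+na_{ij}}{2(t''-t')}_i$) and the addition formula Lemma~\ref{lem:DoubleInA2}(b) with the identity $\LR{1}{2t'}_i[r'+2t']_i+q_i^{\cdots}\LR{1}{2t'-2}_i[s'+2t'-2]_i=[m+1]_i\LR{1}{2t'}_i$-type collapses that extract the common factor $[m+1]_i$.

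The main obstacle is the bookkeeping. The signs, $q$-powers, and $q$-integers conspire in a way that is combinatorially non-trivial; the key mechanism is that the $q$-shift in \eqref{eq:mk1} exactly compensates the factor $q_i^{-e(2m+na_{ij})}$ needed to twist the recursion, while the splitting $[r'+2t']_i+q_i^{\pm(r'+2t')}[s+2t']_i=q_i^{\mp\star}[m+1]_i$ (for appropriate $\star$) produces the overall $[m+1]_i$. The leading part (from the $t''=0$ components) then yields the $X_{n,m+1,e}$ contribution, while the remainder accumulates precisely the $X_{n,m-1,e}$ correction via \eqref{eq:mk2}. Once these identities are verified, the induction closes and the formula follows.
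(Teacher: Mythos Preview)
Your plan is correct and matches the paper's proof almost step for step: induction on $m$ with base cases $m=0,1$, expansion of $B_iX_{n,m,e}$ and $X_{n,m,e}B_i$ via Lemma~\ref{lem:balanced}, re-indexing into sums over $(r',s',t'')$ with $r'+s'+2t''=m+1$, and reduction to a coefficient identity settled by the addition formula of Lemma~\ref{lem:DoubleInA2}(b) together with the splitting of $[m+1]_i$ coming from $m+1=r'+s'+2t''$. The only refinement is that the paper collapses the inner $t'$-sum in two applications of Lemma~\ref{lem:DoubleInA2}(b) (rather than \eqref{eq:mk1}/\eqref{eq:mk2}) and the final check uses the three-term identity $[m+1]_i=q_i^{e(r'+2t'')}[s']_i+q_i^{-e(s'+2t'')}[r']_i+q_i^{e(r'-s')}[2t'']_i$ together with $[2t'']_i\LR{m+na_{ij}+1}{2t''}_i=[m+na_{ij}+1]_i\LR{m+na_{ij}-1}{2t''-2}_i$, which is exactly the mechanism you allude to.
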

	
	\begin{proof}
		We prove by induction on $m$. When $m= 0, 1$, one can check this by direct computation.  We check the right hand side of \eqref{eq:HigherSerreRe} satisfies the recursion formula \eqref{fo:recursion}. 
		
		By Lemma \ref{lem:balanced}, we have $
		B_iB_i^{(n)}=\sum_{u\geqslant 0}[n+1]_i\LR{1}{2u}B_i^{(n+1-2u)}$. Hence by induction hypothesis, we have
		 \begin{align*}
		 &B_iy_{n,m,e}\\
		 = &\sum_{\substack{r+s+2t=m\\t\geqslant 0}}\sum_{u\geqslant 0}(-1)^rq_i^{-e(m+na_{ij}-1)(r+t)}[r+1]_i\LR{m+na_{ij}}{2t}_i\LR{1}{2u}_iB_i^{(r+1-2u)}B_j^{(n)}B_i^{(s)}\\
		 = &\sum_{\substack{r+s+2t=m+1\\t\geqslant 0}}\sum_{u\geqslant 0}(-1)^{r+1}q_i^{-e(m+na_{ij}-1)(r+t+u-1)}[r+2u]_i\LR{m+na_{ij}}{2t-2u}_i\LR{1}{2u}_iB_i^{(r)}B_j^{(n)}B_i^{(s)}.
		 \end{align*}
		 	
		 We write $J_{r,s}$ to denote the coefficient before $B_i^{(r)}B_j^{(n)}B_i^{(s)}$. Then
		 \begin{align*}
		 J_{r,s} =&\sum_{u\geqslant 0}(-1)^{r+1}q_i^{-e(m+na_{ij}-1)(r+t+u-1)}(q_i^{er}[2u]_i+q_i^{-2eu}[r]_i)\LR{m+na_{ij}}{2t-2u}_i\LR{1}{2u}_i\\
		 =&(-1)^{r+1}q_i^{-e(m+na_{ij}-1)(r+t-1)}[r]_i\sum_{u\geqslant 0}q_i^{-e(m+na_{ij}+1)u}\LR{m+na_{ij}}{2t-2u}_i\LR{1}{2u}_i\\
		 &+(-1)^{r+1}q_i^{-e(m+na_{ij}-1)(r+t-1)+er}\sum_{u\geqslant 0}q_i^{-e(m+na_{ij}-1)u}[2u]_i\LR{m+na_{ij}}{2t-2u}_i\LR{1}{2u}_i\\
		  \stackrel{(\heartsuit 1)}{=}&(-1)^{r+1}q_i^{-e(m+na_{ij}-1)(r+t-1)-et}[r]_i\LR{m+na_{ij}+1}{2t}_i \\
		 &+(-1)^{r+1}q_i^{-e(m+na_{ij}-1)(r+t-1)+er}\sum_{u\geqslant 0}q_i^{-e(m+na_{ij}-1)u}\LR{m+na_{ij}}{2t-2u}_i\LR{-1}{2u-2}_i\\
		   \stackrel{(\heartsuit 2)}{=}&(-1)^{r+1}q_i^{-e(m+na_{ij}-1)(r+t-1)-et}[r]_i\LR{m+na_{ij}+1}{2t}_i\\
		 &+(-1)^{r+1}q_i^{-e(m+na_{ij}-1)(r+t-1)+e(r+t-m-na_{ij})}\LR{m+na_{ij}-1}{2t-2}_i,  
		 \end{align*}
where both $(\heartsuit 1)$ and $(\heartsuit 2)$ are by Lemma~\ref{lem:DoubleInA2}.		 
		 Hence 
		 \begin{align*}
		 B_iy_{n,m,e}=&\sum_{\substack{r+s+2t=m+1\\t\geqslant 0}}(-1)^{r+1}q_i^{-e(m+na_{ij}-1)(r+t-1)}\\&\cdot\left(q_i^{-et}[r]_i\LR{m+na_{ij}+1}{2t}_i+q_i^{e(r+t-m-na_{ij})}\LR{m+na_{ij}-1}{2t-2}_i\right)B_i^{(r)}B_j^{(n)}B_i^{(s)}.
		 \end{align*}
		
		 Similarly, one can compute
		 \begin{align*}
		 y_{n,m,e}B_i=&\sum_{\substack{r+s+2t=m+1\\t\geqslant 0}}(-1)^rq_i^{-e(m+na_{ij}-1)(r+t)}\\&\cdot\left(q_i^{et}[s]_i\LR{m+na_{ij}+1}{2t}_i+q_i^{e(m+na_{ij}-t-s)}\LR{m+na_{ij}-1}{2t-2}_i\right)B_i^{(r)}B_j^{(n)}B_i^{(s)}.
		 \end{align*}
		
		 Hence one has
		 \begin{align*}
		 q_i^{-e(2m+na_{ij})}B_iy_{n,m,e}-y_{n,m,e}B_i=\sum_{\substack{r+s+2t=m+1\\t\geqslant 0}}(-1)^{r+1}q_i^{-e(m+na_{ij})(r+t)}a_{r,s}B_i^{(r)}B_j^{(n)}B_i^{(s)},
		 \end{align*}
		 where 
		 \begin{align*} a_{r,s}=&(q_i^{-e(s+2t)}[r]_i+q_i^{e(2t+r)}[s]_i)\LR{m+na_{ij}+1}{2t}_i\\&+(q_i^{-e(m+na_{ij}+s-r)}+q_i^{e(m+na_{ij}-s+r)})\LR{m+na_{ij}-1}{2t-2}_i.
		 \end{align*}
		 	
		 On the other hand, by definition, we have 
		 \begin{align*}
		 -[m+1]_iy_{n,m+1,e}+&[m+na_{ij}-1]_iq_i^{-e(2m+na_{ij}-1)}y_{n,m-1,e}\\&=\sum_{\substack{r+s+2t=m+1\\t\geqslant 0}}(-1)^{r+1}q_i^{-e(m+na_{ij})(r+t)}b_{r,s}B_i^{(r)}B_j^{(n)}B_i^{(s)},
		 \end{align*}
		 where
		 $ b_{r,s}=[m+1]_i\LR{m+na_{ij}+1}{2t}_i-q_i^{e(r-s)}[m+na_{ij}-1]_i\LR{m+na_{ij}-1}{2t-2}_i$.
		 
		It remains to verify $a_{r,s} = b_{r,s}$. Since $m+1=r+s+2t$, we have 
		 $$[m+1]_i=q_i^{e(r+2t)}[s]_i+q_i^{-e(s+2t)}[r]_i+q_i^{e(r-s)}[2t]_i.$$
		 Hence 
		 \begin{align*}
		 a_{r,s}-b_{r,s}=&q_i^{e(r-s)}(q_i^{-e(m+na_{ij})}+q_i^{e(m+na_{ij})}+[m+na_{ij}-1]_i)\LR{m+na_{ij}-1}{2t-2}_i\\&-q_i^{e(r-s)}[2t]_i\LR{m+na_{ij}+1}{2t}_i\\=&q_i^{e(r-s)}[m+na_{ij}+1]_i\LR{m+na_{ij}-1}{2t-2}_i-q_i^{e(r-s)}[2t]_i\LR{m+na_{ij}+1}{2t}_i\\=&0.
		 \end{align*}
		 The proposition is proved.
	\end{proof}
	
It follows from \cite[Theorem 6.3]{CLW21} that $y_{i,j;n,m,e}=0$, whenever $m>-na_{ij}$, $e=\pm 1$. Letting $n=1$, $m=1-a_{ij}$, we have
	\begin{equation}\label{eq:iSerreNew}
	\sum_{\substack{r+s+2t=1-a_{ij}\\t\geqslant 0}}(-1)^r\LR{1}{2t}_iB_i^{(r)}B_jB_i^{(s)}=0.
	\end{equation}
	

 
	\subsubsection{} \label{subsec:rel}
	Let $k$ be any field of characteristic zero. We view $k$ as an $\A$-module on which $q \mapsto 1$.
	Write $\udoti=k\otimes_\A (_{\A}\dot{\U}^\imath)$. For any $i\in \I$, $\zeta\in X_\imath$ and $n\in\mathbb{N}$, we denote by $b_{i,\zeta}^{(n)}$ the image of $B_{i,\zeta}^{(n)}$. It is clear that all     $b_{i,\zeta}^{(n)}$ ($i\in \I$, $\zeta\in X_\imath$, $n\geqslant 1$) is generated by $b_{i,\zeta}$ ($i\in\I$, $\zeta\in X_\imath$). Hence the elements $b_{i,\zeta}$ ($i\in\I$, $\zeta\in X_\imath$), $\one_\zeta$ ($\zeta\in X_\imath$) generate $\udoti$.  
	 
		The algebra $\udoti$ is generated by elements $\one_\zeta$ ($\zeta\in X_\imath$), $b_{i,\zeta}$ ($i\in\I$, $\zeta\in X_\imath$) which subject to the following relations ($i\neq j\in\I$, $\zeta,\zeta'\in X_\imath$): 
		\begin{align}
		&\quad b_{i,\zeta}\one_{\zeta'}=\delta_{\zeta,\zeta'}b_{i,\zeta}, \qquad \one_{\zeta'}b_{i,\zeta}=\delta_{\zeta-\overline{\alpha_i} \zeta'}b_{i,\zeta}, \quad \one_{\zeta'} \one_{\zeta} = \delta_{\zeta, \zeta'} \one_{\zeta}, \label{re:wt}\\
		&\sum_{r+s=1-a_{ij}}(-1)^rb_{i,\zeta-s\overline{\alpha_i}-\overline{\alpha_j}}^{(r)}b_{j,\zeta-s\overline{\alpha_i}}b_{i,\zeta}^{(s)}=0, \quad \text{if } j \neq \tau i \neq i, \label{re:iSerre}\\
		&\!\!\!\sum_{r+s=1-a_{i,\tau i}}\!\!\!(-1)^rb_{i,\zeta-s\overline{\alpha_i}-\overline{\alpha_{\tau i}}}^{(r)}b_{\tau i,\zeta-s\overline{\alpha_i}}b_{i,\zeta}^{(s)}=\left\{\begin{array}{ll}
		\langle \coroot_i-\coroot_{\tau i}, \zeta\rangle\one_\zeta, &  \text{if }a_{i,\tau i}=0,\\
		-2b_{i,\zeta}, & \text{if }a_{i,\tau i}=-1,\\
		0,  & \text{if }a_{i,\tau i}\leqslant -2, 
		\end{array}\right. \label{re:iwithtaui} \text {if }\tau i\neq i, \\
			&\sum_{\substack{r+s+2t=1-a_{ij}\\t\geqslant 0}}(-1)^r\lr{1}{2t}{'b_{i,\zeta-s\overline{\alpha_i}-\overline{\alpha_j}}^{(r)}}b_{j,\zeta-s\overline{\alpha_i}}{'b_{i,\zeta}^{(s)}}=0, \quad \text{if } \tau i = i.\label{re:iSerre2}
		\end{align}
	 The presentation was obtained in  \cite[Theorem~3.1]{CLW18}. Passing to the modified form is straightforward (cf. \cite{Lu93}*{\S31.1.3}). We replace the relation \cite[Theorem~3.1(3.9)]{CLW18} by the equivalent relation \eqref{re:iSerre2} thanks to \eqref{eq:iSerreNew}.
	
%


\subsection{Main theorem}\label{sec:iFr} Retain the setup in \S \ref{sec:qsetup}. Let $\CA'_2 = \CA_2 / (f_l) \supset \CA'$.  Define
	\begin{align*}  
	_{\mathcal{A}'}\dot{\mathfrak{U}}^{\imath}&={\mathcal{A}'}\otimes_c ({_{\mathcal{A}}\dot{\mathrm{U}}^\imath}),\quad 
	_{\mathcal{A}'}\dot{\mathrm{U}}^\imath=\mathcal{A}'\otimes_\phi (_\mathcal{A}\dot{\mathrm{U}}^\imath),\\
	_{\mathcal{A}_2'}\dot{\mathfrak{U}}^{\imath}&={\mathcal{A}_2'}\otimes_c ({_{\mathcal{A}}\dot{\mathrm{U}}^\imath}),\quad 
	_{\mathcal{A}'_2}\dot{\mathrm{U}}^\imath=\mathcal{A}_2'\otimes_\phi (_\mathcal{A}\dot{\mathrm{U}}^\imath),\\
	_{\mathbf{F}} \dot{\mathfrak{U}}^{\imath}&={{\mathbf{F}}}\otimes_c ({_{\mathcal{A}}\dot{\mathrm{U}}^\imath}),\quad 
	{_{\mathbf{F}}}\dot{\mathrm{U}}^\imath={{\mathbf{F}}}\otimes_\phi (_\mathcal{A}\dot{\mathrm{U}}^\imath).
	\end{align*}

	Then we have $_{\mathcal{A}'}\dot{\mathfrak{U}}^{\imath}\subseteq {_{\mathcal{A}_2'}\dot{\mathfrak{U}}^{\imath}}\subseteq {_{\mathbf{F}}\dot{\mathfrak{U}}^{\imath}}$, and $_{\mathcal{A}'}\dot{\U}^{\imath}\subseteq {_{\mathcal{A}_2'}\dot{\U}^{\imath}}\subseteq {_{\mathbf{F}}\dot{\U}^{\imath}}$.
	
	For any $i\in\I$, $n\in\mathbb{N}$, and $\zeta\in X_\imath$, we use $\mathfrak{b}_{i,\zeta}^{(n)}$ (resp. $\mathrm{B}_{i,\zeta}^{(n)}$) to denote the image of $B_{i,\zeta}^{(n)}$ in $_{\mathcal{A}'}\dot{\mathfrak{U}}^{\imath}$ (resp. $_{\mathcal{A}'}\dot{\mathrm{U}}^\imath$). For $i=\tau i$, set $'\mathfrak{b}_{i,\zeta}^{(n)}$ (resp. $'\mathrm{B}_{i,\zeta}^{(n)}$) be the image of $'B_{i,\zeta}^{(n)}$ in $_{\mathcal{A}'_2}\dot{\mathfrak{U}}^\imath$ (resp. $_{\mathcal{A}'_2}\dot{\mathrm{U}}^\imath$). We state the main theorem of this section.
	
	\begin{theorem}\label{thm:iFr}
		There exists a unique $\mathcal{A}_2'$-algebra homomorphim
		$\ifr: {_{\mathcal{A}_2'}\dot{\mathfrak{U}}^{\imath}}\longrightarrow {_{\mathcal{A}_2'}\dot{\mathrm{U}}^\imath},$
		such that: for $\tau i\neq i$, $n\in\mathbb{N}$, we have
		$\ifr(\mathfrak{b}_{i,\zeta}^{(n)})=\mathrm{B}_{i,l\zeta}^{(nl)}$;
		for $\tau i=i$, $n\in\mathbb{N}$, we have 
		$\ifr('\mathfrak{b}_{i,\zeta}^{(n)})={'\mathrm{B}_{i,\zeta}^{(nl)}}.$
	\end{theorem}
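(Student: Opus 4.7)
The plan is to define $\ifr$ via a presentation of $_{\mathcal{A}_2'}\dot{\mathfrak{U}}^\imath$ by generators and relations and to verify directly that each defining relation is preserved under the assignment
\[
\one_\zeta \longmapsto \one_{l\zeta}, \qquad \mathfrak{b}_{i,\zeta} \longmapsto \mathrm{B}_{i,l\zeta}^{(l)} \text{ (for } \tau i \neq i), \qquad {'\mathfrak{b}_{i,\zeta}} \longmapsto {'\mathrm{B}_{i,\zeta}^{(l)}} \text{ (for } \tau i = i).
\]

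First I would establish a presentation of $_{\mathcal{A}_2'}\dot{\mathfrak{U}}^\imath$ parallel to the characteristic-zero one in \S\ref{subsec:rel}. By \S\ref{sec:baid}(a), the algebra is generated by the idempotents $\{\one_\zeta\}_{\zeta \in X_\imath}$ together with $\{\mathfrak{b}_{i,\zeta}\}_{\tau i \neq i}$ and $\{{'\mathfrak{b}_{i,\zeta}}\}_{\tau i = i}$, subject to the weight relations \eqref{re:wt}, the $\imath$Serre relations \eqref{re:iSerre} and \eqref{re:iwithtaui} for $\tau i \neq i$, and the higher Serre-type relation \eqref{re:iSerre2} for $\tau i = i$. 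All coefficients live in $\mathcal{A}_2'$ since $\lr{1}{2t} \in \BZ[2^{-1}] \subset \mathcal{A}_2'$ (the latter inclusion uses $q^0 + q^{-0} = 2 \in S$, so $2$ is invertible in $\mathcal{A}_2$). Uniqueness of $\ifr$ is then immediate, and the task reduces to verifying that the relations are preserved.

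The weight relations are preserved since $l\,\overline{\alpha_i} = \overline{l\alpha_i}$ in $X_\imath$. For the $\tau i \neq i$ relations \eqref{re:iSerre} and \eqref{re:iwithtaui}, the required identities among $\mathrm{B}_i^{(l)}$, $\mathrm{B}_{\tau i}^{(l)}$, $\mathrm{B}_j^{(l)}$ in $_{\mathcal{A}_2'}\dot{\mathrm{U}}^\imath$ can be extracted from the $\tau i \neq i$ analogues of higher $\imath$Serre relations in \cite{CLW21} with $n = l$, specialized via Lemma~\ref{le:qBinomAtUnity}; alternatively, one expands $\mathrm{B}_\bullet^{(l)}$ into $\mathrm{F}_\bullet^{(l)}$ and $\mathrm{E}_\bullet^{(l)}\tilde{K}_\bullet^{-l}$ and invokes Lusztig's quantum Serre relations at roots of unity together with the restriction property of $\ofr$ from \cite{BS21}.

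The main obstacle is the $\tau i = i$ relation \eqref{re:iSerre2}. After applying $\ifr$, the desired identity in $_{\mathcal{A}_2'}\dot{\mathrm{U}}^\imath$ reads
\[
\sum_{\substack{r+s+2t' = 1 - a_{ij} \\ t' \ge 0}} (-1)^r \lr{1}{2t'}\, {'\mathrm{B}_{i,\ast}^{(rl)}}\, \mathrm{B}_{j,\ast}^{(l)}\, {'\mathrm{B}_{i,\ast}^{(sl)}} = 0,
\]
with weight indices suppressed. The strategy is to apply Proposition~\ref{prop:HigherSerreRe} over $_{\mathcal{A}_{(f_l)}}\dot{\U}^\imath$ with $n = l$ and $m = l(1 - a_{ij}) > -l a_{ij}$, obtaining the vanishing of $\sum (-1)^r q_i^{-e(l-1)(r+t)} \LR{l}{2t}_i B_i^{(r)} B_j^{(l)} B_i^{(s)}$ over $r+s+2t = l(1-a_{ij})$. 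Specializing via $\phi$, by Lemma~\ref{le:qBinomAtUnity} the factor $\phi(\LR{l}{2t}_i)$ is nonzero precisely when $l \mid t$, where it equals $\lr{1}{2t/l}$, matching the coefficient $\lr{1}{2t'}$ upon setting $t = lt'$. The residual sum then runs over $r + s = l(1 - a_{ij} - 2t')$ and must be reorganized into the target sum over $r = lr'$, $s = ls'$ of balanced $\imath$divided powers. I would accomplish this by combining Lemma~\ref{lem:ff} (conversion between standard and balanced $\imath$divided powers), Lemma~\ref{lem:balanced} (product structure constants), and Lemma~\ref{lem:DoubleInA2}(b) (multiplicative splitting of $q$-double binomials), so that the specialized higher $\imath$Serre relation collapses precisely to the $\ifr$-image of \eqref{re:iSerre2}. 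The technical heart is this combinatorial bookkeeping: tracking the $q$-phases $q_i^{-e(l-1)(r+t)}$ at a primitive $l$-th root of unity and absorbing the non-$l$-divisible contributions through the product formulas of Lemma~\ref{lem:balanced}.
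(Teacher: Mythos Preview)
Your framework --- define $\ifr$ on generators over the fraction field $\mathbf{F}$ (where the presentation of \S\ref{subsec:rel} applies) and verify the defining relations --- matches the paper, and your treatment of \eqref{re:wt} and \eqref{re:iSerre} is essentially right: for the latter the paper indeed pushes through to ${}_{\A'}\U^-$ via the isomorphism $p_\imath'$ of \eqref{eq:pimA} and invokes Lusztig's relations at roots of unity. For \eqref{re:iwithtaui}, however, your suggestions miss the mark: \cite{CLW21} formulates higher $\imath$Serre relations only for $\tau i=i$, and \cite{BS21} concerns the restriction of $\ofr$, not the splitting direction. The paper's verification again passes to ${}_{\A'}\U^-$, but it requires a new vanishing identity among monomials $F_i^{(rl-a)}F_{\tau i}^{(l-a-b)}F_i^{(sl-b)}$ (Lemma~\ref{lem:rel3}), with the right-hand side of \eqref{re:iwithtaui} emerging from the boundary terms $a+b=l$.

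The serious gap is \eqref{re:iSerre2}. Your single relation $\phi(y_{l,\,l(1+\alpha)})=0$ (with $\alpha=-a_{ij}$) specializes to $\sum_{t'}\lr{1}{2t'}\sum_{r+s=l(1+\alpha-2t')}(-1)^r\bq_i^{er}B_i^{(r)}B_j^{(l)}B_i^{(s)}=0$, where $r,s$ range over \emph{all} nonnegative integers, not just multiples of $l$. Lemmas~\ref{lem:ff}, \ref{lem:balanced}, \ref{lem:DoubleInA2} only rewrite individual factors and provide no mechanism for the off-$l$ terms to cancel; the promised ``collapse'' does not occur from one relation alone. The paper's argument is substantially more elaborate: it uses the entire family $y_{l,\,(1+\alpha)l-k}=0$ for $0\le k<l$, combines them with weights $D_{i;k,-1}^{(k)}$ to obtain $\sum_r(-1)^r\bq_i^{-(l-1)r}B_i^{(r)}B_j^{(l)}J_r=0$, and introduces new linear operators $\delta,E_e$ on $\U_i^\imath$ (Proposition~\ref{prop:Rank1Operators}) to identify $J_r$ with $\bq_i^{-r}\delta_i^{l-1}(D_i^{((1+\alpha)l-r)}B_i^{(l-1)})$. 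The decisive step is a divisibility argument: from $[(2+\alpha)l-r]_i\,D_i^{((1+\alpha)l-r)}B_i^{(l-1)}=[l]_i\,\delta_i(B_i^{(l)}B_i^{((1+\alpha)l-r)})$ and $\phi([l]_i)=0$ one forces $J_r=0$ in ${}_{\A_2'}\U_i^\imath$ whenever $l\nmid r$. These operators and the divisibility trick are the missing ideas.
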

	
	The uniqueness is clear. It suffices to prove the existence over the field of fraction $\mathbf{F}$. This theorem will be proved in \S\ref{subsec:pf}. Let us assume this theorem for now.  

	\begin{corollary}\label{cor:iFr}
	For $\tau i=i$, we have 
		\[
		\ifr(\mathfrak{b}_{i,\zeta}^{(n)}) =
			\begin{cases} \mathrm{B}_{i,l\zeta}^{(ln)}, &\text{if } \langle \coroot_i,\zeta\rangle  \neq n \text{ {\rm mod }}2;\\
			\sum_{t=0}^{(l-1)/2}\phi\left(\qbinom{(l-1)/2}{t}_{q_i^2}\right)\mathrm{B}_{i,l\zeta}^{(nl-2t)}, &\text{if } \langle \coroot_i,\zeta\rangle  = n \text{ {\rm mod }}2.
		\end{cases}
		\]
		In particular, the morphism $\ifr$ restricts to an $\A'$-algebra homomorphism $\ifr: {_{\mathcal{A}'}\dot{\mathfrak{U}}^{\imath}}\longrightarrow {_{\mathcal{A}'}\dot{\mathrm{U}}^\imath}$. 
	\end{corollary}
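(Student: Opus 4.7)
The plan is to deduce the corollary from Theorem~\ref{thm:iFr} by first expanding the standard modified $\imath$divided power $B_{i,\zeta}^{(n)}$ in terms of the balanced ones ${'}B_{i,\zeta}^{(n)}$ using Lemma~\ref{lem:ff}, and then simplifying the resulting double sum via Lemma~\ref{le:qBinomAtUnity} and Lemma~\ref{lem:DoubleInA2}(b). The case split will follow the parity of $\langle \coroot_i,\zeta\rangle + n$.

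First I will dispose of the case $\langle \coroot_i,\zeta\rangle \not\equiv n\pmod 2$. By definition, $B_{i,\zeta}^{(n)} = B_{i,\overline{n+1}}^{(n)}\one_\zeta = {'}B_{i,\zeta}^{(n)}$; and since $l$ is odd, the congruence $\langle \coroot_i,l\zeta\rangle \equiv \langle \coroot_i,\zeta\rangle \equiv n+1 \equiv nl+1\pmod 2$ gives likewise $\mathrm{B}_{i,l\zeta}^{(nl)} = {'}\mathrm{B}_{i,l\zeta}^{(nl)}$. Theorem~\ref{thm:iFr} then produces $\ifr(\mathfrak{b}_{i,\zeta}^{(n)}) = {'}\mathrm{B}_{i,l\zeta}^{(nl)} = \mathrm{B}_{i,l\zeta}^{(nl)}$ immediately.

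For the main case $\langle \coroot_i,\zeta\rangle \equiv n\pmod 2$, I will apply Lemma~\ref{lem:ff} twice. Working in $_{\mathcal{A}_2}\dot{\U}^\imath$ and specializing via $c$, the second identity of \eqref{eq:ff} gives $\mathfrak{b}_{i,\zeta}^{(n)} = \sum_{t\geq 0} \lr{1}{2t}\,{'}\mathfrak{b}_{i,\zeta}^{(n-2t)}$ in $_{\mathcal{A}'_2}\dot{\mathfrak{U}}^\imath$. Applying $\ifr$ and Theorem~\ref{thm:iFr}, then expanding each ${'}\mathrm{B}_{i,l\zeta}^{((n-2t)l)} = \mathrm{B}_i^{((n-2t)l)}\one_{l\zeta}$ back via the first identity of \eqref{eq:ff} in $_{\mathcal{A}'_2}\dot{\U}^\imath$ (both $\overline{(n-2t)l}$ and the parity of $\langle\coroot_i,l\zeta\rangle$ equal that of $n$ since $l$ is odd), I obtain
\[
\ifr(\mathfrak{b}_{i,\zeta}^{(n)}) = \sum_{t,s\geq 0}\lr{1}{2t}\,\phi\!\left(\LR{-1}{2s}_i\right)\mathrm{B}_{i,l\zeta}^{(nl-2tl-2s)}.
\]
So the corollary reduces to showing that, for $u\geq 0$, the coefficient $c_u := \sum_{tl+s=u,\,t,s\geq 0}\lr{1}{2t}\phi(\LR{-1}{2s}_i)$ equals $\phi(\qbinom{(l-1)/2}{u}_{q_i^2})$ for $0\leq u\leq (l-1)/2$ and vanishes otherwise.

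For this combinatorial identity, I decompose $s = s_0 + s_1 l$ with $s_0\in\{0,\dots,l-1\}$ and $u = u_0 + u_1 l$ with $u_0\in\{0,\dots,l-1\}$. Lemma~\ref{le:qBinomAtUnity} gives $\phi(\LR{-1}{2s}_i) = \lr{-1}{2s_1}\phi(\LR{l-1}{2s_0}_i)$, and \eqref{eq:mk3} identifies $\phi(\LR{l-1}{2s_0}_i) = \phi(\qbinom{(l-1)/2}{s_0}_{q_i^2})$, which vanishes once $s_0 > (l-1)/2$. The constraint $tl+s=u$ forces $s_0 = u_0$, so $c_u = 0$ whenever $u_0 > (l-1)/2$; otherwise
\[
c_u = \phi\!\left(\qbinom{(l-1)/2}{u_0}_{q_i^2}\right)\sum_{t+s_1 = u_1}\lr{1}{2t}\lr{-1}{2s_1} = \phi\!\left(\qbinom{(l-1)/2}{u_0}_{q_i^2}\right)\lr{0}{2u_1},
\]
where the last equality specializes Lemma~\ref{lem:DoubleInA2}(b) to $q=1$ with $(m',m'')=(1,-1)$. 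Since $\lr{0}{2u_1} = \delta_{u_1,0}$, the claimed formula follows. The final assertion is then immediate: the explicit formula displays $\ifr(\mathfrak{b}_{i,\zeta}^{(n)})$ as an $\mathcal{A}'$-linear combination of elements of $_{\mathcal{A}'}\dot{\U}^\imath$ in both cases, and such $\imath$divided powers generate $_{\mathcal{A}'}\dot{\mathfrak{U}}^\imath$ over $\mathcal{A}'$. The main obstacle I anticipate is the careful parity bookkeeping in the two applications of Lemma~\ref{lem:ff} and their compatibility with Lusztig-type factorization at roots of unity; the cancellation itself amounts to a single instance of the $q$-Vandermonde identity at $q = 1$.
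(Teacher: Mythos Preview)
Your proof is correct and follows essentially the same approach as the paper: both handle the case $\langle\coroot_i,\zeta\rangle\not\equiv n\pmod 2$ by the direct observation $\mathfrak{b}_{i,\zeta}^{(n)}={'\mathfrak{b}_{i,\zeta}^{(n)}}$, and in the other case both expand via Lemma~\ref{lem:ff} twice and then simplify the resulting coefficient using Lemma~\ref{le:qBinomAtUnity} and Lemma~\ref{lem:DoubleInA2}(b). The only cosmetic difference is in the combinatorial simplification: the paper rewrites $\lr{1}{2t}$ as $\phi(\LR{l}{2tl}_i)$ and applies a single $q$-Vandermonde with parameters $(l,-1)$ to collapse the sum directly to $\phi(\LR{l-1}{2k}_i)$, whereas you first factor $\phi(\LR{-1}{2s}_i)$ via the Lucas-type Lemma~\ref{le:qBinomAtUnity} and then apply the Vandermonde at $q=1$ with parameters $(1,-1)$; both routes yield the same answer with comparable effort.
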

	
	\begin{remarks}
	Even though we do not need to consider the localization $\CA'_2$ in light of Corollary~\ref{cor:iFr}, it seems impossible to obtain the formulas without Theorem~\ref{thm:iFr}. For applications of Frobenius splittings, we only consider fields of characteristic not $2$. So we believe the localization in Theorem~\ref{thm:iFr} is conceptual. 
	\end{remarks}
	
	\begin{proof}
		If $\langle \coroot_i,\zeta\rangle \neq n$ {\rm mod} $2$, then $\mathfrak{b}_{i,\zeta}^{(n)}={'\mathfrak{b}_{i,\zeta}^{(n)}}$  and $\mathrm{B}_{i,\zeta}^{(n)}={'\mathrm{B}_{i,\zeta}^{(n)}}$. Then the identity follows directly.
		
		Suppose $\langle \coroot_i,\zeta\rangle = n$ {\rm mod} $2$. By Lemma \ref{lem:ff}  and Theorem \ref{thm:iFr}, we have
		\begin{align*}
		\ifr(\mathfrak{b}_{i,\zeta}^{(n)})&=\ifr(\sum_{t\geqslant 0}\lr{1}{2t}{'\mathfrak{b}_{i,\zeta}^{(n-2t)}})\\
		&=\sum_{t \geqslant 0}\lr{1}{2t}{'\mathrm{B}_{i,l\zeta}^{(nl-2tl)}}\\
		&=\sum_{t\geqslant 0}\lr{1}{2t}\sum_{s\geqslant 0}\phi\left(\LR{-1}{2s}_i\right)\mathrm{B}_{i,l\zeta}^{(nl-2tl-2s)}\\
		&=\sum_{k\geqslant 0}\sum_{\substack{tl+s=k\\ t\geqslant 0, s\geqslant 0}}\lr{1}{2t}\phi\left(\LR{-1}{2s}_i\right)\mathrm{B}_{i,l\zeta}^{(nl-2k)}.
		\end{align*}
		By Lemma \ref{le:qBinomAtUnity}, $\phi\left(\LR{l}{2k}_i\right)=\lr{1}{2k/l}$ if $l \mid k$, and $\phi\left(\LR{l}{2k}_i\right) = 0$ if $l \nmid  k$. Hence we have 
		\begin{equation*}
		\sum_{\substack{tl+s=k\\ t\geqslant 0, s\geqslant 0}}\lr{1}{2t}\phi\left(\LR{-1}{2s}_i\right)=\sum_{t'+s=k}\bq_i^{t'}\phi\left(\LR{l}{2t'}_i\LR{-1}{2s}_i\right)=\phi\left(\LR{l-1}{2k}_i\right).
		\end{equation*}
		Note that $\phi\left(\LR{l-1}{2k}_i\right)= 0$, unless $0\leqslant k\leqslant (l-1)/2$. Hence we have 
		\begin{equation*}
		\ifr(\mathfrak{b}_{i,\zeta}^{(n)})=\sum_{k=0}^{(l-1)/2}\phi\left(\LR{l-1}{2k}_i\right)\mathrm{B}_{i,l\zeta}^{(nl-2k)}=\sum_{k=0}^{(l-1)/2}\phi\left(\qbinom{(l-1)/2}{k}_{q_i^2}\right)\mathrm{B}_{i,l\zeta}^{(nl-2k)}.
		\end{equation*}
		We finish the proof.
	\end{proof}


	\subsection{Proof of Theorem \ref{thm:iFr}}\label{subsec:pf}
	For $i\in\I$, $\zeta\in X_\imath$, we define 
\begin{align*}	
	\ifr: \cAtUi &\rightarrow \pAtUi,\\
	 '\fb_{i,\zeta} &\mapsto {'\B_{i,l\zeta}^{(l)}},\quad  \text{if } \tau i =i;\\
	 \fb_{i,\zeta} &\mapsto  {\B_{i,l\zeta}^{(l)}},  \quad \text{if } \tau i \neq i.
\end{align*}
 
	We show $\ifr$ is a well-defined algebra homomorphism satisfying Theorem~\ref{thm:iFr}. This proof is organized as follows:
		\begin{itemize}
			\item we check the relation \eqref{re:wt} is preserved under $\ifr$ (this is trivial and will be skipped);
			\item we show   $\ifr(\fb_{i,\zeta}^{(a)})=\B_{i,l\zeta}^{(al)}$ if $\tau i \neq i$ in \S\ref{subsec:pf1};
			\item we show $\ifr('\fb_{i,\zeta}^{(a)})={}'\B_{i,l\zeta}^{(al)}$ if $\tau i = i$ in \S\ref{subsec:pf2}
			\item we check the relation \eqref{re:iwithtaui} is preserved under $\ifr$ in \S\ref{subsec:pf3};
			\item we check the relation \eqref{re:iSerre} is preserved under $\ifr$ in \S\ref{subsec:pf4}; 
			\item we check the relation \eqref{re:iSerre2} is preserved under $\ifr$ in \S\ref{subsec:pf5};
%
	\end{itemize}
	
	\subsubsection{} \label{subsec:pf1}
	Let $\zeta\in X_\imath$ and $ i\in \I$ be such that  $\tau i\neq i$. We have
	\begin{equation*}
	    \ifr(\fb_{i,\zeta}^{(a)})=\frac{(\B_i^{(l)})^a}{a!}\one_{l\zeta}=\frac{\phi\left({[al]_i!}/{([l]_i!)^a}\right)}{a!}\B_i^{(al)}\one_{l\zeta}=\B_{i,l\zeta}^{(al)}.
	\end{equation*}
	
	\subsubsection{} \label{subsec:pf2} 
	Let $\zeta\in X_\imath$ and $ i\in \I$ be such that  $\tau i =  i$.  We proceed by induction on $a$. Thanks to Lemma \ref{lem:balanced}, we have 
	\begin{equation*}
	    '\fb_{i,\zeta-a \overline{\alpha_i}}{'\fb_{i,\zeta}}^{(a)}=(a+1)\sum_{u\geqslant 0}\lr{1}{2u}{'\fb_{i,\zeta}}^{(a+1-2u)},
	\end{equation*}
	and
	\begin{equation*}
	    '\B_{i,l\zeta-al \overline{\alpha_i}}^{(l)}{'\B}_{i,l\zeta}^{(al)}=\phi\left(\qbinom{(a+1)l}{l}_i\prod_{m=1}^u\frac{[al-2m+2]_i[l-2m+2]_i}{[al+l-2m+1]_i[2m]_i}\right){'\B}_{i,l\zeta}^{(al+l-2u)}.
	\end{equation*}
	
	It follows by direct computation that 
	\begin{equation*}
	    \phi\left(\qbinom{(a+1)l}{l}_i\prod_{m=1}^u\frac{[al-2m+2]_i[l-2m+2]_i}{[al+l-2m+1]_i[2m]_i}\right)
	    = 
	    \begin{cases}
	    	(a+1)\lr{1}{2u/l}, &\text{if } l\mid u;\\
		0, &\text{otherwise}.
	    \end{cases}
	\end{equation*}
	Therefore $\ifr('\fb_{i,\zeta}^{(a)})={}'\B_{i,l\zeta}^{(al)}$.
	
	\subsubsection{} \label{subsec:pf3} 
	
	Let $\zeta\in X_\imath$, $i\in\I$ with $\tau i\neq i$. We write $\alpha=-a_{i,\tau i}=-a_{\tau i,i}$. 
	 
	\begin{lemma}\label{lem:rel3}
		For $a,b\geqslant 0$, with $0\leqslant a+b<l$, we have the following identity in $_{\A'}\U^-$:
		\begin{equation*}
	\sum_{\substack{r+s=1+\alpha\\r,s\geqslant 0}}(-1)^rF_i^{(rl-a)}F_{\tau i}^{(l-a-b)}F_i^{(sl-b)}=0.
		\end{equation*}
	\end{lemma}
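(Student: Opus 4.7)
The idea is to derive Lemma~\ref{lem:rel3} from Lusztig's quantum Frobenius of Theorem~\ref{thm:ClaFr}(b), combined with inductive manipulations on $a + b$ and Lucas-type formulas for $q$-binomial coefficients after the specialization $\phi: \A \to \A'$.

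For the base case $a = b = 0$, applying $\fr: \cdotU \to \pdotU$ to the classical Serre relation
\[
\sum_{r+s = 1+\alpha} (-1)^r \mathfrak{f}_i^{(r)} \mathfrak{f}_{\tau i} \mathfrak{f}_i^{(s)} = 0
\]
in $\cdotU$ yields
\[
\sum_{r+s = 1+\alpha} (-1)^r F_i^{(rl)} F_{\tau i}^{(l)} F_i^{(sl)} = 0
\]
in $\pdotU$, which is precisely the $(a, b) = (0, 0)$ case of the lemma.

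For general $(a, b)$ with $0 < a + b < l$, I plan to argue by induction on $a + b$. Assuming the identity holds for all smaller pairs, the $(a, b)$ case should follow by multiplying a lower-order identity by $F_i$ (or $F_{\tau i}$) on one side and commuting through $F_{\tau i}^{(l-a-b)}$ via iterated Serre relations. The $q$-factors that appear are controlled by the Lucas-type formula of Lemma~\ref{le:qBinomAtUnity} applied to ordinary $q$-binomials: writing $n = n_0 + n_1 l$, $k = k_0 + k_1 l$ with $0 \leq n_0, k_0 < l$, one has $\phi(\qbinom{n}{k}_{q_i}) = \phi(\qbinom{n_0}{k_0}_{q_i})\binom{n_1}{k_1}$. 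In particular $\phi(\qbinom{rl}{a}_{q_i}) = 0$ for $0 < a < l$, and $\phi([rl - a]_{q_i}) = -\phi([a]_{q_i})$, which is a unit in $\A'$ for $0 < a < l$.

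The main obstacle is executing the inductive step: expanding the commutator of $F_i$ past $F_{\tau i}^{(l-a-b)}$ produces many terms, and aligning the resulting coefficients with the claimed alternating sum requires careful bookkeeping of $q$-integer arithmetic at the root of unity. The hypothesis $a + b < l$ is essential, as it ensures $l - a - b > 0$ and prevents degenerate $q$-integer factors in $\A'$. If the direct induction proves unwieldy, a cleaner alternative is to establish a generic refinement of the identity in ${}_{\A}\U^-$ (for instance via Lusztig's braid group action \cite{Lu93}*{Chapter~7}, or via the higher Serre relations that mirror those used in Proposition~\ref{prop:HigherSerreRe}) and then specialize under $\phi$ to obtain the claim.
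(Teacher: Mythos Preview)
Your base case $a=b=0$ is correct and is exactly the paper's argument (citing \cite{Lu93}*{35.2.3}). The induction step as you describe it, however, does not go through. Multiplying the $(a',b')$ identity by a single $F_i$ changes the outer exponents in the wrong direction (it raises $rl-a'$ to $rl-a'+1$, not lowers it to $rl-a'-1$), and commuting $F_i$ past $F_{\tau i}^{(l-a-b)}$ scatters into terms whose middle exponents differ. There is no evident way to reassemble these into the target identity; the step you flag as ``unwieldy'' is in fact the missing idea.

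The paper's proof is essentially your fallback alternative, executed precisely. For $\alpha\geq 1$ and $a>0$ (the case $b>0$ being symmetric), Lusztig's higher Serre relation \cite{Lu93}*{\S7.1.1, Proposition~7.1.5} already gives in ${}_{\A'}\U^-$
\[
\sum_{u+v=\alpha l-b}(-1)^u\,\bq_i^{(\alpha(a+b)-b-1)u}\,F_i^{(u)}F_{\tau i}^{(l-a-b)}F_i^{(v)}=0,
\]
since $\alpha l-b>\alpha(l-a-b)$. The key move is to multiply this on the left by the divided power $F_i^{(l-a)}$. The product $F_i^{(l-a)}F_i^{(u)}=\phi\!\left(\qbinom{l-a+u}{l-a}_i\right)F_i^{(l-a+u)}$ picks up, by the Lucas formula you quote, the coefficient $\binom{u_a}{l-a}$ where $l-a+u\equiv u_a\pmod l$; this vanishes unless $u_a\in\{l-a,\dots,l-1\}$. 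Stratifying by $u_a=l-a+c$ with $0\le c\le a-1$ and reindexing, the $c=0$ stratum is (up to a global sign) the desired $(a,b)$ identity, while the strata $c>0$ are exactly the $(a-c,\,b+c)$ identities with the same middle exponent $l-a-b$ but strictly smaller first parameter. This gives a clean induction on $a$, with no commutator bookkeeping.
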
 
		\begin{proof}
		The identity is trivial for $\alpha=0$. If $a=b=0$, the identity follows by \cite[35.2.3]{Lu93}. 
	
	We assume $\alpha \ge 1$ and $a > 0$ (the case when $ b > 0$ is similar). Then we have $\alpha l-b>\alpha(l-a-b)$. Then by the higher Serre relation \cite[\S7.1.1\&Proposition~7.1.5]{Lu93}, we have
		\[
		    \sum_{u+v=\alpha l-b}(-1)^u\bq_i^{(\alpha(a+b) - b -1)u}F_i^{(u)}F_{\tau i}^{(l-a-b)}F_i^{(v)}=0.
		\]
		
For any $ u\ge 0$, we write $l-a+u = u_a + u_1l$ for some $u_1 \in \BZ$ and $0\le u_a \le l-1$. Since $0 < l-a < l$, by \cite[Lemma~34.1.2]{Lu93}, we have 
\[
	 F^{(l-a)}_i F^{(u)}_i =\phi\left( \qbinom{l-a+u}{l-a}_i \right)F^{(l-a+u)}_i = \binom{u_a}{l-a} F^{(l-a+u)}_i.
\]
Therefore, we have 
	\begin{align*}
		0 =   &F^{(l-a)}_i \left(  \sum_{u+v=\alpha l-b}(-1)^u\bq_i^{(\alpha(a+b) - b -1)u}F_i^{(u)}F_{\tau i}^{(l-a-b)}F_i^{(v)}\right) \\
		 = &\sum_{u+v=\alpha l-b}(-1)^u\bq_i^{(\alpha(a+b) - b -1)u} \binom{u_a}{l-a}  F_i^{(l-a+u)}F_{\tau i}^{(l-a-b)}F_i^{(v)}\\
		 = & \sum_{u+v=\alpha l-b, u_a = l-1}(-1)^u\bq_i^{(\alpha(a+b) - b -1)u} \binom{l-1}{l-a}  F_i^{(l-a+u)}F_{\tau i}^{(l-a-b)}F_i^{(v)} \\
		& + \sum_{u+v=\alpha l-b, u_a = l-2}(-1)^u\bq_i^{(\alpha(a+b) - b -1)u} \binom{l-2}{l-a}  F_i^{(l-a+u)}F_{\tau i}^{(l-a-b)}F_i^{(v)} \\
		& + \cdots \cdots \\
		 = & \sum_{u+v=\alpha l-b, u_a = l-1}(-1)^u\bq_i^{(\alpha(a+b) - b -1)u} \binom{l-1}{l-a}  F_i^{(rl-1)}F_{\tau i}^{(l-a-b)}F_i^{(sl -b-a+1)} \\
		& + \sum_{u+v=\alpha l-b, u_a = l-2}(-1)^u\bq_i^{(\alpha(a+b) - b -1)u} \binom{l-2}{l-a}  F_i^{(rl-2)}F_{\tau i}^{(l-a-b)}F_i^{(sl-b-a+2)} \\
		& + \cdots \cdots .
	\end{align*}
	Note that $ \binom{u_a}{l-a} = 0$ if $u_a < l-a$.  The lemma follows by induction on $a$ now. 
	\end{proof}
	
	We show  the following equalities hold  in $\fpdotU^\imath$:
	\begin{equation}
	\sum_{r+s=1+\alpha}(-1)^r\B_{i,*}^{(rl)}\B_{\tau i,*}^{(l)}\B_{i,l\zeta}^{(sl)}=\left\{\begin{array}{ll}
	\langle \coroot_i-\coroot_{\tau i}, \zeta\rangle\one_{l\zeta} &  \text{ if }\alpha=0\\
	-2\B_{i,l\zeta}^{(l)} & \text{ if }\alpha=1\\
	0  & \text{ if }\alpha\geqslant 2 
	\end{array}\right. \label{eq:checkiwithtaui}
	\end{equation}
	  Here $*$ on the subscripts stand for the appropriate elements in $X_{\imath}$. 
	
	Take $\lambda\in X$ such that $\bar{\lambda}=\zeta$. Recall \eqref{eq:pimA} the $\A$-module isomorphism   $p_\imath:{_\A\dot{\U}^\imath}\one_{l\zeta}\rightarrow  {_\A\dot{\U}^-}\one_{l\lambda}$. Hence it induces an isomorphism $p_\imath': {_{\A'}\dot{\U}^\imath}\one_{l\zeta}\rightarrow  {_{\A'}\dot{\U}^-}\one_{l\lambda}$. 
	 
	 For any $n\in\BN$, we have the following identity in $\U$:
	 \begin{equation}\label{eq:spandi}
    B_i^{(a)}=\sum_{t=0}^aq_i^{t(a-t)-(t(t-1)/2)\cdot \alpha}\varsigma_i^tF_i^{(a-t)}E_{\tau i}^{(t)}\Tilde{K}_i^{-t}
\end{equation}

For any $a,b\in\BN$, and $\mu\in X$, we have the following identity in $_\A\dot{\U}$ by \cite{Lu93}*{23.1.3}:
\begin{equation}\label{eq:quoUp}
\begin{split}
    E_i^{(a)}F_i^{(b)}\one_\mu &=\sum_{t\geqslant 0}\qbinom{a-b+\langle \coroot_i,\mu\rangle}{t}_iF_i^{(b-t)}E_i^{(a-t)}\one_\mu\\ &\equiv \qbinom{a-b+\langle \coroot_i,\mu\rangle}{a}_iF_i^{(b-a)}\one_\mu\quad \text{mod }{_\A\dot{\U}}{_\A\U}^+. 
\end{split}
\end{equation}
As usual, we understand $F_i^{(n)}=0$ if $n<0$.

Hence in $_\A\U^-\one_\lambda$ (which is viewed as the quotient ${_\A\dot{\U}\one_\lambda}/{_\A\dot{\U}}{_\A\U}^+\one_\lambda$), we have:
\begin{align*}
    &p_\imath(\sum_{r+s=1+\alpha}(-1)^rB_{i,*}^{(rl)}B_{\tau i,*}^{(l)}B_{i,l\zeta}^{(sl)})=\sum_{r+s=1+\alpha}(-1)^rB_{i}^{(rl)}B_{\tau i}^{(l)}F_{i}^{(sl)}\one_{l\lambda}\\
    &\stackrel{(\heartsuit 1)}{=}\sum_{r+s=1+\alpha}(-1)^rB_{i}^{(rl)}\sum_{b=0}^lq_i^{b(l-b)-b(b-1)/2\cdot\alpha}\varsigma_{\tau i}^bF_{\tau i}^{(l-b)}E_i^{(b)}\tilde{K}_{\tau i}^{-b}F_{i}^{(sl)}\one_{l\lambda}\\
    &\stackrel{(\heartsuit 2)}{=}\sum_{r+s=1+\alpha}(-1)^rB_{i}^{(rl)}\sum_{b=0}^lq_i^{b(l-b)-b(b-1)/2\cdot\alpha-al(\langle \coroot_{\tau i},\lambda\rangle-s\alpha)}\varsigma_{\tau i}^b\\
    &\cdot \qbinom{b-sl+\langle \coroot_i,l\lambda\rangle}{b}_iF_{\tau i}^{(l-b)}F_i^{(sl-b)}\one_{l\lambda}\\
    &\stackrel{(\heartsuit 3)}{=}\sum_{r+s=1+\alpha}(-1)^r\sum_{a=0}^{rl}q_i^{a(rl-a)-a(a-1)/2\cdot \alpha}\varsigma_i^aF_i^{(rl-a)}E_{\tau i}^{(a)}\tilde{K}_i^{-a}\\
    &\cdot \sum_{b=0}^lq_i^{b(l-b)-b(b-1)/2\cdot\alpha-al(\langle \coroot_{\tau i},\lambda\rangle-s\alpha)}\varsigma_{\tau i}^b\qbinom{b-sl+\langle \coroot_i,l\lambda\rangle}{b}_iF_{\tau i}^{(l-b)}F_i^{(sl-b)}\one_{l\lambda}\\
    &\stackrel{(\heartsuit 4)}{=}\sum_{\substack{0\leqslant a+b\leqslant l\\a,b\geqslant 0}}\sum_{\substack{r+s=1+\alpha\\r,s\geqslant 0}}(-1)^rq_i^{-(a+b)^2+\alpha(a+b)(a+b-1)/2+lN} \\&\cdot \varsigma_i^a\varsigma_{\tau i}^b\qbinom{a+b-l+(sl-b)\alpha+\langle \coroot_{\tau i},l\lambda\rangle}{a}_i\qbinom{b-sl+\langle \coroot_i,l\lambda\rangle}{b}_i\notag
	\\&\cdot  F_i^{(rl-a)}F_{\tau i}^{(l-a-b)}F_i^{(sl-b)}\one_{l\lambda}.\notag
\end{align*}
Here $N$ is some integer. The equality $(\heartsuit 1)$ and $(\heartsuit 3)$ follow from \eqref{eq:spandi} The equality $(\heartsuit 2)$ and $(\heartsuit 4)$ follow from \eqref{eq:quoUp}.

	Therefore in $_{\A'}\U^-\one_{l\lambda}$, we have
	\begin{align*}
	&p_\imath'(\sum_{r+s=1+\alpha}(-1)^r\B_{i,*}^{(rl)}\B_{\tau i,*}^{(l)}\B_{i,l\zeta}^{(sl)})
	=\sum_{\substack{0\leqslant a+b\leqslant l\\a,b\geqslant 0}}\sum_{\substack{r+s=1+\alpha\\r,s\geqslant 0}}(-1)^r\bq_i^{-(a+b)^2+\alpha(a+b)(a+b-1)/2} \\&\cdot \phi\left(\varsigma_i^a\varsigma_{\tau i}^b\qbinom{a+b-l+(sl-b)\alpha+\langle \alpha^\vee_{\tau i},l\lambda\rangle}{a}_i\qbinom{b-sl+\langle \alpha^\vee_i,l\lambda\rangle}{b}_i\right)\notag
	\\&\cdot  F_i^{(rl-a)}F_{\tau i}^{(l-a-b)}F_i^{(sl-b)}\one_{l\lambda}.\notag
	\end{align*}

	The coefficients $\phi\left(\varsigma_i^a\varsigma_{\tau i}^b\qbinom{a+b-l+(sl-b)\alpha+\langle \alpha^\vee_{\tau i},l\lambda\rangle}{a}_i\qbinom{b-sl+\langle \alpha^\vee_i,l\lambda\rangle}{b}_i\right)$ are independent of $s$ and $r$ by \cite[Lemma~34.1.2]{Lu93}. 
	Hence by Lemma~\ref{lem:rel3}, it suffices to consider the summands when $a+b = l$. Hence we have
	\begin{align*}
	&p_\imath'(\sum_{r+s=1+\alpha}(-1)^r\B_{i,*}^{(rl)}\B_{\tau i,*}^{(l)}\B_{i,l\zeta}^{(sl)})\notag\\
	=&\sum_{a=0}^l\sum_{\substack{r+s=1+\alpha\\r,s\geqslant 0}}(-1)^r\phi\left(\varsigma_i^a\varsigma_{\tau i}^{-a}\qbinom{(sl-l+a)\alpha+\langle \coroot_{\tau i},l\lambda\rangle}{a}_i\qbinom{l-a-sl+\langle \coroot_i,l\lambda\rangle}{l-a}_i\right)\\
	&\cdot F_i^{(rl-a)}F_i^{(sl-l+a)}\one_{l\lambda}\notag\\
	=&\sum_{r=0}^\alpha(-1)^r(r-\alpha+\langle \coroot_i,\lambda\rangle)\binom{\alpha}{r}F_i^{(\alpha l)}\one_{l\lambda} +\sum_{s=0}^\alpha(-1)^{1+\alpha-s}(s\alpha+\langle \coroot_{\tau i},\lambda\rangle)\binom{\alpha}{s}F_i^{(\alpha l)}\one_{l\lambda}
	\end{align*}
	Note that 
	\[
	F_i^{(rl-a)}F_i^{(sl-l+a)} =
	\begin{cases}
	 \binom{\alpha}{s}F_i^{(\alpha l)}, &\text{if } a =l;\\
	 \binom{\alpha}{r}F_i^{(\alpha l)}, &\text{if } a =0;\\
	 0, &\text{otherwise}.
	 \end{cases}
	\]
	
	By the identities
	\begin{equation*}
	\sum_{t=0}^n(-1)^t\binom{n}{t}=0\quad \text{ if }n>0,\qquad \sum_{t=0}^n(-1)^tt\binom{n}{t}=0\quad\text{ if }n>1,
	\end{equation*}
	we deduce that 
	\[
		p_\imath'(\sum_{r+s=1+\alpha}(-1)^r\B_{i,*}^{(rl)}\B_{\tau i,*}^{(l)}\B_{i,l\zeta}^{(sl)})
		= \begin{cases}
		0, &\text{if } \alpha\ge 2;\\
		\langle \coroot_i-\coroot_{\tau i}, \lambda\rangle \one_{l\lambda}, &\text{if } \alpha=0;\\
		-2F_i^{(l)}\one_{l\lambda}, &\text{if } \alpha=1.
		\end{cases}
	\]
	Now \eqref{eq:checkiwithtaui} follows by the isomorphism $p_\imath': {_{\A'}\dot{\U}}^\imath\one_{l\zeta}\rightarrow  {_{\A'}\dot{\U}}^-\one_{l\lambda}$.


	\subsubsection{}\label{subsec:pf4} Let $\zeta\in X_\imath$ and $i \in \I$ with $\tau i \neq i$. We write $\alpha=-a_{ij}$. We need to show the following equality in $_{\mathbf{F}}\dot{\U}^\imath$:
	\[
	\sum_{r+s=1+\alpha}(-1)^r\B_{i,*}^{(rl)}{'\B_{j,*}^{(l)}}\B_{i,l\zeta}^{(sl)}=0.
	\]
	Here $'\B_{j,*}^{(l)}$ stands for the standard $\imath$divided power $\B_{j,*}^{(l)}$ if $\tau j\neq j$. As before $*$ denotes appropriate elements in $X_\imath$.
	
	Let $\lambda\in X$ such that $\bar{\lambda}=\zeta$ and  let $p'_\imath=p'_{\imath,l\lambda}$ be the isomorphism in \eqref{eq:pimA}. Then we have
	\begin{equation}\label{eq:r}
	p_\imath'(\sum_{r+s=1+\alpha}(-1)^r\B_{i,*}^{(rl)}{'\B_{j,*}^{(l)}}\B_{i,l\zeta}^{(sl)})=\sum_{t=0}^{(l-1)/2}c_t\sum_{r+s=1+\alpha}(-1)^rF_i^{(rl)}F_j^{(l-2t)}F_i^{(sl)}\one_{l\lambda}
	\end{equation}
	where $c_t\in\A_2'$.

	 The following claim can be proved similar to \cite[\S35.2.3]{Lu93}.
	 
		(a) {\it For $0\leqslant a \leqslant l$, we have  $\sum_{r+s=1+\alpha}(-1)^rF_i^{(rl)}F_j^{(a)}F_i^{(sl)}=0$ in $_{\A'}\U^-$.}

	By \eqref{eq:r}, we complete the proof in this case.

	\subsubsection{}\label{subsec:pf5}
	Let  $i \neq j\in\I$ with $\tau i=i$. We write $\alpha=-a_{ij}$. We show  the following equality in $_{\mathbf{F}}\dot{\U}^\imath$:
	 \[
 \displaystyle \sum_{\substack{r+s+2t=1+\alpha\\t\geqslant 0}}(-1)^r\lr{1}{2t}{'\B_{i,*}^{(rl)}}{'\B_{j,*}^{(l)}}{'\B_{i,l\zeta}^{(sl)}}=0.
 \]
	Here $'\B_{j,*}^{(l)}$ stands for the standard $\imath$divided power $\B_{j,*}^{(l)}$ if $\tau j\neq j$, and $*$ denotes appropriate elements in $X_\imath$.

	For any $n,a\in\mathbb{N}$, $e=\pm 1$, set
	\begin{equation*}
	D_{i;a,e}^{(n)}=\sum_{u\geqslant 0}q_i^{-e(a-1)u}\LR{a}{2u}_iB_i^{(n-2u)}\in \U^\imath.
	\end{equation*}
	In particular, $D_{i;0,e}^{(n)}=B_i^{(n)}$. We write $D_i^{(n)}=D_{i; 1,\pm 1}^{(n)}$. We have $B_iB_i^{(a)}=[a+1]_iD_i^{(a+1)}$ by Lemma ~\ref{lem:balanced} for any $a\in\mathbb{N}$.

	Then for $n,m\in\mathbb{N}$, thanks to Proposition~ \ref{prop:HigherSerreRe}, we have
	\begin{equation*}
	y_{n,m}=y_{i,j;n,m}=\sum_{r=0}^m(-1)^rq_i^{-(m-n\alpha-1)r}B_i^{(r)}B_j^{(n)}D_{i;m-n\alpha,1}^{(m-r)}.
	\end{equation*}
	
	Let $\U_i^\imath\subseteq \U^\imath$ be the subalgebra (with 1) generated by $B_i$. It is a polynomial ring with one variable over $\mathbb{Q}(q)$. Elements $\{B_i^{(n)}\mid n\in\mathbb{N}\}$ form a $\mathbb{Q}(q)$-basis for $\U_i^\imath$. Define linear operators $\delta=\delta_i$, $E_e=E_{i,e}$ ($e=\pm 1$) on $\U_i^\imath$, such that $\delta(B_i^{(n+1)})=B_i^{(n)}$, $E_e(B_i^{(n)})=q_i^{en}D^{(n)}$, for any $n\in\mathbb{N}$. We understand $B_i^{(n)}$ as $0$ if $ n < 0$.

	By a proof similar to Lemma~\ref{lem:balanced}, we obtain 
\begin{align}
    B_i^{(a-1)}D_i^{(n)} &=\sum_{t\geqslant 0}\qbinom{a+n-1}{n}_i\prod_{m=1}^t\frac{[a-2m+2]_i[n-2m+2]_i}{[a+n-2m+1]_i[2m]_i}B_i^{(a+n-2t-1)} \notag\\
    & = \sum_{t\geqslant 0}\qbinom{a+n-1}{n}_i \frac{\LR{a}{2t}_i \LR{n}{2t}_i}{\LR{a+n-1}{2t}_i}B_i^{(a+n-2t-1)}. \label{eq:BD}
\end{align}

	\begin{proposition}\label{prop:Rank1Operators}
		
		(a) For any $n,a \in\mathbb{N}$, we have $E_e(D_{i;a,e}^{(n)})=q_i^{en}D_{i;a+1,e}^{(n)}$.
		
		(b) As operators on $\U_i^\imath$, we have $\delta E_e=q_i^e E_e\delta$.
		
		(c) For any $f,g\in\U^\imath_i$, $k\in\mathbb{N}$, we have 
		\begin{equation*}
		\delta^k(fg)=\sum_{s=0}^k\qbinom{k}{s}_i(E_e^{k-s}\delta^sf)(E_{-e}^s\delta^{k-s}g).
		\end{equation*}
		
		(d) For $k,N\in\mathbb{N}$, we have 
			\begin{equation*}
		\delta_i^N(D_i^{(k)}B_i^{(N)})=q_i^{Nk}\sum_{s=0}^N\qbinom{N}{s}_iq_i^{-(N+k)s}D_{i;N-s+1,1}^{(k-s)}D_{i;s,-1}^{(s)}.
		\end{equation*}
		
	\end{proposition}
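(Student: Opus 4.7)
The plan is to establish (a)--(d) in order, reducing (a) and (b) to direct checks on the basis $\{B_i^{(n)}\}_{n\geqslant 0}$ of $\U_i^\imath$, proving (c) by induction on $k$ after a base-case Leibniz identity, and then deducing (d) from (c) by specialization. For (a), I would unfold the definition $D_{i;a,e}^{(n)}=\sum_{u\geqslant 0}q_i^{-e(a-1)u}\LR{a}{2u}_i B_i^{(n-2u)}$, apply $E_e$ termwise via $E_e B_i^{(m)}=q_i^{em}D_i^{(m)}$, and re-expand $D_i^{(m)}=\sum_v \LR{1}{2v}_i B_i^{(m-2v)}$. Collecting the coefficient of $B_i^{(n-2t)}$ gives
\[
q_i^{en}\sum_{u+v=t}q_i^{-e(a+1)u}\LR{a}{2u}_i\LR{1}{2v}_i,
\]
and by Lemma~\ref{lem:DoubleInA2}(b) applied with $(m',m'')=(a,1)$ this equals $q_i^{en-eat}\LR{a+1}{2t}_i$, which matches $q_i^{en}D_{i;a+1,e}^{(n)}$. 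For (b), one computes $\delta E_e B_i^{(n)}=q_i^{en}\delta D_i^{(n)}=q_i^{en}D_i^{(n-1)}$ and $E_e\delta B_i^{(n)}=q_i^{e(n-1)}D_i^{(n-1)}$, whence $\delta E_e=q_i^e E_e\delta$.

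For (c) I would induct on $k$, the case $k=0$ being trivial. The crucial step is the base Leibniz identity
\[
\delta(fg)=(E_e f)(\delta g)+(\delta f)(E_{-e}g),
\]
which by bilinearity reduces to $f=B_i^{(a)}$, $g=B_i^{(b)}$. The left-hand side is expanded in the basis $\{B_i^{(a+b-1-2t)}\}$ by applying $\delta$ to Lemma~\ref{lem:balanced}, while the right-hand side is rewritten as $q_i^{ea}D_i^{(a)}B_i^{(b-1)}+q_i^{-eb}B_i^{(a-1)}D_i^{(b)}$ and expanded in the same basis using the commutativity of $\U_i^\imath$ together with \eqref{eq:BD}. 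Matching coefficients then reduces to the $q$-Pascal identity
\[
\qbinom{a+b}{a}_i=q_i^{ea}\qbinom{a+b-1}{a}_i+q_i^{-eb}\qbinom{a+b-1}{a-1}_i,
\]
valid for both $e=\pm 1$. For the inductive step from $k$ to $k+1$, apply $\delta$ to the $k$-th formula, expand each $\delta\bigl[(E_e^{k-s}\delta^s f)(E_{-e}^s\delta^{k-s}g)\bigr]$ by the base Leibniz rule, use (b) to push $\delta$ past $E_e^{k-s}$ and $E_{-e}^s$ (each commutation contributing a power of $q_i^{\pm e}$), and combine the two resulting sums via the quantum Pascal rule $\qbinom{k+1}{s}_i=q_i^{-es}\qbinom{k}{s}_i+q_i^{e(k-s+1)}\qbinom{k}{s-1}_i$.

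Part (d) then follows by specializing (c) to $f=D_i^{(k)}$, $g=B_i^{(N)}$, and $e=1$. One verifies directly $\delta^s D_i^{(k)}=D_i^{(k-s)}$ and $\delta^{N-s}B_i^{(N)}=B_i^{(s)}$; iterating (a) produces $E_1^{N-s}D_{i;1,1}^{(k-s)}=q_i^{(N-s)(k-s)}D_{i;N-s+1,1}^{(k-s)}$ and $E_{-1}^{s}D_{i;0,-1}^{(s)}=q_i^{-s^2}D_{i;s,-1}^{(s)}$, and collecting the exponents yields the prefactor $q_i^{(N-s)(k-s)-s^2}=q_i^{Nk-s(N+k)}$, as required. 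The main obstacle is the base case of (c): it is the single step that must simultaneously combine the product formula of Lemma~\ref{lem:balanced}, the expansion \eqref{eq:BD}, and the $q$-Pascal identity, whereas (a), (b), the inductive step of (c), and (d) are formal manipulations of $q$-binomials once that Leibniz rule is in hand.
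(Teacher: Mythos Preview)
Your proposal is correct and follows essentially the same route as the paper. Parts (a), (b), and the inductive step of (c) are handled identically; for the base case of (c) you expand both sides in the basis $\{B_i^{(a+b-1-2t)}\}$ and invoke $q$-Pascal on the leading $q$-binomials, whereas the paper first collapses the right-hand side to $\frac{[a+c]_i}{[a]_i}B_i^{(a-1)}D_i^{(c)}$ via $D_i^{(a)}=B_iB_i^{(a-1)}/[a]_i$ and then compares---the two arguments are equivalent (the ratio $[a+c]_i/[a]_i$ is exactly what $q$-Pascal contributes). For (d) your treatment is in fact slightly cleaner: you use $\delta^s D_i^{(k)}=D_i^{(k-s)}$ directly, while the paper detours through $D_i^{(k)}=q_i^{-k}E_1 B_i^{(k)}$ before applying (b) and (c).
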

	
	\begin{proof}
 
 (a) By direct computation, we have
 \begin{align*}
     E_{i,e}(D_{i;a,e})&=E_{i,e}(\sum_{u\geqslant 0}q_i^{-e(a-1)u}\LR{a}{2}_iB_i^{(n-2u)})\\
     &=\sum_{u\geqslant 0}q_i^{-e(a-1)u}\LR{a}{2u}_iq_i^{e(n-2u)}D_i^{(n-2u)}\\
     &=\sum_{u\geqslant 0}q_i^{-e(a-1)u+e(n-2u)}\LR{a}{2u}_i\sum_{t\geqslant 0}\LR{1}{2t}_iB_i^{(n-2u-2t)}\\
     &=\sum_{m\geqslant 0}\sum_{u+t=m}q_i^{en-e(a+1)u}\LR{a}{2u}_i\LR{1}{2t}_iB_i^{(n-2m)}\\
     &=q_i^{en}\sum_{m\geqslant 0}q_i^{-eam}\LR{a+1}{2m}_iB_i^{(n-2m)}\\&=q_i^{en}D_{i;a+1,e}^{(n)}.
 \end{align*}
 
 (b) For any $a\in\mathbb{N}$, we have
\begin{align*}
    \delta_i E_{i,e}(B_i^{(a)})=q_i^{ae}\delta_i(\sum_{u\geqslant 0}\LR{1}{2u}_iB_i^{(a-2u)})=q_i^{ae}\sum_{u\geqslant 0}\LR{1}{2u}_iB_i^{(a-2u-1)}.
\end{align*}
Also, we have
\begin{equation*}
    E_{i,e}\delta_i(B_i^{(a)})=E_{i,e}(B_i^{(a-1)})=q_i^{(a-1)e}\sum_{u\geqslant 0}\LR{1}{2u}_iB_i^{(a-1-2u)}=q_i^{-e}\delta_iE_{i,e}(B_i^{(n)}).
\end{equation*}

(c) We may assume $k\geqslant 1$. We firstly prove the formula for $k=1$. It will suffice to check that for $a,c\in\mathbb{N}$:
\begin{equation*}
\begin{split}
    \delta_i(B_i^{(a)}B_i^{(c)})&= E_{e} ( B_i^{(a)} ) \delta_i ( B_i^{(c)}) +  \delta_i ( B_i^{(a)} ) E_{-e}( B_i^{(c)})\\&  = q_i^{ea}D_i^{(a)}B_i^{(c-1)}+q_i^{-ec}B_i^{(a-1)}D_i^{(c)}.
\end{split}
\end{equation*}

Note that 
\begin{align*}
    q_i^{ea}D_i^{(a)}B_i^{(c-1)}+q_i^{-ec}B_i^{(a-1)}D_i^{(c)}&=q_i^{ea}\frac{B_iB_i^{(a-1)}B_i^{(c-1)}}{[a]_i}+q_i^{-ec}B_i^{(a-1)}D_i^{(c)}\\
    &=\left(q_i^{ea}\frac{[c]_i}{[a]_i}+q_i^{-ec}\right)B_i^{(a-1)}D_i^{(c)}\\
    &=\frac{[a+c]_i}{[a]_i}B_i^{(a-1)}D_i^{(c)}.
\end{align*}

By Lemma \ref{lem:balanced} and \eqref{eq:BD}, we have
\begin{equation*}
    \delta_i(B_i^{(a)}B_i^{(c)})=\sum_{t\geqslant 0}\qbinom{a+c}{a}_i\prod_{m=1}^t\frac{[a-2m+2]_i[c-2m+2]_i}{[a+c-2m+1]_i[2m]_i}B_i^{(a+c-2t-1)} = B_i^{(a-1)}D_i^{(c)}.
\end{equation*}
%
%
Now suppose the formula holds for $k$. Then
\begin{align*}
    \delta_i^{k+1}(fg)&=\sum_{s=0}^k\qbinom{k}{s}_i\delta_i((E_{i,e}^{k-s}\delta_i^sf)(E_{i,-e}^s\delta_i^{k-s}g))\\&=\sum_{s=0}^k\qbinom{k}{s}_i((\delta_iE_{i,e}^{k-s}\delta_i^sf)(E_{i,-e}^{s+1}\delta_i^{k-s}g)+(E_{i,e}^{k+1-s}\delta_i^sf)(\delta_iE_{i,-e}^s\delta_i^{k-s}g))\\
    &=\sum_{s=0}^k\qbinom{k}{s}_i((q_i^{e(k-s)}E_{i,e}^{k-s}\delta_i^{s+1}f)(E_{i,-e}^{s+1}\delta_i^{k-s}g)+(E_{i,e}^{k+1-s}\delta_i^sf)(q_i^{-es}E_{i,-e}^s\delta_i^{k+1-s}g))\\
    &=\sum_{s=0}^{k+1}\left(\qbinom{k}{s-1}_iq_i^{e(k-s+1)}+q_i^{-es}\qbinom{k}{s}_i\right)(E_{i,e}^{k+1-s}\delta_i^sf)(E_{i,-e}^s\delta_i^{k+1-s}g)\\
    &=\sum_{s=0}^{k+1}\qbinom{k+1}{s}_i(E_{i,e}^{k+1-s}\delta_i^sf)(E_{i,-e}^s\delta_i^{k+1-s}g).
\end{align*}
Hence the formula holds for $k+1$. We complete the proof by induction.

(d) We have
	\begin{align*}
	    \delta_i^N(D_i^{(k)}B_i^{(N)})&=\sum_{s=0}^N\qbinom{N}{s}_i(E_{i,1}^{N-s}\delta_i^sD_i^{(k)})(E_{i,-1}^s\delta_i^{N-s}B_i^{(N)})\\
	    &=q_i^{-k}\sum_{s=0}^N\qbinom{N}{s}_i(E_{i,1}^{N-s}\delta_i^sE_{i,1}B_i^{(k)})(E_{i,-1}^sB_i^{(s)})\\
	    &=q_i^{-k}\sum_{s=0}^Nq_i^{s}\qbinom{N}{s}_i(E_{i,1}^{N-s+1}\delta_i^sB_i^{(k)})(q_i^{-s^2}D_{i,s,-1}^{(s)})\\
	    &=q_i^{Nk}\sum_{s=0}^N\qbinom{N}{s}_iq_i^{-(N+k)s}D_{i,N-s+1,1}^{(k-s)}D_{i,s,-1}^{(s)}.
	\end{align*}
	This finishes the proof.
	\end{proof}
	
	For $0\leqslant k<l$, we have $y_{l,(1+\alpha)l-k} = 0$ by higher Serre relations. We have 
	\begin{align}
	0&= \sum_{k=0}^{l-1}(-1)^kq_i^{k} y_{l,(1+\alpha)l-k}  D_{i;k,-1}^{(k)} \notag  \\
		&=\sum_{k=0}^{l-1}(-1)^kq_i^{k} \Big( \sum_{r=0}^{(1+\alpha)l-k}(-1)^rq_i^{-(l-k-1)r}B_i^{(r)}B_j^{(l)}D_{i;l-k,1}^{((1+\alpha)l-k-r)}\Big) D_{i;k,-1}^{(k)}\notag\\
	&=\sum_{r=0}^{(1+\alpha)l}(-1)^rq_i^{-(l-1)r}B_i^{(r)}B_j^{(l)}J_r,\label{eq:S}
	\end{align}
	where 
	$J_r=\sum_{k=0}^{l-1}(-1)^kq_i^{k(r+1)}D_{i;l-k,1}^{((1+\alpha)l-k-r)}D_{i;k,-1}^{(k)}\in \U^\imath_i$.
%
%
	
		Let $_{\A_2}\U^\imath_i$ be the $\A_2$ submodule of $\U^\imath_i$ spanned by $B_i^{(n)}$ for $n\in\mathbb{N}$. Then it is also an $\A_2$-subalgebra. Note that elements $D_{i;a,e}^{(n)}$ ($a,n\in\mathbb{N}$, $e=\pm 1$) belong to this subalgebra. And set $_{\A_2'}\U_i^\imath={\A_2'}\otimes_{\A_2}(_{\A_2}\U^\imath_i)$ where $\A_2\rightarrow\A_2'$ is the canonical quotient. Note that operators $\delta_i$, $E_{i,e}$ preserve $_{\A_2}\U^\imath_i$, so they induce linear operators on  $_{\A_2'}\U_i^\imath$. We use the same notation to denote the corresponding operator after base change.


	By Proposition~\ref{prop:Rank1Operators}, we have
	\begin{align}\label{eq:J}
	&\delta_i^{l-1}(D_i^{((1+\alpha)l-r)}B_i^{(l-1)}) \notag \\
	&= \bq_i^{(l-1)((1+\alpha)l-r)}\sum_{s=0}^{l-1}\phi\left(\qbinom{l-1}{s}_i\right)\bq_i^{-((2+\alpha)l-r-1)s}D_{i;l-s,1}^{((1+\alpha)l-r-s)}D_{i;s,-1}^{(s)}\notag \\
	&\stackrel{\heartsuit}{=}\bq_i^r\sum_{s=0}^{l-1}(-1)^s\bq_i^{(r+1)s}D_{i;l-s,1}^{((1+\alpha)l-r-s)}D_{i;s,-1}^{(s)} =\bq_i^rJ_r \quad \text{in } _{\A_2'}\U_i^\imath
	\end{align}
	
	Here $\heartsuit$ follows by $\phi\left(\qbinom{l-1}{s}_i\right)=(-1)^s$, for $0\leqslant s<l$. By the proof of Proposition \ref{prop:Rank1Operators} (c), we have
	\begin{equation}\label{eq:l}
	[(2+\alpha)l-r]_iD_i^{((1+\alpha)l-r)}B_i^{(l-1)}=[l]_i\delta_i(B_i^{(l)}B_i^{((1+\alpha)l-r)}) \quad \text{in } _{\A_2}\U^\imath_i
	\end{equation}
	
	Note that $\phi([l]_i)=0$ in $\A'_2$. Suppose $l\nmid r$, then  $\phi([(2+\alpha)l-r]_i)\neq 0$. Hence 
	\begin{align*}
		J_r &= \bq_i^{-r} \delta_i^{l-1}(D_i^{((1+\alpha)l-r)}B_i^{(l-1)}) \\
		&= \bq_i^{-r}\phi([(2+\alpha)l-r]_i)^{-1}\phi([l]_i) \delta_i^{l}(B_i^{(l)}B_i^{((1+\alpha)l-r)}) =0 \quad \text{in } _{\A'_2}\U^\imath_i.
	\end{align*}

	Suppose $r=lr'$, with $0\leqslant r'\leqslant 1+\alpha$. Then by \eqref{eq:l} and \S\ref{subsec:pf2}, we have 
	\begin{align*} 
	&(2+\alpha-r')D_i^{((1+\alpha-r')l)}B_i^{(l-1)} \\
	& =\delta_i(B_i^{(l)}B_i^{((1+\alpha-r')l)}) \\
	& =  (2+\alpha-r')\sum_{u\geqslant 0}\lr{1}{2u}B_i^{((2+\alpha-r'-2u)l-1)}
	\quad \text{in } _{\A'_2}\U^\imath_i.
	\end{align*}
%
%
	
	Hence by \eqref{eq:J}, we deduce 
	\begin{equation*}
	J_r=\sum_{u\geqslant 0}\lr{1}{2u}B_i^{((1+\alpha-r'-2u)l)} \quad \text{in } _{\A_2'}\U^\imath_i.
	\end{equation*}
	
	For any $\zeta\in X_\imath$, combining with \eqref{eq:S}, we have the following equality in $_{\A_2'}\dot{\U}^\imath$
	\begin{align*}
	    0&=\sum_{r'=0}^{1+\alpha}(-1)^{r'}B_i^{(r'l)}B_j^{(l)}\sum_{u\geqslant 0}\lr{1}{2u}B_i^{((1+\alpha-r'-2u)l)}\one_{l\zeta}\\
	    &=\sum_{\substack{r+s+2u=1+\alpha\\u\geqslant 0}}(-1)^r\lr{1}{2u}B_i^{(rl)}B_j^{(l)}B_i^{(sl)}\one_{l\zeta}\\
	    &=\sum_{\substack{r+s+2u=1+\alpha\\u\geqslant 0}}(-1)^r\lr{1}{2u}{'\B_{i,*}^{(rl)}}{'\B_{j,*}^{(l)}}{'\B_{i,l\zeta}^{(sl)}}.
	\end{align*}
	We finish the proof in this case. 

	
	\subsection{The splitting on modules}\label{sec:splitmod}
	
    \subsubsection{}
    
	For any $\lambda\in X^+$, recall \S\ref{sec:qanniR} that $L(\lambda)$ denotes the irreducible $\mathrm{U}$-module of highest weight $\lambda$.  It follows from \cite[3.8]{BW18a} that the $\mathcal{A}$-linear map $_\mathcal{A}\dot{\mathrm{U}}^\imath\rightarrow {_\mathcal{A}L(\lambda)}$ sending $u$ to $u\cdot v_\lambda ^+$ is surjective.  
	
\begin{lem}\label{lem:annil}
The map $\pi:{_\mathcal{A}\dot{\mathrm{U}}^\imath}\rightarrow {_\mathcal{A}L(\lambda)}$ sending $u$ to $u\cdot v_\lambda ^+$ has kernel
\[
    J(\lambda)=\sum_{\zeta\in X_\imath,\;\zeta\neq\bar{\lambda}}{_\mathcal{A}\dot{\mathrm{U}}^\imath}\one_\zeta+\sum_{i\in\I,\;n>\langle \coroot_i,\lambda\rangle}{_\mathcal{A}\dot{\mathrm{U}}^\imath}B_{i,\bar{\lambda}}^{(n)}.
\]
\end{lem}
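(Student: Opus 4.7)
The plan is to factor $\pi$ through the $\A$-module isomorphism $p_{\imath,\lambda}: {}_\A \dot{\U}^\imath \one_{\overline{\lambda}} \xrightarrow{\sim} {}_\A \U^- \one_\lambda$ from \eqref{eq:pim}, reducing the problem to the classical description of the kernel of $\U^- \twoheadrightarrow L(\lambda)$. First, decompose ${}_\A \dot{\U}^\imath = \bigoplus_{\zeta \in X_\imath} {}_\A \dot{\U}^\imath \one_\zeta$. For $\zeta \neq \overline{\lambda}$, $\one_\zeta \cdot v_\lambda^+ = 0$ since $v_\lambda^+$ has $X_\imath$-weight $\overline{\lambda}$, so ${}_\A \dot{\U}^\imath \one_\zeta \subseteq \ker \pi$; this handles the first summand of $J(\lambda)$ and reduces the problem to identifying $\ker \pi_\lambda^\imath$, where $\pi_\lambda^\imath := \pi|_{{}_\A \dot{\U}^\imath \one_{\overline{\lambda}}}$.

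Since $\U^+ v_\lambda^+ = 0$, for any $u \in {}_\A \dot{\U}^\imath \one_{\overline{\lambda}}$ one has $u \cdot v_\lambda^+ = \iota_\lambda(u) \cdot v_\lambda^+ = \pi_\lambda(\iota_\lambda(u)) \cdot v_\lambda^+ = p_{\imath,\lambda}(u) \cdot v_\lambda^+$. Thus $\pi_\lambda^\imath = \overline{\pi}_\lambda \circ p_{\imath,\lambda}$, where $\overline{\pi}_\lambda: {}_\A \U^- \one_\lambda \twoheadrightarrow {}_\A L(\lambda)$, $y\one_\lambda \mapsto y \cdot v_\lambda^+$, is the standard surjection whose kernel is $\sum_i {}_\A \U^- F_i^{(n_i+1)} \one_\lambda$ with $n_i := \langle \coroot_i, \lambda\rangle$. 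The proof then reduces to verifying
\[
    p_{\imath,\lambda}\Bigl(\sum_{i \in \I} {}_\A \dot{\U}^\imath B_{i,\overline{\lambda}}^{(n_i+1)}\Bigr) = \sum_{i \in \I} {}_\A \U^- F_i^{(n_i+1)} \one_\lambda.
\]

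For the $\subseteq$ inclusion, I will use the expansion \eqref{eq:spandi}, $B_i^{(a)} = \sum_t q_i^{t(a-t) - \binom{t}{2}\alpha} \varsigma_i^t F_i^{(a-t)} E_{\tau i}^{(t)} \tilde{K}_i^{-t}$ with $\alpha = -a_{i,\tau i}$: the $t \geq 1$ summands carry $E_{\tau i}^{(t)}$ on the right and lie in $\dot{\U}\U^+\one_\lambda$, so $\pi_\lambda(B_i^{(a)}\one_\lambda) = F_i^{(a)}\one_\lambda$. For $\tau i \neq i$ this immediately gives $\pi_\lambda(B_{i,\overline{\lambda}}^{(n)}\one_\lambda) = F_i^{(n)}\one_\lambda$, and a parallel weight-graded computation treats $p_{\imath,\lambda}(u B_{i,\overline{\lambda}}^{(n)})$ for general $u \in {}_\A \dot{\U}^\imath$. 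The main obstacle will be the case $\tau i = i$, where $B_{i,\overline{\lambda}}^{(n)}$ is a balanced $\imath$-divided power ($B_{i,\ev}^{(n)}\one_{\overline{\lambda}}$ or $B_{i,\odd}^{(n)}\one_{\overline{\lambda}}$), so $\pi_\lambda(B_{i,\overline{\lambda}}^{(n)}\one_\lambda)$ picks up lower-order correction terms $F_i^{(n-2k)}\one_\lambda$ for $k \geq 1$ (via Lemma \ref{lem:ff}); verifying these corrections still lie in $\sum_j \U^- F_j^{(n_j+1)} \one_\lambda$ when $n > n_i$ amounts to the identity $B_{i,\overline{\lambda}}^{(n)} \cdot v_\lambda^+ = 0$ in $L(\lambda)$, a rank 1 computation on the irreducible $\U_i$-submodule generated by $v_\lambda^+$ using the defining product formulas of $B_{i,\ev}^{(n)}$ and $B_{i,\odd}^{(n)}$. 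The reverse $\supseteq$ inclusion, combined with a weight-graded rank comparison via the iso $p_{\imath,\lambda}$ and the $\imath$canonical basis compatibility of Theorem \ref{thm:stab}, then yields the equality, paralleling \cite{Lu93}*{Proposition 23.3.6} in the quantum group case.
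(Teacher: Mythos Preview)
Your overall strategy matches the paper's exactly: reduce to the $\overline{\lambda}$-component, factor $\pi_\lambda^\imath$ through the $\dot{\U}^\imath$-module isomorphism $p_{\imath,\lambda}$ (called $\psi$ in the paper), and compare with the classical kernel $P(\lambda)$ of ${}_\A M(\lambda)\twoheadrightarrow{}_\A L(\lambda)$. Two points need correction.

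First, your displayed equality should sum over all $n>n_i$, not just $n=n_i+1$. Over $\A$ this matters: $F_i^{(n-n_i-1)}F_i^{(n_i+1)}=\qbinom{n}{n_i+1}_i F_i^{(n)}$ and the $q$-binomial is not a unit, so ${}_\A\U^-F_i^{(n_i+1)}\one_\lambda$ does not contain $F_i^{(n)}\one_\lambda$ for general $n>n_i+1$. Both the statement of $P(\lambda)$ and your target equality must carry the full range.

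Second, and more substantively, your reverse inclusion does not work as written. Invoking Theorem~\ref{thm:stab} is a red herring: knowing $\ker\pi_\lambda^\imath$ is spanned by $\imath$-canonical basis elements does not tell you those elements lie in $Q(\lambda)$, and a weight-graded rank comparison cannot upgrade $Q(\lambda)\subseteq\ker$ to equality without first knowing $Q(\lambda)$ is a saturated $\A$-submodule. The paper avoids this entirely by proving a single span statement: for each $i$ with $\tau i=i$, the $\A$-span of $\{B_{i,\overline\lambda}^{(n)}\}_{n>n_i}$ equals the $\A$-span of $\{\psi^{-1}(F_i^{(n)}v_\lambda^+)\}_{n>n_i}$, citing \cite{BerW18}*{Theorem~2.10 \& 3.6} (for $\tau i\neq i$ you already have the pointwise identity $\psi^{-1}(F_i^{(n)}v_\lambda^+)=B_{i,\overline\lambda}^{(n)}$). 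This is precisely the unitriangular transition you gesture at via Lemma~\ref{lem:ff}, but stated as an equality of $\A$-spans it gives both inclusions simultaneously: applying ${}_\A\dot{\U}^\imath\cdot(-)$ to equal generating sets yields $Q(\lambda)=\sum_{i,n>n_i}{}_\A\dot{\U}^\imath\,\psi^{-1}(F_i^{(n)}v_\lambda^+)=\psi^{-1}(P(\lambda))$. Replace your final paragraph with this.
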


\begin{proof}
Note that ${_\mathcal{A}\dot{\mathrm{U}}^\imath}=\bigoplus_{\zeta\in X_\imath}{_\mathcal{A}\dot{\mathrm{U}}^\imath}\one_\zeta$, and $\one_\zeta\cdot v_\lambda^+=0$ unless $\zeta=\bar{\lambda}$. Hence it will suffice to show the restricting map $\pi:{_\mathcal{A}\dot{\mathrm{U}}^\imath}\one_{\bar{\lambda}} \rightarrow {_\mathcal{A}L(\lambda)}$ has kernel 
$$
Q(\lambda)=\sum_{i\in\I,\;n>a_i}{_\mathcal{A}\dot{\mathrm{U}}^\imath}B_{i,\bar{\lambda}}^{(n)}.
$$

The action map factors through the ${_\mathcal{A}\dot{\mathrm{U}}^\imath}$-module isomorphism  \eqref{eq:pimA} $\psi: {_\mathcal{A}\dot{\mathrm{U}}^\imath}\one_{\bar{\lambda}}\xrightarrow{\sim}{_\mathcal{A}\dot{\mathrm{U}}^-}\one_\lambda \xrightarrow{\sim} {}_\CA M(\lambda)$. Here ${}_\CA M(\lambda)$ denotes the $\A$-form of the Verma module $M(\lambda)$ with highest weight $\lambda$.

It follows from \cite[23.3.3 \& 23.3.6]{Lu93} that the $\mathcal{A}$-linear map $ {}_\CA M(\lambda) \rightarrow {_\mathcal{A}L(\lambda)}$ sending the highest weight vector to $v_\lambda^+$ has kernel 
\[
   P(\lambda)=\sum_{i\in\I,\; n>a_i}{_\mathcal{A}\dot{\mathrm{U}}}\cdot F_i^{(n)}v^+_\lambda =\sum_{i\in\I,\; n>a_i}{_\mathcal{A}\mathrm{U}}^- F_i^{(n)}v^+_\lambda.
\]

Therefore $Q(\lambda) = \psi^{-1} (P(\lambda) ) = \sum_{i\in\I,\;n>a_i}{_\mathcal{A}\dot{\mathrm{U}}^\imath}  \psi^{-1}(F_i^{(n)}v^+_\lambda)$. If $\tau i \neq i$, then we have $\psi^{-1}(F_i^{(n)}v^+_\lambda) = B_{i,\bar{\lambda}}^{(n)} $ by direct computation. If $\tau i = i$, then the $\CA$-span of $\{B_{i,\bar{\lambda}}^{(n)}\}_{n > a_i}$ is the same as the $\CA$-span of  $\{\psi^{-1}(F_i^{(n)}v^+_\lambda)\}_{n > a_i}$ by \cite[Theorem~2.10\&3.6]{BerW18}. This finishes the proof.
\end{proof}

For any $\mathcal{A}$-algebra $R$ and $\lambda\in X^+$, recall $_R L(\lambda)=R\otimes_{\A} {_\mathcal{A}L(\lambda)}$. Since ${_\mathcal{A}L(\lambda)}$ is a free (hence flat) $\CA$-module, we obtain the following corollary by base change.

\begin{cor}\label{cor:annlR}
The $R$-linear map $_R\dot{\mathrm{U}}^\imath \rightarrow {_RL(\lambda)}$ given by $r\otimes u\mapsto (r\otimes u)(1\otimes v_\lambda^+)$ is surjective and the kernel is the left ideal of $_R\dot{\mathrm{U}}^\imath$ generated by $1\otimes\one_\zeta$ ($\zeta\neq\bar{\lambda}$), $1\otimes B_{i,\bar{\lambda}}^{(n)}$ for various $i\in\I,\, n>\langle \coroot_i,\lambda \rangle$.
\end{cor}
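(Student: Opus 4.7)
The plan is to deduce this corollary from Lemma~\ref{lem:annil} by base change, using the flatness of ${}_{\CA}L(\lambda)$ over $\CA$.

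First I would record the short exact sequence of $\CA$-modules furnished by Lemma~\ref{lem:annil}:
\[
0 \longrightarrow J(\lambda) \longrightarrow {}_{\CA}\dot{\mathrm{U}}^\imath \stackrel{\pi}{\longrightarrow} {}_{\CA}L(\lambda) \longrightarrow 0,
\]
where $J(\lambda)$ is the left ideal of ${}_{\CA}\dot{\mathrm{U}}^\imath$ generated by $\one_\zeta$ ($\zeta\neq\overline{\lambda}$) and $B_{i,\overline{\lambda}}^{(n)}$ ($i\in\I$, $n>\langle\coroot_i,\lambda\rangle$). Since ${}_{\CA}L(\lambda)$ is a free $\CA$-module (it admits an $\imath$canonical basis by \cite{BW18a}), we have $\operatorname{Tor}_1^{\CA}({}_{\CA}L(\lambda),R)=0$ for any $\CA$-algebra $R$, so tensoring with $R$ preserves exactness and yields
\[
0 \longrightarrow R\otimes_{\CA}J(\lambda) \longrightarrow {}_R\dot{\mathrm{U}}^\imath \longrightarrow {}_RL(\lambda) \longrightarrow 0.
\]
This already gives surjectivity of the map $r\otimes u\mapsto (r\otimes u)(1\otimes v_\lambda^+)$, since it factors through $\pi$ after base change.

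Next I would identify the image of $R\otimes_{\CA}J(\lambda)$ inside ${}_R\dot{\mathrm{U}}^\imath$ as the left ideal $K$ generated by the elements $1\otimes\one_\zeta$ and $1\otimes B_{i,\overline{\lambda}}^{(n)}$ listed in the statement. The containment ``$\supseteq$'' holds because these elements lie in the image. For ``$\subseteq$'', any element of $J(\lambda)$ can be written as a finite $\CA$-linear combination of products $u\cdot g$, where $g$ is one of the generators and $u\in{}_{\CA}\dot{\mathrm{U}}^\imath$; under $R\otimes_{\CA}(-)$ each such product maps to $(1\otimes u)(1\otimes g)\in K$, so the image of $R\otimes_{\CA}J(\lambda)$ is contained in $K$.

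Combining the two steps gives the desired description of the kernel. The argument is essentially formal once Lemma~\ref{lem:annil} and the freeness of ${}_{\CA}L(\lambda)$ are available; no step here looks like a genuine obstacle, the only subtlety being to invoke flatness to preserve the short exact sequence under base change.
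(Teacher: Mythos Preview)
Your proof is correct and follows exactly the paper's approach: the paper simply notes that ${}_{\CA}L(\lambda)$ is a free (hence flat) $\CA$-module, so the corollary follows from Lemma~\ref{lem:annil} by base change. You have just unwound this base change argument in more detail, including the identification of $R\otimes_{\CA}J(\lambda)$ with the left ideal generated by the listed elements.
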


\subsubsection{}\label{se:L}
Recall \S\ref{sec:qFr}.

\begin{proposition}\label{prop:fa}
    For any $\lambda\in X^+$, there exists a unique $_{\A'}\dot{\mU}^\imath$-module homomorphism
    \[
    \psi_\lambda:{_{\A'}\mathfrak{L}}(\lambda)\longrightarrow \big({_{\A'}L} (l\lambda)\big)^\ifr, \quad v_{\lambda}^+ \mapsto v_{l\lambda}^+.
    \]
     Here $\big({_{\A'}L} (l\lambda)\big)^{\ifr}$ stands for the $_{\A'}\dot{\mU}^\imath$-module, which is the same as ${_{\A'}L} (l\lambda)$ as $\A'$-modules, and the action of $_{\A'}\dot{\mU}^\imath$ is twisted by $\ifr$.
\end{proposition}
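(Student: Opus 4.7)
The plan is to realize $\psi_\lambda$ as the induced map $\overline{u}\cdot v_\lambda^+ \mapsto \ifr(u)\cdot v_{l\lambda}^+$ and check well-definedness by applying Corollary~\ref{cor:annlR} twice. Uniqueness is automatic: Corollary~\ref{cor:annlR}, applied over $R=\A'_c$, exhibits ${_{\A'}\mathfrak{L}}(\lambda)$ as a cyclic $_{\A'}\dot{\mU}^\imath$-module on $v_\lambda^+$, so any module map from it is pinned down by the image of that vector.

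For existence, Corollary~\ref{cor:annlR} identifies ${_{\A'}\mathfrak{L}}(\lambda)$ with $_{\A'}\dot{\mU}^\imath/J$, where $J$ is the left ideal generated by $\one_\zeta$ (for $\zeta\neq\overline{\lambda}$) and by $\fb_{i,\overline{\lambda}}^{(n)}$ (for $i\in\I$ and $n>\langle\coroot_i,\lambda\rangle$). Setting $\psi_\lambda(\overline u\cdot v_\lambda^+) := \ifr(u)\cdot v_{l\lambda}^+$ then yields an $_{\A'}\dot{\mU}^\imath$-module homomorphism provided each such generator $x$ satisfies $\ifr(x)\cdot v_{l\lambda}^+ = 0$ in ${_{\A'}L}(l\lambda)$. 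This last identity will be verified by applying Corollary~\ref{cor:annlR} again, now over $R=\A'_\phi$ and to the highest weight $l\lambda$: the annihilator of $v_{l\lambda}^+$ in $_{\A'}\dot{\U}^\imath$ is then the left ideal generated by $\one_\eta$ for $\eta\neq\overline{l\lambda}$ and by $\B_{i,\overline{l\lambda}}^{(m)}$ for $m>\langle\coroot_i,l\lambda\rangle = l\langle\coroot_i,\lambda\rangle$.

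For $x=\one_\zeta$ with $\zeta\neq\overline{\lambda}$, Theorem~\ref{thm:iFr} gives $\ifr(\one_\zeta)=\one_{l\zeta}$, which lies in the target annihilator unless $l(\zeta-\overline{\lambda})=0$ in $X_\imath$; Lemma~\ref{le:nop} combined with the oddness of $l$ rules this out. For $x=\fb_{i,\overline{\lambda}}^{(n)}$ with $\tau i\neq i$, Theorem~\ref{thm:iFr} gives $\ifr(x)=\B_{i,l\overline{\lambda}}^{(nl)}$, and $nl>l\langle\coroot_i,\lambda\rangle$ places this in the annihilator. For $x=\fb_{i,\overline{\lambda}}^{(n)}$ with $\tau i=i$, Corollary~\ref{cor:iFr} writes $\ifr(x)$ as an $\A'$-linear combination of $\B_{i,l\overline{\lambda}}^{(nl-2t)}$ for $0\le t\le (l-1)/2$; combining $n\ge\langle\coroot_i,\lambda\rangle+1$ with $2t\le l-1$ yields $nl-2t\ge l\langle\coroot_i,\lambda\rangle+1>\langle\coroot_i,l\lambda\rangle$, so every summand sits in the annihilator.

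The only subtle step is the $\tau i=i$ branch, where $\ifr$ disperses a single $\imath$divided power into up to $(l+1)/2$ tail terms of descending orders $nl,\,nl-2,\ldots,nl-(l-1)$; the required inequality at the smallest order is the place where the oddness of $l$ enters decisively, dovetailing with its role in Lemma~\ref{le:nop} at the weight-label step.
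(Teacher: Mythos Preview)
Your proof is correct and follows essentially the same approach as the paper's: reduce to checking that $\ifr$ sends generators of the annihilator ideal (from Corollary~\ref{cor:annlR}) into the annihilator of $v_{l\lambda}^+$, then handle the two cases $\tau i\neq i$ and $\tau i=i$ via Theorem~\ref{thm:iFr} and Corollary~\ref{cor:iFr} respectively, with the same inequality $nl-2t\ge l\langle\coroot_i,\lambda\rangle+1$. You are in fact slightly more careful than the paper in explicitly treating the idempotent generators $\one_\zeta$ via Lemma~\ref{le:nop}, a step the paper leaves implicit.
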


\begin{proof}
Since ${_{\A'}\mathfrak{L}}(\lambda)$ is generated by $v_\lambda^+$ as $\cAUi$-module, the uniqueness is clear. Thanks to Corollary \ref{cor:annlR}, in order to show such map is well-defined, it will suffice to show that $\ifr (\fb_{i,\bar{\lambda}}^{(n)})v_{l\lambda}^+=0$, for any $i\in\I$ and $n>\langle \alpha^\vee_i,\lambda\rangle$.

Suppose $\tau i\neq i$. Then  $n>\langle \alpha^\vee_i,\lambda\rangle$ implies $nl>\langle \alpha^\vee_i,l\lambda\rangle$. Thanks to Corollary \ref{cor:annlR} and Theorem~\ref{thm:iFr}, we have 
$\ifr (\fb_{i,\bar{\lambda}}^{(n)})v_{l\lambda}^+=\RB_{i,\overline{l\lambda}}^{(nl)}v_{l\lambda}^+=0$.

Suppose $\tau i=i$. By Corollary \ref{cor:iFr}, we have 
$$
\ifr(\fb_{i,\bar{\lambda}}^{(n)}) = \sum_{t=0}^{(l-1)/2}  c_t \RB_{i,\overline{l\lambda}}^{(ln-2t)}, \quad c_t \in \CA'.
$$ 
Since $n>\langle \alpha^\vee_i,\lambda\rangle$, we have $ln\geqslant \langle \alpha^\vee_i,l\lambda\rangle + l$. Therefore $ln-2t\geqslant \langle \alpha^\vee_i,l\lambda\rangle +1$ for $t = 0, \dots, (l-1)/2$. By Corollary \ref{cor:annlR} again, we deduce that $\ifr(\fb_{i,\bar{\lambda}}^{(n)})v_{l\lambda}^+=0$. 

The proposition is proved.
\end{proof}

\subsubsection{}\label{sec:propJ}
For a subset $J$ of the index set $\I$, we call $J$ has \emph{property (*)} if: 

(a) $\tau j=j$, for any $j\in J$; (b) $\langle \alpha^\vee_i,\alpha_j\rangle \geqslant -1$, for any $j\in J$, and $i\in \I$; (c) for any fixed $i\in \I$, there are at most one $j\in J$, with $\langle \alpha^\vee_i, \alpha_j\rangle \neq 0$.

For a subset $J\subseteq \I$ with property (*), we define
\begin{equation*}
\RB_{J,\overline{\lambda}}^{(l-1)} =\prod_{j\in J}{\RB}_j^{(l-1)}\one_{\bar{\lambda}} \in \pAUi, \quad \text{for any }\lambda\in X.
\end{equation*}
We see that the product is independent of the order by condition (c). Note that $\bq_j^k+\bq_j^{-k}$ is invertible in $\A'$, whenever $l$ does not divide $k$. Hence by Lemma \ref{lem:ff}, the element $\RB_{J,\overline{\lambda}}^{(l-1)}$ indeed belongs to $\pAUi$.

\begin{proposition}\label{prop:faJ}
    For any $\lambda\in X^+$, and a subset $J$ of $\I$ which has property (*), there exists a unique $\cAUi$-module homomorphism
    \[
    \psi^J_\lambda: {_{\A'}\mathfrak{L}}(\lambda)\longrightarrow \big({_{\A'}L}(l\lambda)\big)^\ifr, \quad v_\lambda^+ \mapsto \RB_{J,\overline{\lambda}}^{(l-1)}v_{l\lambda}^+\,  \text{ (the untwisted action}).
    \]
    Here $\big({_{\A'}L} (l\lambda)\big)^\ifr$ stands for the $_{\A'}\dot{\mU}^\imath$-module, which is the same as ${_{\A'}L} (l\lambda)$ as $\A'$-modules, and the action of $_{\A'}\dot{\mU}^\imath$ is twisted by $\ifr$.
\end{proposition}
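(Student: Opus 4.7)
The plan is to apply Corollary~\ref{cor:annlR}. Set $\xi := \RB_{J,\bar{\lambda}}^{(l-1)} v_{l\lambda}^+$. Uniqueness is immediate from the cyclicity of ${_{\A'}\mathfrak{L}}(\lambda)$ over $\cAUi$, and existence reduces to verifying: (a) $\ifr(\one_\zeta)\cdot\xi = \delta_{\zeta,\bar{\lambda}}\,\xi$ for all $\zeta\in X_\imath$; and (b) $\ifr(\fb_{i,\bar{\lambda}}^{(n)})\cdot\xi = 0$ for all $i\in\I$ and $n>\langle\alpha_i^\vee,\lambda\rangle$.

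For (a), the $X$-weight of $\xi$ is $l\lambda-(l-1)\sum_{j\in J}\alpha_j$. Property (*)(a) gives $\theta\alpha_j=-\alpha_j$ for $j\in J$, so $2\overline{\alpha_j}=0$ in $X_\imath$, and since $l-1$ is even the $\imath$-weight of $\xi$ is exactly $\overline{l\lambda}$. Combined with $\ifr(\one_\zeta)=\one_{l\zeta}$ and Lemma~\ref{le:nop} (no odd torsion in $X_\imath$), this yields (a).

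For (b), Theorem~\ref{thm:iFr} and Corollary~\ref{cor:iFr} express $\ifr(\fb_{i,\bar{\lambda}}^{(n)})$ as an $\A'$-linear combination of $\RB_{i,\overline{l\lambda}}^{(N)}$ with $N\in\{nl-2t:0\leqslant t\leqslant (l-1)/2\}$, so that every exponent satisfies $N\geqslant nl-(l-1) > \langle\alpha_i^\vee,l\lambda\rangle$. It therefore suffices to prove the uniform claim
\[
\RB_{i,\overline{l\lambda}}^{(N)}\,\RB_{J,\bar{\lambda}}^{(l-1)}\,v_{l\lambda}^+ \;=\; 0 \qquad\text{whenever } N>\langle\alpha_i^\vee,l\lambda\rangle.
\]
We split this according to how $i$ interacts with $J$, using property (*). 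If $i\in J$, then property (*)(c) makes $\RB_i$ commute with every $\RB_j$ ($j\in J\setminus\{i\}$); after moving those factors to the right, Lemma~\ref{lem:balanced} rewrites $\RB_i^{(N)}\RB_i^{(l-1)}$ as an $\A'$-combination of $\RB_i^{(N+l-1-2t)}$, all of whose exponents still exceed $\langle\alpha_i^\vee,l\lambda\rangle$, and Corollary~\ref{cor:annlR} finishes the job. If $i\notin J$ and $a_{ij}=0$ for all $j\in J$, then $\RB_i^{(N)}$ commutes through $\RB_J^{(l-1)}$ outright and we conclude exactly as in the proof of Proposition~\ref{prop:fa}. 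In the remaining case, $i\notin J$ with a unique $j^*\in J$ satisfying $a_{ij^*}=-1$, we move the commuting $\RB_j^{(l-1)}$ ($j\neq j^*$) to the right and invoke Proposition~\ref{prop:HigherSerreRe}, specialized to $\alpha=1$ and combined with Lemma~\ref{le:qBinomAtUnity}, to rewrite $\RB_i^{(N)}\RB_{j^*}^{(l-1)}$ as a sum in which $\RB_{j^*}^{(l-1)}$ sits on the left and the surviving powers of $\RB_i$ are all still strictly greater than $\langle\alpha_i^\vee,l\lambda\rangle$.

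The main obstacle will be this last case: one must check that after reducing the higher $\imath$Serre relations modulo the cyclotomic polynomial $f_l$, the coefficients of any residual monomial carrying a power $\RB_i^{(M)}$ with $M\leqslant \langle\alpha_i^\vee,l\lambda\rangle$ vanish in $\A'$. The required collapse is governed by the $q$-double binomial identities of Lemma~\ref{lem:DoubleInA2} and Lemma~\ref{le:qBinomAtUnity}, in direct analogy with the coefficient cancellation carried out in \S\ref{subsec:pf5} of the proof of Theorem~\ref{thm:iFr}.
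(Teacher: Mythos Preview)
Your reduction to the ``uniform claim'' $\RB_{i,\overline{l\lambda}}^{(N)}\,\RB_{J,\overline{l\lambda}}^{(l-1)}\,v_{l\lambda}^+=0$ for all $N>\langle\alpha_i^\vee,l\lambda\rangle$ is too strong, and in the third case ($i\notin J$, $a_{ij^*}=-1$) it is actually false. Take for instance $l=3$, $\tau i=i$, $J=\{j^*\}$, and $\lambda$ with $\langle\alpha_i^\vee,\lambda\rangle=0$, $\langle\alpha_{j^*}^\vee,\lambda\rangle\geqslant 1$. Then $\RB_{j^*}^{(2)}v_{3\lambda}^+$ has a nonzero component along $\RF_{j^*}^{(2)}v_{3\lambda}^+$, whose $\alpha_i^\vee$-weight is $2>0$; applying $\RB_{i,\overline{3\lambda}}^{(1)}=B_i$ to it produces $F_iF_{j^*}^{(2)}v_{3\lambda}^+\neq 0$. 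So the termwise claim fails, and no amount of higher $\imath$Serre manipulation will rescue it, because the target statement itself is wrong. (Your proposed use of Proposition~\ref{prop:HigherSerreRe} also has the roles reversed: that proposition yields relations with the $\tau$-fixed index on the outside, not a commutation formula for $B_i^{(N)}B_{j^*}^{(l-1)}$.)

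The paper avoids this by \emph{not} breaking $\ifr(\fb_{i,\bar\lambda}^{(n)})$ into its individual summands. Instead it expands $'\RB_{j^*,\overline{l\lambda}}^{(l-1)}v_{l\lambda}^+$ as an $\A'$-combination of $\RF_{j^*}^{(t)}v_{l\lambda}^+$ with $0\leqslant t\leqslant l-1$, each of which is a highest weight vector for the rank-one $i$ of weight $l\lambda_i+t<nl$. The crucial point is the parity dichotomy in Corollary~\ref{cor:iFr}: when $\tau i=i$ and $n=\lambda_i+1$ (the dangerous minimal case), one has $n\not\equiv\lambda_i\pmod 2$, so $\ifr(\fb_{i,\bar\lambda}^{(n)})=\RB_{i,\overline{l\lambda}}^{(nl)}$ is a \emph{single} term with exponent $nl=l\lambda_i+l>l\lambda_i+t$; when $n>\lambda_i+1$, even the smallest exponent $nl-(l-1)$ already exceeds $l\lambda_i+l-1$. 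In both situations the annihilation follows from Berman--Wang's formulas (as in Lemma~\ref{lem:annil}). Thus the vanishing holds for $\ifr(\fb_{i,\bar\lambda}^{(n)})$ as a whole, even though it fails for some of the individual $\RB_i^{(N)}$ in your decomposition.
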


\begin{proof}
It will suffice to prove that $\ifr(\fb_{i,\bar{\lambda}}^{(n)})\RB_{J,\overline{l\lambda}}^{(l-1)}\cdot v_{l\lambda}^+=0$, for any $i\in \I$ and $n>\langle \alpha^\vee_i, \lambda\rangle$ by Lemma~\ref{lem:annil}. We divide the proof into two cases.

Case (a): $i \not \in J$. 

If $\langle  \alpha^\vee_i,\alpha_j\rangle=0$  for all $j\in J$, then $\ifr(\fb_{i,\bar{\lambda}}^{(n)})$ commutes with $\RB_{J,\overline{l\lambda}}^{(l-1)}$. By the proof of Proposition \ref{prop:fa}, we have  $\ifr(\fb_{i,\bar{\lambda}}^{(n)})v_{l\lambda}^+=0$ if $n>\langle \alpha^\vee_i,\lambda\rangle$. Hence $\ifr(\fb_{i,\bar{\lambda}}^{(n)})\RB_{J,\overline{l\lambda}}^{(l-1)} v_{l\lambda}^+=0$

Otherwise we can find a unique $j'\in J$  such that $\langle \alpha^\vee_{i},\alpha_{j'}\rangle =-1$. Then $\ifr(\fb_{i,\bar{\lambda}}^{(n)})$ commutes with $\RB_{j,\overline{l\lambda}}^{(l-1)}$, for any $j' \neq j\in J$. Hence
\[
\ifr(\fb_{i,\bar{\lambda}}^{(n)})\RB_{J,\overline{l\lambda}}^{(l-1)} v_{l\lambda}^+=\RB_{J\backslash\{j'\},*}^{(l-1)}\ifr(\fb_{i,\bar{\lambda}}^{(n)}){'\RB_{j',\overline{l\lambda}}^{(l-1)}}v_{l\lambda}^+.
\]

 We claim $\ifr(\fb_{i,\bar{\lambda}}^{(n)}){'\RB_{j',\overline{l\lambda}}^{(l-1)}}v_{l\lambda}^+=0$. Since $'\RB_{j',\overline{l\lambda}}^{(l-1)}v_{l\lambda}^+$ is an $\A'$-linear combinations of $ \RF_{j'}^{(t)}v_{l\lambda}^+$ with $ 0 \leqslant t \leqslant l-1$,  it suffices to show $\ifr(\fb_{i,\bar{\lambda}}^{(n)})\RF_{j'}^{(t)}v_{l\lambda}^+=0$ for any $ 0 \leqslant t \leqslant l-1$.  Note that $\RF_{j'}^{(t)}v_{l\lambda}^+$ is a highest weight vector of highest weight $\langle \alpha^\vee_i, l\lambda-t\alpha_{j'}\rangle < nl$, with respect to the rank one quantum group associated to $i \in \I$. By Corollary~\ref{cor:iFr}, we have 
	\[
		\ifr(\mathfrak{b}_{i,\overline{\lambda}}^{(n)}) =
			\begin{cases} 
			\RB_{i,\overline{l\lambda}}^{(ln)},  &\text{if } \tau i \neq i;\\
			\mathrm{B}_{i,\overline{l\lambda}}^{(ln)}, &\text{if } \tau i =i \text{ and } n = \langle \coroot_i,\overline{\lambda}\rangle + 1;\\
			\sum_{t=0}^{(l-1)/2}c_t\mathrm{B}_{i,\overline{l\lambda}}^{(nl-2t)} (c_t \in \A'), &\text{if } \tau i =i \text{ and } n > \langle \coroot_i,\overline{\lambda}\rangle  +1.
		\end{cases}
		\]
		The claim follows by the proof of Lemma~\ref{lem:annil} or \cite{BerW18}*{Theorem~2.10\&3.6}. We hence finish Case (a). 

Case (b): $i \in J$. 

Then by the definition of the property (*), $\ifr(\fb_{i,\bar{\lambda}}^{(n)})$ commutes with $\RB_{J,\overline{l\lambda}}^{(l-1)}$. Then by similar computations, we have
\[
\ifr(\fb_{i,\bar{\lambda}}^{(n)})\RB_{J,\overline{l\lambda}}^{(l-1)} v_{l\lambda}^+=\RB_{J,\overline{l\lambda}}^{(l-1)}\ifr(\fb_{i,\bar{\lambda}}^{(n)}) v_{l\lambda}^+=0, \quad \text{if }n>\langle \alpha^\vee_i,\lambda\rangle.
\]

We finish the proof.
\end{proof}


\section{Frobenius splittings of algebraic groups}\label{sec:iFrO}

In this section, we assume given an $\imath$root datum $(\I,Y,X,A,(\alpha_i)_{i\in\I},(\coroot_i)_{i\in\I},\tau,\theta)$ of finite (quasi-split) type. Retain the settings in \S\ref{sec:qFr}. 
\subsection{The maps $\ofr$ and $\iofr$}

\subsubsection{}

Recall \S \ref{sec:iqp} the embedding $_\A\dot{\U}^\imath\hookrightarrow{_\A\widehat{\U}}$. We have induced embeddings $_{\A'}\dot{\U}^\imath\hookrightarrow{_{\A'}\widehat{\U}}$, and $_{\A'}\dot{\mathfrak{U}}^\imath\hookrightarrow{_{\A'}\widehat{\mathfrak{U}}}$. It follows from \cite{BS21}\footnote{In \cite{BS21}, the authors use a different completion, but the same proof applies in our setting.} that $\ofr:{_{\A'}\widehat{\U}}\rightarrow {_{\A'}\widehat{\mU}}$ restricts to $\iofr:{_{\A'}\dot{\U}^\imath}\rightarrow {_{\A'}\dot{\mU}^\imath}$. 

For any canonical basis element $b$ of $_{\A'}\dot{\U}$, we write
\[
\ofr(b)=\sum_{b'\in\dot{\RB}}x_{b,b'}b', \quad \text{for } x_{b,b'}\in \A', b' \in {}_{\A'}\dot{\mU}.
\]
For any $\imath$canonical basis element $b$ of $_{\A'}\dot{\U}^\imath$, we write
\[
\iofr(b)=\sum_{b'\in\dot{\RB}^\imath}x^\imath_{b,b'}b', \quad \text{for } x^\imath_{b,b'}\in\A', b' \in {}_{\A'}\dot{\mathfrak{\U}}^\imath.
\]

\begin{lemma}\label{le:fiFr}
 (1) Let $b'$ be a canonical basis element of $_{\A'}\dot{\mU}$. There are only finitely many canonical basis element $b$ of $_{\A'}\dot{\U}$, such that $x_{b,b'}\neq 0$.

 (2)  Let $b'$ be a $\imath$canonical basis element of $_{\A'}\dot{\mU}^\imath$. There are only finitely many $\imath$canonical basis element $b$ of $_{\A'}\dot{\U}^\imath$, such that $x^\imath_{b,b'}\neq 0$.
\end{lemma}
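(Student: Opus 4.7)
The plan is to prove both parts by the same strategy: use quantum-Frobenius maps on finite-dimensional modules to transport the non-vanishing of the coefficients $x_{b,b'}$ (resp.\ $x^\imath_{b,b'}$) into non-vanishing of the action of $b$ on a fixed vector in a finite-dimensional module, at which point finiteness follows from Lusztig's (resp.\ Watanabe's) compatibility of canonical bases with such modules.

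For (1), I would first observe that $\ofr$ respects the right weight decomposition, so if $b'\in{}_{\A'}\dot{\mU}\one_{\lambda'}$ then only $b\in\dot{\RB}\cap{}_{\A'}\dot{\U}\one_{l\lambda'}$ can contribute. Next I would construct, by combining the $\omega$-twisted analog of Proposition~\ref{prop:ga} with the coalgebra property of $\ofr$ (\cite{Lu93}*{Ch.~35}), a ${}_{\A'}\dot{\U}$-equivariant map
\[
\gamma\colon {}_{\A'}{}^\omega L(l\lambda_1)\otimes{}_{\A'}L(l\lambda_2)\longrightarrow \bigl({}_{\A'}{}^\omega\mathfrak{L}(\lambda_1)\otimes{}_{\A'}\mathfrak{L}(\lambda_2)\bigr)^{\ofr}
\]
for any $\lambda_1,\lambda_2\in X^+$ with $\lambda_2-\lambda_1=\lambda'$, sending $V:=v^-_{-l\lambda_1}\otimes v^+_{l\lambda_2}$ to $v:=v^-_{-\lambda_1}\otimes v^+_{\lambda_2}$. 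I would then choose $\lambda_1,\lambda_2$ large enough (cf.\ \cite{Lu93}*{Theorem~25.2.1}) so that $b'\cdot v\neq 0$. If $x_{b,b'}\neq 0$, expanding
\[
\gamma(b\cdot V)=\ofr(b)\cdot v=\sum_{b''\in\dot{\RB}}x_{b,b''}\,(b''\cdot v)
\]
and invoking \cite{Lu93}*{Theorem~25.2.1} gives that the non-zero vectors $\{b''\cdot v\}_{b''\in\dot{\RB}}$ are distinct elements of the canonical basis of the target, hence linearly independent; so the right-hand side is non-zero and $b\cdot V\neq 0$. A final application of \cite{Lu93}*{Theorem~25.2.1} to the finite-dimensional module ${}_{\A'}{}^\omega L(l\lambda_1)\otimes{}_{\A'}L(l\lambda_2)$ bounds the number of such $b$.

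For (2), the argument is parallel but uses only a single highest-weight module. Given $b'\in\dot{\RB}^\imath\cap{}_{\A'}\dot{\mU}^\imath\one_{\overline{\mu}}$, I would choose $\mu\in X^+$ with $\overline{\mu}$ equal to the $\imath$-weight of $b'$ and $b'\cdot v^+_\mu\neq 0$ in ${}_{\A'}\mathfrak{L}(\mu)$; this is possible by Theorem~\ref{thm:stab} (cf.\ the proof of Lemma~\ref{le:finit}). Restricting $\gamma_\mu$ of Proposition~\ref{prop:ga} along the inclusion $\dot{\U}^\imath\hookrightarrow\dot{\U}$ and using $\ofr|_{\dot{\U}^\imath}=\iofr$ (\cite{BS21}) yields a ${}_{\A'}\dot{\U}^\imath$-equivariant map ${}_{\A'}L(l\mu)\to({}_{\A'}\mathfrak{L}(\mu))^{\iofr}$ sending $v^+_{l\mu}\mapsto v^+_\mu$. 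If $x^\imath_{b,b'}\neq 0$, I would expand
\[
\iofr(b)\cdot v^+_\mu=\sum_{b''\in\dot{\RB}^\imath}x^\imath_{b,b''}\,(b''\cdot v^+_\mu)
\]
and apply Theorem~\ref{thm:stab} (the non-zero $b''\cdot v^+_\mu$ are distinct $\imath$canonical basis vectors of $\mathfrak{L}(\mu)$) to conclude that the sum is non-zero. Hence $b\cdot v^+_{l\mu}\neq 0$ in $L(l\mu)$, and Theorem~\ref{thm:stab} once more forces only finitely many such $b$.

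The main technical step I anticipate requiring care is the construction of the $\omega$-twisted analog of Proposition~\ref{prop:ga} together with the coproduct compatibility of $\ofr$ needed to splice it with the ordinary $\gamma_{\lambda_2}$ into the tensor-product map $\gamma$; both assertions are essentially present in \cite{Lu93}*{Ch.~35} but the details need to be checked carefully. The conceptual heart of the argument is, in both parts, the bijective correspondence between canonical (resp.\ $\imath$canonical) basis elements acting non-trivially on a fixed highest/lowest weight tensor and the canonical basis of the target module, supplied by \cite{Lu93}*{Theorem~25.2.1} in (1) and by Theorem~\ref{thm:stab} in (2).
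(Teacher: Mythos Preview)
Your proposal is correct and follows essentially the same approach as the paper: construct the $\ofr$-twisted (resp.\ $\iofr$-twisted) module maps, then use the stability of canonical bases (\cite{Lu93}*{\S25.2}) and $\imath$canonical bases (Theorem~\ref{thm:stab}) to deduce that $x_{b,b'}\neq 0$ forces $b$ to act nontrivially on a fixed vector in a finite-dimensional module. The only simplification the paper makes is in part (1): rather than assembling the tensor-product map from an $\omega$-twisted analog of Proposition~\ref{prop:ga} plus the coproduct compatibility of $\ofr$, it invokes the explicit description of the kernel ${}_R P(\lambda_1,\lambda_2)$ in \S\ref{sec:qanniR} directly, so checking that $\ofr$ annihilates the generators of ${}_{\A'}P(l\mu_1,l\mu_2)$ immediately produces the desired map on ${}^\omega_{\A'}L(l\mu_1)\otimes{}_{\A'}L(l\mu_2)$.
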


\begin{proof}
We show part (1). Choose some $\mu_1,\mu_2\in X^+$, such that $b'\cdot (v_{-\mu_1}^-\otimes v_{\mu_2}^+)\neq 0$ in $^\omega {_{\A'}\mathfrak{L}}(\mu_1)\otimes_{\A'} {_{\A'}\mathfrak{L}}(\mu_2)$. 

By \S\ref{sec:qanniR}, it is direct to check that we have the $_{\A'}\dot{\U}$-module homomorphism:
\[
^\omega {_{\A'}L}(l\mu_1)\otimes_{\A'} {_{\A'}L}(l\mu_2)\longrightarrow \left({^\omega {_{\A'}\mathfrak{L}}(\mu_1)\otimes_{\A'} {_{\A'}\mathfrak{L}}(\mu_2)}\right)^\ofr,\quad v_{-l\mu_1}^-\otimes v_{l\mu_2}^+ \mapsto v_{-\mu_1}^-\otimes v_{\mu_2}^+.
\]
Here $\left({^\omega {_{\A'}\mathfrak{L}}(\mu_1)\otimes_{\A'} {_{\A'}\mathfrak{L}}(\mu_2)}\right)^\ofr$ is defined similar to that in Proposition~\ref{prop:ga}.

Suppose $b$ is some canonical basis element of $_{\A'}\dot{\U}$, such that $x_{b,b'}\neq 0$. Then thanks to the map above, we deduce that $b\cdot( v_{-l\mu_1}^-\otimes v_{l\mu_2}^+)\neq 0$. There are only finitely many such $b$ by \cite{Lu93}*{\S25.2}.

We now show part (2). For any $\mu\in X^+$, it follows from Corollary \ref{cor:annlR} that there is an $_{\A'}\dot{\U}^\imath$-module homomorphism :
\[
{_{\A'}L}(l\mu)\rightarrow {_{\A'}\mathfrak{L}}(\mu)^\iofr, \quad v_{l\mu}^+ \mapsto v_\mu^+.
\]
Here ${_{\A'}\mathfrak{L}}(\mu)^\iofr$ is defined similar to that in Proposition~\ref{prop:ga}.

For any fixed $\imath$canonical basis element $b'$ of $_{\A'}\dot{\mU}^\imath$, we can find $\mu\in X^+$, such that $b'\cdot v_\mu^+\neq 0$. Here $b'\cdot v_\mu^+$ stands for the standard action (not twisted by $\iofr$). 
Suppose $b$ is some $\imath$canonical basis element of $_{\A'}\dot{\U}^\imath$, such that $x_{b,b'}^\imath\neq 0$. Then thanks to the map above and Theorem \ref{thm:stab}, we deduce that $b\cdot v_{l\mu}^+\neq 0$. By Theorem~\ref{thm:stab} again, there are only finitely many such $b$.
\end{proof}

\subsubsection{}\label{sec:oFrbach}

Let $l=p$ be a prime number, which is relatively prime to all the root length. 
Let $\mathbb{F}_p$ be the finite field consisting of $p$ elements. Note that the $p$-th cyclotomic polynomial $f_p(x)$ is $1+x+\cdots+x^{p-1}$. So $f_p(1)=0$ modulo $p$. Hence there is a ring homomorphism $v:\A'\rightarrow \BF_p$, sending $\bq$ to 1. It follows that the compositions
\begin{equation*}
    \begin{tikzcd}
    \mathcal{A} \arrow[r,"c\text{,}\,\phi"] & \mathcal{A}' \arrow[r,"v"] & \mathbb{F}_p.
    \end{tikzcd}
\end{equation*}
are given by $q\mapsto 1$, regardless of the first map. We shall view $\BF_p$ as an $\A$-algebra in either way without ambiguity.

The we have canonical isomorphisms
\[
\BF_p\otimes_{\A'} {_{\A'}\dot{\mU}}\cong\BF_p\otimes_{\A'} {_{\A'}\dot{\U}}\cong {_{\BF_p}\!\dot{\U}}\quad\text{and}\quad\BF_p\otimes_{\A'} {_{\A'}\dot{\mU}^\imath}\cong\BF_p\otimes_{\A'} {_{\A'}\dot{\U}^\imath}\cong {_{\BF_p}\!\dot{\U}^\imath}.
\]
Therefore the $\A'$-algebra homomorphisms
\[
\ofr: {_{\A'}\dot{\U}}\longrightarrow{_{\A'}\dot{\mU}}\quad \text{ and }  \quad  \iofr:{_{\A'}\dot{\U}^\imath}\longrightarrow{_{\A'}\dot{\mU}^\imath}
\]
induce $\BF_p$-algebra homomorphisms
\[
\ofr:{_{\BF_p}\!\dot{\U}}\longrightarrow{_{\BF_p}\!\dot{\U}}\quad  \text{ and }\quad  \iofr:{_{\BF_p}\!\dot{\U}^\imath}\rightarrow{_{\BF_p}\!\dot{\U}^\imath}.
\]
Thanks to Lemma \ref{le:fiFr}, we further have well-defined $\BF_p$-algebra homomorphisms
\[
\ofr:{}_{\BF_p}\!\widehat{\U} \longrightarrow {}_{\BF_p}\!\widehat{\U} \quad  \text{ and }\quad  \iofr:{}_{\BF_p}\!\widehat{\U}^\imath \rightarrow {}_{\BF_p}\!\widehat{\U}^\imath.
\]

For any $i\in\I$, $n\in\BN$, $\lambda\in X$, we write $e_i^{(n)}\one_\lambda$ and $f_i^{(n)}\one_\lambda$ to denote the images of $E_i^{(n)}\one_\lambda$ (or $\mathfrak{e}_i^{(n)}\one_\lambda$) and $F_i^{(n)}\one_\lambda$ (or $\mathfrak{f}_i^{(n)}\one_\lambda$) in ${}_{\BF_p}\!\dot{\U}$, respectively. Also, for any $i\in\I$, $n\in\BN$, $\zeta\in X_\imath$, we write $b_{i,\zeta}^{(n)}$ and $'b_{i,\zeta}^{(n)}$ to denote the image of $B_{i,\zeta}^{(n)}$ (or $\mathfrak{b}_{i,\zeta}^{(n)}$) and $'B_{i,\zeta}^{(n)}$ (or $'\mathfrak{b}_{i,\zeta}^{(n)}$) in ${}_{\BF_p}\!\dot{\U}^\imath$.


\subsubsection{}
Recall the commutative Hopf algebras $\RO_{\BF_p}$ and $\RO^\imath_{\BF_p}$ in \S\ref{sec:nocp} and \S\ref{sec:pro}, respectively.
%
Thanks to Lemma \ref{le:fiFr}, the maps $\ofr$ and $\iofr$ induce $\BF_p$-linear maps
\[
{\ofr}^*:\RO_{\BF_p}\longrightarrow\RO_{\BF_p}\quad \text{ and } \quad  {\iofr}^*:\RO_{\BF_p}^\imath\longrightarrow\RO_{\BF_p}^\imath.
\]

It follows by definitions that $r\circ{\ofr^*}={\iofr^*}\circ r$, where $r:\RO_{\BF_p}\rightarrow\RO_{\BF_p}^\imath$ denotes the restriction defined in \ref{sec:pro}.

\begin{proposition}\label{prop:pthFr}
(1) For any $f\in \RO_{\BF_p}$, we have ${\ofr^*}(f)=f^p$. 

(2) For any $f\in \RO_{\BF_p}^\imath$, we have ${\iofr^*}(f)=f^p$.
\end{proposition}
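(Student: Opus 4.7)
The plan is to reduce (2) to (1), and then prove (1) by identifying the endomorphism of $G_k$ induced by $\ofr$ with the classical Frobenius of the reductive group.

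For (2), the compatibility $r\circ\ofr^* = \iofr^*\circ r$ noted just before the proposition, the surjectivity of $r$ (\S\ref{sec:pro}(a)), and the fact that $r$ is a ring homomorphism (Theorem~\ref{thm:Hopfi}) yield, for $g=r(f)\in\RO_{\BF_p}^\imath$,
\[
\iofr^*(g)=r(\ofr^*(f))\stackrel{(1)}{=}r(f^p)=r(f)^p=g^p.
\]

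For (1), first observe that $\ofr^*$ is an algebra endomorphism of $\RO_{\BF_p}$ (being dual to the coalgebra map $\ofr$), as is $(\cdot)^p$ in characteristic $p$. Let $k=\overline{\BF_p}$; then $\RO_{\BF_p}\hookrightarrow\RO_k$ by flatness, and $\RO_k$ is the coordinate ring of the smooth reductive group $G_k$ (\S\ref{sec:regpq}). Via the identification $\RO_{\BF_p}^{*}\supset G_k\subset{}_k\widehat{\U}$ of \S\ref{sec:nocp}, the algebra homomorphism $\phi_\xi:\RO_{\BF_p}\to k$ associated to $\xi=\sum_b n_b\, b\in G_k$ satisfies $\phi_\xi\circ\ofr^*=\phi_{\ofr(\xi)}$, where $\ofr$ now denotes the induced map on ${}_k\widehat{\U}$ (\S\ref{sec:oFrbach}). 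Since $G_k(k)$ is Zariski dense in $G_k$, the desired equality $\ofr^*(f)=f^p$ in $\RO_{\BF_p}$ is equivalent to $f(\ofr(\xi))=f(\xi)^p$ for all $\xi\in G_k$ and $f\in\RO_{\BF_p}$, that is, to $\ofr(\xi)=F(\xi)$ in $G_k$, where $F$ is the classical Frobenius (acting as $h\mapsto h^p$ on $\RO_k$).

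Since $\ofr$ is a Hopf-algebra map, hence preserves grouplikes and restricts to a group endomorphism of $G_k\subset{}_k\widehat{\U}$, as does $F$, it suffices to check $\ofr=F$ on the pinning generators of \S\ref{sec:regpq}. For $t=\sum_\lambda n_\lambda\one_\lambda\in T_k$, the grouplike relation $n_\lambda n_{\lambda'}=n_{\lambda+\lambda'}$ forces $n_{p\mu}=n_\mu^p$, so Theorem~\ref{thm:ClaFr}(a) gives
\[
\ofr(t)=\sum_{\lambda\in pX} n_\lambda\one_{\lambda/p}=\sum_{\mu\in X}n_{p\mu}\one_\mu=\sum_{\mu\in X}n_\mu^p\one_\mu=F(t).
\]
Applying the same formula termwise to $x_i(\xi)=\sum_{c,\lambda}\xi^c E_i^{(c)}\one_\lambda$ gives $\ofr(x_i(\xi))=\sum_{c',\mu}\xi^{pc'}e_i^{(c')}\one_\mu=x_i(\xi^p)=F(x_i(\xi))$, and analogously $\ofr(y_i(\xi))=y_i(\xi^p)=F(y_i(\xi))$. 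The main conceptual step is the Yoneda-type reduction to a pointwise identity on $G_k$; once this is in hand, the verification on the generators is immediate from the defining formula for $\ofr$ in Theorem~\ref{thm:ClaFr}(a).
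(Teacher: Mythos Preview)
Your argument is correct and takes a genuinely different route from the paper. The paper proves (1) by a direct combinatorial computation: it expands $\widehat{\Delta}^p(x)$ for a monomial $x=E_{i_1}^{(a_1)}\cdots F_{j_t}^{(b_t)}\one_\lambda$, observes that the result is $S_p$-invariant, and uses that non-fixed $S_p$-orbits have cardinality divisible by $p$, so only the diagonal terms survive and one reads off exactly $f\circ\ofr$. You instead pass to $k$-points and identify the endomorphism of $G_k$ induced by $\ofr$ with the Frobenius $\xi\mapsto\xi^{[p]}$, checking this on the pinning generators. Your approach is more geometric and explains \emph{why} the statement holds (it is the classical Frobenius of the Chevalley group in disguise), at the cost of invoking that $G_k$ is generated by its torus and root subgroups; the paper's $S_p$-orbit argument is self-contained and is moreover reused verbatim in the proof of Proposition~\ref{prop:Ga}. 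Two small remarks: your appeal to ``$\ofr$ is a Hopf-algebra map'' is not actually needed---since both $\ofr$ and $(\cdot)^{[p]}$ are multiplicative on ${}_k\widehat{\U}$ (the latter because the structure constants lie in $\BF_p$), agreement on a group-generating set of $G_k$ already propagates to all of $G_k$ without knowing $\ofr(G_k)\subset G_k$ in advance; and your $F$ is more precisely $F_{G_{\BF_p}}\times_{\BF_p}\mathrm{id}_k$ rather than the absolute Frobenius of $G_k$, so ``$h\mapsto h^p$ on $\RO_k$'' should be read as the $k$-linear extension of the $p$-th power on $\RO_{\BF_p}$.
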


\begin{proof}
We prove part (1). Part (2) follows from the identity $r\circ{\ofr^*}={\iofr^*}\circ r$ and the fact that $r$ is a surjective algebra homomorphism.

Let ${}_{\BF_p}\!\widehat{\U}^{(p)}$ be the $\BF_p$-linear space consisting of all the formal $\BF_p$-linear combinations
\[
\sum_{b_i\in\dot{\RB};\;1\leqslant i\leqslant p }n_{b_1,\cdots,b_p}b_1\otimes\cdots\otimes b_p, \quad \text{for }n_{b_1,\cdots,b_p}\in \BF_p.
\]
 Then ${}_{\BF_p}\!\widehat{\U}^{(p)}$ is naturally endowed with an structure of $\BF_p$-algebra. By applying comultiplication $p$ times, we obtain an $\BF_p$-algebra homomorphism $\widehat{\Delta}^p:{{}_{\BF_p}\!\widehat{\U}}\rightarrow {{}_{\BF_p}\!\widehat{\U}^{(p)}}$. For any $f\in\RO_A$, let $f^{\otimes p}:{{}_{\BF_p}\!\widehat{\U}^{(p)}}\rightarrow \BF_p$ be the $\BF_p$-linear map defined in the obvious way. 
 
 Then it suffices to check that the following diagram commutes:
\begin{equation}\label{dia:pthFr}
\begin{tikzcd}
{}_{\BF_p}\!\dot{\U} \arrow[r,"\ofr"] \arrow[d,"\widehat{\Delta}^p"'] & {}_{\BF_p}\!\widehat{\U} \arrow[d,"f"] \\ {}_{\BF_p}\!\widehat{\U}^{(p)} \arrow[r,"f^{\otimes p}"] & \BF_p
\end{tikzcd}
\end{equation}

Let $x=E_{i_1}^{(a_1)}\cdots E_{i_s}^{(a_s)}F_{j_1}^{(b_1)}\cdots F_{j_t}^{(b_t)}\one_\lambda\in {{}_{\BF_p}\!\dot{\U}}$. We have
\begin{align*}
\widehat{\Delta}^{p}(x)=&\sum_{\substack {a_{n,1}+a_{n,2}+\cdots a_{n,p}=a_n \\ 1\leq n \leq s \\ b_{m,1}+b_{m,2}+\cdots+b_{m,p}=b_m \\ 1\leq m\leq t\\ \lambda_1+\lambda_2+\cdots \lambda_p=\lambda}} E_{i_1}^{(a_{1,1})}\cdots E_{i_s}^{(a_{s,1})}F_{j_1}^{(b_{1,1})}\cdots F_{j_t}^{(b_{t,1})}\one_{\lambda_1} \otimes \cdots\\& \otimes  E_{i_1}^{(a_{1,p})}\cdots E_{i_s}^{(a_{s,p})}F_{j_1}^{(b_{1,p})}\cdots F_{j_t}^{(b_{t,p})}\one_{\lambda_p}
\end{align*}

The symmetric group $S_p$ acts on ${}_{\BF_p}\!\widehat{\U} $ by permutation. The element $\widehat{\Delta}^{p}(x)$ is $S_p$-invariant.  If a summand $\widehat{\Delta}^{p}(x)$ is not fixed by $S_p$, then the number of elements in its $S_p$-orbit will be divisible by $p$. Therefore the evaluation of $f^{\otimes p}$ over non-trivial $S_p$-orbits is $0$. 

Recall $z^p = z$ for any $z \in \BF_p$. Hence we have 
\begin{align*}
f^{\otimes p}\circ \widehat{\Delta}^p(x)=f\left(E_{i_1}^{(a_1/p)}\cdots E_{i_s}^{(a_s/p)}F_{j_1}^{(b_1/p)}\cdots F_{j_t}^{(b_t/p)}\one_{\lambda/p}\right)
\end{align*}
if $p$ divides $a_n$ ($1\leq n\leq s$), $p$ divides $b_m$ ($1\leq m\leq t$), and $\lambda\in p X$; 
and 
\[
f^{\otimes p}\circ \widehat{\Delta}^p(x)=0, \quad \text{otherwise}.
\]

Finally, by the definition of $\ofr$, we conclude that $f^{\otimes p}\circ \widehat{\Delta}^p(x)=f\circ{\ofr}(x)$. Hence the diagram \eqref{dia:pthFr} commutes.
\end{proof}

\subsection{The maps $\fr$ and $\ifr$}

\subsubsection{}

Recall the $\A'$-algebra homomorphisms $\fr:{_{\A'}\dot{\mU}}\rightarrow {_{\A'}\dot{\U}}$ in \S\ref{sec:qFr} and $\ifr:{_{\A'}\dot{\mU}^\imath}\rightarrow{_{\A'}\dot{\U}^\imath}$ in \S\ref{sec:iFr}.
For any canonical basis element $b$ of $_{\A'}\dot{\mU}$, we write
\[
\fr (b) =\sum_{b'\in \dot{\RB}}y_{b,b'}b', \quad \text{for }y_{b,b'}\in \A', b'\in{_{\A'}\dot{\U}}.
\]
Similarly, for any $\imath$canonical basis element $b$ of $_{\A'}\dot{\U}^\imath$, we write
\[
\ifr (b) =\sum_{b'\in \dot{\RB}^\imath}y^\imath_{b,b'}b', \quad \text{for }y^\imath_{b,b'}\in \A', b'\in{_{\A'}\dot{\U}^\imath}.
\]

\begin{lemma}\label{le:fiFrs}
(1) Let $b'$ be a canonical basis element of ${_{A'}\dot{\U}}$. Then there are only finitely many canonical basis element $b$ of $_{\A'}\dot{\mU}$, such that $y_{b,b'}\neq 0$.
 
 (2) Let  $b'$ be an $\imath$canonical basis element of ${_{A'}\dot{\U}^\imath}$. Then there are only finitely many $\imath$canonical basis element $b$ of $_{\A'}\dot{\mU}^\imath$, such that $y^\imath_{b,b'}\neq 0$.
\end{lemma}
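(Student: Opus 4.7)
The plan is to mirror the proof of Lemma \ref{le:fiFr}, reversing the roles of $\dot{\U}$ and $\dot{\mU}$ (respectively $\dot{\U}^\imath$ and $\dot{\mU}^\imath$). The key observation is that any $_{\A'}\dot{\U}$-module $M$ pulls back along $\fr$ to a $_{\A'}\dot{\mU}$-module $M^\fr$ via $x \cdot m = \fr(x)\,m$, and the weight-$\zeta$ subspace of $M^\fr$ coincides with the weight-$l\zeta$ subspace of $M$ because $\fr(\one_\zeta) = \one_{l\zeta}$. The analogous construction along $\ifr$ is exactly the twisted module appearing in Proposition \ref{prop:fa}.

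For part (1), I would fix $b' \in \dot{\RB}$ (viewed in $_{\A'}\dot{\U}$) and choose $\mu_1, \mu_2 \in X^+$ such that $b' \cdot v \neq 0$ for $v := v_{-l\mu_1}^- \otimes v_{l\mu_2}^+$ inside $M := {^\omega {_{\A'}L}(l\mu_1)} \otimes_{\A'} {_{\A'}L}(l\mu_2)$. Such a choice exists by \cite{Lu93}*{\S25.2} together with the stability of the canonical basis on the direct system $\{{^\omega L(\mu_1)} \otimes L(\mu_2)\}$: once $b'$ acts nontrivially on some $v_{-\mu_1'}^- \otimes v_{\mu_2'}^+$, it also does on $v_{-l\mu_1'}^- \otimes v_{l\mu_2'}^+$. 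Then, for any $b \in \dot{\RB}$ in $_{\A'}\dot{\mU}$ with $y_{b, b'} \neq 0$, inside $M^\fr$ we compute
\[
    b \cdot v = \fr(b) \cdot v = \sum_{b'' \in \dot{\RB}} y_{b, b''}\,(b'' \cdot v).
\]
By \cite{Lu93}*{\S25.2}, the nonzero vectors among $\{b'' \cdot v : b'' \in \dot{\RB}\}$ are pairwise linearly independent; since $b' \cdot v \neq 0$, the displayed sum is nonzero, so $b$ acts nontrivially on $v$ in $M^\fr$. Because $M^\fr$ is finite-dimensional and each weight-homogeneous piece $\one_{\zeta_1} {_{\A'}\dot{\mU}} \one_{\zeta_2}$ is finite-dimensional, only finitely many canonical basis elements of $_{\A'}\dot{\mU}$ can act nontrivially on a fixed vector, yielding the claim.

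For part (2), I would fix $b' \in \dot{\RB}^\imath$ in $_{\A'}\dot{\U}^\imath$ and, using Theorem \ref{thm:stab} together with the compatibility of $\imath$canonical bases under the projective system of \cite{BW18a, Wa21}, choose $\mu \in X^+$ with $b' \cdot v_{l\mu}^+ \neq 0$ in $_{\A'}L(l\mu)$. Proposition \ref{prop:fa} then supplies the $_{\A'}\dot{\mU}^\imath$-equivariant map $\psi_\mu \colon {_{\A'}\mathfrak{L}}(\mu) \to \big({_{\A'}L}(l\mu)\big)^\ifr$, $v_\mu^+ \mapsto v_{l\mu}^+$. For any $b \in \dot{\RB}^\imath$ in $_{\A'}\dot{\mU}^\imath$ with $y^\imath_{b,b'} \neq 0$, the identity
\[
    \psi_\mu(b \cdot v_\mu^+) = b \cdot v_{l\mu}^+ = \ifr(b)\,v_{l\mu}^+ = \sum_{b'' \in \dot{\RB}^\imath} y^\imath_{b, b''}\,(b'' \cdot v_{l\mu}^+)
\]
together with the linear independence of the nonzero vectors $\{b'' \cdot v_{l\mu}^+\}$ furnished by Theorem \ref{thm:stab} forces $b \cdot v_\mu^+ \neq 0$ in $_{\A'}\mathfrak{L}(\mu)$. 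A final application of Theorem \ref{thm:stab} to $_{\A'}\mathfrak{L}(\mu)$ as a $_{\A'}\dot{\mU}^\imath$-module bounds the set of such $b$.

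The main technical obstacle is the stability step used to select $\mu$ (respectively $\mu_1, \mu_2$) so that the weight-$l\mu$ (respectively weight-$l(-\mu_1 + \mu_2)$) vector is not annihilated by $b'$. For part (1) this is a routine consequence of \cite{Lu93}*{\S25.2}; for part (2) it requires carefully tracking $b'$ through the projective system of $\imath$canonical bases from \cite{BW18a, Wa21}. Everything after that --- module equivariance, linear independence of basis images, and weight-graded finiteness --- is essentially formal.
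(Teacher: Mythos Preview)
Your proposal is correct and follows essentially the same architecture as the paper: pick a module on which $b'$ acts nontrivially, pull it back along $\fr$ (resp.\ $\ifr$), use linear independence of (\hbox{$\imath$})canonical basis images to conclude that any $b$ with $y_{b,b'}\neq 0$ (resp.\ $y^\imath_{b,b'}\neq 0$) also acts nontrivially, and then invoke a finiteness statement. In part~(2) your argument is virtually identical to the paper's, including the appeal to Proposition~\ref{prop:fa} and Theorem~\ref{thm:stab}.

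The only noteworthy difference is in the endgame of part~(1). The paper constructs the $_{\A'}\dot{\mU}$-module map
\[
{^\omega _{\A'}\mathfrak{L}}(\mu_1)\otimes_{\A'} {_{\A'}\mathfrak{L}}(\mu_2)\longrightarrow \bigl( {^\omega _{\A'}L}(l\mu_1)\otimes_{\A'} {_{\A'}L}(l\mu_2)\bigr)^{\fr},\quad v_{-\mu_1}^-\otimes v_{\mu_2}^+ \mapsto v_{-l\mu_1}^-\otimes v_{l\mu_2}^+,
\]
so that $\fr(b)\cdot(v_{-l\mu_1}^-\otimes v_{l\mu_2}^+)\neq 0$ forces $b\cdot(v_{-\mu_1}^-\otimes v_{\mu_2}^+)\neq 0$, and then quotes \cite{Lu93}*{\S25.2} directly. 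You bypass this map and instead argue finiteness from the finite rank of $M^{\fr}$ together with finite-dimensionality of $\one_{\zeta_1}{_{\A'}\dot{\mU}}\one_{\zeta_2}$ (valid here since the root datum is of finite type). Both routes are sound; the paper's is slightly cleaner because it reduces to an off-the-shelf statement, while yours avoids constructing the extra module map but requires the weight-space finiteness of $\dot{\mU}$ as a separate input.
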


\begin{proof}
We show part (1). Let $\mu_1,\mu_2\in X^+$ be such that $b'\cdot( v_{-l\mu_1}^-\otimes v_{l\mu_2}^+)\neq 0$ in ${^\omega _{\A'}L}(l\mu_1)\otimes_{\A'} {_{\A'}L}(l\mu_2)$. Thanks to \S\ref{sec:qanniR}, we have an $_{\A'}\dot{\mU}$-module homomorphism:
\[
{^\omega _{\A'}\mathfrak{L}}(\mu_1)\otimes_{\A'} {_{\A'}\mathfrak{L}}(\mu_2)\longrightarrow \left( {^\omega _{\A'}L}(l\mu_1)\otimes_{\A'} {_{\A'}L}(l\mu_2)\right)^\fr,\quad v_{-\mu_1}^-\otimes v_{\mu_2}^+ \mapsto v_{-l\mu_1}^-\otimes v_{l\mu_2}^+.
\]

Suppose $b$ is some canonical basis element of $_{\A'}\dot{\mU}$, such that $y'_{b,b'}\neq 0$. Then we deduce that $\fr(b)\cdot (v_{-l\mu_1}^-\otimes v_{l\mu_2}^+)\neq 0$. Hence $b\cdot (v_{-\mu_1}^-\otimes v_{\mu_2}^+)\neq 0$. There are only finitely many such $b$ by \cite{Lu93}*{\S25.2}.

We now show part (2). Let $\mu\in X^+$ be such that $b'\cdot v_{l\mu}^+\neq 0$. By  Proposition \ref{prop:fa}, we have the $_{\A'}\dot{\mU}^\imath$-module homomorphism:
\[
{_{\A'}\mathfrak{L}}(\mu)\longrightarrow \left({_{\A'}L}(l\mu)\right)^\ifr, \quad v_\mu^+ \mapsto v_{l\mu}^+.
\]

Suppose $b$ is some $\imath$canonical basis element of $_{\A'}\dot{\mU}^\imath$, such that $y^\imath_{b,b'}\neq 0$. Then we deduce that $\ifr(b)\cdot v_{l\mu}^+\neq 0$. Hence $b\cdot v_\mu^+\neq 0$. There are only finitely many such $b$ by Theorem~\ref{thm:stab}.
\end{proof}

As consequences, we obtain well-defined $\A'$-algebra homomorphisms  
\begin{equation*}
\fr:{_{\A'}\widehat{\mU}}\rightarrow {_{\A'}\widehat{\U}}\quad \text{and} \quad   \ifr:{_{\A'}\widehat{\mU}^\imath}\rightarrow {_{\A'}\widehat{\U}^\imath}.
\end{equation*}
 
 
\subsubsection{}
Recall \S\ref{sec:nocp} and \S\ref{sec:qFr}. Write ${_{\A'}\widehat{\U}^{(2)}}={_{\A'_\phi}\widehat{\U}^{(2)}}$ and ${_{\A'}\widehat{\mU}^{(2)}}={_{\A'_c}\widehat{\U}^{(2)}}$. Let ${_{\A'}\widehat{\U}}\widehat{\otimes}{_{\A'}\widehat{\mU}}$ be the $\A'$-module consisting of formal $\A'$-linear combinations
\[
\sum_{(b,b')\in\dot{\RB}\times\dot{\RB}} n_{b,b'}b\otimes b', \quad \text{for }n_{b,b'} \in \A'.
\]
This is moreover an $\A'$-algebra. 

Thanks to Lemma \ref{le:fiFr} and Lemma \ref{le:fiFrs}, we have well-defined $\A'$-algebra homomorphisms  
\[
id\otimes \ofr:{_{\A'}\widehat{\U}^{(2)}}\rightarrow {_{\A'}\widehat{\U}}\widehat{\otimes}{_{\A'}\widehat{\mU}} \quad \text{ and } \quad \fr\otimes id:{_{\A'}\widehat{\mU}^{(2)}}\rightarrow{_{\A'}\widehat{\U}}\widehat{\otimes}{_{\A'}\widehat{\mU}}.
\]

\begin{prop}
The following diagram commutes:
\begin{equation}\label{dia:frspl}
        \begin{tikzcd}
        _{\mathcal{A}'}\dot{\mathfrak{U}} \arrow[r,"\fr"] \arrow[d,"\widehat{\Delta}"'] & _{\mathcal{A}'}\dot{\mathrm{U}} \arrow[r,"\widehat{\Delta}"] & _{\mathcal{A}'}\widehat{\mathrm{U}}^{(2)} \arrow[d,"id\otimes\ofr"] \\
        _{\mathcal{A}'}\widehat{\mathfrak{U}}^{(2)}
        \arrow[rr, "\fr\otimes id"] & & _{\mathcal{A}'}\widehat{\mathrm{U}}\widehat{\otimes}{_{\A'}\widehat{\mU}}.
        \end{tikzcd}
    \end{equation}

\end{prop}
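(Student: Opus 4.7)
The plan is to verify commutativity by reducing to a set of algebra generators. All five maps appearing in the diagram ($\fr$, $\widehat{\Delta}$, $id\otimes\ofr$, $\fr\otimes id$, and the second $\widehat{\Delta}$) are $\mathcal{A}'$-algebra homomorphisms between the indicated (completed) algebras; well-definedness of the relevant infinite sums after composition follows from Lemma~\ref{le:fiFr} and Lemma~\ref{le:fiFrs}. Consequently both compositions $(id\otimes\ofr)\circ\widehat{\Delta}\circ\fr$ and $(\fr\otimes id)\circ\widehat{\Delta}$ are $\mathcal{A}'$-algebra homomorphisms ${}_{\mathcal{A}'}\dot{\mathfrak{U}}\longrightarrow{}_{\mathcal{A}'}\widehat{\mathrm{U}}\widehat{\otimes}{}_{\mathcal{A}'}\widehat{\mathfrak{U}}$, so it suffices to check their equality on the set of generators $\{\one_\zeta,\;\mathfrak{e}_i^{(n)}\one_\zeta,\;\mathfrak{f}_i^{(n)}\one_\zeta : i\in\I,\, n\in\mathbb{N},\, \zeta\in X\}$ of ${}_{\mathcal{A}'}\dot{\mathfrak{U}}$.

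The case $\one_\zeta$ is immediate: routing through the right gives $(id\otimes\ofr)\widehat{\Delta}(\one_{l\zeta}) = \sum_{\mu+\mu'=l\zeta,\,\mu'\in lX}\one_\mu\otimes\one_{\mu'/l} = \sum_{\nu\in X}\one_{l(\zeta-\nu)}\otimes\one_\nu$, while routing through the bottom yields $(\fr\otimes id)\widehat{\Delta}(\one_\zeta)=\sum_{\mu+\mu'=\zeta}\one_{l\mu}\otimes\one_{\mu'}$, and the substitution $\nu=\mu'=\zeta-\mu$ matches the two.

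For $\mathfrak{e}_i^{(n)}\one_\zeta$ one applies Lusztig's formula \cite{Lu93}*{\S23.1.5} for the coproduct of a divided power in the modified form. Routing through the right, $\fr(\mathfrak{e}_i^{(n)}\one_\zeta)=\mathrm{E}_i^{(nl)}\one_{l\zeta}$, whose coproduct expands as $\widehat{\Delta}(\mathrm{E}_i^{(nl)}\one_{l\zeta}) = \sum_{a+b=nl}\sum_{\mu+\mu'=l\zeta+b\alpha_i}\bq_i^{\,e(a,b,\mu,\mu')}\,\mathrm{E}_i^{(a)}\one_\mu\otimes\mathrm{E}_i^{(b)}\one_{\mu'}$ for certain exponents $e$. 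Applying $id\otimes\ofr$ annihilates every summand unless $l\mid b$ and $\mu'\in lX$ by Theorem~\ref{thm:ClaFr}(a); writing $b=lb'$, $a=l(n-b')$ and $\mu'=l\mu''$, these surviving terms simplify (using that $\bq_i^l=1$ in $\mathcal{A}'$, so the remaining $\bq_i$-exponents collapse to $1$) to $\sum_{a'+b'=n}\sum_{\mu_1+\mu''=\zeta+b'\alpha_i}\mathrm{E}_i^{(la')}\one_{l\mu_1}\otimes\mathfrak{e}_i^{(b')}\one_{\mu''}$, which is precisely $(\fr\otimes id)\widehat{\Delta}(\mathfrak{e}_i^{(n)}\one_\zeta)$ expanded via the $q\mapsto 1$ specialization of the same coproduct formula. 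The case $\mathfrak{f}_i^{(n)}\one_\zeta$ is symmetric.

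The main obstacle is the bookkeeping of the $q$-power factors $\bq_i^{\,e(a,b,\mu,\mu')}$ coming from the noncocommutative coproduct: one must check that on the surviving summands (where $l\mid a$ and $l\mid b$) all such exponents are multiples of $l$, so they specialize to $1$ in $\mathcal{A}'$ and agree term-by-term with the classical exponents produced by $(\fr\otimes id)\widehat{\Delta}$ after applying $q\mapsto 1$. This is a direct but careful index-tracking exercise using $\bq_i^l=1$ and the weight constraints imposed by the completed coproduct on the modified algebra.
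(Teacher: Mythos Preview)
Your proposal is correct and follows essentially the same approach as the paper: reduce to algebra generators, expand the coproduct of $\fr(\mathfrak{e}_i^{(n)}\one_\zeta)$, apply $id\otimes\ofr$ to kill all summands except those with $l\mid b$ and $\mu'\in lX$, and check that the surviving $\bq_i$-exponents vanish. The only difference is that the paper first passes to the fraction field $\mathbf{F}$, where ${}_{\mathbf{F}}\dot{\mathfrak{U}}$ is generated by the undivided elements $\one_\lambda,\,\mathfrak{e}_i\one_\lambda,\,\mathfrak{f}_i\one_\lambda$, so only the case $n=1$ needs to be checked and the exponent bookkeeping becomes trivial; your version stays over $\mathcal{A}'$ and handles all divided powers directly, which works for the same reason (the surviving exponent carries a factor of $s=lb'$).
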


\begin{proof}
 It suffices to check the commutative diagram over the field of fractions $\mathbf{F}$. Since $_{\mathbf{F}}\dot{\mU}$ is generated by $\one_\lambda$ ($\lambda\in X$), $\mathfrak{e}_{i}\one_\lambda$ ($i\in\I$, $\lambda\in X$) and $\mathfrak{f}_{i}\one_\lambda$ ($i\in\I$, $\lambda\in X$), it will suffice to check the identity  
\begin{equation*}
(id\otimes\ofr)\circ\widehat{\Delta}\circ\fr(u)=(\fr\otimes id)\circ \widehat{\Delta}(u), \quad \text{for any such generator } u \in {}_{\mathbf{F}}\dot{\mU}.
\end{equation*}
By direct computations, one has
\[
(id\otimes\ofr)\circ\widehat{\Delta}\circ\fr(\one_\lambda)=\sum_{\lambda_1+\lambda_2=l\lambda}\one_{\lambda_1}\otimes \ofr(\one_{\lambda_2})=\sum_{\lambda_1'+\lambda_2'=\lambda}\one_{l\lambda_1'}\otimes\one_{\lambda_2'}=(\fr\otimes id)\circ \widehat{\Delta}(\one_\lambda)
\]


By \cite{Lu93}*{\S3.1.5}, for any $i\in\I$, $n\in\BN$, and $\lambda\in X$,  we have 
\begin{align*}
    &\widehat{\Delta}(E_i^{(n)}\one_\lambda)=\sum_{\substack{t+s=n\\ \mu+\mu'=\lambda}}q_i^{(t+\langle \alpha_i^\vee,\mu\rangle)s}E_i^{(t)}\one_\mu\otimes E_i^{(s)}\one_{\mu''},\\ &\widehat{\Delta}(F_i^{(n)}\one_{\lambda})=\sum_{\substack{t+s=n\\\mu+\mu'=\lambda}}q_i^{(s-\langle \alpha_i^\vee,\mu'\rangle)t} F_i^{(t)}\one_\mu\otimes  F_i^{(s)}\one_{\mu'}.
\end{align*}

Therefore 
\begin{align*}
    &\widehat{\Delta}\circ\fr(\mathfrak{e}_i\one_\lambda)=\widehat{\Delta}(\RE_i^{(l)}\one_{l\lambda})=\sum_{\substack{t+s=l\\ \mu+\mu'=l\lambda}}\bq_i^{(t+\langle \alpha_i^\vee,\mu\rangle)s}\RE_i^{(t)}\one_\mu\otimes \RE_i^{(s)}\one_{\mu'},\\
    &\widehat{\Delta}\circ\fr(\mathfrak{f}_i\one_\lambda)=\widehat{\Delta}(\RF_i^{(l)}\one_{l\lambda})=\sum_{\substack{t+s=l\\\mu+\mu'=l\lambda}}\bq_i^{(s-\langle \alpha_i^\vee,\mu'\rangle)t} \RF_i^{(t)}\one_\mu\otimes  \RF_i^{(s)}\one_{\mu'}
\end{align*}

Composing with $id\otimes \ofr$, the summands on the right hand sides are 0, unless $\{t,s\}=\{0,l\}$ and $\mu,\mu' \in l X$. Hence we obtain
\begin{align*}
(id\otimes \ofr)\circ\widehat{\Delta}\circ\fr(\mathfrak{e}_i\one_\lambda)=\sum_{\mu+\mu'=\lambda}(\one_{l\mu}\otimes\mathfrak{e}_i\one_{\mu'}+\RE_i^{(l)}\one_{l\mu}\otimes\one_{\mu'})=(\fr\otimes id)\circ \widehat{\Delta}(\mathfrak{e}_i\one_\lambda),\\
(id\otimes \ofr)\circ\widehat{\Delta}\circ\fr(\mathfrak{f}_i\one_\lambda)=\sum_{\mu+\mu'=\lambda}(\one_{l\mu}\otimes\mathfrak{f}_i\one_{\mu'}+\RF_i^{(l)}\one_{l\mu}\otimes\one_{\mu'})=(\fr\otimes id)\circ \widehat{\Delta}(\mathfrak{f}_i\one_\lambda).
\end{align*}
This completes the proof.
\end{proof}

\subsubsection{}


For any $i\in\I$ with $\tau i=i$, and any $n\in\BN$, recall \S \ref{sec:baid} the definition of the elements $B_i^{(n)}$ in $\U^\imath$. Also recall the elements $'\RB_{i,\zeta}^{(n)}$ ($\zeta\in X_\imath$, $n\in\BN$) in $_{\A_2'}\dot{\U}^\imath$ in \S \ref{sec:iFr}.

\begin{lemma}\label{le:conew}
 For any $i\in\I$ with $\tau i=i$, and any $n\in\BN$, we have
 \[
 \Delta(B_i^{(n)})=\sum_{r=0}^n B_i^{(n-r)}\otimes M_{n,r}
 \]
 where $M_{n,r}\in \U$, such that for any $\lambda\in X$, we have
 \[
 M_{n,r}\one_\lambda=\sum_{\substack{a+a'+2c=r\\c\geqslant 0}}q_i^{s_i(a,a',\lambda,n,c)}\LR{a'-a-\lambda}{2c}_iE_i^{(a)}F_i^{(a')} \one_\lambda
 \]
 where ($\lambda_i=\langle\coroot_i,\lambda\rangle$)
\[
    s_i(a,a',\lambda,n,c)=(a-a'+n+\lambda_i)a'-n(\lambda_i+a)+(1+a-a'+\lambda_i)c.
\]
\end{lemma}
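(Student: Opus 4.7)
The plan is to proceed by induction on $n$. The base cases $n=0$ and $n=1$ are handled by direct comparison: for $n=0$ both sides reduce to $1\otimes 1$, and for $n=1$ one uses the standard coproducts $\Delta(F_i) = F_i\otimes \tilde{K}_i^{-1} + 1 \otimes F_i$, $\Delta(E_i) = E_i \otimes 1 + \tilde{K}_i\otimes E_i$ and $\Delta(\tilde{K}_i^{-1}) = \tilde{K}_i^{-1}\otimes \tilde{K}_i^{-1}$ to derive
\[
\Delta(B_i) = 1\otimes B_i + B_i\otimes \tilde{K}_i^{-1},
\]
which matches the claimed formula after unpacking the cases $(a,a',c) = (0,0,0), (1,0,0), (0,1,0)$ for $s_i(a,a',\lambda,1,0)$ and noting $\tilde{K}_i^{-1}\one_\lambda = q_i^{-\lambda_i}\one_\lambda$.

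For the inductive step, I would exploit the recurrence from Lemma \ref{lem:balanced},
\[
B_i\cdot B_i^{(n)} = [n+1]_i\sum_{t\geq 0}\LR{1}{2t}_i B_i^{(n+1-2t)}.
\]
Applying $\Delta$ and using $\Delta(B_i) = 1\otimes B_i + B_i\otimes \tilde{K}_i^{-1}$ together with the inductive hypothesis yields
\[
(1\otimes B_i + B_i\otimes\tilde{K}_i^{-1})\sum_{r} B_i^{(n-r)}\otimes M_{n,r} = [n+1]_i\sum_{t \geq 0}\LR{1}{2t}_i\sum_{r'}B_i^{(n+1-2t-r')}\otimes M_{n+1-2t,r'}.
\]
Re-expanding $B_i\cdot B_i^{(n-r)}$ on the left by a second application of Lemma \ref{lem:balanced}, then restricting to each weight component so as to match coefficients of $B_i^{(n+1-s)}\otimes (\cdot)$ (using the linear independence of $\{B_i^{(k)}\}_k$ on each weight space), gives a recursion for $M_{n+1,s}\one_\lambda$ in terms of $B_i M_{n,r}\one_\lambda$, weight-shifted $\tilde{K}_i^{-1} M_{n,r'}\one_{\lambda'}$, and $M_{n+1-2t, s-2t}\one_\lambda$ for $t\ge 1$.

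The main obstacle is verifying that this recursion is satisfied by the claimed closed form. This requires: (i) computing $B_i\cdot E_i^{(a)}F_i^{(a')}\one_\lambda$ via the standard commutation $E_i F_i - F_i E_i = (\tilde{K}_i-\tilde{K}_i^{-1})/(q_i-q_i^{-1})$ together with $E_iE_i^{(a)} = [a+1]_i E_i^{(a+1)}$ and $F_iF_i^{(a')} = [a'+1]_i F_i^{(a'+1)}$; (ii) tracking the scalar $\tilde{K}_i^{-1}\one_\lambda = q_i^{-\lambda_i}\one_\lambda$ and the shift $\lambda\mapsto \lambda\pm\alpha_i$ caused by $E_i,F_i$; and (iii) applying the Vandermonde-type identity for double binomials from Lemma \ref{lem:DoubleInA2}(b),
\[
\sum_{c_1+c_2=c}q_i^{e(m_1 c_2 - m_2 c_1)}\LR{m_1}{2c_1}_i\LR{m_2}{2c_2}_i = \LR{m_1+m_2}{2c}_i,
\]
to collapse the double sums in $c$. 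The hardest part will be matching the intricate exponent $s_i(a,a',\lambda,n,c)$ of $q_i$ across all the rearrangements; this amounts to a delicate bookkeeping of $\pm 1$ shifts in the $q_i$-powers but introduces no new conceptual tool beyond the identities already recorded in Lemma \ref{lem:DoubleInA2} and the defining relations of $\U$.
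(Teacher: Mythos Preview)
Your inductive strategy is sound, but it is a genuinely different route from the one taken in the paper. The paper does not induct on $n$ at all: it invokes the coproduct formulas for $\imath$divided powers proved by Chen--Wang \cite{CW22}*{Theorem~4.2 \& Theorem~5.1}, which directly give
\[
\Delta(B_i^{(n)})=\Delta(B_{i,\overline{n+1}}^{(n)})=\sum_{r=0}^n B_{i,\overline{n+1}}^{(n-r)}\otimes S_{n,r}
\]
for an explicit $S_{n,r}$ written in terms of $\check{E}_i^{(a)}$, a quantum-binomial in $K_i^{-2}$, powers of $K_i$, and $F_i^{(a')}$. The remaining work in the paper is then a simplification: one evaluates $S_{n,r}\one_\lambda$ (which produces two slightly different closed forms depending on the parity of $r$), and then uses Lemma~\ref{lem:ff} to rewrite the first tensor factor $B_{i,\overline{n+1}}^{(n-r)}$ as a combination of the balanced powers $B_i^{(m)}$. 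The Vandermonde-type identity of Lemma~\ref{lem:DoubleInA2}(b) collapses the resulting double sum and merges the two parity cases into the uniform expression claimed.

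Your approach trades the external input from \cite{CW22} for a self-contained but longer computation. The advantage is independence from that reference; the cost is that the inductive verification of the recursion for $M_{n+1,s}\one_\lambda$---involving the commutation of $F_i$ and $q_i^{-1}E_i\tilde K_i^{-1}$ past $E_i^{(a)}F_i^{(a')}$, the weight shifts, and the double-binomial identities---is substantially more laborious than the paper's rewriting step, even though no new idea is needed. One small caution: your phrase ``linear independence of $\{B_i^{(k)}\}_k$ on each weight space'' is slightly misleading, since the $B_i^{(k)}$ are not weight vectors; what you actually use is that $\{B_i^{(k)}\}_k$ is a $\mathbb{Q}(q)$-basis of the polynomial subalgebra $\U_i^\imath\subset\U$, so that coefficients in $\U_i^\imath\otimes\U$ can be matched unambiguously.
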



\begin{proof}
In the algebra $\U^\imath$, by the comultiplication formulas \cite{CW22}*{Theorem 4.2 \& Theorem 5.1}, we have
 \[
     \Delta(B_i^{(n)})=\Delta(B_{i,\overline{n+1}}^{(n)})=\sum_{r=0}^n B_{i,\overline{n+1}}^{(n-r)}\otimes S_{n,r},
 \]
 with
 \[
     S_{n,r}=\sum_{\substack{a+a'+2c=r\\c\geqslant 0}}q_i^{\binom{2c+1}{2}+(a+a')(r-n)-aa'}\Check{E_i}^{(a)}\qbinom{h;-\lfloor \frac{r-1}{2}\rfloor}{c}_iK_i^{r-n}F_i^{(a')},
 \]
 where
 \[
 \check{E}_i^{(a)}=\frac{(q_i^{-1}E_iK_i^{-1})^a}{[a]_i!}, \qquad \qbinom{h;a}{n}_i=\prod_{s=1}^n\frac{q_i^{4a+4s-4}K_i^{-2}-1}{q_i^{4s}-1}.
 \]
 In particular, we have
 \[
 \qbinom{h;a}{n}_i\one_\lambda =q_i^{(2a-2-\lambda_i)n}\LR{2a+2n-\lambda_i-2}{2n}_i\one_\lambda, \quad \text{where }\lambda_i=\langle \coroot_i,\lambda\rangle.
 \]

 Then it is direct to compute
 \[
  S_{n,r}\one_\lambda= 
  \begin{cases}
  \displaystyle\sum_{\substack{a+a'+2c=r\\c\geqslant 0}}q_i^{s_i(a,a',\lambda,n,c)}\LR{a'-a-\lambda_i}{2c}_iE_i^{(a)}F_i^{(a')}\one_\lambda, &\text{if $r$ is even};\\
    \displaystyle\sum_{\substack{a+a'+2c=r\\c\geqslant 0}}q_i^{s_i(a,a',\lambda,n,c)-c}\LR{a'-a-\lambda_i-1}{2c}_iE_i^{(a)}F_i^{(a')}\one_\lambda, &\text{if $r$ is odd}.
  \end{cases}
 \]
 
 Hence by Lemma \ref{lem:ff} , we have
 \begin{align*}
 &\Delta(B_{i}^{(n)})= \sum_{\substack{0\leqslant r\leqslant n\\r\,\text{even}}}\left( {B_{i}^{(n-r)}}\otimes \sum_{\substack{a+a'+2c=r\\c\geqslant 0}}q_i^{s_i(a,a',\lambda,n,c)}\LR{a'-a-\lambda_i}{2c}_iE_i^{(a)}F_i^{(a')}\one_\lambda\right)+ \\
        & \sum_{\substack{0\leqslant r\leqslant n\\r\,\text{odd}}} \left(\left(\sum_{t\geqslant 0}\LR{1}{2t}_i{B_{i}}^{(n-r-2t)}\right)\otimes \sum_{\substack{a+a'+2c=r\\c\geqslant 0}}q_i^{s_i(a,a',\lambda,n,c)-c}\LR{a'-a- \lambda_i-1}{2c}_iE_i^{(a)}F_i^{(a')}\one_\lambda\right).
\end{align*}

It remains to compute the coefficient of $B_{i}^{(n-r)}\otimes E_i^{(a)}F_i^{(a')}\one_\lambda$ when both $r$ and $a+a'$ are odd. The coefficient equals
\begin{align*}
    &\sum_{c+t=(r-a-a')/2}q_i^{s_i(a,a',\lambda,n,c)-c}\LR{1}{2t}_i\LR{a'-a-\lambda-1}{2c}_i\\
    =&q_i^{(a-a'+n+\lambda)a'-n(\lambda+a)}\sum_{c+t=(r-a-a')/2}q_i^{(a-a'+\lambda)c}\LR{1}{2t}_i\LR{a'-a-\lambda-1}{2c}_i\\
    =&q_i^{s_i(a,a',\lambda,n,(r-a-a')/2)}\LR{a'-a-\lambda_i}{r-a-a'}_i. \qquad \text{(By Lemma~\ref{lem:DoubleInA2}.)}
\end{align*}

This completes the proof.
\end{proof}

Following the notations in \S\ref{sec:nocp} and \S\ref{sec:Ui1}, we write ${_{\A'}\widehat{\U}^{\imath,1}}={_{\A'_\phi}\widehat{\U}^{\imath,1}}$ and ${_{\A'}\widehat{\mU}^{\imath,1}}={_{\A'_c}\widehat{\U}^{\imath,1}}$. Let  ${_{\A'}\widehat{\U}^\imath}\widehat{\otimes}{_{\A'}\widehat{\mU}}$ be the $\A'$-module consisting of formal $\A'$-linear combinations
\[
\sum_{(b,b')\in\dot{\RB}_\phi\times\dot{\RB}_c}n_{b,b'}b\otimes b', \quad \text{for }n_{b,b'} \in \A'
\]
This is moreover an $\A'$-algebra. By Lemma \ref{le:fiFr} and Lemma \ref{le:fiFrs} we have well-defined $\A'$-algebra homomorphisms 
 \[
 id\otimes \ofr:{_{\A'}\widehat{\U}^{\imath,1}}\rightarrow {_{\A'}\widehat{\U}^\imath}\widehat{\otimes}{_{\A'}\widehat{\mU}} \quad \text{ and  } \quad \ifr\otimes id:{_{\A'}\widehat{\mU}^{\imath,1}}\rightarrow{_{\A'}\widehat{\U}^\imath}\widehat{\otimes}{_{\A'}\widehat{\mU}}.
 \]

\begin{prop}\label{prop:cosp}
    The following diagram commutes:
    \begin{equation}\label{dia:ifrspl}
        \begin{tikzcd}
        _{\mathcal{A}'}\dot{\mathfrak{U}}^\imath \arrow[r,"\ifr"] \arrow[d,"\widehat{\Delta}"'] & _{\mathcal{A}'}\dot{\mathrm{U}}^\imath \arrow[r,"\widehat{\Delta}"] & _{\mathcal{A}'}\widehat{\mathrm{U}}^{\imath,1} \arrow[d,"id\otimes\ofr"] \\
        _{\mathcal{A}'}\widehat{\mathfrak{U}}^{\imath,1}
        \arrow[rr, "\ifr\otimes id"] & & {_{\A'}\widehat{\U}^\imath}\widehat{\otimes}{_{\A'}\widehat{\mU}}
        \end{tikzcd}
    \end{equation}
\end{prop}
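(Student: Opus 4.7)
The plan is to verify the diagram on a set of algebra generators, mirroring the strategy for the non-$\imath$ diagram \eqref{dia:frspl}. First, I will base-change to the fraction field $\mathbf{F}$ of $\A'$, which is harmless since $\cAUi$ is $\A'$-free (with $\imath$canonical basis) by \cite{BW18a}*{Theorem~6.17}, and similarly for the targets. Both composed maps are $\mathbf{F}$-algebra homomorphisms: the top path combines $\ifr$ (Theorem~\ref{thm:iFr}), the right coideal map $\widehat{\Delta}:{_{\mathbf{F}}\dot{\U}^\imath}\to {_{\mathbf{F}}\widehat{\U}^{\imath,1}}$, and $id\otimes\ofr$; the bottom path is analogous. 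Hence it suffices to verify the identity on a generating set of ${_\mathbf{F}\dot{\mU}^\imath}$.

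By the presentation recalled in \S\ref{subsec:rel}, ${_\mathbf{F}\dot{\mU}^\imath}$ is generated by the weight idempotents $\one_\zeta$ together with $\fb_{i,\zeta}$ (for $\tau i\neq i$) and $'\fb_{i,\zeta}$ (for $\tau i=i$). On $\one_\zeta$ the identity is immediate: Theorem~\ref{thm:iFr} applied at $n=0$ yields $\ifr(\one_\zeta)=\one_{l\zeta}$, and Lemma~\ref{le:nop} ensures that distinct idempotents remain orthogonal after multiplication by $l$, so both sides unfold into the same formal sum of idempotent tensors in exact analogy with the computation at the start of the proof of Proposition~\ref{prop:pthFr}. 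For $\fb_{i,\zeta}$ with $\tau i\neq i$, I plan to unwind $\widehat{\Delta}(\B_{i,l\zeta}^{(l)})$ using the standard coproduct formulas for $F_i^{(l)}$ and $E_{\tau i}^{(l)}\tilde K_i^{-l}$ from \cite{Lu93}*{\S3.1.5}, and then apply $id\otimes\ofr$: all summands whose right tensor factor carries a divided-power index outside $l\BZ$ or a weight outside $lX$ vanish, by precisely the mechanism of Proposition~\ref{prop:pthFr}. A symmetric expansion of $(\ifr\otimes id)\widehat{\Delta}(\fb_{i,\zeta})$ via $\widehat{\Delta}(B_i)$ produces the matching surviving terms.

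The main obstacle will be the case $\tau i=i$ with the balanced generator $'\fb_{i,\zeta}$, whose image under $\ifr$ is $'\B_{i,l\zeta}^{(l)}$. For this we invoke Lemma~\ref{le:conew}: writing $\widehat{\Delta}(\B_i^{(l)})=\sum_{r=0}^{l}\B_i^{(l-r)}\otimes M_{l,r}$, with each $M_{l,r}\one_\lambda$ an explicit combination involving $q$-double binomials $\LR{a'-a-\lambda_i}{2c}_i$, applying $id\otimes\ofr$ forces $a,a'\in l\BZ$ and $\lambda\in lX$ on the right factor, and Lemma~\ref{le:qBinomAtUnity} reduces each surviving $q$-double binomial at $q=\bq$. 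On the other side, the simplified coproduct $\widehat{\Delta}(B_i)=B_i\otimes 1+\tilde K_i^{-1}\otimes B_i$ (valid when $\tau i=i$), together with Lemma~\ref{le:conew} specialized at $q=1$, yields $\widehat{\Delta}('\fb_{i,\zeta})$ explicitly; after applying $\ifr\otimes id$ and using Corollary~\ref{cor:iFr} to rewrite $\ifr$-images as $\mathbf{F}$-linear combinations of standard divided powers, the two sides are matched by the $q$-double binomial identities of Lemma~\ref{lem:DoubleInA2} that already underlie the computations in \S\ref{subsec:pf5}. The combinatorial bookkeeping here, substantially heavier than in its non-$\imath$ counterpart, is the chief technical content of the proof.
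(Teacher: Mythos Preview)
Your proposal is correct and follows the same approach as the paper: check on the generators $\one_\zeta$ and $\fb_{i,\zeta}$ (which equals $'\fb_{i,\zeta}$ in degree one when $\tau i=i$) using the coideal coproduct together with the divisibility constraints imposed by $\ofr$. The $\tau i=i$ case is much lighter than you anticipate: applying Lemma~\ref{le:conew} with $n=l$, the constraints $a,a'\in l\BZ$ and $a+a'+2c\le l$ together with $\phi\big(\LR{-l\lambda'_i}{2c}_i\big)=0$ for $c>0$ leave exactly three surviving terms, which match $(\ifr\otimes id)\circ\widehat{\Delta}(\fb_{i,\zeta})$ directly once one checks that the exponents $s_i(\cdot)$ are divisible by $l$, so Corollary~\ref{cor:iFr} and the heavier combinatorics of \S\ref{subsec:pf5} are not needed.
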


\begin{proof}
It suffices to check 
\[
(id\otimes \ofr) \circ \widehat{\Delta}\circ \ifr(u)=(\ifr\otimes id)\circ \widehat{\Delta}(u)
\]
for $u=\one_\zeta$ ($\zeta\in X_\imath$), and $u=\mathfrak{b}_{i,\zeta}$ ($i\in\I$, $\zeta\in X_\imath$). The first case is straightforward.


Assume $u=\mathfrak{b}_{i,\zeta}$. Then by definitions, we have  
\[
(\ifr\otimes id)\circ \widehat{\Delta}(\mathfrak{b}_{i,\zeta})=\sum_{\substack{\zeta'+\overline{\lambda}=\zeta\\\zeta'\in X_\imath,\lambda\in X}}\left('\RB_{i,l\zeta'}^{(l)}\otimes\one_\lambda+\one_{l\zeta'}\otimes(\fe_i\one_\lambda+\ff_i\one_\lambda)\right).
\]
We then compute $(id\otimes \ofr) \circ \widehat{\Delta}\circ \ifr(\fb_{i,\zeta})$, for any $i\in\I$ and $\zeta\in X_\imath$. We divide into two cases.


 Case (a): $\tau i=i$. 
 
 Recall Theorem~\ref{thm:ClaFr} that 
\[
\ofr(\RE_i^{(a)}\RF_i^{(a')}\one_{l\lambda}) =
\begin{cases}
\fe_i^{(a/l)}\ff_i^{(a'/l)}\one_\lambda, &\text{if } l\mid a \text{ and }l\mid a';\\
0, &\text{otherwise.}
\end{cases}
\]
Therefore by the Lemma \ref{le:conew}, we have
 \begin{align*}
 &(id\otimes \ofr) \circ \widehat{\Delta}\circ \ifr(\mathfrak{b}_{i,\zeta}) = (id\otimes \ofr) \circ \widehat{\Delta}('\B_{i,l\zeta}^{(l)}) \\
 =\, &\bq_i^{s_i(0,0,l\lambda,l,0)}{'\RB_{i,\zeta'}^{(l)}}\otimes \one_{\lambda}+\bq_i^{s_i(l,0,l\lambda,l,0)}\one_{\zeta'}\otimes \fe_i\one_\lambda+\bq_i^{s_i(0,l,l\lambda,l,0)}\one_{\zeta'}\otimes \ff_i\one_\lambda.
 \end{align*}

It is direct to check that $s_i(0,0,l\lambda,l,0)$, $s_i(l,0,l\lambda,l,0)$, and $s_i(0,l,l\lambda,l,0)$ are all divisible by $l$. We deduce that
\begin{align*}
(id\otimes \ofr) \circ \widehat{\Delta}\circ \ifr(\mathfrak{b}_{i,\zeta}) &=\sum_{\substack{\zeta'+\overline{\lambda}=\zeta\\\zeta'\in X_\imath,\lambda\in X}}\left('\RB_{i,l\zeta'}^{(l)}\otimes\one_\lambda+\one_{l\zeta'}\otimes(\fe_i\one_\lambda+\ff_i\one_\lambda)\right) \\
&=(\ifr\otimes id)\circ \widehat{\Delta}(\mathfrak{b}_{i,\zeta}).
\end{align*}

 Case (b): $\tau i\neq i$. 
 
 By direct computation in $\mathrm{U}^\imath$, we have
\[
    \Delta(B_i)=B_i\otimes \Tilde{K}_i^{-1}+1\otimes F_i+\Tilde{K}_i^{-1}\Tilde{K}_{\tau i}\otimes \varsigma_i E_{\tau i}\Tilde{K}_i^{-1}.
\]

Therefore we have 
\begin{equation*}
\begin{split}
    &\Delta(B_i^{(l)})=(B_i\otimes \Tilde{K}_i^{-1}+1\otimes F_i+\Tilde{K}_i^{-1}\Tilde{K}_{\tau i}\otimes \varsigma_i E_{\tau i}\Tilde{K}_i^{-1})^l/[l]_i^!\\
    &=B_i^{(l)}\otimes \Tilde{K}_i^{-l}+1\otimes F_i^{(l)}+(\Tilde{K}_i^{-l}\Tilde{K}_{\tau i}^l)\otimes \varsigma_i^l(E_{\tau i}\Tilde{K}_i^{-1})^{l}/[l]_i^!+\text{other terms},
\end{split}
\end{equation*}
where the “other terms" stands for the terms, such that the degree of the second factor does not belong to $l\BZ[\I]$ (recall that $\U$ is naturally $\BZ[\I]$-graded). Hence they are in the kernel of $id \otimes \ofr$.

Therefore
\begin{align*}
    &(id\otimes \ofr) \circ \widehat{\Delta}\circ \ifr(\mathfrak{b}_{i,\zeta})= (id\otimes \ofr) \circ \widehat{\Delta}(\B_{i,l\zeta}^{(l)})\\
    =\, &(id\otimes \ofr) \Big( \sum_{\substack{\zeta'+\overline{\lambda}=\zeta\\\zeta'\in X_\imath,\lambda\in X}}\big(\B_{i,l\zeta'}^{(l)}\otimes \one_{l\lambda}+\one_{l\zeta'}\otimes \RF_i^{(l)}\one_{l\lambda}+\one_{l\zeta'}\otimes \RE_{\tau i}^{(l)}\one_{l\lambda}+\text{other terms}\big) \Big)\\
    =&\sum_{\substack{\zeta'+\overline{\lambda}=\zeta\\\zeta'\in X_\imath,\lambda\in X}}\left('\RB_{i,l\zeta'}^{(l)}\otimes\one_\lambda+\one_{l\zeta'}\otimes(\fe_i\one_\lambda+\ff_i\one_\lambda)\right)\\
    =\, &(\ifr\otimes id)\circ \widehat{\Delta}(\mathfrak{b}_{i,\zeta}).
\end{align*}

This finishes the proof.
\end{proof}

\subsection{Frobenius splittings of algebraic groups}\label{sec:Frbach}
We assume $l=p$ is an odd prime number, which is relatively prime to all the root lengths.  We follow the notations in \S \ref{sec:oFrbach}.

\subsubsection{}
 Tensoring the $\A'$-algebra homomorphisms $\fr: {_{\A'}\dot{\mU}}\longrightarrow{_{\A'}\dot{\U}}$ and  $\ifr:{_{\A'}\dot{\mU}^\imath}\longrightarrow{_{\A'}\dot{\U}^\imath}$ with the $\A'$-algebra $\BF_p$,
we obtain $\BF_p$-algebra homomorphisms
\[
 \fr:{{}_{\BF_p}\!\dot{\U}}\longrightarrow{{}_{\BF_p}\!\dot{\U}}\quad \text{ and } \quad  \ifr:{{}_{\BF_p}\!\dot{\U}^\imath}\longrightarrow{{}_{\BF_p}\!\dot{\U}^\imath}.
\]
Thanks to Lemma \ref{le:fiFrs}, we also have $\BF_p$-algebra homomrophisms 
\[
\fr:{{}_{\BF_p}\!\widehat{\U}}\longrightarrow{{}_{\BF_p}\!\widehat{\U}}\quad \text{ and } \quad  \ifr:{{}_{\BF_p}\!\widehat{\U}^\imath}\longrightarrow{{}_{\BF_p}\!\widehat{\U}^\imath}.
\]
Recall the ${\BF_p}$-Hopf algebras $\RO_{\BF_p}$ in \S\ref{sec:nocp} and $\RO_{\BF_p}^\imath$ in \S\ref{sec:pro}. We hence have the induced well-defined $\BF_p$-linear maps 
\[
{ \fr}^{,*}:\RO_{\BF_p}\longrightarrow \RO_{\BF_p}\quad \text{ and } \quad  { \ifr}^{,*}:\RO_{\BF_p}^\imath\longrightarrow\RO_{\BF_p}^\imath.
\]

\begin{prop}\label{prop:splp}
(1) The map $\fr^{,*}$ is a Frobenius splitting of $\RO_{\BF_p}$, that is, we have $\fr^{,*}(fg^p)={ \fr^{,*}}(f)\cdot g$  for any $f$, $g$ in $\RO_{\BF_p}$, and $\fr^{,*}(1)=1$.

(2) The map $\ifr^{,*}$ is a Frobenius splitting of $\RO_{\BF_p}^\imath$. That is, we have $\ifr^{,*}(fg^p)={\ifr^{,*}}(f)\cdot g$ for any $f$, $g$ in $\RO_{\BF_p}^\imath$, and $\ifr^{,*}(1)=1$.
\end{prop}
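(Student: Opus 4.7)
The plan is to verify the two defining properties of a Frobenius splitting by combining Proposition \ref{prop:pthFr} (which identifies $\ofr^*$ and $\iofr^*$ with the $p$-th power map), the coalgebra compatibility diagrams \eqref{dia:frspl} and \eqref{dia:ifrspl}, and the identities $\ofr \circ \fr = \mathrm{id}$ (Theorem \ref{thm:ClaFr}(b)) and $\iofr \circ \ifr = \mathrm{id}$ (a consequence of the construction in Theorem \ref{thm:iFr}, verified on generators).

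For part (1), view $f, g \in \RO_{\BF_p}$ as linear forms on ${}_{\BF_p}\widehat{\mathrm{U}}$. Using Proposition \ref{prop:pthFr}(1) to substitute $g^p = \ofr^*(g)$, together with the definition of the product in $\RO_{\BF_p}$ via $\widehat{\Delta}$ from \S\ref{sec:nocp}, for any $x \in {}_{\BF_p}\dot{\mathrm{U}}$ one has
\begin{align*}
\fr^{,*}(fg^p)(x) &= (fg^p)(\fr(x)) = (f \otimes g^p)\bigl(\widehat{\Delta}(\fr(x))\bigr) \\
&= (f \otimes g)\bigl((\mathrm{id} \otimes \ofr) \circ \widehat{\Delta} \circ \fr\,(x)\bigr).
\end{align*}
The base change to $\BF_p$ of diagram \eqref{dia:frspl} rewrites the inner expression as $(\fr \otimes \mathrm{id})(\widehat{\Delta}(x))$, yielding
\begin{equation*}
\fr^{,*}(fg^p)(x) = (\fr^{,*}(f) \otimes g)(\widehat{\Delta}(x)) = (\fr^{,*}(f) \cdot g)(x).
\end{equation*}
The unit condition $\fr^{,*}(1) = 1$ then follows formally: from $\ofr \circ \fr = \mathrm{id}$ and Proposition \ref{prop:pthFr}(1) we have $\fr^{,*}(f^p) = \fr^{,*}(\ofr^*(f)) = f$ for all $f \in \RO_{\BF_p}$; specializing the multiplicative identity just proved at $g = 1$ gives $f = f \cdot \fr^{,*}(1)$, and setting $f = 1$ yields $\fr^{,*}(1) = 1$.

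Part (2) proceeds along the same lines, working inside the quotient Hopf algebra $\RO^\imath_{\BF_p} \cong \RO_{\BF_p}/I_{\BF_p}$ of Theorem \ref{thm:Hopfi}. Lift $f, g \in \RO^\imath_{\BF_p}$ to $\tilde f, \tilde g \in \RO_{\BF_p}$; by Proposition \ref{prop:pthFr} (and the intertwining $r \circ \ofr^* = \iofr^* \circ r$), $\ofr^*(\tilde g)$ provides a lift of $g^p = \iofr^*(g)$. Substituting in the product formula and applying the $\imath$-compatibility diagram \eqref{dia:ifrspl} in place of \eqref{dia:frspl} gives $\ifr^{,*}(fg^p) = \ifr^{,*}(f) \cdot g$; the unit condition then follows from $\iofr \circ \ifr = \mathrm{id}$ exactly as above.

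The only technical matter is that the dualized maps $\ofr^*, \fr^{,*}, \iofr^*, \ifr^{,*}$ must send dual (i)canonical basis elements to \emph{finite} linear combinations of such elements, so that the manipulations above genuinely take place inside the restricted duals $\RO_{\BF_p}$ and $\RO^\imath_{\BF_p}$ rather than in their completions. This finiteness is exactly the content of Lemmas \ref{le:fiFr} and \ref{le:fiFrs}, so with those in hand the argument is a clean formal consequence of the two preceding propositions.
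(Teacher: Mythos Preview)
Your argument is correct and follows the same route as the paper: dualize the base-changed diagrams \eqref{dia:frspl} and \eqref{dia:ifrspl}, invoke Proposition~\ref{prop:pthFr} to replace $g^p$ by $\ofr^*(g)$ (resp.\ $\iofr^*(g)$), and read off the unit condition from $\ofr\circ\fr=\mathrm{id}$ (resp.\ $\iofr\circ\ifr=\mathrm{id}$). One small slip: in deriving $\fr^{,*}(1)=1$ your specialization should be at $f=1$ (yielding $g=\fr^{,*}(1)\cdot g$), not at $g=1$; alternatively just take $f=1$ in $\fr^{,*}(f^p)=f$ directly.
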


\begin{proof}
We show part (1). It follows from the commuting diagram \eqref{dia:frspl} that the following diagram commutes
\begin{equation*}
    \begin{tikzcd}
        {}_{\BF_p}\!\dot{\U} \arrow[r,"\fr"] \arrow[d,"\widehat{\Delta}"'] & {}_{\BF_p}\!\widehat{\U} \arrow[r,"\widehat{\Delta}"] & {}_{\BF_p}\!\widehat{\U}^{(2)} \arrow[d,"id\otimes\ofr"] \\
        {}_{\BF_p}\!\widehat{\U}^{(2)}
        \arrow[rr, "\fr\otimes id"] & & {}_{\BF_p}\!\widehat{\U}^{(2)}
        \end{tikzcd}
\end{equation*}

Taking ${\BF_p}$-linear duals, we obtain the commutative diagram
\begin{equation}\label{dia:opsplit}
    \begin{tikzcd}
    \RO_{\BF_p} & \RO_{\BF_p} \arrow[l,"\fr^{,*}"'] & \RO_{\BF_p}\otimes\RO_{\BF_p} \arrow[l,"m"'] \\ \RO_{\BF_p} \otimes\RO_{\BF_p} \arrow[u,"m"] & & \RO_{\BF_p}\otimes\RO_{\BF_p} \arrow[ll,"\fr^{,*}\otimes id"'] \arrow[u,"id\otimes\ofr^{*}"']
    \end{tikzcd}
\end{equation}
We have seen that $\ofr^*$ is the $p$-th power map. The diagram \eqref{dia:opsplit} implies that $\fr^{,*}(fg^p)=\fr^{,*}(f)g$, for any $f,g\in \RO_{\BF_p}$. Since $\ofr\circ\fr=id$, we deduce that $\fr^{,*}(f^p)=f$, for any $f\in \RO_{\BF_p}$. Hence $\fr^{,*}$ is a Frobenius splitting of $\RO_{\BF_p}$.

Part (2) follows similarly using the commutative diagram \eqref{dia:ifrspl}.
\end{proof}



\subsubsection{}\label{sec:kFr}
 Let $k$ be an algebraically closed field with characteristic $p$. Recall $p$ is an odd prime number which is relatively prime to all the root lengths.


The algebra $\RO_k\cong k\otimes_{\BF_p}\RO_{\BF_p}$ is the coordinate ring  of the algebraic group $G_k$ associated to the root datum by \S\ref{sec:nocp}. The algebra $\RO_k^\imath\cong k\otimes_{\BF_p}\RO_{\BF_p}^\imath$ is coordinate ring of the symmetric subgroup $G_k^\theta$ associated to the $\imath$-root datum by Theorem~\ref{thm:Oik}. 


Let $(\;\cdot\;)^p:k\rightarrow k$ be the $\BF_p$-linear map sending any $t$ in $k$ to $t^p$. We define
\begin{align*}
_k\ofr^*& =(\;\cdot\;)^p\otimes_{\BF_p} { \ofr^*}: \RO_k\longrightarrow\RO_k\\ _k\iofr^*&=(\;\cdot\;)^p\otimes_{\BF_p}{\iofr^*}:\RO^\imath_k\longrightarrow\RO_k^\imath.
\end{align*}
 Here we add a subscript $_k({\cdot})$ to indicate the non-trivial automorphism $(\;\cdot\;)^p$ on $k$.



Let $(\;\cdot\;)^{1/p}:k\rightarrow k$ be the $\BF_p$-linear map  sending any $t$ in $k$ to $t^{1/p}$, the unique $p$-th root of $t$. We define
\begin{align*}
_k\fr^{,*}& =(\;\cdot\;)^{1/p}\otimes_{\BF_p}{\fr^{,*}}:\RO_k\longrightarrow\RO_k,\\ 
_k\ifr^{,*}& =(\;\cdot\;)^{1/p}\otimes_{\BF_p}{\ifr^{,*}}:\RO^\imath_k\longrightarrow\RO_k^\imath.
\end{align*}
 We again add a subscript $_k({\cdot})$ here to indicate the non-trivial automorphism $(\;\cdot\;)^{1/p}$ on $k$. 

 The following theorem is immediate by Proposition~\ref{prop:pthFr} and Proposition \ref{prop:splp}.
 
\begin{thm}\label{thm:splitO}
(1) We have 
\begin{align*}
 {_k\ofr^*}(f)&=f^p, \quad \text{for any } f\in \RO_k,\\
  {_k\iofr^*}(f)&=f^p, \quad \text{for any } f\in \RO_k^\imath.
\end{align*}

(2) The maps $_k\fr^{,*}$ and $_k\ifr^{,*}$ provide explicit Frobenius splittings for algebraic groups $G_k$ and $G^\imath_k$, respectively.
\end{thm}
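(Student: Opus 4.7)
The plan is to deduce the theorem from Proposition~\ref{prop:pthFr} and Proposition~\ref{prop:splp} by a straightforward base change from $\BF_p$ to $k$. The key point is that $\RO_k \cong k\otimes_{\BF_p}\RO_{\BF_p}$ and $\RO^\imath_k\cong k\otimes_{\BF_p}\RO^\imath_{\BF_p}$, and by the definitions in \S\ref{sec:kFr} the maps ${_k\ofr^*}$, ${_k\iofr^*}$, ${_k\fr^{,*}}$, ${_k\ifr^{,*}}$ are exactly the tensor products of their $\BF_p$-counterparts with the semilinear twists $(\cdot)^p$ or $(\cdot)^{1/p}$ on $k$. These twists are precisely what is needed to absorb the semilinearity forced by the $p$-th power and its inverse on the coefficient field.

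For part (1), I would write a general element of $\RO_k$ as $f=\sum_i t_i\otimes g_i$ with $t_i\in k$ and $g_i\in\RO_{\BF_p}$, and then apply Proposition~\ref{prop:pthFr} term by term:
\[
    {_k\ofr^*}(f) \;=\; \sum_i t_i^p\otimes \ofr^*(g_i) \;=\; \sum_i t_i^p\otimes g_i^p \;=\; \Big(\sum_i t_i\otimes g_i\Big)^{\! p} \;=\; f^p,
\]
where the third equality uses the Freshman's dream in characteristic $p$ applied to the commutative $k$-algebra $\RO_k$. The identity ${_k\iofr^*}(f)=f^p$ for $f\in\RO^\imath_k$ is proved in exactly the same way, using the second statement of Proposition~\ref{prop:pthFr} and commutativity of $\RO^\imath_k$ (Theorem~\ref{thm:Hopfi}).

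For part (2), I would verify the two defining conditions of a Frobenius splitting (cf. \S\ref{sec:algFr}) by the same base-change bookkeeping. First, ${_k\fr^{,*}}(1) = 1^{1/p}\otimes \fr^{,*}(1) = 1$ by Proposition~\ref{prop:splp}. Second, for $f=\sum_i s_i\otimes a_i$ and $g=\sum_j t_j\otimes b_j$ in $\RO_k$, I would compute
\[
    fg^p \;=\; \sum_{i,j} s_i t_j^p\otimes a_i b_j^p,
\]
using the Freshman's dream for $g^p$, and then apply $\fr^{,*}(a_i b_j^p)=\fr^{,*}(a_i)\,b_j$ from Proposition~\ref{prop:splp} to get
\[
    {_k\fr^{,*}}(fg^p) \;=\; \sum_{i,j} (s_i t_j^p)^{1/p}\otimes \fr^{,*}(a_i)\,b_j \;=\; \sum_{i,j} s_i^{1/p}t_j\otimes \fr^{,*}(a_i)\,b_j \;=\; {_k\fr^{,*}}(f)\cdot g.
\]
The corresponding identity for ${_k\ifr^{,*}}$ on $\RO^\imath_k$ is established identically from the $\imath$version of Proposition~\ref{prop:splp}.

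There is no real obstacle: all substance has already been placed into Proposition~\ref{prop:pthFr} (the $p$-th power identity coming from the combinatorial diagram \eqref{dia:pthFr}) and Proposition~\ref{prop:splp} (the splitting identity coming from the commuting diagrams \eqref{dia:frspl} and \eqref{dia:ifrspl}). The only thing to check in the present statement is that the semilinear twists $(\cdot)^p$ and $(\cdot)^{1/p}$ on $k$ match correctly with the Frobenius-semilinear nature of the splitting conditions; the computations above show this is automatic.
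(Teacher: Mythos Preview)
Your proposal is correct and follows exactly the paper's approach: the paper itself states that the theorem ``is immediate by Proposition~\ref{prop:pthFr} and Proposition~\ref{prop:splp}'' and gives no further details. You have simply spelled out the base-change bookkeeping that the paper leaves implicit.
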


\begin{remark}
(1) The existence of such splittings is guaranteed by \cite{BK05}*{Proposition~1.1.6}. Our construction is explicit and uniform for all positive characteristics (not $2$ for $G^\imath_k$ of course). 

(2) By \cite{BK05}*{Proposition~1.1.6}, there exists a splitting of $G_k$ that compatibly splits $G_k^\imath$. We expect our method can be used to construct explicitly such an Frobenius splitting. This will be addressed in future works.
\end{remark}

\section{Frobenius splittings of flag varieties}\label{sec:Frflag}
In this section, we assume given an $\imath$root datum $(\I,Y,X,A,(\alpha_i)_{i\in\I},(\coroot_i)_{i\in\I}),\tau,\theta)$ of finite (quasi-split) type. Let $p$ be an odd prime number, and relatively prime to all the root lengths (so $p>3$ if the underlying root datum has a $G_2$ factor). We fix an algebraically closed field $k$ of characteristic $p$. 

Let $(G_k, \theta_k,B_k)$ be an anisotropic triple assocaited to the $\imath$root datum following Proposition \ref{prop:cft}. We identify this triple as we constructed in \S \ref{sec:class} (using quantum groups). Let $\{x_i,y_i;i\in\I\}$ be the corresponding anisotropic pinning, and $\mathcal{B}_k=G_k/B_k$ be the flag variety.

	
	

\subsection{The algebra $R_{\BF_p}$ and $R_k$}

	We follow the notations in \S \ref{sec:oFrbach} and \S \ref{sec:Frbach}.
	For any $\lambda\in X^+$, we write
	\[
	\baV(\lambda) =\BF_p\otimes_{\A}{_\A L(\lambda)}.
	\]
	It is a ${}_{\BF_p}\!\dot{\U}$-module with maximal vector $v_\lambda^+=1\otimes v_\lambda^+$ (again we abuse notations here).
	
	Set 
	\[
	R_{\BF_p}=\bigoplus_{\lambda\in X^+}\baV(\lambda)^*.
	\]
	It has a $\BF_p$-algebra structure defined in the following way. For any $\lambda_1,\lambda_2\in X^+$, there is a unique ${}_{\BF_p}\!\dot{\U}$-module homomorphism
	\[
	\baV(\lambda_1+\lambda_2)\longrightarrow \baV(\lambda_1)\otimes \baV(\lambda_2)
	\]
	sending $v_{\lambda_1+\lambda_2}^+$ to $v_{\lambda_1}^+\otimes v_{\lambda_2}^+$.
	Taking dual of this map, we get $\BF_p$-linear maps
	\[
	\baV(\lambda_1)^*\otimes \baV(\lambda_2)^*\longrightarrow \baV(\lambda_1+\lambda_2)^*,
	\]
	which defines the multiplication in $R_{\BF_p}$. The unit of $R_{\BF_p}$ is the linear dual of $\baV(0)=\BF_p\cdot v_0^+$, sending $v_0^+$ to 1.

	Set $R_k = k\otimes_{\BF_p} R_{\BF_p} \cong \bigoplus_{\lambda\in X^+}V_k(\lambda)^*$. 
	For any $\lambda\in X^+$, recall the construction for the $k$-algebra $R_{\mathcal{L}_\lambda}$ in \S \ref{sec:algFr}. By definitions, $R_{\mathcal{L}_\lambda}$ is a $k$-subalgebra of $R_k$. 
	
	
	

	\subsection{The $p$-th power maps}
	\subsubsection{}

Retain the notations in \S \ref{sec:oFrbach} and \S \ref{sec:Frbach}. 
For any $\lambda \in X^+$, recall \S\ref{sec:qFr} the modules ${_{\A'}\mathfrak{L}}(\mu)$ and ${_{\A'}L}(\mu)$.
By the arguments in \S \ref{sec:oFrbach}, we have canonical isomorphisms (as ${}_{\BF_p}\!\dot{\U}$-modules)
\[
\BF_p\otimes_{\A'}{{_{\A'}\mathfrak{L}}(\mu)}\cong \BF_p\otimes_{\A'}{{_{\A'}L}(\mu)}\cong \baV(\mu).
\]
Hence by Proposition \ref{prop:ga}, we have a  ${}_{\BF_p}\!\dot{\U}$-module homomorphism
\[
\gamma_\mu: \baV(p\mu)\longrightarrow \baV(\mu)^{\ofr}, \quad v_{p\mu}^+ \mapsto v_\mu^+.
\]
 


Define $\BF_p$-linear map $\Gamma: R_{\BF_p}\rightarrow R_{\BF_p}$, such that $\Gamma(f)=\gamma_\mu^*(f)$, for $f\in \baV(\mu)^*$. Here $\gamma_\mu^*: (\baV(\mu)^{\ofr})^* \rightarrow \baV(p\mu)^*$ denotes the ${\BF_p}$-linear dual of $\gamma_\mu$.
 
 Recall \S\ref{sec:kFr} the map $(\;\cdot \;)^p:k\rightarrow k$, mapping $ t$ to $t^p$. We further define 
\[
\Gamma_k =(\;\cdot\;)^p\otimes \Gamma: R_k\longrightarrow R_k,
\]
\begin{proposition}[\cite{KL1}*{Theorem~1}] \label{prop:Ga}
(1) The  map $\Gamma: R_{\BF_p}\rightarrow R_{\BF_p}$ is the $p$-th power map, that is, $\Gamma(f)=f^p$, for any $f\in R_{\BF_p}$. 

(2) The  map $\Gamma_k: R_{k}\rightarrow R_{k}$ is the $p$-th power map, that is, $\Gamma(f)=f^p$, for any $f\in R_{k}$.
\end{proposition}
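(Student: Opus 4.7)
The plan is to reduce part (1) to Proposition~\ref{prop:pthFr}, which identifies $\ofr^*$ with the $p$-th power map on $\RO_{\BF_p}$, via the matrix-coefficient construction. Part (2) will follow formally from part (1) using the freshman's dream in characteristic $p$.

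For part (1), fix $\mu \in X^+$ and $f \in \baV(\mu)^*$, and introduce the matrix coefficient $\phi_f \colon {}_{\BF_p}\!\dot{\U} \to \BF_p$ defined by $\phi_f(u) = f(u \cdot v_\mu^+)$. First I would verify that $\phi_f \in \RO_{\BF_p}$: the kernel of the surjection ${}_{\BF_p}\!\dot{\U} \one_\mu \twoheadrightarrow \baV(\mu)$, $u \one_\mu \mapsto u \cdot v_\mu^+$, is spanned by canonical basis elements (the specialization of the statement in \S\ref{sec:qanniR} at $\lambda_1 = 0$), and since $\baV(\mu)$ is finite dimensional, $\phi_f$ annihilates all but finitely many elements of $\dot{\RB}$, placing it in $\RO_{\BF_p}$.

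Next, since $\baV(p\mu)$ is generated as a ${}_{\BF_p}\!\dot{\U}$-module by $v_{p\mu}^+$, it suffices to compare $\Gamma(f)$ and $f^p$ on arbitrary elements $u \cdot v_{p\mu}^+ \in \baV(p\mu)$. The ${}_{\BF_p}\!\dot{\U}$-linearity of $\gamma_\mu$ (with action on the target twisted by $\ofr$) yields
\[
\Gamma(f)(u \cdot v_{p\mu}^+) \;=\; f(\ofr(u) \cdot v_\mu^+) \;=\; \phi_f(\ofr(u)) \;=\; \ofr^*(\phi_f)(u).
\]
On the other side, unwinding the definition of multiplication in $R_{\BF_p}$ via the ${}_{\BF_p}\!\dot{\U}$-module map $\baV(p\mu) \to \baV(\mu)^{\otimes p}$ sending $v_{p\mu}^+ \mapsto (v_\mu^+)^{\otimes p}$, and writing the iterated coproduct $\Delta^{p-1}(u) = \sum u_{(1)} \otimes \cdots \otimes u_{(p)}$, one finds
\[
f^p(u \cdot v_{p\mu}^+) \;=\; f^{\otimes p}\bigl(\Delta^{p-1}(u) \cdot (v_\mu^+)^{\otimes p}\bigr) \;=\; \sum \phi_f(u_{(1)}) \cdots \phi_f(u_{(p)}) \;=\; (\phi_f)^p(u),
\]
where the last equality follows from the Hopf algebra multiplication on $\RO_{\BF_p}$. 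Applying Proposition~\ref{prop:pthFr} to $\phi_f$ gives $\ofr^*(\phi_f) = (\phi_f)^p$, so the two displayed expressions agree, completing part (1).

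For part (2), the multiplication on $R_k = k \otimes_{\BF_p} R_{\BF_p}$ is termwise, so $(a \otimes g)^p = a^p \otimes g^p$. By part (1), $\Gamma_k(a \otimes g) = a^p \otimes \Gamma(g) = a^p \otimes g^p = (a \otimes g)^p$, and the freshman's dream extends this to all of $R_k$. The key conceptual step is recognizing the compatibility between the multiplication in $R_{\BF_p}$ (defined via the ${}_{\BF_p}\!\dot{\U}$-module map into tensor products) and the Hopf algebra multiplication in $\RO_{\BF_p}$ (defined via the coproduct on $\dot{\U}$); once the matrix coefficient $\phi_f$ is seen to intertwine these two products, Proposition~\ref{prop:pthFr} transfers the $p$-th power identity from $\RO_{\BF_p}$ to $R_{\BF_p}$.
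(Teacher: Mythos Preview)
Your proof is correct and takes a genuinely different route from the paper's. The paper argues directly: it introduces the symmetric power $S^p\baV(\mu)$ and the $p$-th power map $\vartheta\colon \baV(\mu)^{\ofr}\to S^p\baV(\mu)$, $v\mapsto \overline{v^{\otimes p}}$, shows by an $S_p$-orbit count (essentially repeating the computation from Proposition~\ref{prop:pthFr}) that $\vartheta$ is ${}_{\BF_p}\!\dot{\U}$-equivariant, and then concludes by cyclicity of $\baV(p\mu)$. You instead observe that the matrix-coefficient map $f\mapsto\phi_f$ embeds $\baV(\mu)^*$ into $\RO_{\BF_p}$ as a subring, intertwining the two multiplications, so the identity $\Gamma(f)=f^p$ becomes a restatement of $\ofr^*(\phi_f)=(\phi_f)^p$, which is exactly Proposition~\ref{prop:pthFr}. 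This is cleaner and avoids duplicating the symmetric-group argument.

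The trade-off is that the paper's proof, while more laborious here, establishes the equivariance of $\vartheta$ as a byproduct, and this is reused essentially in the proofs of Proposition~\ref{prop:Psi} and Proposition~\ref{prop:PsiJ} (the commutativity of diagrams \eqref{dia:v} and \eqref{dia:vp} hinges on it). So if you adopt your approach for Proposition~\ref{prop:Ga}, you would still need to supply the $\vartheta$-equivariance separately before those later propositions.
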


\begin{proof}
It suffices to show part (1). Let $f\in \baV(\mu)^*$ for some $\mu\in X^+$. By definition, the equality $\Gamma(f)=f^p$ is equivalent to the commutativity of the following diagram:
\begin{equation}\label{dia:Fr}
    \begin{tikzcd}
        \baV(p\mu) \arrow[r,"\gamma_\mu"] \arrow[d] & \baV(\mu)^{\ofr}  \arrow[d,"f"]\\
        \baV(\mu)^{\otimes p} \arrow[r,"f^{\otimes p}"] & \mathbb{F}_p.
    \end{tikzcd}
\end{equation}
Note that $\baV(\mu)^{\ofr}= \baV(\mu)$ as $\BF_p$-vector spaces.

Let $I_\mu\subset \baV(\mu)^{\otimes p}$ be the subspace spanned by elements $v_1\otimes \cdots \otimes V_{\BF_p}-v_{\sigma(1)}\otimes \cdots \otimes v_{\sigma(p)}$, for all $v_i\in V_{\BF_p}(\mu)$ and $\sigma\in S_n$. Then $I_\mu$ is moreover a ${}_{\BF_p}\!\dot{\U}$-submodule. Set $S^p\baV(\mu)=\baV(\mu)^{\otimes p}/I_\mu$ be the quotient module. 
Note that the $\BF_p$-linear map $f^{\otimes p}$ factors through $S^p\baV(\mu)$, denoted by $f^p: S^p\baV(\mu)\longrightarrow \mathbb{F}_p$. Define a $\BF_p$-linear map
\[
\vartheta: \baV(\mu)\longrightarrow \baV(\mu)^{\otimes p} \longrightarrow S^p\baV(\mu), \quad v \mapsto v^{\otimes p} \mapsto  v^p=\overline{v^{\otimes p}}. 
\]
Since $(f(v) )^p = f(v)$ for any $v \in \baV(\mu)$, we have the following commuting diagram
\begin{equation*}
    \begin{tikzcd}
    \big(\baV(\mu)^{\ofr}=\big)\baV(\mu) \ar[r, "\vartheta"] \ar[dr, "f"]& S^p\baV(\mu) \ar[d, "f^p"]\\
        &  \BF_p
    \end{tikzcd}
\end{equation*}

 Hence in order to check the commutativity of the diagram \eqref{dia:Fr}, it will suffice to show the commutativity of the following diagram:
\begin{equation}\label{eq:prop:Ga}
    \begin{tikzcd}
    \baV(p\mu) \arrow[r,"\gamma_\mu"] \arrow[d] & \baV(\mu)^{\ofr} \arrow[d,"\vartheta"] \\
    \baV(\mu)^{\otimes p} \arrow[r] & S^p\baV(\mu).
    \end{tikzcd}
\end{equation}

We first show that $\vartheta: \baV(\mu)^{\ofr}\rightarrow S^p\baV(\mu)$ is ${}_{\BF_p}\!\dot{\U}$-equivariant, that is, 
\begin{equation}\label{id:the}
    \vartheta({ \ofr(u)}\cdot v)=u\cdot \vartheta(v), \quad \text{for any } u\in{{}_{\BF_p}\!\dot{\U}}, v\in \baV(\mu).
\end{equation}
 We may assume $v$ is homogeneous of weight $\gamma$, and $u$ equals $e_i^{(a)}\one_\lambda$, or $f_i^{(a)}\one_\lambda$, for some $a\in\BN$ and $\lambda\in X$.  If $\lambda\neq p\gamma$, then it is clear that both sides of \eqref{id:the} are 0.  If $ \lambda = p \gamma$, then entirely similar to the proof of Proposition~\ref{prop:pthFr}, we have 
  \[
 e_i^{(a)}\one_{p\gamma}\cdot v^p= 
 \begin{cases} (e_i^{(a/p)}v)^p, &\text{if } p \mid a;\\
 0, &\text{otherwise}.
 \end{cases}
 \]
  Hence $ \vartheta (\ofr(e_i^{(a)}\one_{p\gamma}) \cdot v)= e_i^{(a)}\one_{p\gamma}\cdot v^p $, whence the identity \eqref{id:the}. Similar computation holds when $u=f_i^{(a)}\one_{p\gamma}$.  
 
  Now all the maps in the diagram \eqref{eq:prop:Ga} are ${{}_{\BF_p}\!\dot{\U}}$-equivariant. Since $\baV(p\mu)$ is generated by $v^+_{p\mu}$ as ${{}_{\BF_p}\!\dot{\U}}$-module and $v^+_{p\mu} \in \baV(p\mu)$ maps to the same element via the two maps, we conclude the diagram is commutative. 
\end{proof}
\subsection{The splitting $\Psi_k$}\label{sec:Phi}

\subsubsection{}

For any $\mu\in X^+$, recall \S \ref{se:L} the $\A'$-linear maps $\psi_\mu:{_{\A'}\mathfrak{L}}(\mu)\rightarrow ({_{\A'}L}(p\mu))^{\ifr}$. Applying base change to $\BF_p$, we obtain the ${}_{\BF_p}\!\dot{\U}^\imath$-module homomorphisms:
\[
\psi_\mu: \baV(\mu)\longrightarrow \baV(p\mu)^{\ifr}, \quad v_\mu^+ \mapsto v_{p\mu}^+.
\]

Define the $\mathbb{F}_p$-linear map $\Psi: R_{\BF_p} \longrightarrow R_{\BF_p} $ by
\[
\Psi(f)=
\begin{cases}
\psi_\mu^*(f), &\text{if } f\in \baV(p\mu)^*;\\
0,  &\text{if }f\in \baV(\mu)^* \text{ with }\mu\not\in pX.
\end{cases}
\]

\begin{prop}\label{prop:Psi}
The map $\Psi$ is a Frobenius splitting for the algebra $R_{\BF_p}$. Namely, it is an additive map satisfying: 

(a) $\Psi(f^pg)=f\Psi(g),$  for $f,g\in R$; 
    
(b) $\Psi(1)=1$.  

\end{prop}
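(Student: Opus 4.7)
The unit axiom $\Psi(1)=1$ is immediate: the unit of $R_{\BF_p}$ corresponds to the linear functional $v_0^+ \mapsto 1$ in $\baV(0)^*$, and $\psi_0\colon\baV(0)\to\baV(0)$ is the identity by construction, so $\Psi(1)=1$.

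For the splitting axiom, I would first observe that both sides vanish if $g \in \baV(\mu)^*$ with $\mu \notin pX$, since then $f^p g \in \baV(p\mu_1+\mu)^*$ with $p\mu_1+\mu \notin pX$. We may therefore assume $f\in\baV(\mu_1)^*$ and $g\in\baV(p\mu_2)^*$ for some $\mu_1, \mu_2\in X^+$. Using Proposition~\ref{prop:Ga} to write $f^p = \Gamma(f) = \gamma_{\mu_1}^*(f)$, and the fact that the product in $R_{\BF_p}$ is dual to the comultiplication map $\Delta_{\mu_1,\mu_2}\colon\baV(\mu_1+\mu_2)\to\baV(\mu_1)\otimes\baV(\mu_2)$, the identity $\Psi(f^p g)=f\Psi(g)$ (holding for all $f, g$) is equivalent to the equality of $\BF_p$-linear maps
\[
A := (\gamma_{\mu_1}\otimes\mathrm{id})\circ\Delta_{p\mu_1,p\mu_2}\circ\psi_{\mu_1+\mu_2}\;=\;(\mathrm{id}\otimes\psi_{\mu_2})\circ\Delta_{\mu_1,\mu_2} =: B
\]
as maps $\baV(\mu_1+\mu_2)\to\baV(\mu_1)\otimes\baV(p\mu_2)$. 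Both $A$ and $B$ send the highest weight vector to $v_{\mu_1}^+\otimes v_{p\mu_2}^+$.

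Since $\baV(\mu_1+\mu_2)$ is generated by $v_{\mu_1+\mu_2}^+$ as a ${}_{\BF_p}\dot{\U}^\imath$-module (Corollary~\ref{cor:annlR}), it is enough to verify $A(u\cdot v_{\mu_1+\mu_2}^+) = B(u\cdot v_{\mu_1+\mu_2}^+)$ for every $u \in {}_{\BF_p}\dot{\U}^\imath$. Expanding each side using the $\ifr$-twisted $\dot{\U}^\imath$-equivariance of $\psi_{\mu_1+\mu_2}$ and the $\ofr$-twisted $\dot{\U}$-equivariance of $\gamma_{\mu_1}$, and writing $\widehat{\Delta}(u)=\sum a_{(1)}\otimes a_{(2)}$ and $\widehat{\Delta}(\ifr(u))=\sum b_{(1)}\otimes b_{(2)}$ in $\widehat{\U}^{\imath,1}$, yields
\begin{align*}
A(u\, v_{\mu_1+\mu_2}^+) &= \sum \ofr(b_{(1)})\,v_{\mu_1}^+ \otimes b_{(2)}\,v_{p\mu_2}^+,\\
B(u\, v_{\mu_1+\mu_2}^+) &= \sum a_{(1)}\,v_{\mu_1}^+ \otimes \psi_{\mu_2}(a_{(2)}\,v_{\mu_2}^+).
\end{align*}
The equality of these two expressions follows from Proposition~\ref{prop:cosp}, which provides the key intertwining $(\ifr\otimes\mathrm{id})\widehat{\Delta}(u) = (\mathrm{id}\otimes\ofr)\widehat{\Delta}(\ifr(u))$, combined with the Hopf compatibility $(\ofr\otimes\ofr)\widehat{\Delta} = \widehat{\Delta}\circ\ofr$ and the defining $\dot{\U}^\imath$-intertwining property of $\psi_{\mu_2}$.

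The main technical obstacle I anticipate will be bridging the $\psi_{\mu_2}$ on the $B$-side with the $b_{(2)}\cdot v_{p\mu_2}^+$ on the $A$-side. Since the $a_{(2)}$ generally lie outside $\dot{\U}^\imath$, unfolding $\psi_{\mu_2}(a_{(2)}\,v_{\mu_2}^+)$ requires rewriting $a_{(2)}\cdot v_{\mu_2}^+$ as $c\cdot v_{\mu_2}^+$ for some $c\in\dot{\U}^\imath$ (using the cyclicity of $\baV(\mu_2)$) before applying the intertwining of $\psi_{\mu_2}$; reconciling the result with the $A$-side then uses the compatibility identities above.
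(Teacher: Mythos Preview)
Your overall strategy—dualizing to an identity of linear maps $A=B$, checking the highest weight vector, and concluding by $\widehat{\U}^\imath$-cyclicity—is exactly right and matches the paper's. But the ``technical obstacle'' you flag at the end is a genuine gap, not just bookkeeping, and your proposed fix (replacing $a_{(2)}v_{\mu_2}^+$ by $c\cdot v_{\mu_2}^+$ for some $c\in\dot{\U}^\imath$) does not close it: the choice of $c$ is noncanonical and depends on $\mu_2$, so there is no way to match the resulting expression with the $A$-side using the identities you list.

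The source of the problem is the ordering of your tensor factors. Since $\U^\imath$ is a \emph{right} coideal, $\widehat{\Delta}$ lands in $\widehat{\U}^\imath\widehat{\otimes}\widehat{\U}$, and Proposition~\ref{prop:cosp} reads $(\ifr\otimes\mathrm{id})\widehat{\Delta}=(\mathrm{id}\otimes\ofr)\widehat{\Delta}\circ\ifr$. This forces the $\ifr$-intertwiner $\psi$ onto the \emph{first} tensor slot and the $\ofr$-intertwiner $\gamma$ onto the \emph{second}. Your $A,B$ have them reversed. The remedy is immediate: use commutativity of $R_{\BF_p}$ (cocommutativity at $q=1$) to write $f^pg=g\cdot f^p$ and compare
\[
A'=(\mathrm{id}\otimes\gamma_{\mu_1})\circ\Delta_{p\mu_2,p\mu_1}\circ\psi_{\mu_1+\mu_2},\qquad
B'=(\psi_{\mu_2}\otimes\mathrm{id})\circ\Delta_{\mu_2,\mu_1}.
\]
Now for $u\in\dot{\U}^\imath$ one computes $B'(u\,v^+)=\sum\ifr(a_{(1)})v_{p\mu_2}^+\otimes a_{(2)}v_{\mu_1}^+$ and $A'(u\,v^+)=\sum b_{(1)}v_{p\mu_2}^+\otimes\ofr(b_{(2)})v_{\mu_1}^+$, and Proposition~\ref{prop:cosp} says these are equal. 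Both maps are therefore $\widehat{\U}^\imath$-equivariant for the \emph{same} action on $\baV(p\mu_2)^{\ifr}\otimes\baV(\mu_1)$, and they agree on the highest weight vector, so $A'=B'$.

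With this correction your argument is in fact a bit cleaner than the paper's: the paper avoids $\gamma_{\mu_1}$ and instead passes through the symmetric power $S^p\baV(\lambda)$ and the map $\vartheta\colon v\mapsto v^p$, proving that $\mathrm{id}\otimes\vartheta$ is $\widehat{\U}^\imath$-equivariant via the same Proposition~\ref{prop:cosp}. The content is the same; your route just skips the symmetric-power packaging.
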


\begin{proof}
Condition (b) follows easily from the definition. Now we check the condition (a).

Let $f\in \baV(\lambda)^*$ and $g\in \baV(p\mu)^*$. By definition, the equality $\Psi(f^pg)=f\Psi(g)$
holds if and only if the following diagram commutes
\begin{equation*}
    \begin{tikzcd}
    \baV(\mu+\lambda) \arrow[r] \arrow[d,"\psi_{\mu+\lambda}"'] & \baV(\mu)\otimes \baV(\lambda) \arrow[r,"\psi_\mu\otimes id"] & \baV(p\mu)^{\ifr}\otimes \baV(\lambda) \arrow[d,"g\otimes f"] \\
    \baV(p\mu+p\lambda)^{\ifr} \arrow[r] & \big(\baV(p\mu)\otimes \baV(\lambda)^{\otimes p}\big)^{\ifr} \arrow[r,"g\otimes f^{\otimes p}"] & \mathbb{F}_p.
    \end{tikzcd}
\end{equation*}
Note that $\baV(p\mu)^{\ifr} = \baV(p\mu)$ as $\BF_p$-vector spaces.

Following the proof of Proposition \ref{prop:Ga}, we define $S^p\baV(\lambda)$, and the $\BF_p$-linear map $\vartheta: \baV(\lambda)\rightarrow S^p\baV(\lambda)$ sending $v$ to $v^p=\overline{v^{\otimes p}}$. Note that the map $g\otimes f^{\otimes p}$ factors through $\baV(p\mu)\otimes S^p\baV(\lambda)$. Moreover, we have the following commuting diagram by Proposition~\ref{prop:Ga}:
\begin{equation*}
    \begin{tikzcd}
    \baV(p\mu)\otimes S^p\baV(\lambda) \arrow[rd,"g\otimes f^p"] & \baV(p\mu)\otimes \baV(\lambda) \arrow[l,"id \otimes \vartheta"'] \arrow[d,"g\otimes f"] \\ \baV(p\mu)\otimes \baV(\lambda)^{\otimes p} \arrow[u] \arrow[r,"g\otimes f^{\otimes p}"'] & \mathbb{F}_p.
    \end{tikzcd}
\end{equation*}

Hence it will suffice to check the following diagram commutes:
\begin{equation}\label{dia:v}
    \begin{tikzcd}
    \baV(\mu+\lambda) \arrow[r] \arrow[d,"\psi_{\mu+\lambda}"'] & \baV(\mu)\otimes \baV(\lambda) \arrow[r,"\psi_\mu\otimes id"] & \baV(p\mu)^{\ifr}\otimes \baV(\lambda) \arrow[d,"id\otimes \vartheta"] \\
    \baV(p\mu+p\lambda)^{\ifr} \arrow[r] & (\baV(p\mu)\otimes \baV(\lambda)^{\otimes p})^{\ifr} \arrow[r] & (\baV(p\mu)\otimes S^p\baV(\lambda))^{\ifr}.
    \end{tikzcd}
\end{equation}

Note that each space of the diagram admits a ${}_{\BF_p}\!\dot{\U}^\imath$-action, ${}_{\BF_p}\!\widehat{\U}^\imath$-action, as well as a ${}_{\BF_p}\!\widehat{\U}$-action. 
We claim that each morphism is ${}_{\BF_p}\!\widehat{\U}^\imath$-equivariant. We will only check this claim for the map 
\[
id\otimes\vartheta: \baV(p\mu)^{ \ifr}\otimes \baV(\lambda)\longrightarrow (\baV(p\mu)\otimes S^pV(\lambda))^{ \ifr}.
\]
The checkings for other maps are trivial.

Let $v\in \baV(p\mu)$, $w\in \baV(\lambda)$ and $u\in{{}_{\BF_p}\!\widehat{\U}^\imath}$. We show
\begin{equation}\label{id:id}
    (id\otimes \vartheta) ({\ifr} \otimes id \circ \widehat{\Delta}(u)   \cdot( v\otimes w))={\ifr}(u)\cdot (v\otimes w^p).
\end{equation}


It follows from Proposition~\ref{prop:cosp} that
\[
\big({ \ifr\otimes id}\big)\circ\widehat{\Delta}(u)=\big(id\otimes{ \ofr}\big)\circ \widehat{\Delta}\circ{ \ifr}(u).
\]
Recall \eqref{eq:prop:Ga} that 
\[
 \vartheta \big((  \ofr (x)\cdot v) \big)= x \cdot\vartheta (v), \text{ for } v\in \baV(\lambda)^{ \ofr}, x\in {{}_{\BF_p}\!\dot{\U}}. 
\]
Hence we have
\begin{align*}
   &(id\otimes \vartheta) ({\ifr} \otimes id \circ \widehat{\Delta}(u) \cdot( v\otimes w))\\
    &=\big(id\otimes \vartheta\big)\big((id\otimes {\ofr})\circ \widehat{\Delta}\circ {\ifr(u)}\cdot (v\otimes w)\big)\\
    &=\big(\widehat{\Delta}\circ {\ifr(u)}\big)\cdot \big(id\otimes \vartheta\big)\big(v\otimes w \big)\\
    &={\ifr(u)}\cdot (v\otimes w ^p).
\end{align*}
This proves the identity \eqref{id:id}.

The highest weight vector $v_{\mu+\lambda}^+$ maps to the same vector $v_{p\mu}^+\otimes (v_\lambda^{+})^p$ via both compositions by the following computation
\begin{equation*}
    \begin{tikzcd}
    v_{\mu+\lambda}^+ \arrow[r,mapsto] \arrow[d,mapsto] & v_\mu^+\otimes v_\lambda^+ \arrow[r,mapsto] & v_{p\mu}^+\otimes v_\lambda^+ \arrow[d,mapsto] \\ v_{p\mu+p\lambda}^+ \arrow[r,mapsto] & v_{p\mu}^+\otimes (v_\lambda^{+})^{\otimes p} \arrow[r,mapsto] & v_{p\mu}^+\otimes (v_\lambda^{+})^p.
    \end{tikzcd}
\end{equation*}

Since all morphisms in the diagram \eqref{dia:v} are ${}_{\BF_p}\!\dot{\U}^\imath$-equivariant, and $\baV(\mu+\lambda)$ is generated by $v_{\mu+\lambda}^+$ as a ${}_{\BF_p}\!\widehat{\U}^\imath$-module, the proof follows now.
\end{proof}

\subsubsection{}\label{sec:sptfl}
Recall \S\ref{sec:kFr} the map  $(\;\cdot\;)^{1/p}:k\rightarrow k$ sends $t$ to $t^{1/p}$. Define
\[
\Psi_k =(\;\cdot\;)^{1/p}\otimes \Psi:  k \otimes_{\BF_p} R_{\BF_p}= R_k\longrightarrow R_k.
\]
For any $\lambda\in X^{++}$, it is clear that the subalgebra $k$-subalgebra $R_{\mathcal{L}_\lambda}$ is preserved by $\Psi_k$. Thanks to Proposition \ref{prop:Psi} and Lemma~\ref{le:algFrfl}, we make the following conclusion.

(a) {\em The map $\Psi_k$ induces a Frobenius splitting  of the flag variety $\CB_k$. }

The splitting is independent of the choice of $\lambda\in X^{++}$ by a similar argument as \cite{KL2}*{Lemma~6.3}.
 
\subsubsection{}

 Recall Proposition~\ref{prop:Korbits} the construction of  codimension-one $K_k$-orbits on $\CB_k$. 
For any $\mu\in X^{++}$, and  $i\in I$   with $\tau i\neq i$, we consider the restriction  map
$
r_{\mu,i}:H^0(\mu)=H^0(\CB_k,\mathcal{L}_\mu)\longrightarrow H^0(\overline{\mO_i},\mathcal{L}_\mu)$. Let $I_{\mu,i}$ be the kernel.

\begin{lemma}\label{lem:Iui}
Under the isomorphism $V_k(\mu)^*\cong H^0(\mu)$, the subspace $I_{\mu,i}$ consists of linear forms vanishing on $_k\dot{\U}^\imath\cdot n_iv_\mu^+$, which equals the $_k\dot{\U}^\imath$-submodule of $V_k(\mu)^*$ generated by the extremal vector of weight $s_i\mu$, where $s_i \in W$ is the simple reflection associated to $i$.
\end{lemma}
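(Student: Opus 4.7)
The plan is to translate the geometric vanishing condition into an algebraic one via Borel--Weil, invoke Proposition~\ref{prop:KvsUi}, and then extract the cyclic structure of the annihilator. Concretely, for $\mu \in X^{++}$ the closed immersion $\iota\colon \CB_k \hookrightarrow \mathbb{P}(V_k(\mu))$, $gB_k \mapsto [gv_\mu^+]$, satisfies $\iota^*\CO(1) = \mathcal{L}_\mu$ and realizes the identification $H^0(\mu) \cong V_k(\mu)^*$. Under this identification, a section corresponding to $\xi \in V_k(\mu)^*$ vanishes at $gB_k$ precisely when $\xi(gv_\mu^+) = 0$. Since $\mO_i = K_kn_iB_k/B_k$ is open dense in $\overline{\mO_i}$ and vanishing loci of sections are closed, $\xi$ lies in $I_{\mu,i}$ iff $\xi(k \cdot n_iv_\mu^+) = 0$ for every $k \in K_k$, i.e., $\xi$ annihilates the $K_k$-stable subspace of $V_k(\mu)$ generated by $n_iv_\mu^+$. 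By Proposition~\ref{prop:KvsUi}, this subspace coincides with the ${}_k\dot{\U}^\imath$-submodule ${}_k\dot{\U}^\imath \cdot n_iv_\mu^+$, yielding the first assertion.

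For the second assertion, I note that $I_{\mu,i}$ is itself ${}_k\dot{\U}^\imath$-stable, being the annihilator of a ${}_k\dot{\U}^\imath$-submodule. Under the Borel--Weil identification, $V_k(\mu)^* = H^0(\mu)$ inherits a natural rational $G_k$-module structure, and hence a ${}_k\dot{\U}^\imath$-action via the embedding ${}_k\dot{\U}^\imath \hookrightarrow {}_k\widehat{\U}$ of \S\ref{sec:iqp}. In this module $V_k(\mu)^*$ there is (up to scalar) a unique weight vector $\xi_{s_i\mu}^*$ of weight $s_i\mu$, the \emph{extremal vector of weight $s_i\mu$}, since for $\mu \in X^{++}$ the $s_i\mu$-weight space of $V_k(\mu)$ (hence of $V_k(\mu)^*$) is one-dimensional. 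To identify $I_{\mu,i}$ with the ${}_k\dot{\U}^\imath$-submodule generated by $\xi_{s_i\mu}^*$, I would first verify $\xi_{s_i\mu}^* \in I_{\mu,i}$ by a weight-space calculation using the $G_k$-invariance of the natural pairing $V_k(\mu)^* \times V_k(\mu) \to k$ (so that $\xi_{s_i\mu}^*$ pairs non-trivially only with a very restricted set of vectors, which the ${}_k\dot{\U}^\imath$-orbit of $n_iv_\mu^+$ misses), and then conclude equality via a cyclicity/dimension comparison.

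The main obstacle will be the cyclicity step in the second assertion: showing that the ${}_k\dot{\U}^\imath$-submodule generated by $\xi_{s_i\mu}^*$ exhausts all of $I_{\mu,i}$. Since ${}_k\dot{\U}^\imath$ is only a coideal subalgebra of $\U$ and not a Hopf subalgebra, the standard contragredient-duality shortcut for $\U$-modules does not apply directly, and one must compare weight multiplicities in $I_{\mu,i}$ and in the cyclic submodule generated by $\xi_{s_i\mu}^*$. Here Theorem~\ref{thm:stab} is essential for controlling which $\imath$canonical basis elements act non-trivially on $V_k(\mu)$ and thus for pinning down the weight vectors that can appear in the ${}_k\dot{\U}^\imath$-orbit.
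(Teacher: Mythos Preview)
Your argument for the first assertion is exactly the paper's: Borel--Weil identification, density of $\mO_i$ in $\overline{\mO_i}$, and Proposition~\ref{prop:KvsUi} to pass from $K_k$-stability to ${}_k\dot{\U}^\imath$-stability. Nothing more is needed there.

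You have misparsed the second clause. The relative pronoun ``which'' refers to ${}_k\dot{\U}^\imath\cdot n_iv_\mu^+$, not to $I_{\mu,i}$; the sentence is simply recording that $n_iv_\mu^+$ is (up to scalar) the extremal vector of weight $s_i\mu$ in $V_k(\mu)$, so that ${}_k\dot{\U}^\imath\cdot n_iv_\mu^+$ is the ${}_k\dot{\U}^\imath$-submodule generated by that extremal vector. (The ``$V_k(\mu)^*$'' in the statement is a typo for $V_k(\mu)$; note that $n_iv_\mu^+$ lives in $V_k(\mu)$, and the application in Theorem~\ref{thm:spl1} uses the description of $I_{\mu,i}$ as the annihilator of a submodule of $V_k(\mu)$, not as a cyclic submodule of $V_k(\mu)^*$.) The paper's proof accordingly ends after the first assertion, treating the second clause as a tautology.

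So the entire ``second assertion'' programme---showing $\xi_{s_i\mu}^*\in I_{\mu,i}$, the cyclicity argument, the appeal to Theorem~\ref{thm:stab} for a dimension count---is aimed at a statement the lemma does not make. Your concern that the coideal (rather than Hopf) structure of ${}_k\dot{\U}^\imath$ obstructs a duality argument is well-founded for that stronger claim, but the claim itself is not what is being asserted. Drop everything after the first paragraph of your proposal.
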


\begin{proof}
For any $f\in V_k(\mu)^*$, we identify $f$ with a section in $H^0(\mu)$. Then $r_{\mu,i}(f)=0$ $\Leftrightarrow$  $f\mid_{\mathcal{O}_i} = 0$ $\Leftrightarrow$  $f(kn_i\cdot v_\mu^+)=0$, for all $k\in K$ $\Leftrightarrow$ $f$ vanishes on the $K$-submodule generated by the extremal vector $n_iv_\mu^+$, which is exactly $_k\dot{\U}^\imath\cdot n_i v_\mu^+$, thanks to Proposition \ref{prop:KvsUi}. Hence the lemma is proved.
\end{proof}

\begin{theorem}\label{thm:spl1}
The map $\Psi_k: R_k \rightarrow R_k$ provides a Frobenius splitting of $\CB_k$, which compatibly splits subvarieties $\overline{\mO_i}$, for all $i\in \I$  with $\tau i\neq i$. 
\end{theorem}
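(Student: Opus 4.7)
The splitting $\Psi_k$ is already known to split $\CB_k$ by \S\ref{sec:sptfl} (a), so only the compatibility statement remains. By Lemma~\ref{le:algFrfl}, for any $\mu \in X^{++}$ it suffices to verify that $\Psi_k$ preserves the graded ideal $I_{\overline{\mO_i}, \mathcal{L}_\mu} = \bigoplus_{n \geq 0} I_{n\mu, i}$ inside $R_{\mathcal{L}_\mu}$. Under Borel--Weil $V_k(n\mu)^* \cong H^0(\mathcal{L}_{n\mu})$, Lemma~\ref{lem:Iui} identifies $I_{n\mu, i}$ with the annihilator $({}_k\dot{\U}^\imath \cdot n_i v_{n\mu}^+)^\perp$. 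By construction, the restriction of $\Psi_k$ to $V_k(n\mu)^*$ equals (up to the $p$-th root automorphism of $k$) the transpose of $\psi_{n\mu/p}$ when $p \mid n$, and is zero otherwise. The problem therefore reduces to showing
\[
    \psi_\nu\bigl({}_k\dot{\U}^\imath \cdot n_i v_\nu^+\bigr) \subseteq {}_k\dot{\U}^\imath \cdot n_i v_{p\nu}^+ \quad \text{in } V_k(p\nu), \qquad \text{for every } \nu \in X^+.
\]

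Since $\psi_\nu$ is $\ifr$-twisted $_k\dot{\U}^\imath$-equivariant (Proposition~\ref{prop:fa}) and $\ifr$ sends $_k\dot{\U}^\imath$ into itself, this further reduces to the single inclusion $\psi_\nu(n_i v_\nu^+) \in {}_k\dot{\U}^\imath \cdot n_i v_{p\nu}^+$. The key is to express the extremal vector $n_i v_\nu^+$ using only the coideal subalgebra. Because $\tau i \neq i$ the commutator $[E_{\tau i}, F_i]$ vanishes in $\U$, and $E_{\tau i} v_\nu^+ = 0$; an easy induction on $n$ then gives $B_i^n v_\nu^+ = F_i^n v_\nu^+$, and passing to divided powers and the modified form yields $b_{i, \bar\nu}^{(n)} v_\nu^+ = F_i^{(n)} v_\nu^+$ in $V_k(\nu)$ for all $n \ge 0$. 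Setting $a = \langle \coroot_i, \nu \rangle$, the standard $\mathrm{SL}_2$ computation with the anisotropic pinning of \S\ref{sec:regpq} gives $n_i v_\nu^+ = \pm F_i^{(a)} v_\nu^+ = \pm b_{i, \bar\nu}^{(a)} v_\nu^+$.

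Applying $\psi_\nu$ and invoking Theorem~\ref{thm:iFr}, which in the case $\tau i \neq i$ specializes to $\ifr(b_{i, \bar\nu}^{(a)}) = b_{i, p\bar\nu}^{(pa)}$ after base change to $k$, we compute
\[
    \psi_\nu(n_i v_\nu^+) = \pm \ifr(b_{i, \bar\nu}^{(a)}) \cdot v_{p\nu}^+ = \pm b_{i, p\bar\nu}^{(pa)} v_{p\nu}^+ = \pm F_i^{(pa)} v_{p\nu}^+ = \pm n_i v_{p\nu}^+,
\]
using that $\langle \coroot_i, p\nu\rangle = pa$; this visibly lies in $_k\dot{\U}^\imath \cdot n_i v_{p\nu}^+$, completing the plan. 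The technical heart of the argument is the innocuous-looking identity $b_{i, \bar\nu}^{(n)} v_\nu^+ = F_i^{(n)} v_\nu^+$: it is precisely the mechanism by which the ``full'' extremal vector $n_i v_\nu^+$ can be read off within the coideal subalgebra and thereby tracked through the $\dot{\U}^\imath$-equivariant map $\psi_\nu$. The hypothesis $\tau i \neq i$ is indispensable here, since when $\tau i = i$ the action of $\varsigma_i E_i \tilde K_i^{-1}$ produces genuine lower-order corrections; this is why the multiplicity-two divisors require the separate splitting $\Psi_k^J$ of \S\ref{sec:PhiJ}.
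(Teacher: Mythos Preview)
Your proof is correct and follows essentially the same approach as the paper: both reduce via Lemma~\ref{le:algFrfl} and Lemma~\ref{lem:Iui} to checking that $\psi_\nu$ carries the extremal vector of weight $s_i\nu$ to that of weight $s_i(p\nu)$, and both verify this using the identity $b_{i,\bar\nu}^{(a)} v_\nu^+ = f_i^{(a)} v_\nu^+$ (for $\tau i\neq i$) together with $\ifr(b_{i,\bar\nu}^{(a)}) = b_{i,\overline{p\nu}}^{(pa)}$. Your write-up is simply more explicit about why $B_i^n v_\nu^+ = F_i^n v_\nu^+$ and about the $\mathrm{SL}_2$ identification $n_i v_\nu^+ = \pm F_i^{(a)} v_\nu^+$, whereas the paper records the computation in one line.
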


\begin{proof}
Recall \S\ref{sec:Phi}. The ${}_{\BF_p}\!\dot{\U}^\imath$-module homomorphism
\[
\psi_\mu: \baV(\mu)\longrightarrow \baV(p\mu)^{\ifr}
\]
sends ${}_{\BF_p}\!\dot{\U}^\imath$-submodules to ${}_{\BF_p}\!\dot{\U}^\imath$-submodules. We compute
\[
\psi_\mu(f^{(\mu_i)}_i\one_\mu\cdot v_\mu^+)=\psi_\mu(b_{i,\bar{\mu}}^{(\mu_i)}\cdot v_\mu^+)=b_{i,\overline{p\mu}}^{(p\mu_i)}\cdot v_{p\mu}^+=f_i^{(p\mu_i)}\cdot v_{p\mu}^+, \quad \text{where }\mu_i=\langle \coroot_i,\mu\rangle.
\]
 Hence $\psi_\mu$ sends the weight space $\baV(\mu)_{s_i\mu}$ to the weight space $\baV(p\mu)_{s_i(p\mu)}$.

Thus by definition, we have \[
\Psi_k\mid_{H^0(p\mu)}:H^0(p\mu)\longrightarrow H^0(\mu), \quad \text{sending }I_{p\mu,i} \mapsto I_{\mu,i}.
\]
The theorem now follows from \S\ref{sec:sptfl} and  Lemma~\ref{le:algFrfl}.
\end{proof}


 
\subsection{The splitting $\Psi^J_k$} \label{sec:PhiJ}
Recall  \S \ref{sec:propJ} and the notations in \S\ref{sec:oFrbach}. We fix a subset $J \subset \I$ with the property (*). We  denote by $b_{J,\bar{\lambda}}^{(p-1)}$ the image of $\RB_{J,\bar{\lambda}}^{(p-1)}$ in ${}_{\BF_p}\!\dot{\U}^\imath$ under the canonical isomorphism $\BF_p\otimes_{\A'}{_{\A'}\dot{\U}^\imath}\cong{{}_{\BF_p}\!\dot{\U}^\imath}$.
\subsubsection{}




For any $\mu\in X^+$, recall the map $\psi_\mu^J:{_{\A'}\mathfrak{L}}(\mu)\rightarrow ({_{\A'}L}(p\mu))^{\ifr}$ in Proposition \ref{prop:faJ}. Applying base change, we obtain the ${}_{\BF_p}\!\dot{\U}^\imath$-module homomorphism:
\[
\psi_\mu^J:\baV(\mu)\longrightarrow \baV(p\mu)^{\ifr}, \quad v_\mu^+ \mapsto b_{J,\overline{p\mu}}^{(p-1)}\cdot v_{p\mu}^+.
\]

We denote its $\BF_p$-linear dual by $(\psi_\mu^J)^*$. Define the $\BF_p$-linear map $\Psi^J: R\longrightarrow R$ by 
\[
\Psi^J(f) = 
\begin{cases}
(\psi_\mu^J)^*(f), &\text{if }f\in \baV(p\mu)^*;\\
0, &\text{if }f\in \baV(\mu)^*  \text{ with }\mu\not\in pX.
\end{cases}
\]

\begin{prop}\label{prop:PsiJ}
The map $\Psi^J$ is a Frobenius splitting for the algebra $R_{\BF_p}$. 
\end{prop}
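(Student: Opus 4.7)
The plan is to follow the structure of the proof of Proposition~\ref{prop:Psi}, now tracking the extra factor $b_{J,*}^{(p-1)}$ that appears in the image of $\psi_\mu^J$. First I would verify $\Psi^J(1) = 1$: since $\psi_0^J(v_0^+) = b_{J,0}^{(p-1)} v_0^+ \in \baV(0) = \BF_p v_0^+$, it suffices to show $b_j^{(p-1)} v_0^+ = v_0^+$ for each $j \in J$. Because $B_j v_0^+ = 0$, the balanced divided power evaluates (at $q = 1$, in $\BF_p$) to $\prod_{k=0}^{p-2}(p-2-2k)/(p-1)! \equiv (-2)^{p-1}(p-1)!/(p-1)! \equiv 1 \pmod p$, by Fermat's and Wilson's theorems.

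For the multiplicativity $\Psi^J(f^p g) = f \Psi^J(g)$, I would reduce (in the same way as Proposition~\ref{prop:Psi}) to the commutativity of the analogue of the diagram \eqref{dia:v} with $\psi_\mu$ and $\psi_{\mu+\lambda}$ replaced throughout by $\psi_\mu^J$ and $\psi_{\mu+\lambda}^J$. All morphisms remain ${}_{\BF_p}\!\widehat{\U}^\imath$-equivariant: the new maps by Proposition~\ref{prop:faJ}, and the tensor embeddings together with $id \otimes \vartheta$ by the same reasoning as in Proposition~\ref{prop:Psi}, which rests on the key identity \eqref{id:id} (itself from Proposition~\ref{prop:cosp}). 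Tracing $v_{\mu+\lambda}^+$, the top path yields $(b_{J,\overline{p\mu}}^{(p-1)} v_{p\mu}^+) \otimes (v_\lambda^+)^p$, while the bottom path yields the image of $b_{J,\overline{p(\mu+\lambda)}}^{(p-1)} (v_{p\mu}^+ \otimes (v_\lambda^+)^{\otimes p})$ in $\baV(p\mu) \otimes S^p \baV(\lambda)$.

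The hard part will be reconciling these two expressions. By property (*) (condition (c) applied to $i = j$), the elements $b_j^{(p-1)}$ for distinct $j \in J$ commute, so $\widehat{\Delta}(b_J^{(p-1)})$ factors multiplicatively, and it suffices to show, for each $j \in J$, that in the coproduct $\widehat{\Delta}(b_j^{(p-1)}) = \sum_{r \geq 0} b_j^{(p-1-r)} \otimes m_{p-1,r}$ from Lemma~\ref{le:conew}, only the $r = 0$ term contributes to the action on $v_{p\mu}^+ \otimes (v_\lambda^+)^p$. Using the equivariance $u \cdot \vartheta(v) = \vartheta(\ofr(u) v)$ from the proof of Proposition~\ref{prop:Ga}, this reduces to $\ofr(m_{p-1,r} \one_{p\lambda}) v_\lambda^+ = 0$ for $r \geq 1$. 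For odd $r$, every summand of $m_{p-1,r} \one_{p\lambda}$ contains $E_j^{(a)}$ or $F_j^{(a')}$ with $0 < a, a' < p$, hence is killed by $\ofr$ (Theorem~\ref{thm:ClaFr}). For even $r = 2c \geq 2$, only the $a = a' = 0$ summand survives $\ofr$, but its scalar $\lr{-p\lambda_j}{2c}$ vanishes modulo $p$: its numerator $(-p\lambda_j)(-p\lambda_j-2)\cdots(-p\lambda_j-2c+2)$ begins with $-p\lambda_j \equiv 0 \pmod p$, while its denominator $2^c c!$ is a $p$-unit since $c \leq (p-1)/2 < p$.
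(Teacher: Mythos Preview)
Your proof is correct and follows essentially the same strategy as the paper's: reduce to the commutativity of the analogue of diagram~\eqref{dia:v} by ${}_{\BF_p}\!\widehat{\U}^\imath$-equivariance, then trace the highest weight vector. Your argument is in fact more complete than the paper's terse version in two places: you verify $\Psi^J(1)=1$ explicitly (a computation the paper omits), and at the $(\heartsuit)$ step you handle the pure-scalar summand $a=a'=0$ in the coproduct of Lemma~\ref{le:conew} via the vanishing of $\lr{-p\lambda_j}{2c}$ modulo $p$---a point the paper's stated hint (only $f_i^{(s)}\cdot(v_\lambda^+)^{\otimes p}\in I_\lambda$) does not directly cover, though your use of the $\ofr$--$\vartheta$ equivariance from the proof of Proposition~\ref{prop:Ga} is precisely what the paper's ``cf.'' is pointing to.
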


\begin{proof}

The proof is similar to the proof of the Proposition \ref{prop:Psi}. It suffices to check the commutativity of the diagram
\begin{equation}\label{dia:vp}
    \begin{tikzcd}
    \baV(\mu+\lambda) \arrow[r] \arrow[d,"\psi_{\mu+\lambda}^J"'] & \baV(\mu)\otimes \baV(\lambda) \arrow[r,"\psi_{\mu}^J\otimes id"] & \baV(p\mu)^{\ifr}\otimes \baV(\lambda) \arrow[d,"id\otimes \vartheta"] \\
    \baV(p\mu+p\lambda)^{\ifr} \arrow[r] & (\baV(p\mu)\otimes \baV(\lambda)^{\otimes p})^{\ifr} \arrow[r] & (\baV(p\mu)\otimes S^p\baV(\lambda))^{\ifr}.
    \end{tikzcd}
\end{equation}
One can show that all the maps in the above diagram is ${}_{\BF_p}\!\widehat{\U}^\imath$-equivariant. Hence it will suffice to keep track of the maximal vector:
\begin{equation*}
    \begin{tikzcd}
    v^+_{\mu+\lambda} \arrow[r,mapsto] \arrow[d,mapsto] & v^+_\mu\otimes v^+_\lambda \arrow[r,mapsto] & \left(b_{J,\overline{p\mu}}^{(p-1)}v_{p\mu}^+\right)\otimes v_\lambda^+ \arrow[d,mapsto]\\ b_{J,\overline{p\mu+p\lambda}}^{(p-1)}v_{p\mu+p\lambda}^+ \arrow[r,mapsto] & b_{J,\overline{p\mu+p\lambda}}^{(p-1)}\cdot \left(v_{p\mu}^+\otimes (v_\lambda^+)^{\otimes p}\right) \arrow[r,mapsto,"(\heartsuit)"] & \left(b_{J,\overline{p\mu}}^{(p-1)}v_{p\mu}^+\right)\otimes (v_\lambda^+)^p.
    \end{tikzcd}
\end{equation*}

The $(\heartsuit)$ follows from the fact that 
    $f_i^{(s)}\cdot (v_\lambda^+)^{\otimes p}\in I_\lambda$,
for any $0<s<p$  and $i\in\I$ (cf. the proof of Proposition~\ref{prop:Ga}). Here $I_\lambda\subset V(\lambda)^{\otimes p}$ stands for the kernel of the quotient map $V(\lambda)^{\otimes p}\rightarrow S^p\baV(\lambda)$.

Hence the diagram \eqref{dia:vp} commutes. This completes the proof for the proposition.
\end{proof}
 
\subsubsection{}

For $\lambda\in X^+$,  $i\in \I$ with $\tau i=i$, and $\epsilon=\pm 1$, 
we define
\[
v_{\lambda,i,\epsilon} =\sum_{a\geqslant 0}\epsilon^{\langle \coroot_i,\lambda\rangle+a}f_i^{(a)}v_\lambda^+ \in \baV(\lambda).
\]

By \cite[(2.16) \& (2.17)   \& (3.9)\& (3.8)]{BerW18}, we have 
\[
    b_{i,\bar{\lambda}}^{(\langle \coroot_i,\lambda\rangle )}v_\lambda^+=\sum_{t\geqslant 0}f_i^{(\langle \coroot_i,\lambda\rangle -2t)}v_\lambda^+,\qquad  
    b_{i,\bar{\lambda}}^{(\langle \coroot_i,\lambda\rangle -1)}v_\lambda^+=\sum_{t\geqslant 0}f_i^{(\langle \coroot_i,\lambda\rangle-2t-1)}v_\lambda^+.
\]
Hence we conclude 
\begin{equation}\label{eq:bv}
v_{\lambda,i,1}=(b_{i,\bar{\lambda}}^{(\langle \coroot_i,\lambda\rangle )}+b_{i,\bar{\lambda}}^{(\langle \coroot_i,\lambda\rangle -1)})v_\lambda^+, \quad v_{\lambda,i,-1}=(b_{i,\bar{\lambda}}^{(\langle \coroot_i,\lambda\rangle )}-b_{i,\bar{\lambda}}^{(\langle \coroot_i,\lambda\rangle -1)})v_\lambda^+.
\end{equation}



\begin{lemma}\label{le:bb}
 For $i\in \I$ with $\tau i=i$, $\zeta\in X_\imath$, and $n\in\mathbb{N}$, we have
 \[
     'b_{i,\zeta}^{(np)}\cdot {'b_{i,\zeta}^{(p-1)}}={'b_{i,\zeta}^{(np+p-1)}}  \quad \text{in } {{}_{\BF_p}\!\dot{\U}^\imath}.
 \]
\end{lemma}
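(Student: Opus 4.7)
The plan is to reduce the identity to a computation of structure constants over the $\CA_2$-form and then specialize to $\BF_p$.

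First, I would reinterpret both sides as coming from elements in ${}_{\CA_2}\dot\U^\imath$. Since $\tau i = i$, both $'b_{i,\zeta}^{(np)}$ and $'b_{i,\zeta}^{(p-1)}$ are defined using the balanced $\imath$-divided powers of \S\ref{sec:baid} together with the idempotent $\one_\zeta$, and they live in the $\CA_2$-form by \S\ref{sec:Ui1}(a). The weight condition is consistent: the left weight of $'B_{i,\zeta}^{(p-1)}$ in $X_\imath$ is $\zeta - (p-1)\bar\alpha_i = \zeta$ because $p-1$ is even and $2\bar\alpha_i = 0$ in $X_\imath$, so the product $'B_{i,\zeta}^{(np)}\cdot {'B_{i,\zeta}^{(p-1)}}$ equals $B_i^{(np)}B_i^{(p-1)}\one_\zeta$ in ${}_{\CA_2}\dot\U^\imath$. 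It therefore suffices to establish, after applying the base change $v\circ\phi:\CA_2\to\BF_p$, that $B_i^{(np)}B_i^{(p-1)} = B_i^{(np+p-1)}$.

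Next, I would apply Lemma~\ref{lem:balanced} with $a = np$ and $k = p-1$ to write
\[
B_i^{(np)}B_i^{(p-1)} = \sum_{t\geqslant 0} \qbinom{np+p-1}{np}_i \prod_{m=1}^{t}\frac{[np-2m+2]_i[p-1-2m+2]_i}{[np+p-2m]_i[2m]_i}\, B_i^{(np+p-1-2t)},
\]
a finite sum with $t$ ranging from $0$ to $(p-1)/2$. I then push each coefficient through $\phi:\CA\to\CA'$. For the prefactor, Lusztig's Lemma 34.1.2 applied to the $p$-adic decompositions $np+p-1 = (p-1)+np$ and $np = 0 + np$ yields $\phi\bigl(\qbinom{np+p-1}{np}_i\bigr) = \binom{n}{n}\cdot\phi\bigl(\qbinom{p-1}{0}_i\bigr) = 1$.

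The crucial observation is that the $m=1$ factor of the product contains $[np]_i$ in the numerator. Since $\bq_i^{\,p} = 1$, we have $\phi([np]_i) = 0$, while the denominators $\phi([np+p-2]_i) = -[2]_{\bq_i}$ and $\phi([2m]_i)$ for $1\leqslant m\leqslant (p-1)/2$ are nonzero in the integral domain $\CA'$ because $p$ is odd and relatively prime to all $\epsilon_i$. (Alternatively, one may verify the same vanishing via Lemma~\ref{le:qBinomAtUnity}: $\phi(\LR{np}{2t}_i)$ is nonzero only when $p\mid t$, whereas $\phi(\LR{p-1}{2t}_i)$ is nonzero only when $t<p$; combined, only $t=0$ survives.) Hence every term with $t\geqslant 1$ kills after $\phi$, leaving only the $t=0$ term with coefficient $1$. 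Applying $v:\CA'\to\BF_p$ and restoring the $\one_\zeta$ gives the desired identity in ${}_{\BF_p}\!\dot\U^\imath$.

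The one subtlety to watch is the interplay between the rational expression and the $\CA_2$-valued polynomial form of the coefficient in Lemma~\ref{lem:balanced}; I would work with the product form so that no division by potential zero-divisors occurs before specialization, and rely on the Gaussian binomial reduction for the prefactor only. Beyond this bookkeeping, the argument is a direct consequence of $\phi([np]_i)=0$ at a primitive $p$-th root of unity.
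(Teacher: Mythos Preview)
Your proof is correct and follows essentially the same route as the paper: apply Lemma~\ref{lem:balanced} with $a=np$, $k=p-1$, observe that the coefficient lies in $\mathcal{A}_{(f_p)}$ for $0\le t\le (p-1)/2$, and then use $\phi([np]_i)=0$ to kill all terms with $t\ge 1$ while the $t=0$ prefactor specializes to $1$. Your added remarks on weight compatibility via $2\bar\alpha_i=0$ in $X_\imath$ and the alternative vanishing argument through Lemma~\ref{le:qBinomAtUnity} are correct but not needed for the paper's line of reasoning.
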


\begin{proof}
By Lemma~\ref{lem:balanced} in $\mathrm{U}^\imath$, we have
\[
    B_i^{(np)}B_i^{(p-1)}=\sum_{t\geqslant 0}\qbinom{np+p-1}{np}_i\prod_{m=1}^t\frac{[np-2m+2]_i[p-2m+1]_i}{[np+p-2m]_i[2m]_i}B_i^{(np+p-1-2t)}.
\]

Note that
\[
    \qbinom{np+p-1}{np}_i\prod_{m=1}^t\frac{[np-2m+2]_i[p-2m+1]_i}{[np+p-2m]_i[2m]_i} = 0, \quad \text{unless } 0\leqslant t\leqslant (p-1)/2.
\]

Assume $0\leqslant t\leqslant (p-1)/2$, then the above element belongs to $\mathcal{A}_{f_p}$, i.e., the denominator is not divisible by $f_p$.
Then since $\phi([np]_i)=0$, we have
\[
    \phi\left(\qbinom{np+p-1}{np}_i\prod_{m=1}^t\frac{[np-2m+2]_i[p-2m+1]_i}{[np+p-2m]_i[2m]_i}\right)=0, \quad \text{if } t \neq 0.
\]
Finally, when $t=0$, we obtain $\phi\left(\qbinom{np+p-1}{np}_i\right)=1$. This proved the lemma.
\end{proof}


\begin{lem}\label{lem:pj}
For any $j\in J$, the element $\psi_\mu^J(v_{\mu,j,\epsilon})$ belongs to the ${}_{\BF_p}\dot{\U}^\imath$-submodule of $\baV(p\mu)$ generated by the vector $v_{\mu,j,\epsilon}$.
\end{lem}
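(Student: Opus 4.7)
The plan is to exploit the ${}_{\BF_p}\dot{\U}^\imath$-equivariance of $\psi_\mu^J$ (with the target twisted by $\ifr$) in order to reduce the statement to a purely rank-one computation inside the rank-one subalgebra ${}_{\BF_p}\dot{\U}^\imath_j \subset {}_{\BF_p}\dot{\U}^\imath$ associated to $j \in J$. First, using \eqref{eq:bv} we write
\[
v_{\mu,j,\epsilon} = \bigl(b_{j,\bar{\mu}}^{(\mu_j)} + \epsilon\, b_{j,\bar{\mu}}^{(\mu_j-1)}\bigr)\, v_\mu^+, \qquad \mu_j=\langle \coroot_j,\mu\rangle,
\]
and by equivariance of $\psi_\mu^J$ obtain
\[
\psi_\mu^J(v_{\mu,j,\epsilon}) = \ifr\!\bigl(b_{j,\bar{\mu}}^{(\mu_j)}+\epsilon\, b_{j,\bar{\mu}}^{(\mu_j-1)}\bigr)\cdot b_{J,\overline{p\mu}}^{(p-1)}\, v_{p\mu}^+.
\]

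Next, by property $(*)$ for $J$ (conditions (a) and (c) in \S\ref{sec:propJ}), the element $b_{J,\overline{p\mu}}^{(p-1)}$ factors as ${}'b_{j,\overline{p\mu}}^{(p-1)}\cdot b_{J\setminus\{j\},*}^{(p-1)}$, where the two factors commute. Moreover, since $\ifr$ of any element supported at $j$ lands in the rank-one subalgebra for $j$, condition (c) implies it commutes with $b_{J\setminus\{j\},*}^{(p-1)}$. We may therefore pull $b_{J\setminus\{j\},*}^{(p-1)}$ past everything and reduce the lemma to the rank-one claim
\[
\ifr\!\bigl(b_{j,\bar{\mu}}^{(\mu_j)}+\epsilon\, b_{j,\bar{\mu}}^{(\mu_j-1)}\bigr)\cdot {}'b_{j,\overline{p\mu}}^{(p-1)}\, v_{p\mu}^+ \ \in\ {}_{\BF_p}\dot{\U}^\imath_j\cdot v_{p\mu,j,\epsilon}.
\]

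The rank-one claim is then established by explicit manipulation. The relations \eqref{eq:ff} of Lemma~\ref{lem:ff} allow us to rewrite $b_{j,\bar{\mu}}^{(\mu_j)}$ and $b_{j,\bar{\mu}}^{(\mu_j-1)}$ as $\A_2$-linear combinations of the balanced $\imath$divided powers ${}'b_{j,\bar{\mu}}^{(n)}$, after which Theorem~\ref{thm:iFr} (together with Corollary~\ref{cor:iFr}) gives $\ifr({}'b_{j,\bar{\mu}}^{(n)}) = {}'b_{j,\overline{p\mu}}^{(np)}$, followed by Lemma~\ref{le:bb} to combine ${}'b_{j,\overline{p\mu}}^{(np)} \cdot {}'b_{j,\overline{p\mu}}^{(p-1)} = {}'b_{j,\overline{p\mu}}^{(np+p-1)}$ into a single balanced $\imath$divided power in ${}_{\BF_p}\dot{\U}^\imath_j$. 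Converting back from balanced to standard modified $\imath$divided powers via \eqref{eq:ff} once more, and using \eqref{eq:bv} applied at $p\mu$, one recognizes the result as a ${}_{\BF_p}\dot{\U}^\imath_j$-multiple of $v_{p\mu,j,\epsilon} = (b_{j,\overline{p\mu}}^{(p\mu_j)} + \epsilon\, b_{j,\overline{p\mu}}^{(p\mu_j-1)})v_{p\mu}^+$, as desired.

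The main obstacle is the careful bookkeeping of parities of $\langle \coroot_j, \cdot\rangle$ on the relevant weight spaces. Since $\langle \coroot_j, \bar\mu\rangle \equiv \mu_j \pmod 2$, the Frobenius image $\ifr(b_{j,\bar\mu}^{(\mu_j)})$ falls in the ``same parity'' case of Corollary~\ref{cor:iFr} and is therefore a non-trivial $\BF_p$-linear combination of several terms ${}'b_{j,\overline{p\mu}}^{(p\mu_j - 2t)}$, rather than a single term; the combinatorial identities of Lemma~\ref{lem:DoubleInA2} are needed to verify that, after multiplication by ${}'b_{j,\overline{p\mu}}^{(p-1)}$ and simplification of the resulting $q$-double binomial coefficients modulo $f_p$, the combined coefficient precisely recovers the sign pattern $\epsilon^{\mu_j + a}$ characterizing $v_{p\mu,j,\epsilon}$.
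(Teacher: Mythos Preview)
Your outline is correct and follows the paper's proof essentially step for step: reduce to rank one by pulling out $b_{J\setminus\{j\},*}^{(p-1)}$ via property~(*), write $v_{\mu,j,\epsilon}$ through \eqref{eq:bv}, apply $\ifr$ using the balanced $\imath$divided powers, collapse the product with ${}'b_{j,\overline{p\mu}}^{(p-1)}$ via Lemma~\ref{le:bb}, and convert back. The one point to sharpen is the final ``recognition'' step: the paper actually proves the clean identity $\ifr(b_{j,\bar\mu}^{(\mu_j)})\cdot{}'b_{j,\overline{p\mu}}^{(p-1)}=b_{j,\overline{p\mu}}^{(p\mu_j)}$ by checking $\sum_{t\ge 0}\lr{1}{2t}\lr{-1}{2k-1+p-2tp}=\delta_{0,k}$, and this reduction relies on Lemma~\ref{le:qBinomAtUnity} (the Lucas-type formula for $q$-double binomials at an $l$-th root of $1$), not merely on Lemma~\ref{lem:DoubleInA2}.
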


\begin{proof}
We write $\mu_j = \langle \coroot_j, \mu \rangle $. Thanks to \eqref{eq:bv}, it suffices to prove that 
\[
\psi_\mu^J(b_{j,\bar{\mu}}^{(\mu_j)}v_\mu^+)\in {{}_{\BF_p}\!\dot{\U}^\imath}\cdot b_{j,\overline{p\mu}}^{(p\mu_j)}v_{p\mu}^+ \quad\text{and}\quad\psi_\mu^J(b_{j,\bar{\mu}}^{(\mu_j-1)}v_\mu^+)\in {{}_{\BF_p}\!\dot{\U}^\imath}\cdot b_{j,\overline{p\mu}}^{(p\mu_j-1)}v_{p\mu}^+
.\]

By definition, 
\begin{equation}\label{id:jm}
\psi_\mu^J(b_{j,\bar{\mu}}^{(\mu_j)}v_\mu^+)=\ifr (b_{j,\bar{\mu}}^{(\mu_j)})b_{J,\overline{p\mu}}^{(p-1)}v_{p\mu}^+=b_{J\backslash \{j\},\overline{p\mu}}^{(p-1)}\cdot \ifr(b_{j,\bar{\mu}}^{(\mu_j)})'b_{j,\overline{p\mu}}^{(p-1)}v_{p\mu}^+.
\end{equation}

By the proof of Corollary \ref{cor:iFr} and Lemma \ref{le:bb}, we have 
\begin{equation*}
    \begin{split}
        \ifr(b_{j,\overline{\mu}}^{(\mu_j)})\cdot {'b_{i,\overline{p\mu}}^{(p-1)}}&=\sum_{t\geqslant 0}\lr{1}{2t}{'b_{j,\overline{p\mu}}^{(p\mu_j-2tp)}}\cdot {'b_{i,\overline{p\mu}}^{(p-1)}}\\&=\sum_{t\geqslant 0}\lr{1}{2t} {'b}_{i,\overline{p\mu}}^{(p\mu_j-2tp+p-1)}\\
        &=\sum_{t\geqslant 0}\sum_{s\geqslant 0}\lr{1}{2t} \lr{-1}{2s} {b}_{i,\overline{p\mu}}^{(p\mu_j-2tp+p-1-2s)}\\
        &=\sum_{k\geqslant 0}\sum_{t\geqslant 0}\lr{1}{2t}\lr{-1}{2k-1+p-2tp}b_{i,\overline{p\mu}}^{(p\mu_j-2k)}.
    \end{split}
\end{equation*}

For any $k\geqslant 0$, let $k_0\in [0,p)$, such that $k_0\equiv k+(p-1)/2\; \text{mod }p$. By the Lemma \ref{le:qBinomAtUnity}, we have
\begin{align*}
\lr{-1}{2k-1+p-2tp} & = \lr{p-1}{2k_0}\lr{-p}{2k-2k_0-1+p-2tp}\\
&=\lr{p-1}{2k_0}\lr{-1}{(2k-2k_0-1)/p+1-2t}.
\end{align*}

Hence, for any $k \geqslant 0$, we have
\begin{align*}
&\sum_{t\geqslant 0}\lr{1}{2t}\lr{-1}{2k-1+p-2tp}\\
&=\lr{p-1}{2k_0}\sum_{t\geqslant 0}\lr{1}{2t}\lr{-1}{(2k-2k_0-1)/p+1-2t}\\&=\lr{p-1}{2k_0}\delta_{0,k+(p-1)/2-k_0}. \quad (\text{By Lemma~\ref{lem:DoubleInA2}})
\end{align*}

When $k+(p-1)/2-k_0= 0$, we have
\[
\lr{p-1}{2k_0}=\binom{(p-1)/2}{k_0}=\binom{(p-1)/2}{(p-1)/2+k} = \delta_{0,k}.
\]
Hence we conclude that
\[
\sum_{t\geqslant 0}\lr{1}{2t}\lr{-1}{2k-1+p-2tp}=\delta_{0,k},
\]
and hence 
$\ifr(b_{j,\overline{\mu}}^{(\mu_j)})\cdot {'b_{i,\overline{p\mu}}^{(p-1)}}=b_{j,\overline{p\mu}}^{(p\mu_j)}$.
Now \eqref{id:jm}  implies 
\[
\psi_\mu^J(b_{j,\bar{\mu}}^{(\mu_j)}v_\mu^+)=b_{J\backslash \{j\},\overline{p\mu}}^{(p-1)}\cdot b_{j,\overline{p\mu}}^{(p\mu_j)}v_{p\mu}^+\in {{}_{\BF_p}\!\dot{\U}}^\imath\cdot b_{j,\overline{p\mu}}^{(p\mu_j)}v_{p\mu}^+.
\]

For another relation, still by (the easier version of) Corollary \ref{cor:iFr} and Lemma \ref{le:bb}, we have 
\begin{align*}
\psi_\mu^J(b_{j,\overline{\mu}}^{(\mu_j-1)}v_{p\mu}^+)&=\ifr(b_{j,\overline{\mu}}^{(\mu_j-1)})b_{J,\overline{p\mu}}^{(p-1)}v_{p\mu}^+=b_{J\backslash \{j\},\overline{p\mu}}^{(p-1)}\cdot {'b_{j,\overline{p\mu}}^{(p\mu_j-p)}}{'b_{j,\overline{p\mu}}^{(p-1)}}v_{p\mu}^+\\
&=b_{J\backslash \{j\},\overline{p\mu}}^{(p-1)}\cdot {'b_{j,\overline{p\mu}}^{(p\mu_j-1)}}v_{p\mu}^+ \in {{}_{\BF_p}\!\dot{\U}^\imath}\cdot b_{j,\overline{p\mu}}^{(p\mu_j)} v_{p\mu}^+. 
\end{align*}

Hence we complete the proof for the lemma.
\end{proof}

\subsubsection{}\label{sec:spltJk}
Recall \S\ref{sec:kFr} the map $(\;\cdot\;)^{1/p}: k \rightarrow k$ mapping $t$ to $t^{1/p}$. Define 
\[
\Psi_k^J =(\;\cdot\;)^{1/p}\otimes \Psi^J:R_k\longrightarrow R_k.
\]
For any $\lambda\in X^{++}$, it is clear that the subalgebra $k$-subalgebra $R_{\mathcal{L}_\lambda}$ is preserved by $\Psi_k^J $. Thanks to Proposition \ref{prop:PsiJ} and Lemma~\ref{le:algFrfl}, we make the following conclusion. 

(a) {\em The map $\Psi_k^J $ induces a Frobenius splitting  of the flag variety $\CB_k$.}

The splitting is independent of the choice of $\lambda\in X^{++}$ by a similar argument as \cite{KL2}*{Lemma~6.3}.


 
\subsubsection{}

Recall that $\mO_j^\pm=K_ky_j(\pm 1)B_k/B_k$ are codimensional one $K_k$-orbits on the flag variety $\CB_k$ by Proposition~\ref{prop:Korbits}, for any $ j \in J$.


For any $\mu\in X^+$ and $j \in J$, let $I_{\mu,j}^+$ be the kernel of the restriction map
\[
r_{\mu,i}:H^0(\mu)=H^0(\CB_k,\mathcal{L}_\mu)\longrightarrow H^0(\overline{\mO_j^+},\mathcal{L}_\mu).
\]
Similarly define the subspace $I_{\mu,j}^-$ of $H^0(\mu)$. Similar to Lemma~\ref{lem:Iui}, we see that 
the subspace $I_{\mu,j}^+$ (resp. $I_{\mu,j}^-$) consists of linear forms vanishing on $_k\dot{\U}^\imath\cdot y_j(1)v_\mu^+$ (resp. ${}_k\dot{\U}^\imath\cdot y_j(-1)v_\mu^+$), under the isomorphism $V_k(\mu)^*\cong H^0(\mu)$.

\begin{theorem}\label{thm:spl2}
The map $\Psi^J_k: R_k \rightarrow R_k$ induces a Frobenius splitting of $\CB_k$, which compatibly splits subvarieties $\overline{\mO_j^\pm}$, for all $j\in J$.
\end{theorem}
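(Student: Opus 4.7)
The argument will parallel the proof of Theorem~\ref{thm:spl1}, with Lemma~\ref{lem:pj} and the property $(\ast)$ of $J$ replacing the simpler computation used in the multiplicity-free case. By \S\ref{sec:spltJk}(a) (which is a consequence of Proposition~\ref{prop:PsiJ} and Lemma~\ref{le:algFrfl}(a)(b)), $\Psi_k^J$ already induces a Frobenius splitting of $\CB_k$ for any ample $\mathcal{L}_\lambda$, so it remains only to verify the compatibility condition Lemma~\ref{le:algFrfl}(c) for each of the closed subvarieties $\overline{\mO_j^\pm}$, $j\in J$.

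First, I would identify the ideals $I_{\mu,j}^\pm$ explicitly. Using the formulas $y_j(\epsilon)v_\mu^+ = \sum_{a\ge 0} \epsilon^a f_j^{(a)} v_\mu^+ = \epsilon^{\langle\coroot_j,\mu\rangle}v_{\mu,j,\epsilon}$ and Proposition~\ref{prop:KvsUi}, the same argument as in Lemma~\ref{lem:Iui} gives, under the isomorphism $V_k(\mu)^*\cong H^0(\mu)$,
\[
I_{\mu,j}^\pm = \{\, f\in V_k(\mu)^*\mid f(_k\dot\U^\imath\cdot v_{\mu,j,\pm 1})=0\,\}.
\]
In particular $I_{\mu,j}^\pm$ corresponds, via linear duality, to the $_k\dot\U^\imath$-submodule $W_{\mu,j,\pm} := {_k\dot\U^\imath}\cdot v_{\mu,j,\pm 1}\subseteq V_k(\mu)$.

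Second, as in the proof of Theorem~\ref{thm:spl1}, the desired inclusion $\Psi_k^J(I_{p\mu,j}^\pm)\subseteq I_{\mu,j}^\pm$ (trivial on $V_k(\nu)^*$ for $\nu\notin pX$ since $\Psi^J$ is zero there) is equivalent by linear duality to the inclusion of submodules
\[
\psi_\mu^J(W_{\mu,j,\pm})\subseteq W_{p\mu,j,\pm}
\]
after base change to $k$. I would establish this using ${}_{\BF_p}\dot{\U}^\imath$-equivariance of $\psi_\mu^J$ (with the action on the target twisted by $\ifr$): for any $x\in {}_{\BF_p}\dot{\U}^\imath$,
\[
\psi_\mu^J(x\cdot v_{\mu,j,\pm 1}) = \ifr(x)\cdot \psi_\mu^J(v_{\mu,j,\pm 1})\in {}_{\BF_p}\dot{\U}^\imath\cdot \psi_\mu^J(v_{\mu,j,\pm 1}),
\]
since $\ifr$ is an algebra homomorphism into ${}_{\BF_p}\dot{\U}^\imath$. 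By Lemma~\ref{lem:pj} we have $\psi_\mu^J(v_{\mu,j,\pm 1})\in {}_{\BF_p}\dot{\U}^\imath\cdot v_{p\mu,j,\pm 1}$, and the inclusion $\psi_\mu^J(W_{\mu,j,\pm})\subseteq W_{p\mu,j,\pm}$ follows. Applying Lemma~\ref{le:algFrfl} finishes the proof.

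The substantive work is already contained in Lemma~\ref{lem:pj}; the main obstacle is purely conceptual, namely keeping the twisting by $\ifr$ straight so that the equivariance argument correctly translates Lemma~\ref{lem:pj} into a statement about the splitting. Once the duality between $I_{\mu,j}^\pm$ and the submodules $W_{\mu,j,\pm}$ is in place, the rest is a direct application of Lemma~\ref{le:algFrfl} and mimics the simpler proof of Theorem~\ref{thm:spl1}.
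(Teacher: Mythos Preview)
Your proposal is correct and follows essentially the same approach as the paper: identify $I_{\mu,j}^\pm$ with the annihilator of ${}_k\dot\U^\imath\cdot y_j(\pm1)v_\mu^+$, use Lemma~\ref{lem:pj} together with the ${}_{\BF_p}\dot\U^\imath$-equivariance of $\psi_\mu^J$ (twisted by $\ifr$) to conclude $\Psi_k^J(I_{p\mu,j}^\pm)\subseteq I_{\mu,j}^\pm$, and finish with Lemma~\ref{le:algFrfl}. If anything, you are slightly more explicit than the paper about the role of the $\ifr$-twist in the equivariance step.
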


\begin{proof}Let $j \in J$. 
Note that $y_j(1)v_\mu^+=\sum_{a\geqslant 0}f_j^{(a)}\one_\mu\cdot v_\mu^+$ and similarly $y_j(-1)v_\mu^-=\sum_{a\geqslant 0}(-1)^af_j^{(a)}\one_\mu\cdot v_\mu^+$. Hence by Lemma~\ref{lem:pj}, the map 
\[
\Psi_k^J\mid_{H^0(p\mu)}: H^0(p\mu)\longrightarrow H^0(\mu), \quad \text{ mapping } I_{p\mu,j}^\pm \text{ to }I_{\mu,j}^\pm. 
\]

Take $\lambda\in X^{++}$. We deduce that the map $\Psi_k^J$ induces a Frobenius splitting for the algebra $R_{\mathcal{L}_\lambda}$ which preserves the ideals $I_{\mathcal{L}_\lambda,\overline{\mO_i^\pm}}$, for any $i\in J$. The theorem follows from \S\ref{sec:spltJk} and Lemma \ref{le:algFrfl}.
\end{proof}


\subsection{Results on partial flags}

Let $P = P_S$ be a standard parabolic subgroup of $G_k$ (recall we fixed a pinning of $G_k$), corresponding to $S\subset\I$. Let $\pi=\pi_{P}:G_k/B_k\rightarrow G_k/P$ be the projection. Let $\mu\in X^+$ be such that $\langle \alpha_j^\vee,\mu\rangle=0$ for any $j\in S$. Then $\mu$ extends to a character of $P$. Let
\[
\mathcal{L}^P_\mu =G_k\times^{P}k_\mu\longrightarrow G_k/P
\]
be the associated line bundle of $G_k/P$. Then $\pi_P^*(\mathcal{L}^P_\mu) \cong \mathcal{L}_\mu$, as line bundles on $G_k/B_k$. Hence we have
\[
H^0(G_k/P,\mathcal{L}^P_\mu)\cong H^0(G_k/B_k,\mathcal{L}_\mu)\cong V_k(\mu)^*.
\]
The $k$-algebra $R_{\mathcal{L}^P_\mu} =\bigoplus_{n\geq 0}H^0(G_k/P,\mathcal{L}^P_{n\mu})$ is isomorphic to a subalgebra of $R_k$. It is clear that the splitting maps defined in \ref{sec:sptfl} and \ref{sec:spltJk} restrict to $R_{\mathcal{L}^P_\mu}$.  

\begin{corollary}\label{cor:parab}
The map $\Psi_k$ induces a Frobenius splitting of $G_k/P$, which compatibly splits $\pi(\overline{\mathcal{O}_i})$, for all $i\in\I$  with $\tau i\neq i$.

For any $J\subset\I$ with the property (*), the map $\Psi_k^J$ induces a Frobenius splitting of $G_k/P$, which compatibly splits $\pi\big(\overline{\mathcal{O}_j^\pm}\big)$  for all $j\in J$.
\end{corollary}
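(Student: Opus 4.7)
The plan is to deduce the corollary from Theorems~\ref{thm:spl1} and~\ref{thm:spl2} by transporting the splittings along the graded inclusion $R_{\mathcal{L}^P_\mu} \hookrightarrow R_k$ already discussed in the excerpt. The only thing to verify is that the ideals in $R_{\mathcal{L}^P_\mu}$ corresponding to $\pi(\overline{\mathcal{O}_i})$, respectively $\pi(\overline{\mathcal{O}_j^\pm})$, are obtained by intersecting the ideals of $\overline{\mathcal{O}_i}$, respectively $\overline{\mathcal{O}_j^\pm}$, in $R_k$ with the subalgebra $R_{\mathcal{L}^P_\mu}$.

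First, under the identification $R_k \cong \bigoplus_{\lambda \in X^+} V_k(\lambda)^*$, the subalgebra $R_{\mathcal{L}^P_\mu}$ is the graded summand $\bigoplus_{n \ge 0} V_k(n\mu)^*$: the pullback $\pi^*$ identifies $H^0(G_k/P, \mathcal{L}^P_{n\mu})$ with $V_k(n\mu)^*$, since $n\mu$ extends to a character of $P$ and so $\mathcal{L}_{n\mu}$ restricts trivially to each fiber $P/B$. Both $\Psi_k$ and $\Psi_k^J$ are $X^+$-graded, sending $V_k(p\lambda)^*$ to $V_k(\lambda)^*$ and annihilating $V_k(\lambda)^*$ when $\lambda \notin pX^+$, hence they preserve $R_{\mathcal{L}^P_\mu}$, as already observed in the text preceding the corollary. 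Applying Lemma~\ref{le:algFrfl} to the ample line bundle $\mathcal{L}^P_\mu$ (for $\mu$ regular dominant with respect to the Levi of $P$), each restriction induces a Frobenius splitting of $G_k/P$.

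For compatibility, I would verify the ideal identity
\[
I^P_{n\mu,\pi(\overline{\mathcal{O}})} = I_{n\mu,\overline{\mathcal{O}}} \cap R_{\mathcal{L}^P_\mu}
\]
for any $K_k$-orbit closure $\overline{\mathcal{O}} \subset \CB_k$. This is formal from the properness and surjectivity of $\pi$ onto its closed image $\pi(\overline{\mathcal{O}})$: a section $f \in H^0(G_k/P, \mathcal{L}^P_{n\mu})$ vanishes on $\pi(\overline{\mathcal{O}})$ precisely when $\pi^* f$ vanishes on $\overline{\mathcal{O}}$, because every point of $\pi(\overline{\mathcal{O}})$ lifts to a point of $\overline{\mathcal{O}}$. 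Since $\Psi_k$ preserves $I_{n\mu,i}$ by Theorem~\ref{thm:spl1} and $\Psi_k^J$ preserves $I^\pm_{n\mu,j}$ by Theorem~\ref{thm:spl2}, and since both preserve $R_{\mathcal{L}^P_\mu}$, they preserve the intersections in question. Lemma~\ref{le:algFrfl} then yields the desired compatible splittings of $\pi(\overline{\mathcal{O}_i})$ and $\pi(\overline{\mathcal{O}_j^\pm})$. The argument is essentially formal given the earlier theorems; the only point requiring care is the ideal identification, so I do not anticipate a genuine obstacle.
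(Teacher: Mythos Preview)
Your proposal is correct and follows the same approach as the paper. The paper in fact states the corollary without proof, relying on the sentence immediately preceding it (``It is clear that the splitting maps defined in \S\ref{sec:sptfl} and \S\ref{sec:spltJk} restrict to $R_{\mathcal{L}^P_\mu}$''); your write-up simply makes explicit the ideal identification $I^P_{n\mu,\pi(\overline{\mathcal{O}})} = I_{n\mu,\overline{\mathcal{O}}} \cap R_{\mathcal{L}^P_\mu}$ that the paper leaves implicit.
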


\subsection{Geometric consequences} \label{sec:geo}
Theorem~\ref{thm:spl1}, Theorem~\ref{thm:spl2}, and Corollary~\ref{cor:parab} imply standard consequences on $K_k$-orbit closures following \S\ref{sec:algFr}. 


We futher obtain the following stronger vanishing result on semi-ample line bundles. 


\begin{theorem}
Let $i\in\I$ be such that $\langle\coroot_j,\alpha_i\rangle\geqslant -1$, for any $j\in\I$, and be such that $\langle\coroot_i,\mu\rangle >0$ and $\langle\coroot_{\tau i},\mu\rangle >0$, for any $\mu\in X^+$. We have

(a) the restriction $H^0(G_k/B_k,\mathcal{L}_\mu)\rightarrow H^0(\overline{\mathcal{O}_i^\pm},\mathcal{L}_\mu)$ is surjective;

(b) $H^n(\overline{\mathcal{O}_i^\pm},\mathcal{L}_\mu)=0$, for $n>0$.

Here we write $\mathcal{O}_i^\pm=\mathcal{O}_i$, if $\tau i\neq i$.
\end{theorem}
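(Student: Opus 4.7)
The plan is to reduce to the ample case on the partial flag variety attached to the stabilizer of $\mu$, then transfer back through the projection. Let $S = \{j \in \I : \langle \coroot_j, \mu \rangle = 0\}$; the hypotheses $\langle \coroot_i, \mu\rangle > 0$ and $\langle \coroot_{\tau i}, \mu\rangle > 0$ give $i, \tau i \notin S$, so $\mu$ extends to a character of the standard parabolic $P = P_S$. Writing $\pi: G_k/B_k \to G_k/P$ for the projection, we have $\CL_\mu \cong \pi^* \CL_\mu^P$ with $\CL_\mu^P$ ample on $G_k/P$, since $\mu$ is strictly dominant relative to $P$. When $\tau i \neq i$, Corollary~\ref{cor:parab} applied to $\Psi_k$ yields a Frobenius splitting of $G_k/P$ compatibly splitting $Z := \pi(\overline{\CO_i})$. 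When $\tau i = i$, the singleton $J = \{i\}$ has property (*) (condition (a) is $\tau i = i$, condition (b) is the assumed $\langle \coroot_j, \alpha_i\rangle \geq -1$ for all $j \in \I$, and condition (c) is vacuous for a singleton), so $\Psi_k^{\{i\}}$ descends to a Frobenius splitting of $G_k/P$ compatibly splitting $Z := \pi(\overline{\CO_i^\pm})$. In either case, combining the ampleness of $\CL_\mu^P$ with \S\ref{sec:algFr}(c) produces
\[
H^n(G_k/P, \CL_\mu^P) = 0 = H^n(Z, \CL_\mu^P|_Z), \quad n > 0,
\]
together with the surjection $H^0(G_k/P, \CL_\mu^P) \twoheadrightarrow H^0(Z, \CL_\mu^P|_Z)$.

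Next I would lift these conclusions to the flag variety. Since $\mu$ extends to $P$, one has $\pi_*\CL_\mu = \CL_\mu^P$ and $R^n\pi_*\CL_\mu = 0$ for $n > 0$ by the Borel--Weil theorem applied to the fibres $P/B$, and Leray gives $H^n(G_k/B_k, \CL_\mu) \cong H^n(G_k/P, \CL_\mu^P)$. For the restricted morphism $\pi' := \pi|_{\overline{\CO_i^\pm}} : \overline{\CO_i^\pm} \to Z$, once one establishes the pushforward identities
\[
\pi'_*\CO_{\overline{\CO_i^\pm}} = \CO_Z, \qquad R^k\pi'_*\CO_{\overline{\CO_i^\pm}} = 0 \text{ for } k > 0,
\]
the projection formula delivers $R\pi'_*(\CL_\mu|_{\overline{\CO_i^\pm}}) \cong \CL_\mu^P|_Z$, hence $H^n(\overline{\CO_i^\pm}, \CL_\mu|_{\overline{\CO_i^\pm}}) \cong H^n(Z, \CL_\mu^P|_Z)$ for every $n$. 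Comparing this with the unrestricted Leray identification and chasing the resulting commutative square of restriction maps delivers both (a) and (b) at once.

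The hard part will be establishing the cohomological triviality of $\pi'$ above. The positivity assumptions on $\langle \coroot_i, \mu\rangle$ and $\langle \coroot_{\tau i}, \mu\rangle$ keep $\alpha_i$ and $\alpha_{\tau i}$ outside the root system of the Levi of $P$, so $\overline{\CO_i^\pm}$ is not collapsed by $\pi$ along the rank-one direction used to produce it via Proposition~\ref{prop:Korbits}; this should force $\pi'$ to be birational onto its image $Z$. Frobenius splitting of both $\overline{\CO_i^\pm}$ and $Z$ (from compatible splittings in the ambient flag varieties via Theorem~\ref{thm:spl1}, Theorem~\ref{thm:spl2} and Corollary~\ref{cor:parab}) gives weak normality by \S\ref{sec:algFr}(a), and combined with the birational-proper structure of $\pi'$ yields $\pi'_*\CO_{\overline{\CO_i^\pm}} = \CO_Z$. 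The higher vanishing $R^k\pi'_*\CO_{\overline{\CO_i^\pm}} = 0$ should follow by analysing the fibres of $\pi'$ via the rank-one case analysis of Proposition~\ref{prop:Korbits}: each fibre is a $K_k$-stable closed subscheme of a fibre of $\pi$ (itself a partial flag variety of the Levi of $P$), and these subschemes can be identified with iterated rank-one orbit closures whose structure sheaves have no higher cohomology.
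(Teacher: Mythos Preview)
Your reduction to the partial flag $G_k/P$ and the use of Corollary~\ref{cor:parab} to produce a compatible splitting there is exactly the paper's strategy, and your verification that $J=\{i\}$ has property~(*) when $\tau i=i$ is correct. The problem is the transfer step: you assert that $\pi':\overline{\CO_i^\pm}\to Z$ is \emph{birational}, but this is false whenever $S\neq\varnothing$ (i.e.\ whenever $\mu\notin X^{++}$, which is the only interesting case). In fact $\pi'$ has positive-dimensional generic fibre isomorphic to $P/B_k$, so the weak-normality argument you sketch cannot identify $\pi'_*\CO_{\overline{\CO_i^\pm}}$ with $\CO_Z$ along those lines, and your fibre-by-fibre analysis of $R^k\pi'_*$ is attacking the wrong object.

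What actually makes the transfer work is much stronger and much simpler: the orbit closure is \emph{saturated} for $\pi$, namely
\[
\overline{\CO_i^\pm}\;=\;\pi^{-1}\bigl(\pi(\overline{\CO_i^\pm})\bigr).
\]
This follows from the case analysis in \S\ref{sec:Korbits} (the hypotheses $i,\tau i\notin S$ prevent the codimension-one orbit from mapping onto $G_k/P$). Consequently $\pi':\overline{\CO_i^\pm}\to Z$ is a locally trivial $P/B_k$-fibration, and the projection formula immediately gives
\[
H^n(\overline{\CO_i^\pm},\CL_\mu)\;\cong\;H^n(Z,\CL_\mu^P)
\]
for all $n$, compatibly with the restriction map from $G_k/B_k$. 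The higher direct images vanish because the fibres are partial flag varieties, not because of any birational or weak-normality consideration. Once you replace ``birational'' by ``saturated preimage'' your argument becomes the paper's.
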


\begin{proof}
Set $S=\{j\in\I:\langle \alpha_j^\vee,\mu\rangle=0\}$. Let $P =P_S$ be the parabolic subgroup associated to the subset $S$, and $\pi:G_k/B_k\rightarrow G_k/P$ be the projection. It can be deduced from the case study in \S \ref{sec:Korbits} that $\overline{\mathcal{O}_i^\pm}=\pi^{-1}\circ\pi(\overline{\mathcal{O}_i^\pm})$. Hence the restriction $\pi_{i}^\pm:\overline{\mathcal{O}_i^\pm}\rightarrow\pi(\overline{\mathcal{O}_i^\pm})$ is a locally trivial fibration with fibre isomorphic to $P/B_k$. By the projection formula, we have isomorphisms (for any $n$)
\[
\pi^*\!:\!H^n(G_k/P,\mathcal{L}^{P}_\mu)\xrightarrow{\sim}H^n(G_k/B_k,\mathcal{L}_\mu), \,\, \pi_{i}^{\pm,*}\!:\!H^n(\pi\big(\overline{\mathcal{O}_i^\pm}\big),\mathcal{L}_\mu^{P})\xrightarrow{\sim}H^n(\overline{\mathcal{O}_i^\pm},\mathcal{L}_\mu).
\]

We also have the commutative diagram
\[
\begin{tikzcd}
H^0(G_k/P,\mathcal{L}_\mu^P) \arrow[r]  \isoarrow{d} & H^0(\pi_P(\overline{\mathcal{O}_i^\pm}),\mathcal{L}_\mu^P) \isoarrow{d}\\ H^0(G_k/B_k,\mathcal{L}_\mu) \arrow[r] & H^0(\overline{\mathcal{O}_i^\pm},\mathcal{L}_\mu).
\end{tikzcd}
\]

It follows from Corollary \ref{cor:parab} that $G_k/P$ is Frobenius split compatibly with $\pi_P(\overline{\mathcal{O}_i^\pm})$. Note that the assumption on $i$ guarantees $\{i\}$ is a subset satisfying property (*), if $\tau i=i$. Since $\mathcal{L}_\mu^P$ is a very ample line bundle on $G/P$, by Corollary \S \ref{sec:algFr} (c), we deduce that the restriction $H^0(G_k/P,\mathcal{L}_\mu^P)\rightarrow H^0(\pi_P(\overline{\mathcal{O}_i^\pm}),\mathcal{L}_\mu^P)$ is surjective, and $H^n(\pi_P(\overline{\mathcal{O}_i^\pm}),\mathcal{L}_\mu^P)=0$ for $n>0$. Hence the theorem follows.
\end{proof}

\begin{remark}
If $\tau i\neq i$ (or, $\tau i = i$ with $\mathcal{O}_i^+\neq\mathcal{O}_i^-$), the orbit closures $\overline{\mathcal{O}_i^\pm}$ are multiplicity-free. In these cases, a stronger statement is obtained by Brion \cite{Br01b}*{Corollary 8}, in which he showed the results for any dominant weights $\mu$. 

Our result is new in the case $\tau i=i$ with $\mathcal{O}_i^+=\mathcal{O}_i^-$, which corresponds to $\overline{\mathcal{O}_i^\pm}$ being multiplicity-two. The assumption on $\mu$ cannot be dropped by \cite{Br01b}*{Proposition 10}.
\end{remark}

Let $\CO$ be a $K_k$-orbit on the flag variety $\CB_k=G_k/B_k$. Recall from Proposition \ref{prop:Korbits} (3) that the 
parameterization of such orbits is independent of the base filed. We denote by $\CB_{\BZ[2^{-1}]}$ the flag scheme over $\BZ[2^{-1}]$ (cf. \cite{Jan03}*{Part II, \S1}).
	
	\begin{prop}\label{prop:KorbitsZ}
	There exists a closed subscheme $\CZ=\CZ (\mathcal{O})$ of $\CB_{\BZ[2^{-1}]}$ (over $\BZ[2^{-1}]$), such that, 
	
	(1) $\CZ\rightarrow Sp\,\BZ[2^{-1}]$ is flat;
	
	(2) there is a nonempty open subset $U$ of $Sp\,\BZ[2^{-1}]$ such that for any algebraically closed field $k'$ and a morphism $Sp\,k'\rightarrow U$, the base change $\CZ_{k'} = \CZ \times_{Sp\, \BZ[2^{-1}]} Sp\, k'$ gives the closure of the corresponding $K_{k'}$-orbit on $\mathcal{B}_{k'}$. In particular, $\CZ_{k'}$ is reduced.
	\end{prop}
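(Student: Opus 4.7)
The plan is to construct $\CZ$ as a flat closed subscheme of $\CB_{\BZ[2^{-1}]}$ built from a $\G^\imath_{\BZ[2^{-1}]}$-orbit of a distinguished $\BZ[2^{-1}]$-rational point. By Proposition~\ref{prop:Korbits}(1), the orbit $\mathcal{O}$ admits a representative $vB_k$, where $v$ is a word in the elements $n_i^{-1}$, $y_i(1)x_i(-1/2)$ and $y_i(-1)x_i(1/2)$. Because only $1/2$ appears in the denominators, the identical word defines $v \in \G(\BZ[2^{-1}])$, hence a $\BZ[2^{-1}]$-point $vB \in \CB_{\BZ[2^{-1}]}$. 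I then take $\CZ_0$ to be the scheme-theoretic image of the orbit map $\mu : \G^\imath_{\BZ[2^{-1}]} \to \CB_{\BZ[2^{-1}]}$, $g \mapsto g \cdot vB$, which is a closed subscheme of $\CB_{\BZ[2^{-1}]}$.

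To obtain flatness, I pass to the flat model: since $\BZ[2^{-1}]$ is a Dedekind domain, a coherent sheaf on a $\BZ[2^{-1}]$-scheme is flat over $\BZ[2^{-1}]$ if and only if it is $\BZ[2^{-1}]$-torsion free. I therefore define $\CZ$ by enlarging the ideal sheaf of $\CZ_0$ so as to kill all $\BZ[2^{-1}]$-torsion in its structure sheaf. This yields a flat closed subscheme of $\CB_{\BZ[2^{-1}]}$ with the same generic fibre as $\CZ_0$, establishing (1).

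For (2), generic flatness yields a dense open $U \subset Sp\,\BZ[2^{-1}]$ over which the geometric fibres $\CZ_{k'}$ are equidimensional of the same dimension as $\CZ_\BQ$. For any geometric point $Sp\,k' \to U$, the fibre $\CZ_{k'}$ contains the $\G^\imath_{k'}$-orbit of $vB_{k'}$, which by Theorem~\ref{thm:Oik} is precisely the $K_{k'}$-orbit $\mathcal{O}_{k'}$; hence $\overline{\mathcal{O}_{k'}} \subseteq \CZ_{k'}$. By Proposition~\ref{prop:Korbits}(3), $\dim \overline{\mathcal{O}_{k'}}$ is constant in $k'$ and equals $\dim \CZ_\BQ = \dim \overline{\mathcal{O}_\BC}$. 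Combined with the irreducibility of $\CZ_{k'}$, which holds after possibly shrinking $U$ by semi-continuity inherited from the irreducibility of $\CZ_\BQ$, this forces $\CZ_{k'} = \overline{\mathcal{O}_{k'}}$ as closed subsets. Generic reducedness of the fibres of the flat morphism $\CZ \to Sp\,\BZ[2^{-1}]$ then permits me to shrink $U$ further so that the equality holds scheme-theoretically.

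The main obstacle is the final identification of fibres on an \emph{open} subset of $Sp\,\BZ[2^{-1}]$: flatness alone does not guarantee that each $\CZ_{k'}$ agrees with $\overline{\mathcal{O}_{k'}}$ at every prime of $\BZ[2^{-1}]$, since sporadic primes could in principle introduce extra components or nilpotents. The key leverage comes from the characteristic-independent dimension and parameterization statements in Proposition~\ref{prop:Korbits}(3), which force the desired equality on a cofinite set of closed points; restricting to a nonempty open $U$ is precisely what remains when one cannot rule out such sporadic primes a priori, consistent with the paper's expectation that one may in fact take $U = Sp\,\BZ[2^{-1}]$.
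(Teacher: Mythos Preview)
Your proof is correct and follows essentially the same approach as the paper's: choose a $\BZ[2^{-1}]$-rational representative via Proposition~\ref{prop:Korbits}(1), take the scheme-theoretic image of the orbit map from $\G^\imath_{\BZ[2^{-1}]}$, and appeal to standard generic-fibre arguments (the paper simply cites \cite{MR85}*{Lemma~3} for this last step, whereas you spell out the dimension and irreducibility comparison). Your torsion-killing step is harmless but unnecessary: since $\G^\imath_{\BZ[2^{-1}]}$ is free over $\BZ[2^{-1}]$, each of its irreducible components dominates $Sp\,\BZ[2^{-1}]$, so the same holds for the reduced scheme-theoretic image, which is therefore already flat.
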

	
	\begin{proof}
	Thanks to Proposition~\ref{prop:Korbits}, a representative of $\CO$ can be chosen in the  $\BZ[2^{-1}]$-points of $\CB_{\BZ[2^{-1}]}$. Let $\G^\imath_{\BZ[2^{-1}]} \rightarrow \CB_{\BZ[2^{-1}]}$ be the orbit map. Let $\CZ = \CZ (\mathcal{O})$ be the scheme-theoretical image (cf. \cite{Ha77}*{Exercise II 3.11(d)}). The rest follows from similar arguments as in the proof of \cite{MR85}*{Lemma 3}.
	\end{proof}

    \begin{remark}
	It is an open question if one can take $U=Sp\,\BZ[2^{-1}]$ for part (2). Similar question for Schubert varieties is addressed in \cite{Se83}*{Theorem 2 (ii)}.
	\end{remark}

Thanks to the above lemma and semiconuity, all the results stated in this subsection remain true over algebraically closed fields with characteristic 0 (cf. \cite{BK05}*{\S 1.6}).

\subsection{Normality: type $\textrm{AIII}$}\label{sec:AIII}
Let $k$ be an algebraically closed field of characteristic $>2$. We consider the type AIII symmetric pairs in this subsection. We illustrate how to deduce normality from our splittings, which requires detailed information of the Bruhat order of $K_k$-orbits on $\CB_k$. 

\subsubsection{}

Let $G_k=GL_{n,k}$ and $a=\begin{pmatrix}
I_{\lceil n/2\rceil} & 0 \\ 0 & -I_{\lfloor n/2 \rfloor}
\end{pmatrix} \in G_k$.
Let $\theta_k: G_k \rightarrow G_k$ be the conjugation by $a$. Then $K_k=G_k^{\theta_k}=GL_{\lceil n/2\rceil,k}\times GL_{\lfloor n/2 \rfloor,k}$. The symmetric pair $(G_k,\theta_k)$ is quasi-split. The underlying Satake diagram is of type $\textrm{AIII}_{n-1}$. 

Let $B^{st}_k$ be the subgroup of $G_k$ consisting of upper-triangular matrices, and $T^{st}_k$ be the subgroup of diagonal matrices. Then $B^{st}_k$ is a $\theta$-stable Borel subgroup. Set  
\[
g=\begin{pmatrix}
1 & & & & & -1\\ & 1 & & & -1 & \\ & & \ddots &   \reflectbox{$\ddots$}  & & \\ & & \reflectbox{$\ddots$} & \ddots & & \\ & 1 & & & 1 & \\ 1 & & & & & 1
\end{pmatrix} \in G_k.
\]

The middle entry of $g$ can be either $1$ or $-1$ when $n$ is odd.
Let $B_k=gB_k^{st}g^{-1}$, and $T_k=gT^{st}_kg^{-1}$. Then $B_k$ is a $\theta$-anisotropic Borel subgroup and $T_k = B_k \cap \theta_k(B_k)$.

Let $W$ be the (absolute) Weyl group, and $\I=\{1,2,\dots,n-1\}$ be the index set for the simple roots. The graph automorphism $\tau$ induced by $\theta$ is given by $\tau(i)=n-i$, for any $i\in\I$. Let $\mathcal{B}_k$ denote the flag variety. 

 Let $\CV$ denote the set of $K_k$-orbits of $\mathcal{B}_k$, and let $\mathcal{O}_0\in \CV$ be the open orbit.  The Weyl group $W$ acts on the set $\CV$ by \cite{RS90}*{\S 2} by identifying $W \cong N(T_k) / T_k$. Set $J=\{1,2,\dots,\lfloor n/2\rfloor-1\}$, and $J'=\tau(J)=\{n-1,n-2,\cdots,\lceil n/2\rceil+1\}$ be two subsets of $\I$. Let $W_J$ and $W_{J'}$ be the parabolic subgroup of $W$ associated to $J$ and $J'$, respectively. For any $j\in J\cup J'$, we have $\tau j\neq j$. Set $\CV'=W_{J'}\cdot\mathcal{O}_0\cup W_J\cdot\mathcal{O}_0$. Note that $W_{J'}\cdot\mathcal{O}_0=W_J\cdot\mathcal{O}_0$, when $n$ is even.

\begin{proposition}
The splitting defined in \S \ref{sec:sptfl} compatibly splits all the orbits in $\CV'$ simultaneously.
\end{proposition}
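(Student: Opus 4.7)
The plan is to exploit the closure properties of compatibly split subvarieties together with Theorem~\ref{thm:spl1}, reducing the statement to a combinatorial claim about the Bruhat order on $K_k$-orbits in type AIII.

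First, since $\tau j\neq j$ for every $j\in J\cup J'$, Theorem~\ref{thm:spl1} already yields that $\Psi_k$ compatibly splits $\overline{\CO_j}$ for every $j\in J\cup J'$ (all with a single splitting). By \S\ref{sec:algFr}(b), the family of closed subvarieties compatibly split by a fixed Frobenius splitting is closed under taking scheme-theoretic intersections and under passing to irreducible components. Consequently, $\Psi_k$ compatibly splits every irreducible component of any intersection $\bigcap_{j\in S}\overline{\CO_j}$ with $S\subseteq J\cup J'$. The task therefore reduces to showing that every orbit closure $\overline{w\cdot\CO_0}$ with $w\in W_J\cup W_{J'}$ can be so realised.

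I plan to prove this by induction on $\ell(w)$. By the $\theta_k$-symmetry interchanging $J$ and $J'$ (or by the analogous induction) one may focus on $w\in W_J$. The base case $w=e$ is trivial, since $\overline{\CO_0}=\CB_k$. For the inductive step, fix a reduced expression $w=s_jw'$ with $j\in J$ and $\ell(w')=\ell(w)-1$; by induction $\overline{w'\cdot\CO_0}$ is compatibly split. The claim is that $\overline{w\cdot\CO_0}$ is an irreducible component of $\overline{\CO_j}\cap\overline{w'\cdot\CO_0}$. The containment follows because $w\cdot\CO_0=s_j\cdot(w'\cdot\CO_0)$ lies in $\overline{w'\cdot\CO_0}$ and in $\overline{\CO_j}$, the latter inclusion coming from the monoid action of $s_j$ on $K_k$-orbits \cite{RS90}*{Theorem~7.11}. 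For maximality, it suffices to show that the orbit $w'\cdot\CO_0$ itself is not contained in $\overline{\CO_j}$: in that case $\overline{\CO_j}\cap\overline{w'\cdot\CO_0}$ is a proper closed $K_k$-invariant subset of the irreducible variety $\overline{w'\cdot\CO_0}$, hence of pure codimension one, and the codimension-one suborbit $\overline{w\cdot\CO_0}$ appears among its irreducible components.

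The main obstacle is the non-containment $w'\cdot\CO_0\not\subseteq\overline{\CO_j}$, equivalently $w'\cdot\CO_0\not\leqslant \CO_j$ in the Richardson--Springer Bruhat order on $K_k$-orbits. For a general quasi-split symmetric pair this could fail, but in type AIII it is available from Wyser's explicit description of the Bruhat order via clans \cite{Wy16}. The clans attached to orbits in $W_J\cdot\CO_0$ have a very constrained structure (essentially parametrised by permutations in the symmetric group $S_{\lfloor n/2\rfloor}$ acting on the left half of the clan), and the clan-theoretic comparison with the clan of $\CO_j$ fails precisely under the hypothesis $\ell(s_jw')>\ell(w')$. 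Verifying this clan comparison forms the combinatorial heart of the argument; once accomplished, every orbit in $\CV'$ is exhibited as an irreducible component of an intersection of codimension-one orbit closures that are already compatibly split by $\Psi_k$, completing the proof.
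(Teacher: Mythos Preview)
Your route is quite different from the paper's, and considerably heavier. The paper does not touch intersections or the Bruhat order at all; it simply extends the proof of Theorem~\ref{thm:spl1}. For $w\in W_J$ with a reduced word $w=s_{i_1}\cdots s_{i_r}$ (all $i_t\in J$), the key identity in $L(\mu)$ is
\[
B_{i_1}^{(n_1)}B_{i_2}^{(n_2)}\cdots B_{i_r}^{(n_r)}v_\mu^+ \;=\; F_{i_1}^{(n_1)}F_{i_2}^{(n_2)}\cdots F_{i_r}^{(n_r)}v_\mu^+,
\]
the extremal vector of weight $w\mu$ (with the usual Demazure exponents $n_t$). This holds because $B_{i_t}=F_{i_t}+\varsigma_{i_t}E_{\tau i_t}\tilde K_{i_t}^{-1}$ with $\tau i_t\in J'$; since $J\cap J'=\emptyset$, each $E_{\tau i_t}$ commutes past the $F_{i_s}$ ($i_s\in J$) and kills $v_\mu^+$. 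Applying $\psi_\mu$ and using $\ifr(\fb_{i,\bar\mu}^{(n)})=\RB_{i,\overline{p\mu}}^{(pn)}$ for $\tau i\neq i$ reproduces the same identity at level $p\mu$, and then the argument of Theorem~\ref{thm:spl1} (via Lemma~\ref{lem:Iui}) shows $\Psi_k$ preserves the ideal of $\overline{w\cdot\CO_0}$. No Bruhat-order input is needed.

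Your plan is plausible in outline, but two steps are not justified as written. First, the containment $w\cdot\CO_0\subseteq\overline{\CO_j}$ does \emph{not} follow from the Richardson--Springer monoid action as you claim: that action raises dimension, and here it sends $w\cdot\CO_0$ to $w'\cdot\CO_0$, which yields $w\cdot\CO_0\leq w'\cdot\CO_0$, not $w\cdot\CO_0\leq\CO_j$. Second, both this inclusion and the non-inclusion $w'\cdot\CO_0\not\leq\CO_j$ are deferred to an unexecuted clan computation. These can indeed be extracted from Wyser's description (equivalently, by showing that the induced closure order on $\{w\cdot\CO_0:w\in W_J\}$ is the opposite Bruhat order on $W_J$), but you have not done so; and even once completed, the argument is far longer than the paper's one-line algebraic check.
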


\begin{proof}
Let $\mathcal{O}\in \CV'$. Then $\mathcal{O}=K_knB_k/B_k$, for some $n\in N(T_k)$, representing $w\in W_J\cup W_{J'}$. Let $w=s_{i_1}s_{i_2}\cdots s_{i_r}$ be a reduced expression for $w$, with $i_t$ ($1\leqslant t\leqslant r$) either all belong to $J$ , or all belong to $J'$. 

Let $\mu\in X^+$. Note that in the $\U$-module $L(\mu)$, we have
\[
B_{i_1}^{(n_1)}B_{i_2}^{(n_2)}\cdots B_{i_r}^{(n_r)}v_\mu^+=F_{i_1}^{(n_1)}F_{i_2}^{(n_2)}\cdots F_{i_r}^{(n_r)}v_\mu^+
\]
which is a nonzero vector of extremal weight $w\cdot\mu$, where $n_t=\langle \coroot_{i_t},\mu-n_r\alpha_{i_r}-\cdots-n_{t+1}\alpha_{i_{t+1}}\rangle$, for $1\leqslant t\leqslant r$. Then the proposition follows by a similar proof to Theorem \ref{thm:spl1}. 
\end{proof}

    \subsubsection{} \label{sec:AIIIa}
    For any $i\in\I$, let $P_i$ be the minimal parabolic subgroup containing $B_k$ associated to the index $i$. Let $\pi_i:G_k/B_k\rightarrow G_k/P_i$ be the projection.  Then for any $\mathcal{O}\in \CV$, the decomposition of $\pi_i^{-1} \circ \pi_i (\CO) = \tilde{\pi}_i (\CO)$ into $K_k$-orbits  depends on the case analysis in \S\ref{sec:Korbits}. The following claim follows from \cite{Wy16}*{Theorem~1.2}. The proof can be obtained merely by translating our languages into clans, which we shall skip. 

(a) {\em Let $\mathcal{O}\in \CV$ and $i \in I$ be such that $s_i \cdot \CO \neq \CO$. 
Then for any $\mathcal{O}'\in \CV$ with $\overline{\mathcal{O}'}\supseteq \mathcal{O}$ and $\overline{\mathcal{O}'}\supseteq s_i\cdot\mathcal{O}$,  we have $\overline{\mathcal{O}'}\supseteq\tilde{\pi}_i  (\CO)$.}

\begin{lem}\label{lem:AIII}
    Let $\CO' \in \CV$ and $\CO \in \CV$ the unique open dense $K_k$-orbit in $\tilde{\pi}_i(\CO')$ for some $ i \in \I$. Let $\widetilde{\mathcal{O}'}$ be the preimage of $\overline{\mathcal{O}'}$ via the projection $G_k \rightarrow G_k / B_k$.  Then the following surjective morphism has connected fibers
    \[
    f: \widetilde{\mathcal{O}'}\times^{B_k} P_{i}/B_k\longrightarrow\overline{\mathcal{O}}.
    \]
\end{lem}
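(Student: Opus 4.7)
Proof proposal: The strategy is to realize $f$ as a base change of the restricted projection $\pi_i|_{\overline{\mathcal{O}'}}:\overline{\mathcal{O}'}\to\pi_i(\overline{\mathcal{O}'})$, thereby reducing the connected-fiber statement for $f$ to one intrinsic to $\overline{\mathcal{O}'}$.

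First I would set up the isomorphism
\[
\widetilde{\mathcal{O}'}\times^{B_k}P_i/B_k\xrightarrow{\sim}\overline{\mathcal{O}'}\times_{G_k/P_i}G_k/B_k,\qquad [g,hB_k]\mapsto (gB_k,ghB_k),
\]
under which $f$ becomes the projection to the second factor. Since $\mathcal{O}$ is the unique open dense $K_k$-orbit in the irreducible variety $\tilde{\pi}_i(\mathcal{O}')=\pi_i^{-1}(\pi_i(\mathcal{O}'))$ and $\pi_i$ is proper, we have
\[
\overline{\mathcal{O}}=\overline{\tilde{\pi}_i(\mathcal{O}')}=\pi_i^{-1}(\pi_i(\overline{\mathcal{O}'})),
\]
so $\overline{\mathcal{O}}$ is $P_i$-stable and $\pi_i|_{\overline{\mathcal{O}}}:\overline{\mathcal{O}}\to\pi_i(\overline{\mathcal{O}'})$ is a surjective $\BP^1$-bundle. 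Consequently $f$ is the base change of $\pi_i|_{\overline{\mathcal{O}'}}$ along $\pi_i|_{\overline{\mathcal{O}}}$; this immediately yields the surjectivity of $f$ and canonically identifies the fiber of $f$ over $yB_k\in\overline{\mathcal{O}}$ with the fiber of $\pi_i|_{\overline{\mathcal{O}'}}$ over $\pi_i(yB_k)$.

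It then remains to prove $\pi_i|_{\overline{\mathcal{O}'}}$ has connected fibers. If $\mathcal{O}'$ is $P_i$-stable, then necessarily $\mathcal{O}'=\mathcal{O}$ and $\overline{\mathcal{O}'}=\pi_i^{-1}(\pi_i(\overline{\mathcal{O}'}))$ is itself a $\BP^1$-bundle over $\pi_i(\overline{\mathcal{O}'})$, so every fiber is a $\BP^1$ and we are done. Otherwise, the local analysis of $P_i\ast\Omega$ in \S\ref{sec:Korbits} (cases (a)$'$, (c), (d) for the $B_k$-orbit on $G_k/K_k$ corresponding to $\mathcal{O}'$) shows that $\pi_i|_{\mathcal{O}'}$ is a birational isomorphism onto $\pi_i(\mathcal{O}')$, so $\pi_i|_{\overline{\mathcal{O}'}}$ is a proper birational morphism. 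Given normality of the target $\pi_i(\overline{\mathcal{O}'})$, Stein factorization combined with Zariski's main theorem would force $\pi_i|_{\overline{\mathcal{O}'}}$ to have connected fibers.

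The main obstacle is thus establishing connectedness in the non-$P_i$-stable case, which I propose to handle by induction on the dimension of the $K_k$-orbit closure. Normality of the smaller closure $\pi_i(\overline{\mathcal{O}'})\subset G_k/P_i$ is the key inductive input: it can be obtained from Corollary~\ref{cor:parab}, which provides Frobenius splittings of $\pi_i(\overline{\mathcal{O}'})$ in $G_k/P_i$, combined with Wyser's explicit clan description of the closure order on $K_k$-orbits \cite{Wy16}, which allows one to enumerate the codimension-one sub-orbits needed for the normality criterion. As a more direct alternative, one can argue combinatorially from the clan model that any two distinct points of $\overline{\mathcal{O}'}\cap\pi_i^{-1}(z)$ must lie in a single sub-orbit $\mathcal{O}''\subseteq\overline{\mathcal{O}'}$ satisfying $\tilde{\pi}_i(\mathcal{O}'')\subseteq\overline{\mathcal{O}'}$, forcing the fiber to contain the entire $\BP^1=\pi_i^{-1}(z)$.
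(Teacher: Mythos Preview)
Your identification of the fibre as $\overline{\mathcal{O}'}\cap\pi_i^{-1}(\pi_i(yB_k))$ is exactly what the paper does. The divergence is in how you then prove connectedness, and your primary route has a genuine circularity problem.

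You want to invoke Zariski's main theorem, which needs normality of the target $\pi_i(\overline{\mathcal{O}'})\subset G_k/P_i$. But in the paper's logical structure this very lemma is the input to the normality result (Proposition~\ref{prop:KorbitsZ} is not normality; the relevant statement is the subsequent proposition, which applies \cite{MR87}*{Lemma~1} and needs connected fibres of \emph{this} map). Even if you set up a simultaneous induction on dimension, the inductive hypothesis would give you normality of $\overline{\mathcal{O}'}$ in $G_k/B_k$, not of its image $\pi_i(\overline{\mathcal{O}'})$ in $G_k/P_i$; a proper birational image of a normal variety need not be normal, and the Stein factorisation only tells you that $\overline{\mathcal{O}'}\to\pi_i(\overline{\mathcal{O}'})$ factors through the \emph{normalisation} of the target, which brings you back to square one. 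Corollary~\ref{cor:parab} gives a compatible splitting but not normality on its own; the passage from splitting to normality in this paper again goes through the connected-fibre resolution. So the loop does not close.

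The paper instead argues directly on the fibre, which is a closed subset of $\tilde{\pi}_i(xB_k)\cong\BP^1_k$ stable under the image $H$ of $K_k\cap xP_ix^{-1}$ in $PGL_{2,k}$. One runs through the Richardson--Springer cases for $H$: if $H^\circ$ is unipotent or $H=PGL_{2,k}$ the closed $H$-stable subsets of $\BP^1$ are automatically connected; the normaliser-of-a-torus case is excluded for type~AIII by \cite{Br01b}; and in the torus case the only disconnected closed $H$-stable subset is the pair of fixed points, which correspond to two $K_k$-orbits exchanged by $s_i$. Here the Wyser fact you cite as \S\ref{sec:AIIIa}(a) is used: if $\overline{\mathcal{O}'}$ contains both, it must contain the whole $\tilde{\pi}_i$ of either, hence the entire $\BP^1$. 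Your ``more direct alternative'' is essentially this argument, but the precise mechanism is the $H$-orbit structure on $\BP^1$ rather than a statement about pairs of points lying in a single sub-orbit.
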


    \begin{proof}
         Recall the closed embedding via the convolution product
       \[
       \widetilde{\mathcal{O}'}\times^B P_{i}/B_k \rightarrow \overline{\mathcal{O}'}\times \overline{\mathcal{O}}, \quad (g_1, g_2 B_k/B_k) \mapsto (g_1B_k/B_k, g_1g_2 B_k/B_k).
       \]
        Let $xB_k / B_k \in \overline{\mathcal{O}}$ for some $x \in G_k$. Then we have 
        \[
        f^{-1} (xB_k / B_k) \cong \tilde{\pi}_i (xB_k / B_k) \cap \overline{\mathcal{O}'}.
        \]
        
       Since the fiber is stable under the action of $K \cap x P_i x^{-1}$, it remains to consider $H$-orbits on  $\tilde{\pi}_i  (xB_k / B_k) \cong P_i / B_k \cong \BP^1_k$, where $H$ denotes the image of $K \cap x P_i x^{-1}$ in $Aut(\BP^1_k) \cong PGL_{2,k}$. By construction, $H$ has an open dense orbit. We have four cases to consider by \cite{RS90}*{\S4}:
       
       Case I: $H^\circ$ is unipotent. In this case, we have two orbits, one open dense orbit and one closed fixed point. The fibre is always connected. 
       
       Case II: $H = PGL_{2,k}$. There is only one orbit. The fibre is connected. 
       
       Case III: $H$ is a torus. There are three orbits, one open dense orbit and two fixed points. However, the fibre can not be the union of the two fixed points, by \S\ref{sec:AIIIa} (a). The fibre is always connected in this case. 
       
       Case IV:  $H$ is the normalizer of a torus. Then $H^\circ$ is a torus. The open dense $H^\circ$-orbit is $H$-stable and $H$ permutes the two fixed points of $H^\circ$.  However, this case can not happen in our setting by \cite{Br01b}*{Corollary~2 \& Page 18}
       
       We now finish the proof. 
    \end{proof}
\begin{remark}
(1) We expect the claim \S\ref{sec:AIIIa} (a)  holds for arbitrary symmetric pairs. 

(2) It follows from the proof of Lemma~\ref{lem:AIII} that the connectedness of the  fibres can be read from Brion's graphs in \cite{Br01b}. This is closely related to multiplicity-free subvarieties.

Brion's graph has only simple edges for the symmetric pair $(GL_{n,k}, GL_{\lceil n/2\rceil,k}\times GL_{\lfloor n/2 \rfloor,k})$, so that Lemma~\ref{lem:AIII} holds for all orbits in this case.
\end{remark}

\subsubsection{}
We now prove the main result of this section. 
\begin{prop}
For any $\mathcal{O}\in \CV$, its closure $\overline{\mathcal{O}}$ is normal if it is Frobenius split. In particular,  $\overline{\mathcal{O}}$ is normal for any $\mathcal{O}\in \CV'$. 
\end{prop}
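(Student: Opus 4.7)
The strategy is to build, for each $\mathcal{O} \in \CV$, a Bott--Samelson-type desingularization $f : Z \to \overline{\mathcal{O}}$ from a smooth variety $Z$, by iterating Lemma~\ref{lem:AIII}, and then invoke the standard principle (cf.~\cite{BK05}*{\S1.2}) that a Frobenius split variety admitting a proper birational surjection from a normal variety is itself normal.

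I would induct on $\dim \mathcal{O}$. The base case is a closed $K_k$-orbit, which is $K_k$-homogeneous, hence smooth and normal. For the inductive step, given $\mathcal{O}$ with $\overline{\mathcal{O}}$ Frobenius split, I would use Wyser's description \cite{Wy16} of the Bruhat order on $K_k\backslash G_k/B_k$ to produce a chain $\mathcal{O}_1, \mathcal{O}_2, \ldots, \mathcal{O}_r = \mathcal{O}$ together with indices $i_2,\ldots,i_r \in \I$ such that $\mathcal{O}_1$ is closed, $\dim \mathcal{O}_t = \dim \mathcal{O}_{t-1} + 1$, and $\mathcal{O}_t$ is the unique open dense $K_k$-orbit in $\tilde{\pi}_{i_t}(\mathcal{O}_{t-1})$. (For $\mathcal{O} \in \CV'$ such a chain is immediate from any reduced expression for the corresponding element of $W_J \cup W_{J'}$, so that every move falls into case~(a)$'$ of \S\ref{sec:Korbits}; for general $\mathcal{O} \in \CV$ one must further use the combinatorics to avoid the type~(c)/(d) moves at the middle index $i=n/2$.) Form
\[
    Z := \widetilde{\mathcal{O}_1} \times^{B_k} P_{i_2}/B_k \times^{B_k} \cdots \times^{B_k} P_{i_r}/B_k,
\]
which is smooth as an iterated $\mathbb{P}^1$-bundle over the smooth base $\widetilde{\mathcal{O}_1}$, and take the multiplication morphism $f : Z \to \CB_k$.

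Iterating Lemma~\ref{lem:AIII} on $r$, together with \S\ref{sec:AIIIa}(a) at each stage, shows that $f$ factors through $\overline{\mathcal{O}}$, is proper and surjective, and has connected fibres; the dimension equality $\dim Z = \dim \mathcal{O}_1 + (r-1) = \dim \mathcal{O}$ then forces $f$ to be birational. Since $\overline{\mathcal{O}}$ is Frobenius split and $Z$ is smooth, normality follows by the standard argument of \cite{BK05}*{\S1.2}. The ``in particular'' statement is now immediate: the preceding proposition furnishes a single Frobenius splitting of $\CB_k$ compatibly splitting every orbit closure in $\CV'$, so each such $\overline{\mathcal{O}}$ is Frobenius split and hence normal.

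The main obstacle lies in the inductive construction of the chain $\{\mathcal{O}_t\}$ with both the strict dimension jump and the uniqueness of the open dense orbit at each $\tilde{\pi}_{i_t}$-step; this is a combinatorial statement on the orbit poset where Wyser's explicit results \cite{Wy16} are essential. Equally delicate is propagating connectedness of fibres through the tower: Lemma~\ref{lem:AIII} is a one-step assertion, and its iteration requires that at stage $t$ the image of $\widetilde{\mathcal{O}_{t-1}} \times^{B_k} P_{i_t}/B_k$ actually meets each $K_k$-orbit of $\overline{\mathcal{O}_t}$ exactly in the closure of a single orbit of the previous tower, which is precisely what \S\ref{sec:AIIIa}(a) is designed to control.
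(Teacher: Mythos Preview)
Your approach matches the paper's: build the Bott--Samelson tower $Z = \widetilde{\mathcal{O}_1}\times^{B_k}P_{i_1}\times^{B_k}\cdots\times^{B_k}P_{i_m}/B_k$ from a closed orbit, show $f:Z\to\overline{\mathcal{O}}$ has connected fibres by iterating Lemma~\ref{lem:AIII}, and conclude normality from Frobenius splitting (the paper cites \cite{MR87}*{Lemma~1} where you invoke \cite{BK05}).

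Your two stated obstacles are overstatements. First, the chain $\mathcal{O}_1,\ldots,\mathcal{O}_m=\mathcal{O}$ with unit dimension jumps exists for \emph{every} $\mathcal{O}\in\CV$ by \cite{RS90}*{Theorem~4.6}; there is no need to appeal to Wyser's combinatorics or to avoid moves of type (c)/(d) at $i=n/2$, since Lemma~\ref{lem:AIII} already handles all cases (see the remark following it: Brion's graph has only simple edges here). Second, the propagation of connected fibres is simpler than your last paragraph suggests: the paper just factors $f=f_2\circ f_1$, where $f_1$ (onto $\widetilde{\mathcal{O}'}\times^{B_k}P_{i_m}/B_k$) has connected fibres by induction on the tower length and $f_2$ has connected fibres by Lemma~\ref{lem:AIII}; since $f_1$ is proper hence closed, the composite has connected fibres, with no further orbit-by-orbit bookkeeping required.
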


\begin{proof}

 Let $\mathcal{O}\in \CV$. By  \cite{RS90}*{Theorem 4.6}, there exists a closed orbit $\mathcal{O}_1\in \CV$ and a sequence $i_1, \dots, i_m \in \I$, such that $\tilde{\pi}_{i_m} \circ \cdots \circ \tilde{\pi}_{i_1}(\mathcal{O}_1)=\mathcal{O}$ and $\textrm{dim}(\mathcal{O})=\textrm{dim}(\mathcal{O}_1)+m$. Let $\widetilde{\mathcal{O}_1}\subset G$ be the preimage of $\mathcal{O}_1$ via the natural projection $G_k \rightarrow G_k/B_k$. Then there is a proper surjective morphism:
\[
f:\widetilde{\mathcal{O}_1}\times^{B_k}P_{i_1}\times^{B_k}\cdots\times^{B_k}P_{i_m}/B_k\longrightarrow \overline{\mathcal{O}}.
\]

We show the map $f$ has connected fibres by induction on $m$. 
The base case is by Lemma~\ref{lem:AIII}. By the induction hypothesis, the map 
\[
f_1:\widetilde{\mathcal{O}_1}\times^{B_k}P_{i_1}\times^{B_k}\cdots\times^{B_k}P_{i_{m-1}}\times^{B_k} P_{i_m}/B_k\longrightarrow \widetilde{\mathcal{O}'}\times ^{B_k} P_{i_m}/B_k
\]
has connected fibers. The projection $f_2: \widetilde{\mathcal{O}'}\times ^{B_k} P_{i_m}/B_k \rightarrow \overline{\mathcal{O}}$ also has connected fibres. Since $f_1$ is proper hence closed, we conclude that $f = f_2 \circ f_1$ has connected fibers as well. 

The rest of the proposition follows by \cite{MR87}*{Lemma 1}.
\end{proof}

Thanks to Proposition \ref{prop:KorbitsZ}, any orbit closure over characteristic 0 can be viewed as the geometric generic fibre of the corresponding (flat) scheme over $\BZ[2^{-1}]$. Since geometric normality is a open condition over $Sp\,\BZ[2^{-1}]$ (\cite{EGA}*{Theorem 12.2.4 (iv)}), closures of orbits in $\mathcal{V}'$ are normal over characteristic 0.

\begin{remark}
Similar maps to $f$ were considered by Barbasch-Evens in \cite{BE94}*{6.3.7}. Barbasch-Evens' construction starts from ``distinguished orbits'', instead of closed ones, which guarantees the map is birational. In general, the map defined from closed orbits can be generically finite-to-one. 
\end{remark}

\begin{remark}
In the langauage of \cite{Wy16}, the orbits in $\CV'$ correspond to clans which have at most one sign, and the first $\lfloor n/2\rfloor$ natural numbers are distinct.
\end{remark}

\end{document}
\\